\numberwithin{equation}{section}
\newcommand{\R}{\mathbb R} 
\newcommand{\N}{\mathbb N}
\newcommand{\C}{\mathbb C}
\newcommand{\Pro}{\mathbb P}
\newcommand{\1}{\mathbf 1}
\newcommand{\D}{\mathrm{d}}
\newcommand{\om}{\omega}
\newcommand{\e}{\mathrm{e}}
\newcommand{\hop}{\vskip.3cm\noindent} 
\newcommand{\hip}{\vskip.1cm\noindent}
\newtheorem{thm}{Theorem}[section]   
\newtheorem{cor}[thm]{Corollary}
\newtheorem{prop}[thm]{Proposition}
\newtheorem{lem}[thm]{Lemma}
\newtheorem{defi}[thm]{Definition}
\newtheorem{rema}[thm]{Remark}
\newtheorem{hypo}[thm]{Hypothesis}
\DeclareRobustCommand{\SkipTocEntry}[5]{}
\title[Metastability results for a class of linear Boltzmann equations]{Metastability results for a class of linear Boltzmann equations} 
\author[T. Normand]{Thomas Normand}
\email{thomas.normand@math.u-bordeaux.fr}
\date{}
\begin{document}
\maketitle

\begin{abstract}
We consider a semiclassical linear Boltzmann model with a non local collision operator.
We provide sharp spectral asymptotics for the small spectrum in the low temperature regime from which we deduce the rate of return to equilibrium as well as a metastability result.
The main ingredients are resolvent estimates obtained via hypocoercive techniques and the construction of sharp Gaussian quasimodes through an adaptation of the WKB method.
\end{abstract}

\tableofcontents

\section{Introduction}
\subsection{Motivations}
We are interested in the linear Boltzmann equation:
\begin{equation}\label{Boltl1}
\left\{
\begin{aligned}
&h\partial_tu+v\cdot h\partial_xu-\partial_xV\cdot h\partial_vu+Q_{\mathcal H}(h,u)=0 \\ 
&u_{|t=0}=u_0
\end{aligned}
\right.
\end{equation}
in a semiclassical framework (i.e in the limit $h \to 0$), where $h$ is a \emph{semiclassical parameter} and corresponds to the temperature of the system.
Here we denoted for shortness $\partial_x$ and $\partial_v$ the partial gradients with respect to $x$ and $v$.
This equation is used to model the evolution of a system of charged particles in a gas on which acts an electrical force associated to the real valued potential $V$ that only depends on the space variable $x$. 
The interactions between the particles are modelled by the linear operator $Q_{\mathcal H}$ which is called \emph{collision operator}.
Here the unknown is the function $u:\R_+\to L^1(\R^{2d})$ giving the probability density of the system of particles at time $t \in \R_+$, position $x \in \R^d$ and velocity $v\in \R^d$.
For our purpose, we introduce the square roots of the usual Maxwellian distributions 
\begin{align}\label{muh}
\mu_h(v)=\frac{\e^{-\frac{v^2}{4h}}}{(2\pi h)^{d/4}} \qquad \text{and } \qquad \mathcal M_h=\e^{-\frac{V}{2h}}\mu_h.
\end{align}
In many models, we have 
\begin{align}\label{ql1}
Q_{\mathcal H}(h,\mathcal M_h^2)=0 \qquad \text{and } \qquad Q_{\mathcal H}^*(h,1)=0
\end{align}
so in particular $\mathcal M_h^2$ is a stable state of \eqref{Boltl1}.
In order to do a perturbative study of the time independent operator near $\mathcal M^2_h$, we introduce the natural Hilbert space 
$$\mathcal H= \big\{u \in \mathcal D' \,;\, \mathcal M_h^{-1} u\in L^2(\R^{2d})\big\}.$$
It is clear from the Cauchy Schwarz inequality that $\mathcal H$ is indeed a subset of $L^1(\R^{2d})$ provided that $\e^{-\frac{V}{2h}}\in L^2(\R^d_x)$.
In view of \eqref{ql1} and the definition of $\mathcal H$, it is more convenient to work with the new unknown
$$f=\mathcal M_h^{-1} u\, :\R_+\to L^2(\R^{2d})$$
for which the new equation becomes 
\begin{equation}\label{Boltgene}
\left\{
\begin{aligned}
&h\partial_tf+v\cdot h\partial_xf-\partial_xV\cdot h\partial_vf+Q_h(f)=0 \\ 
&f_{|t=0}=f_0
\end{aligned}
\right.
\end{equation}
where
$$Q_h=\mathcal M_h^{-1}\circ Q_{\mathcal H}(h, \cdot) \circ \mathcal M_h.$$
Our study will be focused on the new time independent operator 
\begin{align}\label{Ph}
P_h&=v\cdot h\partial_x-\partial_xV\cdot h\partial_v+Q_h \nonumber \\
&=X_0^h+Q_h 
\end{align}
for some specific choices of the collision operator $Q_h$, where the notation $X_0^h$ will stand for the operator $v\cdot h\partial_x-\partial_xV\cdot h\partial_v$, but also for the vector field $(x,v)\mapsto h(v,-\partial_x V(x))$.
There are plenty of different collision operators studied in the literature, their main properties being that these are symmetric integral operators acting as multiplicators in the position variable $x$ and canceling the Maxwellian distribution.
Our work is in particular motivated by the study of the \emph{mild relaxation} operator introduced in \cite{theserobbe} and given by $H_0(1+H_0)^{-1}$ with $H_0$ the harmonic oscillator in velocity defined by
\begin{align}\label{h0}
H_0=-h^2\Delta_v+\frac{v^2}{4}-\frac{hd}{2}.
\end{align}
In this spirit, the collision operators we will be working with will always be bounded and self-adjoint so, $(X_0^h,\mathcal C^\infty_c(\R^{2d}))$ being essentially skew-adjoint, the operator $P_h$ (endowed with the appropriate domain) is maximal accretive and \eqref{Boltgene} is well-posed.
More generally, some interesting cases of collision operators are given by functions of $H_0$ (see for instance \cite{Lemoprxu11,Lemoprxu12,Lemoprxu13,Herau,theserobbe}) which is the setting that we will adopt.
 
This paper is concerned with the spectral study of the operator $P_h$.
This type of questions has recently known some major progress on the impulse of microlocal methods.
In the case of the linear Boltzmann equation \eqref{Boltgene}, the use of hypocoercive techniques in 2015 in \cite{Robbe} enabled to get some resolvent estimates and establish a rough localization of the small spectrum of $P_h$ which consists of exponentially small eigenvalues in correspondance with the minima of the potential $V$. 
This type of result is similar to the one obtained for example for the Witten Laplacian by Helffer and Sj\"ostrand in \cite{HelSjo} in the 1980's.
Such a localization already leads to return to equilibrium and metastability results which can be improved as the description of the small spectrum becomes more precise.
For example, sharp asymptotics of the small eigenvalues of the Witten Laplacian were obtained later in the 2000's in \cite{BoGaKl} and \cite{HeKlNi} and later again for Kramers-Fokker-Planck type operators by H\'erau et al. in \cite{HHS}.
In these papers, the idea was to exhibit a supersymmetric structure for the operator and then study both the derivative acting from 0-forms into 1-forms and its adjoint with the help of basic quasimodes.
In \cite{theserobbe}, Robbe managed to show that the Boltzmann equation \eqref{Boltgene} with mild relaxation enjoys such a supersymmetric structure.
However, in that case, the matrix appearing in the modification of the inner product does not obey good estimates with respect to the semiclassical parameter $h$.
This is why our goal here will be to give precise spectral asymptotics for the operator $P_h$ through a more recent approach which consists in directly constructing a family of accurate quasimodes for our operator in the spirit of \cite{LPMichel} and \cite{BonyLPMichel}.

The aim of this paper is twofold.
In a first time, we want to prove a result similar to the one obtained by Robbe in \cite{Robbe} but for a large class of collision operators.
The second goal is to provide complete asymptotics of the small eigenvalues of $P_h$ as it was done in \cite{HeKlNi} for the Witten Laplacian or in \cite{HHS, HHS11} with recent improvements by Bony et al. in \cite{BonyLPMichel} in the case of Fokker-Planck type differential operators.
We manage to establish such results for the equation \eqref{Boltgene} for a class of pseudo-differential collision operators presenting nice symbol properties as well as a factorized structure.

\subsection{Setting and main results}
For $d'\in \N^*$ and $Z\in \C^{d'} $, we use the standard notation $\langle Z \rangle=(1+|Z|^2)^{1/2}$.
In this paper, we will treat the case of collision operators of the form
$$Q_h=\varrho(H_0)$$
with $\varrho$ satisfying the following:
\begin{hypo}\label{hyporho}
The function $\varrho: \R_+ \to \R_+$ vanishes at the origin and for all $t\geq 0$,
$$\varrho(t)\geq \frac1C \frac{t}{\langle t \rangle}.$$
Moreover, it admits an analytic extension to $\{\mathrm{Re}\, z > -\frac1C\}$ for which there exist $\varrho_\infty \in \R_+$ and $\alpha>0$ such that $\varrho(z)=\varrho_\infty+O(\langle z \rangle^{-\alpha})$.
\end{hypo}
\hip
In particular, $Q_h$ will be bounded uniformly in $h$ and self-adjoint.
An example of such collision operator is the \emph{mild relaxation} operator introduced in \cite{theserobbe} and given by $H_0(1+H_0)^{-1}$.
In order to state the consequences of Hypothesis \ref{hyporho}, let us introduce a few notations of semiclassical microlocal analysis which will be used in all this paper.
These are mainly extracted from \cite{Zworski}, chapter 4.
We will denote $\Xi\in \R^{d'}$ 
the dual variable of $X$ 
and use the semiclassical Fourier transform
$$\mathcal F_h(f)(\Xi)= \int_{\R^{d'}}\e^{-\frac ih X\cdot \Xi}f(X) \, \D X.$$
We consider the space of semiclassical symbols 
\begin{align*}
S^\kappa \big(\langle (X,\Xi) \rangle^k\big)=\big\{a_h \in \mathcal C^\infty(\R^{2d'}) \, ; \, \forall \alpha \in \N^{2d'}, \exists \, C_\alpha>0\;
		\emph{\text{such that }} |\partial^\alpha a_h(X,\Xi)|\leq C_\alpha h^{-\kappa |\alpha|}\langle (X,\Xi) \rangle^k\big\}
\end{align*}
where $k\in \R$ and $\kappa \in [0, 1/2]$. Note that those symbols are allowed to depend on $h$; however, in order to shorten the notations, we will drop the index $h$ in the rest of the paper when dealing with semiclassical symbols.
Given a symbol $a \in S^\kappa(\langle (X,\Xi) \rangle^k)$, we define the associated semiclassical pseudo-differential operator for the Weyl quantization acting on functions $u \in \mathcal S(\R^{d'})$ by
$$\mathrm{Op}_h(a)u(X)=(2\pi h)^{-d'}\int_{\R^{d'}} \int_{\R^{d'}}\e^{\frac ih (X-X')\cdot \Xi}a\Big(\frac{X+X'}{2},\Xi\Big)u(X')\,\D X'\D \Xi$$
where the integrals may have to be interpreted as oscillating integrals.
We will denote $\Psi^\kappa(\langle (X,\Xi) \rangle^k)$ the set of such operators.
In our setting, we will denote $\xi$ (resp. $\eta$) the dual variable of $x$ (resp. $v$).
We also need to introduce the notion of analytic symbols.
For our purpose, we almost always consider symbols that do not depend on the variable $\xi$.
\begin{defi}\label{symbolo}
For $\tau>0$, let us introduce the set 
$$\Sigma_\tau=\{z\in \C \, ; \, |\mathrm{Im}\,z|< \tau\}^d\subset \C^d.$$
For $k \in \R$, we denote $S^0_\tau(\langle (x,v,\eta) \rangle^k)$ the space of symbols $a_h\in S^0(\langle (x,v,\eta) \rangle^k)$ independent of $\xi$ such that:
\begin{enumerate}[label=$\mathrm{(\roman*)}$]
\item For all 
$(x,v)\in \R^{2d}$, 
$a_h(x,v,\cdot)$ is analytic on $\Sigma_\tau$ \label{holom}
\item For all $\beta \in \N^{2d}$, there exists $C_\beta>0$ such that $|\partial_{(x,v)}^{\beta} a_h |\leq C_\beta \langle (x,v,\eta) \rangle^k$
on $\R^{2d}\times \Sigma_\tau.$ \label{majobande}
\end{enumerate}
We will also use the notation $a_h=O_{S^0_\tau(\langle(x,v,\eta) \rangle^{k})}(h^N)$ to say that for all $\alpha \in \N^{3d}$, there exists $C_{\alpha, N}$ such that
$| \partial^\alpha  a_h |\leq C_{\alpha, N} \, h^N \langle (x,v,\eta) \rangle^k$
on $\R^{2d}\times \Sigma_\tau$.
\end{defi}
\hip
Here again, we will drop the index $h$ in the notations of analytic symbols.
Using the Cauchy-Riemann equations, we see that item \ref{holom} from Definition \ref{symbolo} implies that for all $\beta \in \N^{2d}$ and $(x,v)\in \R^{2d}$, the functions $\partial^\beta_{(x,v)} a(x,v,\cdot)$ are also analytic on $\Sigma_\tau$.
Besides, the Cauchy formula implies that for any $\tilde \tau<\tau$, $\alpha \in \N^d$ and $\beta\in \N^{2d}$, there exists $C_{\alpha,\beta}$ such that
$$|\partial_\eta^\alpha\partial_{(x,v)}^\beta a |\leq C_{\alpha,\beta} \langle (x,v,\eta) \rangle^k \qquad \text{ on } \R^{2d}\times \Sigma_{\tilde \tau}$$
i.e up to taking $\tau$ smaller, item \ref{majobande} from Definition \ref{symbolo} can be extended to $\beta\in \N^{3d}$.
Let us introduce a slightly unusual notion of "expansion" where the coefficients are allowed to depend on $h$: we will say that 
\begin{align}\label{exph}
a \sim_h \sum_{j \geq 0} h^j a_j
\end{align}
in $S^0(\langle (x,v,\eta) \rangle^k)$ (resp. in $S^0_\tau(\langle (x,v,\eta) \rangle^k)$) if $(a_j)_{j \geq 0}\subset S^0(\langle (x,v,\eta) \rangle^k)$ (resp. $(a_j)_{j \geq 0}\subset S^0_\tau(\langle (x,v,\eta) \rangle^k)$) is a family of symbols which may depend on $h$ and are such that for all $N\in \N$, 
$$ a-\sum_{j=0}^{N-1}h^j a_j =O_{S^0(\langle(x,v,\eta)\rangle^{k})}(h^N) \qquad \text{\big(resp. } O_{S^0_\tau(\langle(x,v,\eta)\rangle^{k})}(h^N)\text{\big)}$$
Finally, we also have the usual notion of classical expansion for a symbol: $a \sim \sum_{j \geq 0} h^j a_j$ in $S^0(\langle (x,v,\eta) \rangle^k)$ (resp. in $S^0_\tau(\langle (x,v,\eta) \rangle^k)$) means that $a \sim_h \sum_{j \geq 0} h^j a_j$ in $S^0(\langle (x,v,\eta) \rangle^k)$ (resp. in $S^0_\tau(\langle (x,v,\eta) \rangle^k)$) and the $(a_j)_{j \geq 0}$ are independent of $h$.\\
We now extend these notions to matrix valued symbols: if 
$$M=(m_{p,q})\mathop{}_{\substack{1\leq p \leq n_1 \\  1\leq q \leq n_2}}$$
\sloppy is a matrix of functions such that each $m_{p,q}\in S^\kappa(\langle (x,v,\eta) \rangle^k)$ (resp. $m_{p,q}\in S^0_\tau(\langle (x,v,\eta) \rangle^k)$), we say that $M\in \mathcal M_{n_1,n_2}\big(S^\kappa(\langle (x,v,\eta) \rangle^k)\big)$ \big(resp. $M\in \mathcal M_{n_1,n_2}\big(S^0_\tau(\langle (x,v,\eta) \rangle^k)\big)$\big) and we denote 
$$\mathrm{Op}_h(M)=\Big(\mathrm{Op}_h(m_{p,q})\Big)\mathop{}_{\substack{1\leq p \leq n_1 \\  1\leq q \leq n_2}}.$$
The notation 
$$M=O_{\mathcal M_{n_1,n_2}\big(S^0(\langle(x,v,\eta) \rangle^{k})\big)}(h^N) \qquad \text{\big(resp. } M=O_{\mathcal M_{n_1,n_2}\big(S^0_\tau(\langle(x,v,\eta) \rangle^{k})\big)}(h^N)\text{\big)}$$
means that for all $(p,q)\in \llbracket 1,n_1 \rrbracket\times \llbracket 1,n_2 \rrbracket$, the symbol $m_{p,q}$ is $O_{S^0(\langle(x,v,\eta) \rangle^{k})}(h^N)$ (resp. $O_{S^0_\tau(\langle(x,v,\eta) \rangle^{k})}(h^N)$).
Furthermore, the notions of expansions $M\sim_h \sum_{n \geq 0} h^n M_n$ and $M\sim \sum_{n \geq 0} h^n M_n$ in $\mathcal M_{n_1,n_2}\big(S^0(\langle (x,v,\eta) \rangle^k)\big)$ \big(resp. $\mathcal M_{n_1,n_2}\big(S^0_\tau(\langle (x,v,\eta) \rangle^k)\big)$\big) are straightforward adaptations of the ones for scalar symbols.

These notions enable us to introduce a new class of collision operators which appears to be more general that the one given by Hypothesis \ref{hyporho}.
Let us denote $b_h$ the twisted derivative
\begin{align}\label{bh}
b_h=h\partial_v +v/2
\end{align}
so that in particular with the notation \eqref{h0} we have $H_0=b_h^*b_h$.
\begin{hypo}\label{hypom}
There exists $\tau>0$ and a symmetric matrix of analytic symbols
$$M^h(x,v,\eta)=\big(m_{p,q}(x,v,\eta)\big)_{1\leq p,q \leq d}\in \mathcal M_d\big(S^0_\tau(\langle (v,\eta) \rangle^{-2})\big)$$
sending $\R^{3d}$ into $\mathcal M_d(\R)$ and such that, with the notation \eqref{bh}, the collision operator $Q_h$ satisfies
\begin{enumerate}[label=\alph*)]
	\item 
$Q_h=b_h^*\circ\text{Op}_h(M^h)\circ b_h$ \label{facto}
	\item 
$M^h\sim \sum_{n \geq 0}h^nM_n$ in $\mathcal M_d\big(S^0_\tau(\langle (v,\eta) \rangle^{-2})\big)$ \label{expm}
	\item For all $(x,v,\eta)\in \R^{3d}$, $M^h(x, v, \eta ) =M^h(x, v, -\eta )$ \label{paire}
	\item For all $(x,v,\eta)\in \R^{3d}$, $M_0(x,v,\eta) \geq \frac 1C \langle (v,\eta) \rangle^{-2} \; \mathrm{Id}$.\label{minom}
\end{enumerate}
\end{hypo}
\hip
Since the $(M_n)_n$ do not depend on $h$, we easily get that these matrices of symbols are also even in $\eta$, symmetric, independent of $\xi$ and with values in $\mathcal M_d(\R)$; so in particular item \ref{minom} makes sense.
This will enable us to establish Lemma \ref{mino Q} which is sometimes reffered to as \emph{microscopic coercivity} (see for instance \cite{DoMoSc}).
As announced, we have the following Lemma which is proven in Appendix \ref{rho}:
\begin{lem}\label{1impl2}
Hypothesis \ref{hyporho} implies Hypothesis \ref{hypom}.
\end{lem}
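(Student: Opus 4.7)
The plan is to construct $M^h$ via functional calculus of $b_hb_h^*$, exploiting the factorization $\varrho(z) = z\psi(z)$ that is allowed by $\varrho(0)=0$.

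First I would introduce $\psi(z):=\varrho(z)/z$, which extends analytically across $z=0$ into $\{\mathrm{Re}\,z>-1/C\}$ thanks to $\varrho(0)=0$; Hypothesis \ref{hyporho} then yields $\psi(z)=O(\langle z\rangle^{-1})$ at infinity and $\psi(t)\ge 1/(C\langle t\rangle)$ on $[0,\infty)$. By functional calculus combined with the intertwining relation $b_hf(b_h^*b_h)=f(b_hb_h^*)b_h$ (established on polynomials by induction and extended by density), one obtains the exact operator identity
\[
Q_h=\varrho(H_0)=\psi(H_0)\,H_0=b_h^*\,\psi(b_hb_h^*)\,b_h.
\]
Setting $\mathcal A(v,\eta):=i\eta+v/2\in \C^d$, the Weyl symbol of $b_hb_h^*$ is the matrix $\mathcal A\mathcal A^*+(h/2)\,\mathrm{Id}$, and this operator is elliptic (bounded below by $h/2$). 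The Helffer--Sj\"ostrand formula together with a semiclassical expansion of the resolvent $(z-b_hb_h^*)^{-1}$ then shows that $\psi(b_hb_h^*)$ is a matrix-valued pseudodifferential operator whose symbol admits a classical $h$-expansion with principal term $\psi(\mathcal A\mathcal A^*)$. This symbol is however \emph{not} in $\mathcal M_d(S^0_\tau(\langle(v,\eta)\rangle^{-2}))$: writing $P_{\mathcal A}:=\mathcal A\mathcal A^*/|\mathcal A|^2$ one has the spectral decomposition
\[
\psi(\mathcal A\mathcal A^*)=\psi(|\mathcal A|^2)\,P_{\mathcal A}+\psi(0)\,(\mathrm{Id}-P_{\mathcal A}),
\]
whose second term is bounded but not decaying.

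The crucial observation is that $M^h$ is constrained only through its sandwich $\mathcal A^*\#M^h\#\mathcal A=\sigma(Q_h)$, so we may freely modify $M^h$ by any matrix symbol $N^h$ satisfying $\mathcal A^*\#N^h\#\mathcal A=0$ without spoiling the factorization. The natural leading-order choice is then
\[
M_0(v,\eta):=\psi(|\mathcal A|^2)\,\mathrm{Id}=\psi\bigl(|\eta|^2+v^2/4\bigr)\,\mathrm{Id},
\]
for which $\mathcal A^*M_0\mathcal A=\psi(|\mathcal A|^2)|\mathcal A|^2=\varrho(|\mathcal A|^2)$ matches the principal symbol of $Q_h$. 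One then checks the items of Hypothesis \ref{hypom} for $M_0$: analyticity on $\Sigma_\tau$ holds for $\tau$ small enough that $\mathrm{Re}(|\eta|^2+v^2/4)>-1/C$ on $\R^{2d}\times\Sigma_\tau$; the decay $\langle(v,\eta)\rangle^{-2}$ is inherited from $\psi(z)=O(\langle z\rangle^{-1})$; evenness in $\eta$, symmetry and reality on real arguments are automatic; and the positivity $M_0\ge \frac{1}{C'}\langle(v,\eta)\rangle^{-2}\,\mathrm{Id}$ follows from $\psi(t)\ge 1/(C\langle t\rangle)$.

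Higher-order corrections are produced inductively: assuming $M_0,\ldots,M_{n-1}$ are chosen in the correct symbol class, the residual $r_n(v,\eta)$ defined as the $h^n$-coefficient of $\sigma(Q_h)-\mathcal A^*\#\bigl(\sum_{j<n}h^jM_j\bigr)\#\mathcal A$ vanishes at $\mathcal A=0$ by induction and by $\varrho(0)=0$, so one can set $M_n:=(r_n/|\mathcal A|^2)\,\mathrm{Id}$ and finish by Borel summation to produce $M^h\sim\sum h^nM_n$. The main obstacle is controlling the symbolic calculus through the induction --- in particular, checking that each $r_n$ is smoothly (indeed analytically on $\Sigma_\tau$) divisible by $|\mathcal A|^2$, that the evenness in $\eta$ and the symmetry of the $M_n$ propagate through the Moyal calculus, and that the decay at infinity is preserved at every step.
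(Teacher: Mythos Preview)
Your factorization $Q_h=b_h^*\psi(b_hb_h^*)b_h$ is correct, but it is not the choice the paper makes, and the difference matters. The paper observes that componentwise $b_h^j H_0=(H_0+h)b_h^j$, so one may equally well write
\[
\varrho(H_0)=b_h^*\bigl(\tilde\varrho(H_0+h)\otimes\mathrm{Id}\bigr)b_h,\qquad \tilde\varrho=\psi,
\]
where the middle operator is a \emph{scalar} function of the harmonic oscillator tensored with the identity. Its Weyl symbol is therefore automatically of the form $m^h(v,\eta)\,\mathrm{Id}$, and the paper computes $m^h$ directly from the explicit Mehler formula for the resolvent symbol of $H_0$ (the reference~\cite{DeKa}), obtaining the decay, analyticity, parity and classical expansion by elementary estimates on one-dimensional integrals. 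In particular $M_0(v,\eta)=\tilde\varrho(v^2/4+\eta^2)\,\mathrm{Id}$, which matches your proposed leading term, but the full symbol comes for free without any induction or gauge-fixing.

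Your route through $\psi(b_hb_h^*)$ forces you to confront the rank-one matrix $\mathcal A\mathcal A^*$ and then correct by hand. The inductive step you sketch has a genuine gap: saying that $r_n$ ``vanishes at $\mathcal A=0$'' only gives vanishing at the single point $(v,\eta)=0$, which is far from enough to make $r_n/|\mathcal A|^2$ smooth (let alone analytic on $\Sigma_\tau$ with the right decay). You would need to show that each $r_n$ is in fact a function of $|\mathcal A|^2$ alone and vanishes at the origin; this is plausible by symmetry but requires tracking the Moyal calculus carefully, and in any case presupposes the full symbol expansion of $\sigma(Q_h)$, which is essentially what the paper's Mehler computation provides. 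In short, the paper's scalar intertwiner sidesteps everything you identify as ``the main obstacle''.
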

\hip
We will also make a few confining assumptions on the function $V$, assuring for instance that the bottom spectrum  of the associated Witten Laplacian is discrete. In particular, our potential will satisfy Assumption 2 from \cite{LPMichel} and Hypothesis 1.1 from \cite{Robbe}.
\begin{hypo}\label{V}
The potential $V$ is a smooth Morse function depending only on the space variable $x\in \R^d$ with values in $\R$ which is bounded from below and such that
$$|\partial_x V(x)|\geq \frac1C\qquad \text{for }|x|>C.$$
Moreover, for all $\alpha \in \N^d$ with $|\alpha|\geq 2$, there exists $C_\alpha$ such that 
$$|\partial_x^\alpha V|\leq C_\alpha.$$
In particular, for every $0\leq k\leq d$, the set of critical points of index $k$ of $V$ that we denote $\mathcal U^{(k)}$ is finite and we set 
\begin{align}\label{n0}
n_0=\# \mathcal U^{(0)}.
\end{align}
Finally, we will suppose that $n_0\geq 2$.
\end{hypo}
\hip
The last assumption comes from the fact that when $n_0=1$, the so-called \emph{small spectrum} of the operator $P_h$ (i.e its eigenvalues with exponentially small modulus) is trivial, so there is nothing to study.
It is shown in \cite{MeSc}, Lemma 3.14 that for a function $V$ satisfying Hypothesis \ref{V}, we have $V(x)\geq |x|/C$ outside of a compact. In particular, under Hypothesis \ref{V}, it holds $\e^{-V/2h}\in L^2(\R^{d}_x)$.
Moreover, in our setting, $X_0^h$ is a smooth vector field whose differential is bounded on $\R^{2d}$, so the operator $X_0^h$ endowed with the domain
$$D=\{u \in L^2(\R^{2d}) \, ; \, X_0^hu \in L^2(\R^{2d})\}$$
is skew-adjoint on $L^2(\R^{2d})$ and the set $\mathcal S(\R^{2d})$ is a core for this operator.
Therefore, $(P_h,D)^*=(-X_0^h+Q_h,D)$ and $(P_h,D)$ is m-accretive on $L^2(\R^{2d})$.

We can now state our first result which consists in giving a rough localization of the small spectrum of $P_h$ that we prove in Section \ref{sectionrough}.
\begin{thm}\label{thmRobbe}
Assume that Hypotheses \ref{hypom} 
and \ref{V} are satisfied and recall the notation \eqref{n0}.
Then the operator $(P_h,D)$ admits $0$ as a simple eigenvalue.
Moreover, there exists $c>0$ and $h_0>0$ such that for all $0<h\leq h_0$,  
$\mathrm{Spec}(P_h)\cap \{\mathrm{Re} \,z\leq ch^2\}$ consists of exactly $n_0$ eigenvalues (counted with algebraic multiplicity) 
that are exponentially small with respect to $1/h$ and for all $0<\tilde c\leq c$, the resolvent estimate 
$$(P_h-z)^{-1}=O(h^{-2})$$
holds uniformly in $\{\mathrm{Re} \,z\leq ch^2\} \backslash B(0, \tilde c h^2)$.
Finally, except for $0$, the real parts of these small eigenvalues are positive.
\end{thm}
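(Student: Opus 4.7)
The plan is to follow the hypocoercive strategy of \cite{Robbe}, now fed by the abstract factorization provided by Hypothesis \ref{hypom} rather than by the specific form of the mild relaxation operator. The proof splits naturally into three steps: a hypocoercive resolvent estimate that gives $(P_h-z)^{-1}=O(h^{-2})$ on the announced region, the construction of at least $n_0$ rough quasimodes associated with the minima of $V$, and a matching upper bound on the dimension of the small spectral subspace via reduction to an effective semiclassical Witten-type Laplacian on $L^2(\R^d_x)$.

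For the resolvent bound I would split $L^2(\R^{2d})=\ker Q_h\oplus(\ker Q_h)^\perp$. Items \ref{facto} and \ref{minom} of Hypothesis \ref{hypom} together with a Gårding-type estimate (the content of the announced Lemma \ref{mino Q}) identify $\ker Q_h$ with the Maxwellian fibre $\{g(x)\mu_h(v)\}$ and control $\|(1-\Pi)f\|$ in terms of $\mathrm{Re}\langle P_hf,f\rangle=\langle Q_hf,f\rangle$, where $\Pi$ denotes the orthogonal projection onto $\ker Q_h$. To extend coercivity to all of $L^2$ I would introduce a Hérau-type modification $\|f\|_\varepsilon^2=\|f\|^2+\varepsilon\,\mathrm{Re}\langle Af,f\rangle$ with $A$ built from the macroscopic transport $\Pi X_0^h(1-\Pi)$ in the standard Dolbeault--Mouhot--Schmeiser fashion. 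After integrating out the velocity variable on $\ker Q_h$, the commutator generated by $A$ reduces to an effective semiclassical Witten-type operator on $L^2(\R^d_x)$ associated with $V$; by Hypothesis \ref{V} and \cite{HelSjo}, its low-lying spectrum consists of $n_0$ exponentially small eigenvalues with a gap of order $h^2$ above them, and combining the two estimates produces the announced resolvent bound.

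The quasimodes are built from the Maxwellian itself. A direct computation gives $X_0^h\mathcal M_h=0$ and $b_h\mathcal M_h=0$, hence $P_h\mathcal M_h=0$ by item \ref{facto}. For each $m\in\mathcal U^{(0)}$ I would set $f_m=\chi_m\mathcal M_h/\|\chi_m\mathcal M_h\|$ with $\chi_m$ a smooth cutoff supported in a basin around $m$. Since the commutator $[P_h,\chi_m]$ is supported away from $m$ where $\mathcal M_h$ is exponentially small, one obtains $\|P_hf_m\|=O(\e^{-c/h})$. Writing the spectral projector as $\Pi_0=\frac{1}{2i\pi}\oint_{|z|=\tilde c h^2}(z-P_h)^{-1}\,\D z$ and using the resolvent estimate, the family $(f_m)_{m\in\mathcal U^{(0)}}$ is almost orthonormal and lies in $\mathrm{Ran}\,\Pi_0$ up to exponentially small errors; this gives $\dim\mathrm{Ran}\,\Pi_0\ge n_0$. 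The matching upper bound is obtained by transferring via the modified norm to the effective Witten Laplacian, whose small spectrum is exactly $n_0$-dimensional by \cite{HelSjo}.

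Simplicity of $0$ then follows because $\ker P_h\cap D$ is contained in $\ker Q_h\cap\ker X_0^h=\R\mathcal M_h$ under the confining assumption on $V$. For the strict positivity of the other small real parts, $m$-accretivity already gives $\mathrm{Re}\,z\ge 0$; if some nonzero small eigenvalue $z=i\lambda$ were purely imaginary, an associated eigenvector $f$ would satisfy $\langle Q_hf,f\rangle=\mathrm{Re}\langle P_hf,f\rangle=0$, forcing $f\in\ker Q_h$, i.e. $f=g(x)\mu_h(v)$, and $X_0^hf=i\lambda f$ would then give $v\cdot h\partial_xg=i\lambda g$ for all $v$, yielding $g=0$. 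The main obstacle is the careful bookkeeping of the commutator $[X_0^h,\mathrm{Op}_h(M^h)]$ in the hypocoercive step, where the full analytic expansion of item \ref{expm} and the parity of item \ref{paire} of Hypothesis \ref{hypom} are essential to close the energy estimate without losing the $h^2$ threshold.
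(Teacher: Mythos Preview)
Your outline follows essentially the same route as the paper: microscopic coercivity (Lemma~\ref{mino Q}) from items \ref{facto} and \ref{minom} of Hypothesis~\ref{hypom}, a hypocoercive modified form giving a resolvent estimate on the orthogonal of $n_0$ Gaussian quasimodes, and then the spectral projector $\Pi_0$ to count eigenvalues. The paper does the hypocoercive step after conjugating by the dilation $S_h$ and uses the H\'erau operator $L=\Lambda^{-2}a^*b$ rather than a DMS-style $A=\Pi X_0^h(1-\Pi)$ operator, but this is a cosmetic difference.

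Two points need correction. First, your final sentence misidentifies the obstruction: the commutator that matters in the hypocoercive step is $[L,X_0]$ (transport against the auxiliary operator), not $[X_0^h,\mathrm{Op}_h(M^h)]$. The collision operator enters only through $\|\tilde Q_1^{1/2}u\|^2$ and the bound $\|\tilde Q_1\|=O(h^{-1})$, and for this only items \ref{facto} and \ref{minom} of Hypothesis~\ref{hypom} are used. The analytic expansion \ref{expm} and the parity \ref{paire} are \emph{not} needed anywhere in the proof of Theorem~\ref{thmRobbe}; they enter only later, in the construction of accurate quasimodes in Sections~\ref{sectionquasim}--\ref{sectionequations}. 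Second, in your argument that no nonzero eigenvalue is purely imaginary, the equation $X_0^h(g\mu_h)=i\lambda g\mu_h$ does not give $v\cdot h\partial_x g=i\lambda g$: you dropped the term coming from $-\partial_xV\cdot h\partial_v\mu_h$. The correct identity is $\mu_h^{-1}X_0^h(g\mu_h)=v\cdot h\,\e^{-V/2h}\partial_x(\e^{V/2h}g)$, which must equal the $v$-independent quantity $i\lambda g$; hence $\partial_x(\e^{V/2h}g)=0$, i.e.\ $g\in\C\,\e^{-V/2h}$, and then $i\lambda g=0$ forces $\lambda=0$ or $g=0$. Your conclusion is right but the intermediate equation is not.
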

\hip
This result can be seen as a generalization of Theorem 3.0.2 from \cite{theserobbe} (up to the $h^2$ instead of $h$) as we saw that the \emph{mild relaxation} operator (which is the collision operator studied in this reference) satisfies our hypotheses.
In our case we get a localization of order $h^2$ because we adopt a simpler proof based on hypocoercivity (inspired by \cite{Robbe}) than the one presented in \cite{theserobbe}.

In order to study the long time behavior of the solutions of \eqref{Boltgene}, we need a precise description of the small spectrum of $P_h$.
To this aim, we construct in Sections \ref{sectionquasim} and \ref{sectionequations} in the spirit of the WKB method a family of accurate quasimodes localized around the minima of $V$ that enables us to establish sharp asymptotics of the small eigenvalues of $P_h$.
This leads in Section \ref{sectionvp} to the following Theorem which is the main result of this paper.
For the sake of simplicity, we make in the statement an additionnal assumption (Hypothesis \ref{jvide}) on the topology of the potential $V$ that could actually be omitted (see \cite{Michel} or \cite{BonyLPMichel}). 
\begin{thm}\label{thmToto}
Suppose that 
Hypotheses \ref{hypom}, \ref{V} and \ref{jvide} are satisfied and denote $\underline{\mathbf m}$ a global minimum of $V$.
According to Theorem \ref{thmRobbe}, we can associate to each $\mathbf m \in \mathcal U^{(0)}\backslash \{\underline{\mathbf m}\}$ a non zero exponentially small eigenvalue of $P_h$ that we denote $\lambda(\mathbf m,h)$.
These eigenvalues satisfy the following formula:
$$\lambda(\mathbf m,h)=h\e^{-2\frac{S(\mathbf m)}{h}}\frac{\det (\mathrm{Hess}_{\mathbf m}V)^{1/2}}{2\pi} B_h(\mathbf m) $$
with $S$ defined in Definition \ref{j et s} and $B_h(\mathbf m)$ admitting a classical expansion whose first term is 
$$\sum_{\mathbf s \in \mathbf j(\mathbf m)} |\det (\mathrm{Hess}_{\mathbf s}V)|^{-1/2} \,M_0(\mathbf s,0,0) \nu_2^{\mathbf s} \cdot \nu_2^{\mathbf s}$$
where the map $\mathbf j$ is also defined in Definition \ref{j et s}, the matrix $M_0(\mathbf s,0,0)$ is introduced in Hypothesis \ref{hypom} and the vector $\nu_2^{\mathbf s}\in \R^d$ is defined in Proposition \ref{phinu}.
\end{thm}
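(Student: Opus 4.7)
The plan is to lean on Theorem \ref{thmRobbe} to reduce the analysis of $\lambda(\mathbf m,h)$ to a finite dimensional spectral problem. The resolvent bound $(P_h-z)^{-1}=O(h^{-2})$ on the circle $\gamma=\{|z|=ch^2\}$ lets us define the spectral projector
$$\Pi_h=\frac{1}{2\pi i}\oint_\gamma(z-P_h)^{-1}\,\D z,$$
whose range $F_h$ has dimension $n_0$. Using the WKB quasimodes $(f_{\mathbf m})_{\mathbf m\in\mathcal U^{(0)}}$ of Sections \ref{sectionquasim} and \ref{sectionequations} together with the analogous ones $(\tilde f_{\mathbf m})$ associated to $P_h^*=-X_0^h+Q_h$, I would first check that $(\Pi_h f_{\mathbf m})$ is a basis of $F_h$ and that $((\Pi_h^*\tilde f_{\mathbf m}))$ is a basis of $\mathrm{Ran}(\Pi_h^*)$, the remainders $(1-\Pi_h)f_{\mathbf m}$ and $(1-\Pi_h^*)\tilde f_{\mathbf m}$ being $O(h^\infty)e^{-S(\mathbf m)/h}$ thanks to the accuracy of the WKB construction. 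Consequently, the small eigenvalues of $P_h$ coincide up to relative $O(h^\infty)$ corrections with the eigenvalues of $\mathcal B^{-1}\mathcal A$, where
$$\mathcal A(\mathbf m,\mathbf m')=\langle P_h f_{\mathbf m},\tilde f_{\mathbf m'}\rangle,\qquad \mathcal B(\mathbf m,\mathbf m')=\langle f_{\mathbf m},\tilde f_{\mathbf m'}\rangle.$$

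The key structural fact, which is typical in Eyring--Kramers-type analyses, is that the labelling of minima by decreasing values of $S(\mathbf m)$ (via the map $\mathbf j$ from Definition \ref{j et s}) makes $\mathcal B^{-1}\mathcal A$ almost upper triangular: the off-diagonal entries $(\mathbf m,\mathbf m')$ with $S(\mathbf m')\ge S(\mathbf m)$ are negligible compared to the diagonal entry at $\mathbf m$. Extracting the eigenvalues then amounts to computing each ratio $\mathcal A(\mathbf m,\mathbf m)/\mathcal B(\mathbf m,\mathbf m)$ by Laplace's method. The denominator is a Gaussian concentrated at the phase space point $(\mathbf m,0)$ and produces, up to a factor $h$, the normalization $\det(\mathrm{Hess}_{\mathbf m}V)^{1/2}/(2\pi)$. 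For the numerator, the construction of $f_{\mathbf m}$ ensures that $P_h f_{\mathbf m}$ is concentrated at the saddle points $\mathbf s\in\mathbf j(\mathbf m)$, each of which contributes by Laplace expansion a factor $|\det\mathrm{Hess}_{\mathbf s}V|^{-1/2}$ together with the common exponential $e^{-2S(\mathbf m)/h}$. The remaining scalar weight is prescribed by the leading symbol of the collision operator: writing $Q_h=b_h^*\mathrm{Op}_h(M^h)b_h$ as in Hypothesis \ref{hypom}\ref{facto}, the action of $b_h$ on the Gaussian quasimodes selects at each saddle the unstable direction $\nu_2^{\mathbf s}$ of Proposition \ref{phinu}, and pairing with the dual quasimode yields precisely $M_0(\mathbf s,0,0)\nu_2^{\mathbf s}\cdot\nu_2^{\mathbf s}$. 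Summation over $\mathbf s\in\mathbf j(\mathbf m)$ gives the leading term of $B_h(\mathbf m)$, while the higher order corrections come from the formal classical expansion of both $M^h$ and the WKB symbol.

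The main obstacle is the accurate construction and propagation of the quasimodes themselves, which is why it is relegated to Sections \ref{sectionquasim}--\ref{sectionequations}. Unlike the Witten Laplacian or the Kramers--Fokker--Planck operator treated in \cite{HeKlNi,HHS,BonyLPMichel}, the operator $P_h$ does not enjoy a usable supersymmetric structure, so one cannot rely on elementary quasimodes of differential form type. Moreover $Q_h$ is a pseudodifferential operator in $v$, which forces one to work with symbols in the $\eta$ variable. The WKB phase $\phi_{\mathbf m}$ must solve an eikonal equation combining the transport along the Hamiltonian flow of $X_0^h$ with an effective dissipation built from the symbol $M_0$, and the transport equations for the symbolic coefficients are only well-posed in the analyticity strip $\Sigma_\tau$ from Definition \ref{symbolo}. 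Hypothesis \ref{hypom} is precisely what guarantees that these transport equations admit solutions whose $\eta$-decay is strong enough so that the remainders $(P_h-\lambda)f_{\mathbf m}$ are $O(h^\infty)$ in the weighted norm $e^{\phi_{\mathbf m}/h}L^2$. Once those quasimodes are in place, the reduction above converts the asymptotics of $\lambda(\mathbf m,h)$ into the explicit Laplace computation sketched above.
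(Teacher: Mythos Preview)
Your outline is a workable alternative, but it diverges from the paper's route in two respects worth noting.

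First, the paper does \emph{not} introduce a second family of quasimodes for $P_h^*$. It works with a single real-valued family $(\tilde f_{\mathbf m,h})_{\mathbf m}$ and the $L^2$-orthonormal basis $(u_j)$ of $\mathrm{Ran}\,\Pi_0$ obtained from it by Gram--Schmidt. The key simplification (Lemma \ref{Pff}, equation \eqref{x0ff}) is that skew-adjointness of $X_0^h$ together with real-valuedness of $f_{\mathbf m,h}$ gives $\langle X_0^h f_{\mathbf m,h},f_{\mathbf m,h}\rangle=0$, so the diagonal entry reduces exactly to $\langle Q_h f_{\mathbf m,h},f_{\mathbf m,h}\rangle=\langle \mathrm{Op}_h(M^h)\,b_hf_{\mathbf m,h},\,b_hf_{\mathbf m,h}\rangle$, which is real and directly yields the prefactor $M_0(\mathbf s,0,0)\nu_2^{\mathbf s}\cdot\nu_2^{\mathbf s}$ by Laplace's method. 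With your biorthogonal pairing $\langle P_h f_{\mathbf m},\tilde f_{\mathbf m}\rangle$ this cancellation is lost: the transport term $\langle X_0^h f_{\mathbf m},\tilde f_{\mathbf m}\rangle$ does not vanish and must be computed separately near each saddle, which is doable but substantially heavier.

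Second, the finite-dimensional reduction in the paper is not an ``almost upper triangular'' argument. Lemma \ref{f1f2} gives the symmetric off-diagonal bound $O\big(h^\infty\sqrt{\tilde\lambda_{\mathbf m}\tilde\lambda_{\mathbf m'}}\big)$, and Proposition \ref{propgas} shows that the matrix $\mathcal M'$ of $P_h|_H$ in the basis $(u_j)$ is a \emph{classical graded symmetric matrix} in the sense of \cite{BonyLPMichel}, namely $h^{-1}\e^{2\hat S_1/h}\mathcal M'=\Omega(\varpi)\big(M_h^\#+O(h^\infty)\big)\Omega(\varpi)$ with $M_h^\#$ diagonal. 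The eigenvalue asymptotics then follow by invoking Theorem 4 of \cite{BonyLPMichel}, which handles precisely this multi-scale structure. Your triangularity claim would require controlling the interplay between exponentially separated diagonal entries and off-diagonal perturbations by hand, whereas the paper offloads this to the cited black box.

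In short: your approach is the natural non-self-adjoint template and would work, at the price of a second WKB construction and a transport-term computation. The paper exploits the specific structure (real quasimodes, skew-adjoint transport) to stay within a single family and a real almost-symmetric matrix, then appeals to the graded-matrix machinery of \cite{BonyLPMichel}.
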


When Hypothesis \ref{hypom} is replaced by Hypothesis \ref{hyporho}, we can give a slightly more precise statement. In that case, denoting $\boldsymbol \mu_{\mathbf s}$ the only negative eigenvalue of $\mathrm{Hess}_{\mathbf s}V$, the first term of $B_h(\mathbf m)$ is 
$$\frac12\sum_{\mathbf s \in \mathbf j(\mathbf m)} |\det (\mathrm{Hess}_{\mathbf s}V)|^{-1/2}\Big(-\varrho'(0)+\sqrt{\varrho'(0)^2-4\boldsymbol \mu_ {\mathbf s}}\Big).$$
Indeed, under Hypothesis \ref{hyporho}, it is shown in Appendix \ref{rho}, more precisely in \eqref{m0} that $M_0(\mathbf s,0,0)=\tilde \varrho(0)\,\mathrm{Id}=\varrho'(0)\,\mathrm{Id}$.
Thanks to Proposition \ref{phinu}, we then have
$$\mathrm{Hess}_{\mathbf s}V \nu_2=-\varrho'(0)^2(1+\nu_2^2)\nu_2^2\, \nu_2$$
and consequently
$$\nu_2^2=-\frac12 +\frac{\sqrt{\varrho'(0)^2-4\boldsymbol \mu_ {\mathbf s}}}{2\varrho'(0)}$$
so the statement follows.

Finally, Section \ref{sectionral} consists in using the sharp localization obtained in Theorem \ref{thmToto} in order to discuss the phenomena of return to equilibrium and metastability for the solutions of \eqref{Boltgene}.
More precisely, we are able to give a sharp rate of convergence of the semigroup $\e^{-tP_h/h}$ towards $\Pro_1$, the orthogonal projector on Ker $P_h$ : denoting $\lambda^*$ a non zero eigenvalue of $P_h$ whose real part is minimal, we establish that the rate of return to equilibrium is essentially given by $\mathrm{Re}\,\lambda^*/h$:
\begin{cor}\label{ral}
Under the assumptions of Theorem \ref{thmToto}, there exists $h_0>0$ such that for all $0<h\leq h_0$, $t\geq 0$ and $N\geq 1$, there exists $C_N>0$ such that 
$$\|\e^{-tP_h/h}-\Pro_1\|\leq C_N \e^{-t \, \mathrm{Re}\, \lambda^*(1-C_N h^N)/h}.$$
Moreover, if $\lambda^*$ does not share its expansion given by Theoerm \ref{thmToto} with another eigenvalue of $P_h$ (in particular it is a simple eigenvalue), then $\lambda^*$ is real and we even have
$$\|\e^{-tP_h/h}-\Pro_1\|\leq C \e^{-t \lambda^*/h}.$$
\end{cor}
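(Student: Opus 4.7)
The plan is to use the spectral projector
$$\Pi = \frac{1}{2\pi i}\oint_{|z|=ch^2/2}(z-P_h)^{-1}\,\mathrm{d}z,$$
well-defined thanks to the $O(h^{-2})$ resolvent estimate of Theorem \ref{thmRobbe}, of rank $n_0$, and commuting with $P_h$. In our setting, the orthogonal projector $\Pro_1$ actually coincides with the spectral projector of $P_h$ onto $\{0\}$ (since $Q_h\mathcal M_h=0$ and $X_0^h\mathcal M_h=0$ yield $P_h\mathcal M_h = P_h^*\mathcal M_h = 0$, so that $\ker P_h = \ker P_h^* = \R\mathcal M_h$), whence $\Pi\Pro_1 = \Pro_1\Pi = \Pro_1$. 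Splitting
$$\e^{-tP_h/h}-\Pro_1 = (I-\Pi)\e^{-tP_h/h} + (\Pi-\Pro_1)\e^{-tP_h/h},$$
I control the two pieces separately.

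On $\mathrm{Ran}(I-\Pi)$ the spectrum of $P_h$ lies in $\{\mathrm{Re}\,z>ch^2\}$. Combining the resolvent estimate of Theorem \ref{thmRobbe} along the circle $\{|z|=ch^2/2\}$ with the m-accretivity of $P_h$ to handle large imaginary parts (via, e.g., a Neumann expansion of $(z-P_h)^{-1}$ around the resolvent of the skew-adjoint $X_0^h$), a standard Dunford-type contour representation of the semigroup yields
$$\|(I-\Pi)\e^{-tP_h/h}\| \leq Ch^{-K}\e^{-cth}$$
for some $c,K>0$. Since $ch$ is polynomial in $h$ whereas $\mathrm{Re}\,\lambda^*/h$ is exponentially small in $1/h$, this piece decays dramatically faster than the target rate and can be absorbed harmlessly. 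For the remaining piece, $B:=P_h|_{\mathrm{Ran}\,\Pi}$ is an $n_0\times n_0$ matrix with spectrum $\{0\}\cup\{\lambda(\mathbf m,h):\mathbf m\in\mathcal U^{(0)}\setminus\{\underline{\mathbf m}\}\}$, and the Dunford-Riesz calculus gives
$$(\Pi-\Pro_1)\e^{-tP_h/h} = \frac{1}{2\pi i}\oint_\gamma \e^{-tz/h}(z-B)^{-1}\,\mathrm{d}z$$
for any contour $\gamma$ encircling the non-zero small eigenvalues. Choosing $\gamma$ at real part $\mathrm{Re}\,\lambda^*(1-C_Nh^N)$ and absorbing any polynomial $(t/h)^k$ arising from possible Jordan blocks---together with the potentially $h$-growing norms of the spectral projectors of $B$ when the $\lambda(\mathbf m,h)$ cluster---into the exponential slack via the elementary estimate $(1+s)^k\leq C_{k,\varepsilon}\,\e^{\varepsilon s}$ yields the first bound.

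Under the clean hypothesis, $\overline{\lambda^*}$ is also an eigenvalue of $P_h$ (which has real coefficients) with the conjugate expansion, so the non-sharing assumption forces $\lambda^* = \overline{\lambda^*}\in\R$. Moreover $\lambda^*$ is then separated from the other non-zero small eigenvalues at some polynomial-in-$h$ scale, so $\gamma$ may be split into a tiny loop around $\lambda^*$ (whose spectral projector contribution is bounded) plus a loop around the remaining non-zero eigenvalues, all of strictly larger real part; the former contributes $\e^{-t\lambda^*/h}$ times a bounded factor and the latter decays strictly faster. The main technical obstacle is precisely controlling the $h$-dependence of the spectral projector norms of $B$ when the non-zero small eigenvalues cluster exponentially closely---a situation controlled by Theorem \ref{thmToto} only at each finite order of the asymptotic expansion---which is what necessitates the slack factor $1-C_Nh^N$ in the general bound.
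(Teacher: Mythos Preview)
Your overall architecture matches the paper's: split via the spectral projector $\Pi_0$, use the resolvent estimate on $\mathrm{Ran}(1-\Pi_0)$ to get $\e^{-tP_h/h}(1-\Pi_0)=O(\e^{-cht})$ (the paper invokes Proposition~\ref{1-pi0l2} together with a Gearhart--Pr\"uss type result from \cite{HS-restosg}, which is what your ``Dunford-type contour'' sketch is aiming at), and then reduce to the finite-dimensional restriction $\mathcal M'=P_h|_{H\cap\mathcal M_h^\perp}$. Your argument for the reality of $\lambda^*$ under the clean hypothesis is also the paper's.

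The genuine gap is in your treatment of the matrix piece. You correctly name the obstacle---exponentially clustering eigenvalues can make the spectral projectors of $B$ (or equivalently $\|(z-B)^{-1}\|$ along your contour) exponentially large in $1/h$---but your proposed cure, ``absorb into the exponential slack $(1-C_Nh^N)$'', does not work. The slack in the exponent is $t\,\mathrm{Re}\,\lambda^*\,C_Nh^N/h$, which is itself exponentially small in $1/h$ since $\lambda^*$ is; it cannot compensate a time-independent prefactor of size $\e^{c/h}$, and certainly not uniformly for all $t\ge 0$ with an $h$-independent constant $C_N$. Mere knowledge of the eigenvalue expansions from Theorem~\ref{thmToto} is therefore not enough here. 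The paper's resolution is structural: by Proposition~\ref{propgas} and Remark~\ref{remgas}, $h^{-1}\e^{2\hat S_1/h}\mathcal M'=\Omega(\varpi)\big(M_h^{\#}+O(h^\infty)\big)\Omega(\varpi)$ is a \emph{classical graded (almost) symmetric matrix}, and Theorem~4 of \cite{BonyLPMichel} provides resolvent estimates for such matrices that are uniform even when eigenvalues cluster (the diagonal conjugation by $\Omega(\varpi)$ replaces the non-robust spectral decomposition). This is exactly the step that converts the quasimode information into the semigroup bound, following the scheme of \cite{LPMichel}, Theorem~1.11. Without invoking the graded-matrix structure, your contour argument for $\|\e^{-t\mathcal M'/h}\|$ remains incomplete.
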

\hip
Besides, in the spirit of \cite{BonyLPMichel}, we also show the metastable behavior of the solutions of \eqref{Boltgene}:
\begin{cor}\label{meta}
Suppose that the assumptions of Theorem \ref{thmToto} hold true.
Let us consider some local minima $\mathbf m_1=\underline{\mathbf m}$, $\mathbf m_2$, $\dots$, $\mathbf m_K$ such that 
$$S\big(\mathcal U^{(0)}\big)=\{+\infty=S(\mathbf m_1) > S(\mathbf m_2) > \dots > S(\mathbf m_K)\}$$
for the map $S$ from Definition \ref{j et s}.
For $2\leq k \leq K$, denote $\Pro_k$ the spectral projection associated to the eigenvalues that are $O\big(\e^{-2\frac{S(\mathbf m_k)}{h}}\big)$.
Then for any times $(t_k^\pm)_{1\leq k \leq K}$ satisfying 
$$t_K^-\geq h^{-1}|\ln (h^\infty)| \quad \text{and }\quad t_k^-\geq |\ln (h^\infty)|\e^{2\frac{S(\mathbf m_{k+1})}{h}}\quad \text{for }\quad k=1,\dots, K-1$$
as well as
$$t_1^+=+\infty \qquad \text{and }\quad t_k^+=O\Big(h^\infty\e^{2\frac{S(\mathbf m_{k})}{h}}\Big)\qquad \text{for }\quad k=2,\dots, K $$
one has 
$$\e^{-tP_h/h}=\Pro_k+O(h^\infty)\qquad \text{on }[t_k^-,t_k^+].$$ 
\end{cor}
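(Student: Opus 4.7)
The plan is to combine the Dunford functional calculus with the sharp eigenvalue asymptotics of Theorem~\ref{thmToto}, in the spirit of \cite{BonyLPMichel}. By Theorems~\ref{thmRobbe} and \ref{thmToto}, the small spectrum of $P_h$ splits into clusters indexed by $k\in\{1,\ldots,K\}$: the $k$-th cluster consists of the eigenvalues associated to minima $\mathbf m$ with $S(\mathbf m)=S(\mathbf m_k)$, and these eigenvalues are of size $\asymp h\e^{-2S(\mathbf m_k)/h}$. Hence for each $k\in\{2,\ldots,K\}$ one can choose a radius $r_k$ with $h\e^{-2S(\mathbf m_k)/h}\ll r_k\ll h\e^{-2S(\mathbf m_{k+1})/h}$ (with the convention $r_K\ll h^2$) so that the circle $\Gamma_k=\partial B(0,r_k)$ lies in the resolvent set of $P_h$ and separates the cluster defining $\Pro_k$ from the rest of $\mathrm{Spec}(P_h)$.

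The key step is then to write
\begin{equation*}
\e^{-tP_h/h}-\Pro_k=\Pro_k\bigl(\e^{-tP_h/h}-I\bigr)+(I-\Pro_k)\e^{-tP_h/h}
\end{equation*}
and estimate each piece by a contour integral. For the first piece,
\begin{equation*}
\Pro_k\bigl(\e^{-tP_h/h}-I\bigr)=\frac{1}{2\pi i}\int_{\Gamma_k}(\e^{-tz/h}-1)(z-P_h)^{-1}\,dz,
\end{equation*}
one uses $|\e^{-tz/h}-1|\leq Ct|z|/h\leq Ctr_k/h$ on $\Gamma_k$ together with the resolvent estimate below to get a bound of order $tr_k/h$, which is $O(h^\infty)$ whenever $t\leq t_k^+=O(h^\infty\e^{2S(\mathbf m_k)/h})$; for $k=1$ this piece is identically zero since $P_h\Pro_1=0$, which is precisely why one takes $t_1^+=+\infty$. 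For the second piece, one integrates $\e^{-tz/h}(z-P_h)^{-1}$ along a contour $\tilde\Gamma_k$ enclosing only the eigenvalues outside the cluster of $\Pro_k$, typically a small circle of radius just below $r_{k+1}$ (or just below $ch^2$ for $k=K$) joined to a vertical line $\mathrm{Re}\,z=ch^2/2$ closed at infinity. Along $\tilde\Gamma_k$ one has $\mathrm{Re}\,z\gtrsim r_{k+1}/C$ on the inner part and $\mathrm{Re}\,z\geq ch^2/2$ on the outer part, so $|\e^{-tz/h}|$ is exponentially small in $t$, and the lower bounds prescribed for $t_k^-$ are designed precisely to make the full integral $O(h^\infty)$.

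The main technical obstacle will be the resolvent estimate on these exponentially close-to-zero contours: Theorem~\ref{thmRobbe} yields $(z-P_h)^{-1}=O(h^{-2})$ only on $\{\mathrm{Re}\,z\leq ch^2\}\setminus B(0,\tilde ch^2)$, whereas $\Gamma_k$ and the inner part of $\tilde\Gamma_k$ (for $k<K$) lie strictly inside $B(0,h^2)$. To overcome this I would first factor out the spectral projector $\Pro_K$ on the entire small spectrum: on $\mathrm{Ran}\,\Pro_K$ the operator $P_h$ is, up to an $O(h^\infty)$ error, a finite $n_0\times n_0$ matrix whose spectrum is given explicitly by Theorem~\ref{thmToto}, so its resolvent along the separating contour is controlled by $C/\mathrm{dist}(z,\mathrm{Spec})\lesssim 1/r_k$; on $\mathrm{Ran}(I-\Pro_K)$ the $O(h^{-2})$ bound of Theorem~\ref{thmRobbe} applies. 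A further subtlety is the non-selfadjointness of $P_h$: the projectors $\Pro_k$ are oblique, and one needs uniform-in-$h$ control of $\|\Pro_k\|$, which again follows from their contour-integral definition together with the same resolvent bounds.
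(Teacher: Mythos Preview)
Your approach is essentially the one the paper adopts: the paper's proof consists of a single sentence deferring to Corollary~1.6 of \cite{BonyLPMichel}, and your outline reproduces that argument. Your sketch is in fact more detailed than what the paper writes.

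The one place where your reasoning is a bit loose is the resolvent bound on the finite-dimensional block. You write that on $\mathrm{Ran}\,\Pro_K=\mathrm{Ran}\,\Pi_0$ the resolvent of $P_h$ is controlled by $C/\mathrm{dist}(z,\mathrm{Spec})$; for a generic non-normal matrix this fails, and the eigenvalue asymptotics of Theorem~\ref{thmToto} alone do not give it. What makes it work here is the graded structure of Proposition~\ref{propgas} (cf.\ Remark~\ref{remgas}): the matrix $\mathcal M'$ is a classical graded almost symmetric matrix in the sense of \cite{BonyLPMichel}, and Theorem~4 of that reference then provides the resolvent estimates on the separating contours between clusters. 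This is exactly the input the paper invokes in the proof of Corollary~\ref{ral}, and it is the same ingredient needed here. With that correction your argument goes through.
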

\hip
In other words, we have shown the existence of timescales on which, during its convergence towards the global equilibrium, the solution of \eqref{Boltgene} will essentially visit the metastable spaces associated to the small eigenvalues of $P_h$.

The results presented in this paper should be reasonably easy to adapt to the case of collision operators satisfying Hypothesis \ref{hypom} with the space $S^0$ replaced by $S^\kappa$ for $\kappa \in [0,1/2[$ (we should get some expansions in powers of $h^{1-2\kappa}$ instead of just $h$).
Another perspective would then be to study the critical case $\kappa=1/2$ which should in particular cover the \emph{linear relaxation} collision operator corresponding to the linear BGK model
\begin{align}\label{qlr}
Q_h=h(1-\Pi_h)
\end{align}
 where $\Pi_h$ denotes the orthogonal projection on 
\begin{align}\label{Eh}
E_h=\mu_h\, L^2(\R^{d}_x)
\end{align}
and for which Robbe gave a first localization of the small spectrum of the associated operator $X_0^h+Q_h$ in \cite{Robbe}.

\section{Rough description of the small spectrum} \label{sectionrough}


Throughout the paper, we assume that Hypotheses \ref{hypom} and \ref{V} hold true.
This implies in particular that $Q_h$ is bounded uniformly in $h$ and self-adjoint in $L^2(\R^{2d})$.
Let us begin with a Lemma which consists in comparing our collision operator with the one introduced in \eqref{qlr} and studied in \cite{Robbe}.
This will in particular enable us to use some computations from \cite{Robbe} later on.

\begin{lem}\label{mino Q}
There exists $h_0>0$ such that for all $0<h<h_0$, 
$$Q_h\geq \frac hC (1-\Pi_h)$$
where $\Pi_h$ is the projection introduced in \eqref{qlr}.
In particular, $Q_h$ is non negative.
\end{lem}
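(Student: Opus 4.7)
The plan is to bound $Q_h$ below by exploiting the factorization $Q_h = b_h^*\,\mathrm{Op}_h(M^h)\,b_h$, the microscopic coercivity of $M^h$, and the spectral gap of size $h$ of the harmonic oscillator $H_0 = b_h^*b_h$ above its kernel $E_h$. The reduction is organized around three ingredients: (a) $b_h\Pi_h = 0$, so that $\langle Q_h f,f\rangle$ depends only on $(1-\Pi_h)f$; (b) an operator inequality of the form $\mathrm{Op}_h(M^h) \geq c_0(1+H_0)^{-1}$ coming from the symbolic lower bound on $M^h$; and (c) a functional-calculus identity converting $b_h^*(1+H_0)^{-1}b_h$ into a scalar function of $H_0$ that is bounded below by $h(1-\Pi_h)$.

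For (a), a direct computation shows $b_h\mu_h = (h\partial_v + v/2)\mu_h = 0$, so $b_h\Pi_h = 0$ when we view $\Pi_h$ as acting in the $v$ variable with $x$ as a parameter. Hence $\langle Q_hf,f\rangle = \langle \mathrm{Op}_h(M^h) b_h f, b_h f\rangle$. For (b), I would sandwich by $(1+H_0)^{1/2}$: the pseudodifferential calculus (in $(v,\eta)$, with $x$ as a parameter and $M^h$ independent of $\xi$) shows that $(1+H_0)^{1/2}\mathrm{Op}_h(M^h)(1+H_0)^{1/2}$ is a bounded matrix-valued semiclassical pseudodifferential operator whose principal symbol $(1+v^2/4+\eta^2)\,M_0(x,v,\eta)$ is bounded below by a positive multiple of $\mathrm{Id}$ uniformly on $\R^{3d}$, thanks to the coercivity $M_0 \geq \tfrac{1}{C}\langle(v,\eta)\rangle^{-2}\mathrm{Id}$ of Hypothesis \ref{hypom}\ref{minom}. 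The matrix semiclassical Gårding inequality then yields, for $h$ small enough,
$$(1+H_0)^{1/2}\,\mathrm{Op}_h(M^h)\,(1+H_0)^{1/2} \;\geq\; c_0/2,$$
whence $\mathrm{Op}_h(M^h) \geq (c_0/2)(1+H_0)^{-1}$.

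For (c), the commutation identity $[b_h^j, H_0] = hb_h^j$, deduced from $[b_h^j,(b_h^k)^*] = h\delta_{jk}$, translates into the intertwining $(1+H_0)^{-1} b_h^j = b_h^j(1+H_0-h)^{-1}$. Since each $b_h^{j*}b_h^j$ commutes with $H_0$, summing over $j$ gives
$$b_h^*(1+H_0)^{-1}b_h \;=\; H_0\,(1+H_0-h)^{-1}.$$
The eigenvalues of this operator on the $H_0$-eigenspaces $\{H_0 = hk\}$ are $hk/(1+h(k-1))$, which vanish for $k=0$ and satisfy $hk/(1+h(k-1))\geq h$ for $k\geq 1$ as soon as $h\leq 1$ (since then $(k-1)(1-h)\geq 0$). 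Combining this spectral bound with (a) and (b) gives $\langle Q_hf,f\rangle \geq (c_0/2)\,h\,\|(1-\Pi_h)f\|^2$, which is the claim.

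The main obstacle is (b). A naive application of Gårding directly to $M^h - \tfrac{1}{C}\langle(v,\eta)\rangle^{-2}\mathrm{Id} \geq 0$ would only give $\mathrm{Op}_h(M^h) \geq \tfrac{1}{C}\mathrm{Op}_h(\langle(v,\eta)\rangle^{-2}) - Kh$, whose error becomes $-Kh\,H_0$ after the $b_h^*\!\cdot b_h$ sandwich, an unbounded perturbation that would overwhelm the target $h(1-\Pi_h)$ on high-energy Hermite modes. The sandwich-by-$(1+H_0)^{1/2}$ trick above sidesteps this difficulty by converting the symbolic lower bound into a uniform bound on an operator in $\Psi^0(1)$, where the Gårding error is a harmless additive $O(h)$ constant rather than a factor of $H_0$.
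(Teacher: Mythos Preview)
Your proof is correct and follows essentially the same strategy as the paper: sandwich $\mathrm{Op}_h(M^h)$ by a square root of the harmonic oscillator to obtain a bounded elliptic operator, apply G\aa{}rding, and reduce via the $b_h$--$H_0$ intertwining to the mild relaxation operator whose spectral gap on $E_h^\perp$ is $h/C$. The only cosmetic differences are that the paper sandwiches by an explicit approximate square root $A$ of $1+H_1$ (with $H_1=(H_0+h)\otimes\mathrm{Id}$) and tracks the $O(h^2)$ remainders by hand, landing on $H_0(1+H_0)^{-1}$, whereas you invoke $(1+H_0)^{1/2}$ directly as a pseudodifferential operator and obtain the equivalent $H_0(1+H_0-h)^{-1}$.
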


\begin{proof}
Since the space $E_h$ defined in \eqref{Eh} is contained in $\mathrm{Ker}\,Q_h$, it is enough to prove that $\langle Q_hu,u \rangle \geq \frac hC \|u\|^2$ for $u \in E_h^\perp$.
Let $u\in E_h^\perp$ and recall the notations $H_0$ and $H_1$ from \eqref{h0} and \eqref{h1}.
Let us consider an approximate square root $A$ of $(1+H_1)$ given by
$$A=\mathrm{Op}_h\Big(\big(1+v^2/4+\eta^2+h(1-d/2)\big)^{1/2}\mathrm{Id}\Big)\in \Psi^0\big(\langle (v,\eta) \rangle\big).$$
By symbolic calculus, we easily have $A^2=1+H_1+h^2R_1$ with $R_1\in \Psi^0\big(\langle (v,\eta) \rangle^2\big)$.
Besides, the symbol of $A$ is clearly elliptic so $A$ is invertible and its inverse is also a pseudo-differential operator satisfying $A^{-2}=(1+H_1)^{-1}+h^2R_2$ with $R_2\in \Psi^0\big(\langle (v,\eta) \rangle^{-2}\big)$ (see for instance \cite{DimassiSjostrand}, chapter 8).
Thus, using the factorization from Hypothesis \ref{hypom} and the self-adjointness of $A$, we get
$$\langle Q_hu,u \rangle =\big\langle A\,\mathrm{Op}_h(M^h) A\, A^{-1}b_h u\, , \, A^{-1}b_h u \big\rangle.$$
Now according to Hypothesis \ref{hypom} and symbolic calculus again, the principal symbol of $A\,\mathrm{Op}_h(M^h) A$ is elliptic so we can use the G\aa{}rding inequality to write
\begin{align*}
\langle Q_hu,u \rangle &\geq \frac1C \big\langle A^{-2}b_h u\, ,  b_h  u \big\rangle\\
	&\geq \frac1C \big\langle b_h^* (1+H_1)^{-1} b_h u\, ,   u \big\rangle-\frac{h^2}{C} \big|\big\langle  b_h^*\, R_2 \,b_h u\, ,   u \big\rangle  \big|.
\end{align*}
Still using symbolic calculus, we get $b_h^*\, R_2 \,b_h=O(1)$ so applying \eqref{bhh1} we finally have 
$$\langle Q_hu,u \rangle \geq \frac1C \big\langle H_0(1+H_0)^{-1}  u\, ,   u \big\rangle-O(h^2)\| u\|^2$$
and the conclusion comes from the fact that the spectrum of $H_0(1+H_0)^{-1}|_{E_h^\perp}$ is contained in $[h/C , +\infty[$.
\end{proof}
We can already prove that $0$ is a simple eigenvalue of $(P_h,D)$ and that the other eigenvalues have positive real part.
It is easy to check that $\mathcal M_h$ defined in \eqref{muh} is in $\mathrm{Ker}\,P_h$.
Now let $\lambda \in \R$ and let us prove that for $u\in $ Ker $(P_h-i\lambda) $, one has $u\in \C\, \mathcal M_h$.
Since $X_0^h$ is skew-adjoint and 
$Q_h$ is self-adjoint and non-negative
, we have 
$$0=\mathrm{Re} \langle (P_h-i\lambda)u, u \rangle=\|Q_h^{1/2}u\|^2$$
so in particular $u\in $ Ker $Q_h=E_h$ according to Lemma \ref{mino Q}.
Therefore, $u=w\mu_h$ with $w \in  L^2(\R^d_x)$ and using that $\mu_h^{-1}X_0^hu=i \lambda w$ does not depend on $v$, we get in the sense of distributions $\partial_x (\e^{V/2h}w)=0$ which yields the desired result.

\subsection{Hypocoercivity}
Let us now use the dilatation operators
\begin{equation*}\label{ST}
S_h:\left\{\begin{aligned}
L^2(\R^{2d})&\to L^2(\R^{2d})\\ 
u&\mapsto h^{-d/2}u\Big(\frac{.}{\sqrt h}\,, \, \frac{.}{\sqrt h}\Big)
\end{aligned}
\right.
\qquad \qquad
T_h:\left\{\begin{aligned}
L^2(\R^d_x)&\to L^2(\R^d_x)\\ 
u&\mapsto h^{-d/4}u\Big(\frac{.}{\sqrt h}\Big)
\end{aligned}
\right.
\end{equation*}
that were introduced in \cite{Robbe} in which these were combined with a scaling of $\Pi_h$ to conjugate $P_h$ to a non-semiclassical operator with $h$-dependent potential.
In our case, it will enable us to use some computations and results already established in \cite{Robbe}.

\begin{lem}
Denoting 
$$X_0=v\cdot \partial_x -\partial_x V_h(x)\cdot\partial_v$$
where $V_h=h^{-1}V(\sqrt h \; \cdot )$,
$$\tilde Q_1=h^{-1}S_h^{-1}Q_hS_h$$
and  
$$\mathrm{Dom}\,(P)=\{u\in L^2(\R^{2d})\, ; \, X_0u \in L^2(\R^{2d})\},\qquad P=X_0+\tilde Q_1,$$
one has
$$(hP\,,\, \mathrm{Dom}(P))=(S_h^{-1}P_hS_h\,,\, S_h^{-1}D).$$
Moreover, 
$$(hP\,,\, \mathrm{Dom}(P))^*=(S_h^{-1}P_h^*S_h\,,\, S_h^{-1}D).$$
\end{lem}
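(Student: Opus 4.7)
The plan is to verify the conjugation identity $S_h^{-1} P_h S_h = hP$ by direct calculation, using the unitarity of $S_h$ to transfer the domain condition.

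First I would check that $S_h : L^2(\R^{2d}) \to L^2(\R^{2d})$ is unitary. The prefactor $h^{-d/2}$ is precisely tuned so that a change of variables in the norm integral produces a Jacobian $h^d$ that cancels the $h^{-d}$ coming from the prefactor squared. Hence $S_h^{-1}=S_h^*$, and in particular the domain of any conjugated operator is the preimage under $S_h$ of the original domain.

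Next I would compute $S_h^{-1} X_0^h S_h$ by the chain rule. Applying $h\partial_{x_i}$ to $(S_h g)(x,v)=h^{-d/2}g(x/\sqrt h,v/\sqrt h)$ produces an overall $\sqrt h$ (the $h$ in $h\partial_x$ times the $1/\sqrt h$ from the chain rule). Post-composing with $S_h^{-1}$, which substitutes $(x,v)\mapsto(\sqrt h\,y,\sqrt h\,w)$, turns the multiplier $v$ into $\sqrt h\,w$, so the net result is $h\,w\cdot\partial_y g(y,w)$, i.e. $S_h^{-1}(v\cdot h\partial_x)S_h=h(v\cdot\partial_x)$. For the other term, I would use the identity $\partial_{x_i}V(\sqrt h\,y)=\sqrt h\,\partial_{y_i}V_h(y)$ — which follows immediately from $V_h=h^{-1}V(\sqrt h\,\cdot)$ — to get $S_h^{-1}(\partial_xV\cdot h\partial_v)S_h=h(\partial_xV_h\cdot\partial_v)$. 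Hence $S_h^{-1}X_0^hS_h=hX_0$, and since $S_h^{-1}Q_hS_h=h\tilde Q_1$ by the very definition of $\tilde Q_1$, summing gives $S_h^{-1}P_hS_h=hP$.

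For the domain, unitarity of $S_h$ implies that $u\in S_h^{-1}D$ iff $S_hu\in L^2$ and $X_0^hS_hu\in L^2$, which by the conjugation just proved is equivalent to $u\in L^2$ and $X_0 u\in L^2$, i.e. $u\in \mathrm{Dom}(P)$. The adjoint statement is then immediate: since $S_h$ is unitary, $(S_h^{-1}P_hS_h)^*=S_h^{-1}P_h^*S_h$ with domain $S_h^{-1}\mathrm{Dom}(P_h^*)$, and the excerpt has already observed that $(P_h,D)^*=(-X_0^h+Q_h,D)$ has the same domain $D$, so this is $S_h^{-1}D$ as claimed. The argument is purely bookkeeping of $h$-powers; the only place where one must be careful is the rescaling of $V$, but the definition of $V_h$ has been chosen precisely to absorb the right powers of $\sqrt h$.
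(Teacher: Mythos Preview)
Your proof is correct and follows essentially the same approach as the paper's: both establish the conjugation identity $S_h^{-1}X_0^hS_h=hX_0$ and deduce the domain equality from it, with the adjoint statement following immediately from unitarity. The paper's proof is much terser (it simply asserts the conjugation identity without writing out the chain-rule computation), so your version is just a more detailed rendering of the same argument.
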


\begin{proof} 
We have for $u \in L^2(\R^{2d})$
$$hX_0u=S_h^{-1}X_0^hS_hu$$
so using that $S_h$ is bounded we get $\mathrm{Dom}\,(P)=S_h^{-1}D$.
Consequently,
$$(hP\,,\, \mathrm{Dom}(P))=(S_h^{-1}P_hS_h\,,\, S_h^{-1}D)$$
and the result for the adjoint follows immediately.
\end{proof}
We also recall the notations of the following differential operators from \cite{Herau} and \cite{Robbe}:
$$a=\partial_x +\frac{\partial_x V_h}{2}
\qquad ; \qquad b=\partial_v +\frac v2
\qquad \text{and } \qquad \Lambda^2=a^*a+b^*b+1.$$
The operator $(\Lambda ^2,\mathcal C^\infty_c(\R^{2d}))$ is essentially self-adjoint.
The Schwartz space $\mathcal S(\R^{2d})$ is included in the domain of its self-adjoint extension $(\Lambda^2,D(\Lambda^2))$ which is invertible.
We can then define the operator $L=\Lambda^{-2}a^*b$, which is bounded uniformly in $h$ (see \cite{Robbe}, Lemma 2.7), as well as the perturbation $h \varepsilon (L+L^*)=O(h)$ where $\varepsilon>0$ will be chosen small enough later.\\
Besides, notice that $a^*a=-\Delta_x+|\partial_xV_h|^2/4-\Delta V_h/2=:\Delta_{V_h/2}$ is the Witten Laplacian in $x$ associated to the potential $V_h/2$ and that  
\begin{align*}
\Delta_{V/2}^h&:=hT_ha^*aT_h^{-1}\\
	&=-h^2\Delta_x+|\partial_xV|^2/4-h\Delta V/2
\end{align*}
is the semi-classical Witten Laplacian associated to the potential $V/2$.
The small spectrum of this operator was first studied by Helffer and Sjöstrand in \cite{HelSjo} and we now know that we can construct an orthonormal family $(\varphi_j)_{1\leq j \leq n_0}\subset \mathcal C^\infty_c(\R^{d}_x)$ of quasimodes associated to this operator given by
$$\varphi_j =\chi_j \e^{-\frac{V-V(x_j)}{2h}}$$
where $x_j$ is one of the local minima of $V$ and $\chi_j$ is a cut-off function localizing around $x_j$.
Recall the notation $\mu_h$ from \eqref{muh} and let us now define the families of functions  
$$g_j^h=\varphi_j\mu_h\qquad \text{and } \qquad g_j=S_h^{-1}g_j^h$$
for $1\leq j \leq n_0$.
These are actually quasimodes for our operators $P_h$ and $P_h^*$ :

\begin{lem}\label{quasi-ortho}
The family $(g_j^h)_{1\leq j \leq n_0}$ is orthonormal and there exists $\alpha>0$ such that for all $1\leq j\leq n_0$,
$$P_hg_j^h=O_{L^2}(\e^{-\frac{\alpha}{h}}),\qquad \quad P_h^*g_j^h=O_{L^2}(\e^{-\frac{\alpha}{h}}).$$
Moreover, $P_hg_j^h$ and $P_h^*g_j^h$ are in $\mathcal S (\R^{2d}) \subseteq D$ and we have 
$$P_h^*P_hg_j^h=O_{L^2}(\e^{-\frac{\alpha}{h}}),\qquad \quad P_hP_h^*g_j^h=O_{L^2}(\e^{-\frac{\alpha}{h}}).$$
\end{lem}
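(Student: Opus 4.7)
The strategy is to exploit two elementary algebraic facts: $\mathcal M_h$ is annihilated by $P_h$, and the cut-off $\chi_j$ depends only on $x$, which lets it commute with $b_h$ (and hence with $Q_h$). After that, all estimates reduce to the exponential decay of $\mathcal M_h$ away from the minimum $x_j$.

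\medskip

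\textbf{Step 1 (Orthonormality).} Write $g_j^h(x,v)=\varphi_j(x)\mu_h(v)$. Since the factorization separates $x$ and $v$ and $\|\mu_h\|_{L^2(\R^d_v)}^2=(2\pi h)^{-d/2}\int \e^{-v^2/2h}\D v=1$, one has $\langle g_j^h,g_k^h\rangle_{L^2(\R^{2d})}=\langle\varphi_j,\varphi_k\rangle_{L^2(\R^d_x)}=\delta_{jk}$ by the assumed orthonormality of $(\varphi_j)$.

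\medskip

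\textbf{Step 2 (Reduction to a boundary term).} Observe that $g_j^h=c_j\,\chi_j\mathcal M_h$ with $c_j=\e^{V(x_j)/2h}$. A direct computation gives $b_h\mathcal M_h=(h\partial_v+v/2)\mathcal M_h=0$, and since $\chi_j$ is independent of $v$, $b_h(\chi_j\mathcal M_h)=\chi_j b_h\mathcal M_h=0$. Using Hypothesis \ref{hypom}\ref{facto}, this yields
\[Q_h g_j^h=b_h^*\mathrm{Op}_h(M^h)b_h(c_j\chi_j\mathcal M_h)=0.\]
Moreover, $X_0^h\mathcal M_h=0$ because $\mathcal M_h$ is a function of the Hamiltonian $V(x)+v^2/2$, which is preserved by the Hamiltonian flow $X_0^h$. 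Since $\chi_j(x)$ commutes with $\partial_x V\cdot h\partial_v$, we obtain
\[P_h g_j^h=c_j[X_0^h,\chi_j]\mathcal M_h=c_j\,hv\cdot(\partial_x\chi_j)\mathcal M_h,\]
and identically $P_h^*g_j^h=-c_j\,hv\cdot(\partial_x\chi_j)\mathcal M_h$.

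\medskip

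\textbf{Step 3 (Exponential smallness).} On the support of $\partial_x\chi_j$, which lies outside a neighborhood of $x_j$, there exists $\alpha>0$ such that $V(x)-V(x_j)\geq 2\alpha$. Therefore
\[|c_j\mathcal M_h|=(2\pi h)^{-d/4}\e^{-(V(x)-V(x_j))/2h}\e^{-v^2/4h}\leq C\,\e^{-\alpha/h}\e^{-v^2/4h}\]
on this support, and the $L^2$ norm of $hv\cdot(\partial_x\chi_j)\mathcal M_h c_j$ is $O(\e^{-\alpha/h})$ after integrating the Gaussian in $v$ and the compactly supported factor in $x$. This handles the first two estimates. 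Smoothness: $g_j^h\in\mathcal S(\R^{2d})$ as a product of a $C^\infty_c$ function of $x$ and a Gaussian in $v$; $X_0^h$ preserves $\mathcal S$, and $Q_h$ preserves $\mathcal S$ because its symbol lies in $S^0_\tau(\langle(v,\eta)\rangle^0)$ (after absorbing the $b_h,b_h^*$ factors). Hence $P_h g_j^h, P_h^*g_j^h\in\mathcal S\subseteq D$.

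\medskip

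\textbf{Step 4 (Iterated application).} The function $w_j:=P_h g_j^h=c_j hv\cdot(\partial_x\chi_j)\mathcal M_h$ is Schwartz, supported in $x$ on a fixed compact set away from $x_j$, and carries the pointwise bound $|w_j|\leq C h\langle v\rangle\e^{-\alpha/h}\e^{-v^2/4h}$ together with similar bounds on all of its derivatives (with a possibly smaller $\alpha$). Since $Q_h$ is bounded on $L^2$ uniformly in $h$ and $X_0^h$ acts as a first-order differential operator with $\partial_xV$ bounded on the compact $x$-support of $w_j$, applying $P_h^*$ preserves the $L^2$ bound $O(\e^{-\alpha/h})$ (with $\alpha$ replaced by any strictly smaller positive number, which we rename $\alpha$). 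The same argument with $-X_0^h$ gives $P_hP_h^*g_j^h=O_{L^2}(\e^{-\alpha/h})$.

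\medskip

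The main obstacle would have been controlling $Q_h(\chi_j\mathcal M_h)$ given that $Q_h$ is a nonlocal pseudodifferential operator; the factorized structure $Q_h=b_h^*\mathrm{Op}_h(M^h)b_h$ from Hypothesis \ref{hypom}\ref{facto}, combined with $b_h\mathcal M_h=0$ and the $v$-independence of $\chi_j$, collapses this issue to an exact identity, which is why the whole lemma reduces to the boundary computation in Step 2.
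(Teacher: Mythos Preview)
Your proof is correct and follows essentially the same route as the paper, which defers to Lemma~2.4 of \cite{Robbe} after noting that $E_h=\mathrm{Ker}\,Q_h$. Your argument is in fact slightly more self-contained: you obtain $Q_h g_j^h=0$ directly from the factorization $Q_h=b_h^*\mathrm{Op}_h(M^h)b_h$ and $b_h(\chi_j\mathcal M_h)=0$, bypassing the need for Lemma~\ref{mino Q} at this stage, and then reduce everything to the boundary term $hv\cdot(\partial_x\chi_j)\mathcal M_h$ exactly as in the cited reference.
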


\begin{proof} The proof is the same as the one of Lemma 2.4 from \cite{Robbe} since with the notation \eqref{Eh} and Lemma \ref{mino Q} we also have $E_h=$ Ker $Q_h$.\end{proof}

One of the key results of this section is that the real part of the perturbation of our operator is bounded from below on a subspace of finite codimension given by the orthogonal of the quasimodes:
\begin{prop}\label{hypo}
Denote $N_{h,\varepsilon}^{\pm}$ the bounded self-adjoint operator $\mathrm{Id}\pm \varepsilon h (L+L^*)$. 
There exists $\varepsilon >0$ and $h_0>0$ such that for all $h \in ]0,h_0]$ and $u \in \mathcal S(\R^{2d})\cap (g_j)_{1\leq j\leq n_0}^{\perp}$, one has
$$\mathrm{Re}\langle N_{h,\varepsilon}^+Pu,u\rangle \geq \frac hC \|u\|^2$$
as well as
$$\mathrm{Re}\langle N_{h,\varepsilon}^-P^*u,u\rangle \geq \frac hC \|u\|^2.$$
\end{prop}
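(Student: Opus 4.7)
The plan follows the hypocoercive scheme of Hérau and of \cite{Robbe}, adapted to our more general collision operator; I focus on the $N_{h,\varepsilon}^+$ estimate, the proof for $N_{h,\varepsilon}^-$ applied to $P^*=-X_0+\tilde Q_1$ being identical since the sign flip on the transport part compensates the sign flip in front of the perturbation, so the key commutator term keeps its sign. Write
\begin{equation*}
\mathrm{Re}\langle N_{h,\varepsilon}^+Pu,u\rangle=\mathrm{Re}\langle Pu,u\rangle+\varepsilon h\,\mathrm{Re}\langle(L+L^*)Pu,u\rangle.
\end{equation*}
Skew-adjointness of $X_0$ collapses the first term to $\langle\tilde Q_1u,u\rangle$, and transporting Lemma \ref{mino Q} through the unitary $S_h$ gives the microscopic lower bound $\langle\tilde Q_1u,u\rangle\geq\tfrac1C\|(1-\tilde\Pi)u\|^2$ with $\tilde\Pi=S_h^{-1}\Pi_hS_h$.

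For the perturbative term split $(L+L^*)P=(L+L^*)X_0+(L+L^*)\tilde Q_1$. Skew-adjointness of $X_0$ turns the first piece into $\mathrm{Re}\langle[L,X_0]u,u\rangle$, so the algebraic heart of the proof is the expansion of $[L,X_0]=[\Lambda^{-2}a^*b,X_0]$. Using the direct identity $[b,X_0]=a$ (which holds because $V_h=h^{-1}V(\sqrt h\,\cdot)$ has uniformly bounded second derivatives by Hypothesis \ref{V}) together with the fact that $[\Lambda^{-2},X_0]$ and $[a^*,X_0]$ are bounded on $L^2$, one obtains
\begin{equation*}
[L,X_0]=\Lambda^{-2}a^*a+R,
\end{equation*}
where each piece of $R$ is either $O(1)$ on $L^2$ or controlled by $\|bu\|$. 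The mixed term $(L+L^*)\tilde Q_1$ is handled by the uniform bound $L+L^*=O(1)$, Cauchy--Schwarz and Young, producing a contribution of the form $C_\eta\langle\tilde Q_1u,u\rangle+\eta\|u\|^2$. The $\|bu\|$-type remainders are absorbed into $\langle\tilde Q_1u,u\rangle$ using that the ellipticity of $M_0$ from Hypothesis \ref{hypom} and a Gårding argument yield $\langle\tilde Q_1u,u\rangle\gtrsim\|(1+H_0)^{-1/2}bu\|^2$, and the remaining $\eta\|u\|^2$ terms are taken care of for $\eta$ small.

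It remains the macroscopic coercivity $\mathrm{Re}\langle\Lambda^{-2}a^*au,u\rangle\geq(1/C)\|\tilde\Pi u\|^2$ for $u$ orthogonal to the $g_j$. Writing $\tilde\Pi u=\tilde w(x)\tilde\mu(v)$ and using $b\tilde\mu=0$, one has $a^*au=(\Delta_{V_h/2}\tilde w)\tilde\mu$, and on this subspace $\Lambda^{-2}a^*a$ is exactly the functional calculus operator $\Delta_{V_h/2}(\Delta_{V_h/2}+1)^{-1}$ applied to $\tilde w$. By construction $g_j=S_h^{-1}(\varphi_j\mu_h)$ with $\varphi_j$ the standard Helffer--Sjöstrand Gaussian quasimodes attached to the $n_0$ exponentially small eigenvalues of the semiclassical Witten Laplacian $\Delta_{V/2}^h$; orthogonality of $u$ to $g_j$ amounts to orthogonality of $\tilde w$ to $T_h^{-1}\varphi_j$. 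Since $\Delta_{V/2}^h$ enjoys a spectral gap of order $h$ above these $n_0$ eigenvalues (\cite{HelSjo}) and $\Delta_{V_h/2}=h^{-1}T_h^{-1}\Delta_{V/2}^hT_h$, the rescaled operator $\Delta_{V_h/2}$ has a spectral gap of order $1$ on the relevant complement, yielding the required bound. Picking $\varepsilon$ small enough to close all estimates, one concludes $\mathrm{Re}\langle N_{h,\varepsilon}^+Pu,u\rangle\geq(1/C)\|(1-\tilde\Pi)u\|^2+(\varepsilon h/C)\|\tilde\Pi u\|^2\geq(h/C)\|u\|^2$.

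The hardest step is the commutator computation and the accompanying bookkeeping: one has to extract $\Lambda^{-2}a^*a$ as a clean positive term while making sure every lower-order remainder is either bounded (and thus harmless at the scale $\varepsilon h$) or strictly dominated by $\langle\tilde Q_1u,u\rangle$. A secondary point is to verify that the factor $\Lambda^{-2}$ preserves the macroscopic spectral gap on $\mathrm{Ran}\,\tilde\Pi$, which follows from monotonicity of $\lambda\mapsto\lambda/(\lambda+1)$ applied to the spectrum of $\Delta_{V_h/2}$.
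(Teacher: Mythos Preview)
Your approach is exactly the paper's hypocoercive scheme (which it in turn borrows from \cite{Robbe}): split into $\langle\tilde Q_1 u,u\rangle$, the cross term $\varepsilon h\langle(L+L^*)\tilde Q_1 u,u\rangle$, and the commutator term $\varepsilon h\,\mathrm{Re}\langle[L,X_0]u,u\rangle$, then use $[L,X_0]=\Lambda^{-2}a^*a+\mathcal A$ with $\mathcal A$ bounded and the Witten spectral gap on the orthogonal of the quasimodes. Two points, however, do not close as you wrote them.

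\textbf{The mixed term.} The bound ``$C_\eta\langle\tilde Q_1u,u\rangle+\eta\|u\|^2$'' is oriented the wrong way and, more importantly, hides the crucial $h$-scaling. Since $\|\tilde Q_1\|\leq C/h$, one only has $\|\tilde Q_1 u\|\leq (C/h)^{1/2}\langle\tilde Q_1 u,u\rangle^{1/2}$, so
\[
\varepsilon h\,|\langle\tilde Q_1 u,(L+L^*)u\rangle|\;\leq\; C\varepsilon h^{1/2}\langle\tilde Q_1 u,u\rangle^{1/2}\|u\|\;\leq\;\tfrac12\langle\tilde Q_1 u,u\rangle+C\varepsilon^2 h\,\|u\|^2.
\]
The point is that the error term comes out of size $\varepsilon^2 h\|u\|^2$, and it is precisely this factor $h$ that allows it to be absorbed into the macroscopic gain $\varepsilon h\,\delta\|\tilde\Pi u\|^2$ for $\varepsilon$ small. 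With your form the closure would require either $C_\eta\leq 1/2$ (impossible if $\eta$ is small) or $\eta\lesssim\varepsilon h$ (which then forces $C_\eta\gtrsim h^{-1}$), and the argument collapses.

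\textbf{The macroscopic step.} You write $a^*a u=(\Delta_{V_h/2}\tilde w)\tilde\mu$, but this is $a^*a(\tilde\Pi u)$, not $a^*a u$. The inequality $\langle\Lambda^{-2}a^*a u,u\rangle\geq (1/C)\|\tilde\Pi u\|^2$ is false for general $u$: one has to split $u=\tilde\Pi u+(1-\tilde\Pi)u$ and estimate the cross terms, which produces an extra loss of the form $-C\|(1-\tilde\Pi)u\|^2$ (this is the term $-\varepsilon\|(1-\Pi_1)u\|^2$ in the estimate the paper quotes from \cite{Robbe}). That loss is then absorbed by the microscopic part $\langle\tilde Q_1 u,u\rangle\geq (1/C)\|(1-\tilde\Pi)u\|^2$, which is of order $1$ while the loss carries a prefactor $\varepsilon h$. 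Once these two repairs are made your argument coincides with the paper's.
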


\begin{proof}
One has for $u \in \mathcal S(\R^{2d})$, using the fact that $X_0$ is skew-adjoint:
\begin{align*}
\mathrm{Re}\langle N_{h,\varepsilon}^+ Pu,u\rangle &=\mathrm{Re}\langle Pu,N_{h,\varepsilon}^+ u\rangle \\
					&=\mathrm{Re}\langle \tilde Q_1u,N_{h,\varepsilon}^+ u\rangle +\mathrm{Re}\langle X_0 u,N_{h,\varepsilon}^+ u\rangle \\
					&=\|\tilde Q_1^{1/2}u\|^2+h \varepsilon \mathrm{Re}\langle \tilde Q_1u,(L+L^*) u\rangle +h \varepsilon \mathrm{Re}\langle X_0 u,(L+L^*) u\rangle\\
					&=\|\tilde Q_1^{1/2}u\|^2+h \varepsilon \mathrm{Re}\langle \tilde Q_1u,(L+L^*) u\rangle +h \varepsilon \mathrm{Re}\langle [L,X_0]u,u \rangle \\
					&=I+ h II+ h III
\end{align*}
Note that if we replace $P$ by $P^*$ and $N_{h,\varepsilon}^+$ by $N_{h,\varepsilon}^-$, we get $I-hII+hIII$.
Besides, it is also proven in \cite{Robbe} that 
\begin{align*}
[L,X_0]&=\mathcal A+\Lambda^{-2}a^*a
\end{align*}
where $\mathcal A$ is also bounded uniformly in $h$.
%
Since $\|Q_h\|\leq C$ and $Q_h\geq \frac hC(1-\Pi_h)$ according to Lemma \ref{mino Q}, we get $\|\tilde Q_1\|\leq \frac Ch$ and $\tilde Q_1\geq \frac1C(1-\Pi_1)$.
Hence
\begin{align}\label{I+II}
I\pm h II&\geq I -h|II| \nonumber\\
	&\geq \|\tilde Q_1^{1/2}u\|^2-h \varepsilon \|\tilde Q_1u\| \| (L+L^*)u\| \nonumber\\
	&\geq \|\tilde Q_1^{1/2}u\|^2-\sqrt C h^{\frac12} \varepsilon  \|\tilde Q_1^{1/2}u\| \| (L+L^*)u\| \nonumber\\	
	&\geq \frac12 \|\tilde Q_1^{1/2}u\|^2 - 2Ch\varepsilon^2\|L\|^2\|u\|^2 \nonumber \\
	&\geq \frac{1}{2C} \|(1-\Pi_1)u\|^2 - 2Ch\varepsilon^2\|L\|^2\|u\|^2
\end{align}
We can combine this with the following estimate from \cite{Robbe} (proof of Proposition 2.5): there exists $\delta>0$ such that for $u \in (g_j)_{1\leq j\leq n_0}^{\perp}$,
$$III\geq-\frac{1}{4}\|(\mathrm{Id}-\Pi_1)u\|^2-\varepsilon^2\| \mathcal A\|^2\|u\|^2 +\frac{\varepsilon \delta}{4} \|\Pi_1u\|^2-\varepsilon \|(\mathrm{Id}-\Pi_1)u\|^2.$$
This yields for $\varepsilon<\frac{\delta}{4(\|\mathcal A\|^2+C\|L\|^2)}$ that
\begin{align}\label{III}
I\pm h II+h III&\geq \frac{1}{C}\|(\mathrm{Id}-\Pi_1)u\|^2+h \frac{\varepsilon \delta}{4} \|\Pi_1u\|^2-h\varepsilon^2\Big(\|\mathcal A\|^2+C\|L\|^2\Big)\|u\|^2\nonumber\\
	&\geq \frac hC \|u\|^2.
\end{align}
so the proof is complete.
\end{proof}
\hip
This result extends to $u\in  (g_j)_{1\leq j\leq n_0}^{\perp}\cap \mathrm{Dom}\,(P)$ since $\mathcal S(\R^{2d})$ is a core for both $(P,\mathrm{Dom}\,(P))$ and $(P^*,\mathrm{Dom}\,(P^*))$.
It only differs from Proposition 2.5 in \cite{Robbe} by a factor $h$ in the estimate.
This comes from the fact that in our case, $\tilde Q_1=O(h^{-1})$ and not $O(1)$ (because $Q_h=O(1)$ and not $O(h)$) so we have to use a perturbation of order $h$ (the operator $N_{h,\varepsilon}^\pm$) to obtain the gain in $\|(1-\Pi_1)u\|^2$ in \eqref{I+II}.
As a consequence, the gain in $\|\Pi_1u\|^2$ from \eqref{III} is of order $h$ and not of order 1.


\begin{cor}\label{2.10}
There exists $c>0$ and $h_0>0$ such that for all $h \in ]0,h_0]$, $u \in D\cap (g_j^h)_{1\leq j\leq n_0}^{\perp}$ and $z \in \C$ uith $\mathrm{Re}\, z\leq ch^2 $
$$\|(P_h-z)u\|\geq ch^2\|u\|\qquad \text{and }\qquad \|(P_h^*-z)u\|\geq ch^2\|u\|.$$ 
\end{cor}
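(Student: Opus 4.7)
The plan is to derive the resolvent-type lower bound for $P_h$ by pulling back to the scaled operator $hP$ of Proposition \ref{hypo}. For $u\in D\cap(g_j^h)_{1\le j\le n_0}^\perp$, set $\tilde u=S_h^{-1}u$. Since $S_h$ is unitary, $\tilde u\in\mathrm{Dom}(P)$ and $\langle\tilde u,g_j\rangle=\langle u,g_j^h\rangle=0$, so $\tilde u\in\mathrm{Dom}(P)\cap(g_j)_{1\le j\le n_0}^\perp$. From the identity $hP=S_h^{-1}P_hS_h$ and the isometric character of $S_h$, I would first reduce the problem to proving $\|(hP-z)\tilde u\|\ge ch^2\|\tilde u\|$ whenever $\mathrm{Re}\,z\le ch^2$, and then handle $P_h^*$ in exactly the same way via the identity $(hP)^*=S_h^{-1}P_h^*S_h$.

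Next, I would test against the bounded self-adjoint multiplier $N_{h,\varepsilon}^+=\mathrm{Id}+h\varepsilon(L+L^*)$ introduced in Proposition \ref{hypo}. The Cauchy--Schwarz inequality gives
\[
\|(hP-z)\tilde u\|\,\|N_{h,\varepsilon}^+\tilde u\|\ \ge\ \mathrm{Re}\langle N_{h,\varepsilon}^+(hP-z)\tilde u,\tilde u\rangle,
\]
and the right-hand side splits as
\[
h\,\mathrm{Re}\langle N_{h,\varepsilon}^+P\tilde u,\tilde u\rangle\ -\ \mathrm{Re}(z)\,\langle N_{h,\varepsilon}^+\tilde u,\tilde u\rangle.
\]
Proposition \ref{hypo} bounds the first term below by $h^2\|\tilde u\|^2/C$. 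Since $L$ is uniformly bounded, $N_{h,\varepsilon}^+=\mathrm{Id}+O(h)$, so for $h$ small one has $\|N_{h,\varepsilon}^+\tilde u\|\le 2\|\tilde u\|$ and $\langle N_{h,\varepsilon}^+\tilde u,\tilde u\rangle\in[\tfrac12,2]\|\tilde u\|^2$.

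To close the estimate, I would note that the assumption $\mathrm{Re}\,z\le ch^2$ implies $-\mathrm{Re}(z)\langle N_{h,\varepsilon}^+\tilde u,\tilde u\rangle\ge-2ch^2\|\tilde u\|^2$ (the term is non-negative when $\mathrm{Re}\,z\le 0$). Choosing $c>0$ small enough so that $\tfrac1C-2c\ge\tfrac1{2C}$, the above combine into
\[
2\|\tilde u\|\cdot\|(hP-z)\tilde u\|\ \ge\ \frac{h^2}{2C}\|\tilde u\|^2,
\]
hence $\|(P_h-z)u\|=\|(hP-z)\tilde u\|\ge\frac{h^2}{4C}\|\tilde u\|=\frac{h^2}{4C}\|u\|$ after possibly shrinking $c$. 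The same argument, with $N_{h,\varepsilon}^+$ replaced by $N_{h,\varepsilon}^-$ and Proposition \ref{hypo} applied to $P^*$, yields the bound for $P_h^*$.

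There is essentially no hard step here: the corollary is a routine packaging of the coercivity estimate of Proposition \ref{hypo} as a resolvent bound, so the only point requiring care is the bookkeeping for the perturbation $\mathrm{Re}(z)\langle N_{h,\varepsilon}^+\tilde u,\tilde u\rangle$, which is why the threshold on $\mathrm{Re}\,z$ must be taken of order $h^2$ and strictly smaller than the coercivity constant $1/C$.
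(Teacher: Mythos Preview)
Your proposal is correct and follows essentially the same argument as the paper: conjugate to the scaled operator $hP$ via $S_h$, test against the self-adjoint perturbation $N_{h,\varepsilon}^\pm$, invoke the coercivity of Proposition~\ref{hypo}, and absorb the $\mathrm{Re}\,z$ term by choosing $c$ small relative to $1/C$. The only cosmetic difference is that you make the case split on the sign of $\mathrm{Re}\,z$ explicit, whereas the paper handles it in one line.
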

\hip

\begin{proof}
Recall that $N_{h,\varepsilon}^+=1+O(h)$.
Hence, for $u \in D\cap (g_j^h)_{1\leq j\leq n_0}^{\perp}$, we have by putting $u=S_hw$ and using that $S_h$ is unitary
\begin{align*}
\|(P_h-z)u\|\|u\| &\geq \frac12 \|(P_h-z)u\|\|N_{h,\varepsilon}^+w\| \\
			&\geq \frac12 \mathrm{Re }\langle (P_h-z)u, S_h N_{h,\varepsilon}^+ w \rangle \\
			&= \frac 12 \mathrm{Re }\langle N_{h,\varepsilon}^+ (hP-z) w,w \rangle \\
			 &\geq \frac{h^2}{C} \|u\|^2 - \mathrm{Re }\, z \|N_{h,\varepsilon}^+\|\|u\|^2 \\
			&\geq \frac{h^2}{2C} \|u\|^2 
\end{align*} 
if $\mathrm{Re}\, z\leq h^2/2C $.
The same proof holds when replacing $P$ by $P^*$ and $N_{h,\varepsilon}^+$ by $N_{h,\varepsilon}^-$.
\end{proof}
\subsection{Resolvent estimates and first localization of the small eigenvalues}
\sloppy Using Lemma $\ref{quasi-ortho}$, it is clear that for $u\in \mathrm{Span}\big((g_j^h)_{1\leq j\leq n_0}\big)$ and $A\in \{P_h, P_h^*, P_h^*P_h, P_hP_h^*\}$ we have 
$$\|Au\|^2=O(\e^{-\frac{2\alpha}{h}})\|u\|^2.$$
Now if we denote $\Pro$ the orthogonal projection on $\mathrm{Span}\big((g_j^h)_{1\leq j\leq n_0}\big)$, we get by using Corollary $\ref{2.10}$ that for $z \in \C$ such that Re $z\leq ch^2$ and $u \in D$
\begin{align*}
\|(P_h-z)u\|^2&=\|(P_h-z)(\mathrm{Id}-\Pro)u+(P_h-z)\Pro u\|^2\\
		&=\|(P_h-z)(\mathrm{Id}-\Pro)u\|^2+\|(P_h-z)\Pro u\|^2+2 \mathrm{Re}\langle (P_h-z)(\mathrm{Id}-\Pro)u,(P_h-z)\Pro u\rangle \\
		&\geq c^2h^4 \|(\mathrm{Id}-\Pro)u\|^2 +|z|^2\|\Pro u\|^2 -O(\e^{-\frac{\alpha}{h}})\|u\|^2+2 \mathrm{Re}\langle (P_h-z)(\mathrm{Id}-\Pro)u,(P_h-z)\Pro u\rangle.
\end{align*}
The last term equals 
\begin{align*}
2 \mathrm{Re}\Big[ \langle (\mathrm{Id}-\Pro)u,P_h^*P_h\Pro u\rangle-z\langle (\mathrm{Id}-\Pro)u,P_h\Pro u\rangle-\bar z\langle (\mathrm{Id}-\Pro)u,P_h^*\Pro u\rangle\Big]=(1+|z|)O(\e^{-\frac{\alpha}{h}})\|u\|^2.
\end{align*}
Therefore choosing $\tilde c \leq c$, there exists $h_0>0$ such that for $h\leq h_0$ and $z$ such that $\tilde c h^2\leq |z| \leq c h^2$
\begin{align*}
\|(P_h-z)u\|^2&\geq \Big(|z|^2+O(\e^{-\frac{\alpha}{h}})\Big)\|u\|^2\geq \frac{\tilde c^2h^4}{2}\|u\|^2.
\end{align*}
Once again, the same estimate holds with $P_h^*$ instead of $P_h$ and since the annulus we are working on is invariant by complex conjugation we also have
$$\|(P_h-z)^*u\|\geq\frac{\tilde c h^2}{2}\|u\|\,.$$
Therefore, we get the following resolvent estimate on the annulus centered in 0 and of radiuses $\tilde ch^2$ and $ch^2$:
\begin{align}\label{annulus}
\|(P_h-z)^{-1}\|=O(h^{-2}) \quad \text{for } \tilde c h^2\leq |z| \leq c h^2.
\end{align}
We can now consider the spectral projection 
\begin{align}\label{Pi0}
\Pi_0=\frac{1}{2i\pi}\int_{|z|=c h^2}(z-P_h)^{-1}\D z
\end{align}
and its range that we denote $H$.
This operator will yield some information on $\mathrm{Spec}( P_h)\cap B(0, c h^2)$ and therefore enable us to prove the main statement from Theorem \ref{thmRobbe}.

The main point is that $H$ is of dimension $n_0$.
It can be obtained by a direct adaptation of the proof of Proposition 3.1 from \cite{Robbe}.
Hence $\mathrm{Spec}( P_h)\cap B(0, c h^2)$ which is the same as $\mathrm{Spec}( P_h|_H)$ consists of $n_0$ eigenvalues (counted with algebraic multiplicity).
Here again, our result slightly differs from the one in \cite{Robbe} as we do not rule out the possibilities that $P_h|_H$ contains some Jordan blocks and that some of its eigenvalues are not real.
It only remains to prove that these are exponentially small with respect to $1/h$.
We begin by noticing that thanks to Lemma \ref{quasi-ortho}, we have $(z-P_h)g_j^h=zg_j^h+O(\e^{-\frac{\alpha}{h}})$ and $(z-P_h^*)g_j^h=zg_j^h+O(\e^{-\frac{\alpha}{h}})$ from which we easily deduce
\begin{align}\label{pi0gjh}
\Pi_0 g_j^h=g_j^h+O(\e^{-\frac{\alpha}{h}}) \qquad \text{and} \qquad \Pi_0^* g_j^h=g_j^h+O(\e^{-\frac{\alpha}{h}}).
\end{align} 
In particular, $(\Pi_0g_j^h)_{1\leq j \leq n_0}$ is almost orthonormal so for $u=\sum u_j \Pi_0g_j^h \in H$, we have
$$\|u\|^2=\big(1+O(\e^{-\alpha/h})\big)\sum_{j=1}^{n_0}|u_j|^2.$$
Therefore it is enough to prove that $P_h$ is exponnentially small on $(\Pi_0g_j^h)_{1\leq j \leq n_0}$.
But thanks to the resolvent estimate \eqref{annulus}, it is easy to see that $\Pi_0=O(1)$ and since $P_h$ and $\Pi_0$ commute, we get the desired result.

To complete the proof of Theorem \ref{thmRobbe}, it only remains to show the existence of the resolvent on $\{\mathrm{Re}\, z \leq ch^2\}\backslash B(0,\tilde ch^2)$ as well as the estimate in $O(h^{-2})$.

\begin{lem}\label{v+r}
Denote $\hat \Pi_0=1-\Pi_0$.
For all $u \in L^2(\R^{2d})$, we have
$$\hat \Pi_0u=w+r$$
whith $w \in (g_j^h)_{1\leq j\leq n_0}^{\perp}$ and $r \in \mathrm{Span}\big((g_j^h)_{1\leq j\leq n_0}\big)$ satisfying 
$r=O(\e^{-\frac{\alpha}{h}})\|\hat \Pi_0u\|$.
\end{lem}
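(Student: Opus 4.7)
The plan is to take the decomposition dictated by the geometry of the problem: set $G := \mathrm{Span}\big((g_j^h)_{1 \leq j \leq n_0}\big)$ and let $w$ be the orthogonal projection of $\hat\Pi_0 u$ onto $G^\perp$, and $r$ its orthogonal projection onto $G$. By construction $w \in (g_j^h)_{1\leq j \leq n_0}^{\perp}$ and $r \in G$, so it only remains to show that $r = O(\e^{-\alpha/h})\|\hat\Pi_0 u\|$.

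Since the family $(g_j^h)_{1 \leq j \leq n_0}$ is orthonormal by Lemma \ref{quasi-ortho}, we have
\begin{equation*}
r = \sum_{j=1}^{n_0} \langle \hat\Pi_0 u, g_j^h \rangle \, g_j^h,
\qquad \|r\|^2 = \sum_{j=1}^{n_0} |\langle \hat\Pi_0 u, g_j^h \rangle|^2,
\end{equation*}
so the bound reduces to estimating each scalar coefficient. The key observation is that $\hat\Pi_0$ is a projector, hence $\hat\Pi_0^2 = \hat\Pi_0$. Writing $\langle \hat\Pi_0 u, g_j^h \rangle = \langle \hat\Pi_0 u, \hat\Pi_0^* g_j^h \rangle$ moves the projector to the quasimode side, where we control it via \eqref{pi0gjh}: since $\Pi_0^* g_j^h = g_j^h + O(\e^{-\alpha/h})$, we get $\hat\Pi_0^* g_j^h = -(\Pi_0^* - \mathrm{Id})g_j^h = O_{L^2}(\e^{-\alpha/h})$.

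Applying the Cauchy--Schwarz inequality then yields $|\langle \hat\Pi_0 u, g_j^h \rangle| \leq \|\hat\Pi_0 u\| \cdot \|\hat\Pi_0^* g_j^h\| = O(\e^{-\alpha/h})\|\hat\Pi_0 u\|$ for every $j$, and summing over the finitely many indices $j \in \llbracket 1, n_0 \rrbracket$ gives the required estimate on $\|r\|$. No substantial obstacle is expected here: the whole statement is a direct consequence of the idempotency of $\hat\Pi_0$ combined with the exponentially small defect of the quasimodes relative to the spectral projection, already recorded in \eqref{pi0gjh}.
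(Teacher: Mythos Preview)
Your proof is correct and follows essentially the same approach as the paper: take $r$ to be the orthogonal projection of $\hat\Pi_0 u$ onto $\mathrm{Span}\big((g_j^h)\big)$, then use the idempotency $\hat\Pi_0^2=\hat\Pi_0$ together with \eqref{pi0gjh} to bound each coefficient $\langle \hat\Pi_0 u, g_j^h\rangle$. The paper's version is simply a more compressed write-up of the same argument.
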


\begin{proof} 
First we take for $r$ the orthogonal projection of $\hat \Pi_0u$ on $\mathrm{Span}\big((g_j^h)_{1\leq j\leq n_0}\big)$.
Then we notice that using \eqref{pi0gjh}, we get
$$\langle g_j^h,\hat \Pi_0u\rangle =\langle \hat \Pi_0^*g_j^h,\hat \Pi_0u\rangle =O(\e^{-\frac{\alpha}{h}})\|\hat \Pi_0u\|$$
which implies the announced estimate.
\end{proof}

\begin{lem}\label{detail}
For all $r' \in \mathrm{Span}\big((g_j)_{1\leq j\leq n_0}\big)$, we have $N_{h,\varepsilon}^\pm r' \in \mathrm{Dom}\,(P^*)= \mathrm{Dom}\,(P)$.
Moreover, the restrictions to the finite dimensional subspace $\mathrm{Span}\big((g_j)_{1\leq j\leq n_0}\big)$ of the operators $PN_{h,\varepsilon}^\pm$ and $P^*N_{h,\varepsilon}^\pm$ are all $O(1)$.
\end{lem}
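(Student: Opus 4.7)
The crux of the proof is an algebraic observation about the $g_j$'s. After the semiclassical rescaling, each $g_j=S_h^{-1}(\varphi_j\mu_h)$ takes the form
\[
g_j(x,v)=C_{j,h}\,\chi_j(\sqrt h\,x)\,e^{-V_h(x)/2}\,e^{-v^2/4},
\]
so the Gaussian $v$-factor is annihilated by $b=\partial_v+v/2$. Hence $bg_j=0$, giving $Lg_j=\Lambda^{-2}a^*bg_j=0$. On $\mathrm{Span}((g_j)_{1\leq j\leq n_0})$, the operators $N_{h,\varepsilon}^\pm$ therefore reduce to
\[
N_{h,\varepsilon}^\pm r'=r'\pm\varepsilon h\,L^*r',
\]
and both claims of the lemma reduce to statements about $L^*r'=b^*a\Lambda^{-2}r'$.

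The analogous identity $ae^{-V_h/2}=0$ yields
\[
ag_j=\sqrt h\,C_{j,h}\,(\nabla\chi_j)(\sqrt h\,x)\,e^{-V_h/2}\,e^{-v^2/4},
\]
supported where $\nabla\chi_j\neq 0$, i.e.\ on a set on which $V(\sqrt h\,x)\geq V(x_j)+\alpha_0$ for some $\alpha_0>0$. Hence $ag_j$ and $a^*ag_j$ are Schwartz functions that are $O(e^{-\alpha/h})$ in every seminorm. Combined with $bg_j=0$, this gives $\Lambda^2g_j=g_j+a^*ag_j$, whence
\[
\Lambda^{-2}g_j=g_j-\Lambda^{-2}a^*ag_j\qquad\text{and}\qquad L^*g_j=b^*ag_j-b^*a\Lambda^{-2}a^*ag_j.
\]
The first summand lies in $\mathcal S(\R^{2d})$ and is exponentially small; the second has exponentially small $L^2$-norm thanks to the $h$-uniform boundedness of $a\Lambda^{-2}$ and $b\Lambda^{-2}$ recalled in \cite{Robbe}, Lemma 2.7.

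To check $L^*g_j\in\mathrm{Dom}(X_0)=\mathrm{Dom}(P)=\mathrm{Dom}(P^*)$ (the latter two equalities coming from the boundedness of $\tilde Q_1$), I would expand $X_0L^*g_j$ via the commutator identities
\[
[X_0,a]=\mathrm{Hess}(V_h)\,b,\quad [X_0,b]=-a,\quad [X_0,\Lambda^{-2}]=-\Lambda^{-2}[X_0,\Lambda^2]\Lambda^{-2},
\]
together with their adjoints and the boundedness of $a\Lambda^{-2}$ and $b\Lambda^{-2}$ (Hypothesis \ref{V} ensures $\mathrm{Hess}(V_h)=(\mathrm{Hess}\,V)(\sqrt h\,\cdot)$ is uniformly bounded). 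This rewrites $X_0L^*g_j$ as a finite sum of $L^2$-bounded operators applied to the exponentially small inputs $ag_j$ and $a^*ag_j$, giving $X_0L^*g_j=O(e^{-\alpha/h})$ and in particular $L^*g_j\in\mathrm{Dom}(X_0)$.

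The operator-norm bound is then routine: Lemma \ref{quasi-ortho} and the intertwining $hP=S_h^{-1}P_hS_h$ give $Pg_j=h^{-1}S_h^{-1}P_hg_j^h=O(h^{-1}e^{-\alpha/h})$, while $P(\varepsilon hL^*g_j)=\varepsilon h(X_0+\tilde Q_1)L^*g_j=O(h)$ by the previous step and $\tilde Q_1=O(h^{-1})$ acting on an exponentially small function. Since $(g_j)$ is orthonormal (as the unitary image of $(g_j^h)$ under $S_h^{-1}$), this yields the announced $O(1)$ bound for $PN_{h,\varepsilon}^\pm$ on $\mathrm{Span}(g_j)$; the argument for $P^*=-X_0+\tilde Q_1$ is identical. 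The main obstacle I would focus on is the uniform-in-$h$ control of $X_0L^*g_j$, where the non-locality of $\Lambda^{-2}$ and the unboundedness of $X_0$ interact and require careful bookkeeping in the commutator expansion.
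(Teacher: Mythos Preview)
Your argument is correct and takes a genuinely different route from the paper. You exploit the algebraic fact $bg_j=0$ (immediate from the Gaussian $v$-factor), which kills $Lg_j=\Lambda^{-2}a^*bg_j$ outright and reduces the problem to $L^*g_j$ alone; you then use $ag_j=O_{\mathcal S}(e^{-\alpha/h})$ and the decomposition $\Lambda^{-2}g_j=g_j-\Lambda^{-2}a^*ag_j$ to show that $L^*g_j$ and $X_0L^*g_j$ are in fact exponentially small. The paper instead treats $Lg_j$ and $L^*g_j$ symmetrically via the single identity $X_0L=[X_0,L]+LX_0$: since $[L,X_0]=\mathcal A+\Lambda^{-2}a^*a$ with $\mathcal A$ bounded was already established in the proof of Proposition~\ref{hypo}, and since $L$ is bounded while $X_0g_j=O(e^{-\alpha/h})$, one gets $X_0Lg_j,\,X_0L^*g_j\in L^2$ and $O(1)$ in two lines. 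Your approach yields sharper information (exponential smallness rather than mere $O(1)$) and is more self-contained, but the commutator bookkeeping you flag as ``the main obstacle'' amounts to re-deriving, on this particular input, fragments of the bounded-commutator estimate the paper simply quotes as a black box. If you want to shorten your write-up, note that once you have $Lg_j=0$, the paper's global identity $X_0L^*=[X_0,L^*]+L^*X_0$ with $[X_0,L^*]=[X_0,L]^*$ bounded immediately gives $L^*g_j\in\mathrm{Dom}(P)$ and $X_0L^*g_j=O(1)$ without any further expansion.
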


\begin{proof}
For the first statement, it is sufficient to show that for $1\leq j \leq n_0$, the functions $Lg_j$ and $L^*g_j$ are both in $\mathrm{Dom}\, (P)$.
But we have in the sense of distributions
\begin{align}\label{X0L}
X_0Lg_j
=
[X_0,L]g_j+LX_0g_j
\end{align}
and we saw in the proof of Proposition \ref{hypo} that $[X_0,L]$ is a bounded operator on $L^2(\R^{2d})$ so it is then clear that $X_0Lg_j \in L^2(\R^{2d})$ i.e $Lg_j \in \mathrm{Dom}\,(P)$.
The same goes easily for $L^*g_j$.
For the second statement, using Lemma \ref{quasi-ortho} and the fact that $\tilde Q_1=O(h^{-1})$, it suffices to notice that for $1\leq j \leq n_0$, \eqref{X0L} implies that $X_0Lg_j$ and $X_0L^*g_j$ are both $O(1)$ as we saw that $L$ and $[X_0,L]$ are $O(1)$.
\end{proof}

\begin{prop}\label{1-pi0l2}
Consider $\hat P_h$ the restriction of $P_h$ to $\hat \Pi_0D$ acting on $\hat \Pi_0L^2(\R^{2d})$.
Then for all $z\in \C$ such that $\mathrm{Re}$ $z\leq ch^2$, the resolvent $(\hat P_h-z)^{-1}$ exists and we have the uniform estimate $$(\hat P_h-z)^{-1}=O(h^{-2}).$$
\end{prop}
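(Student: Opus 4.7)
The plan is to prove the a priori lower bound $\|(P_h - z)u\| \geq c'h^2\|u\|$ for all $u \in \hat\Pi_0 D$ and all $z$ with $\mathrm{Re}\,z \leq ch^2$, then combine this with the m-accretivity of the restriction $\hat P_h$ on the Hilbert subspace $\hat\Pi_0 L^2$ to invert $\hat P_h - z$ throughout the half-plane by a standard connectedness argument. The \emph{main obstacle} is the a priori bound itself: because the spectral projection $\hat\Pi_0$ is not orthogonal, a function in $\hat\Pi_0 D$ need not be orthogonal to the quasimodes $(g_j^h)$, so Corollary \ref{2.10} cannot be invoked verbatim and the cross terms coming from the non-orthogonal part must be controlled. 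Lemmas \ref{v+r} and \ref{detail} are precisely tailored to this.

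For the a priori bound, I would take $u \in \hat\Pi_0 D$ and apply Lemma \ref{v+r} to write $u = w + r$ with $w \in (g_j^h)^\perp$ and $\|r\| = O(\e^{-\alpha/h})\|u\|$. Setting $v = S_h^{-1}u$, $\tilde w = S_h^{-1}w \in (g_j)^\perp$, and $\tilde r = S_h^{-1}r \in \mathrm{Span}(g_j)$, I would test $(P_h - z)u$ against $S_h N_{h,\varepsilon}^+ v$ as in Corollary \ref{2.10}:
\[
2\|u\|\,\|(P_h - z)u\| \;\geq\; 2\,\mathrm{Re}\,\langle (P_h - z)u,\, S_h N_{h,\varepsilon}^+ v\rangle \;=\; 2\,\mathrm{Re}\,\langle (hP - z)v,\, N_{h,\varepsilon}^+ v\rangle.
\]
Expanding $v = \tilde w + \tilde r$ on the right, Proposition \ref{hypo} produces the main contribution $\mathrm{Re}\,\langle hP\tilde w, N_{h,\varepsilon}^+\tilde w\rangle \geq (h^2/C)\|\tilde w\|^2$; the orthogonality of $\tilde w$ and $\tilde r$ yields $\|\tilde w\|^2 = (1 - O(\e^{-2\alpha/h}))\|u\|^2$; and $|\mathrm{Re}(z)\,\langle N_{h,\varepsilon}^+ v, v\rangle| \leq ch^2(1 + O(h))\|u\|^2$. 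The cross term $\mathrm{Re}\,\langle hP\tilde r, N_{h,\varepsilon}^+\tilde w\rangle$ is controlled by Lemma \ref{quasi-ortho}, which yields $\|hP\tilde r\| = \|P_h r\| = O(\e^{-\alpha/h})\|r\| = O(\e^{-2\alpha/h})\|u\|$. The remaining term $\mathrm{Re}\,\langle hPv, N_{h,\varepsilon}^+\tilde r\rangle = \mathrm{Re}\,\langle v, hP^*N_{h,\varepsilon}^+\tilde r\rangle$ is where Lemma \ref{detail} earns its keep, simultaneously ensuring $N_{h,\varepsilon}^+\tilde r \in \mathrm{Dom}(P^*)$ and giving $\|hP^*N_{h,\varepsilon}^+\tilde r\| \leq Ch\|\tilde r\| = O(h\e^{-\alpha/h})\|u\|$. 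Choosing $c < 1/C$ and taking $h$ small absorbs every error into $c'h^2\|u\|^2$ and yields $\|(P_h - z)u\| \geq (c'/2)h^2\|u\|$.

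For invertibility, I would observe that $\hat P_h$ is m-accretive on $\hat\Pi_0 L^2$: accretivity is inherited from $P_h$, and for $f \in \hat\Pi_0 L^2$ the function $u = (P_h + 1)^{-1}f$ automatically satisfies $\Pi_0 u = 0$, as one sees by applying $\hat\Pi_0$ to $(P_h + 1)u = f$ and using $[\hat\Pi_0, P_h] = 0$. Hence $(\hat P_h + 1)$ is bijective on $\hat\Pi_0 L^2$, and $\{\mathrm{Re}\,z < 0\} \subset \rho(\hat P_h)$ with $\|(\hat P_h - z)^{-1}\| \leq 1/|\mathrm{Re}\,z|$. Setting $M = 2/(c'h^2)$, the subset $\Omega$ of $\{\mathrm{Re}\,z \leq ch^2\}$ on which $\hat P_h - z$ is invertible with $\|(\hat P_h - z)^{-1}\| \leq M$ contains $\{\mathrm{Re}\,z \leq -c'h^2/2\}$, is open by Neumann perturbation, and is closed thanks to the a priori bound combined with weak compactness (any weak limit of $(\hat P_h - z_n)^{-1}f$ lies in $D$ and inverts $\hat P_h - z$ by closedness of the graph). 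Connectedness of the half-plane then forces $\Omega = \{\mathrm{Re}\,z \leq ch^2\}$, delivering the announced $O(h^{-2})$ resolvent estimate.
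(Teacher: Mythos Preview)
Your proposal is correct and, for the core a~priori estimate, follows the paper's argument essentially verbatim: you use Lemma~\ref{v+r} to split $u\in\hat\Pi_0 D$ into a piece $w\in(g_j^h)^\perp$ and an exponentially small remainder $r$, apply Proposition~\ref{hypo} to the $(w,w)$ term, and invoke Lemma~\ref{detail} to move $P$ across in the $(w,r)$ cross term. This is exactly what the paper does (it writes out the four cross terms $\langle Pw',N^+w'\rangle$, $\langle w',P^*N^+r'\rangle$, $\langle N^+Pr',w'\rangle$, $\langle Pr',N^+r'\rangle$ and bounds them the same way). One cosmetic point: your bound ``$|\mathrm{Re}(z)\,\langle N^+v,v\rangle|\le ch^2(1+O(h))\|u\|^2$'' is only the relevant one-sided inequality when $\mathrm{Re}\,z>0$; for $\mathrm{Re}\,z\le 0$ the term helps rather than hurts, so the conclusion is unaffected.

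Where you genuinely diverge from the paper is in the passage from the a~priori bound to actual invertibility. The paper repeats the whole hypocoercive estimate for $P^*$ with $N^-_{h,\varepsilon}$ (exactly as in Proposition~\ref{hypo} and Corollary~\ref{2.10}), obtaining $\|(P_h^*-z)u\|\ge ch^2\|u\|$ on $\hat\Pi_0 D$ as well; injectivity of both $\hat P_h-z$ and its adjoint then forces bijectivity. You instead observe that $\hat P_h$ inherits m-accretivity on the closed invariant subspace $\hat\Pi_0 L^2$ (your argument that $(P_h+1)^{-1}$ preserves $\ker\Pi_0$ via $[\Pi_0,P_h]=0$ is clean), which already gives invertibility on $\{\mathrm{Re}\,z<0\}$, and then propagate across the whole half-plane by a resolvent-continuation argument using the uniform a~priori bound. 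This is a legitimate alternative: it spares you the second hypocoercive computation at the cost of the connectedness step. Incidentally, your closedness argument via weak limits is heavier than needed; once you know $\|(\hat P_h-z)^{-1}\|\le M$ on $\Omega$, a Neumann series at any $z_n\in\Omega$ with $|z-z_n|<1/M$ already puts $z$ in the resolvent set, so $\Omega$ is automatically closed in the half-plane.
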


\begin{proof}
We actually prove that the result of Proposition $\ref{hypo}$ remains true when replacing the set $(g_j)_{1\leq j\leq n_0}^\perp\cap \mathrm{Dom}\,(P)$ by $S_h^{-1}\hat \Pi_0D$.
We will deduce that the result of Corollary $\ref{2.10}$ also remains true when taking $u \in \hat \Pi_0D$ instead of $(g_j^h)_{1\leq j\leq n_0}^\perp\cap D$, which is precisely the statement that we want to prove.
Let $u \in D$, using the notations from Lemma $\ref{v+r}$ we have
\begin{align*}
\mathrm{Re}\,\langle PS_h^{-1}\hat \Pi_0u,N_{h,\varepsilon}^+S_h^{-1}\hat \Pi_0u\rangle &=\mathrm{Re}\,\langle PS_h^{-1}w,N_{h,\varepsilon}^+S_h^{-1}w\rangle +\mathrm{Re}\,\langle PS_h^{-1}w,N_{h,\varepsilon}^+S_h^{-1}r\rangle \\
		&\qquad \qquad +\mathrm{Re}\,\langle PS_h^{-1}r,N_{h,\varepsilon}^+S_h^{-1}w\rangle +\mathrm{Re}\,\langle PS_h^{-1}r,N_{h,\varepsilon}^+S_h^{-1}r\rangle \,.
\end{align*}
Now let us denote $w'=S_h^{-1}w\in (g_j)_{1\leq j\leq n_0}^\perp\cap \mathrm{Dom}\,(P)$ and $r'=S_h^{-1}r\in \mathrm{Span}\big((g_j)_{1\leq j\leq n_0}\big)$.
We can use Proposition $\ref{hypo}$ as well as Lemmas \ref{v+r} and \ref{detail} to get
\begin{align*}
\mathrm{Re}\,\langle N_{h,\varepsilon}^+PS_h^{-1}\hat \Pi_0u,S_h^{-1}\hat \Pi_0u\rangle &=\mathrm{Re}\,\langle Pw',N_{h,\varepsilon}^+w'\rangle +\mathrm{Re}\,\langle w',P^*N_{h,\varepsilon}^+r'\rangle +\mathrm{Re}\,\langle N_{h,\varepsilon}^+Pr',w'\rangle +\mathrm{Re}\,\langle Pr',N_{h,\varepsilon}^+r'\rangle \\
		&\geq \frac hC \|w\|^2-O\big(\|w\|\,\|r\|\big)-O(\e^{-\frac{\alpha}{h}}\|r\|)\\
		&\geq \frac {h}{2C} \|S_h^{-1}\hat \Pi_0u\|^2.
\end{align*}
As usual, all of the above remains true with $P^*$ and $N_{h,\varepsilon}^-$ instead of $P$ and $N_{h,\varepsilon}^+$ so the proof is now complete.
\end{proof}
\hip
\textit{End of Proof of Theorem \ref{thmRobbe}} : 
Let $z\in \C$ satisfying $\mathrm{Re}\, z\leq ch^2$ and $|z|\geq \tilde ch^2$ and recall the notation $H=\mathrm{Ran}\,\Pi_0$.
We already know from Proposition \ref{1-pi0l2} that $\hat P_h-z$ is invertible, but it is clearly also the case of $P_h|_H-z$ since $P_h|_H=O(\e^{-\alpha/h})$.
Therefore $P_h-z$ is invertible and we have
\begin{align}\label{Ph-z-1}
(P_h-z)^{-1}=(\hat P_h-z)^{-1}\hat \Pi_0+(P_h|_H-z)^{-1}\Pi_0.
\end{align}
Besides, we easily have for such $z$ that $\|(P_h|_H-z)u\|\geq \frac 1C  h^2\|u\|$ which combined with \eqref{Ph-z-1},  Proposition \ref{1-pi0l2} and the fact that $\|\Pi_0\|=O(1)$ yields the estimate $(P_h-z)^{-1}=O(h^{-2})$.
\hspace*{\fill} $\Box$

\section{Accurate quasimodes} \label{sectionquasim}

\subsection{General form}
Let us denote 
 $$W(x,v)=\frac{V(x)}{2}+\frac{v^2}{4}$$
the global potential on $\R^{2d}$.
Before we can construct our quasimodes, we need to recall the general labeling of the minima which originates from \cite{HeKlNi} and was generalized in \cite{HHS11}, as well as the topological constructions that go with it.
In our case, it has to be done for the global potential, i.e the function $W$.
However, by the definition of $W$, a strong connection between these constructions for $W$ and the ones for $V$ will appear, leading to simplifications.
In order to give a proper statement about this connection, let us construct the labelings for both $W$ and $V$.
To this aim, we consider $d'\in \N^*$ and a smooth Morse function $Y$ on $\R^{d'}$ bounded from below, having at least two local minima and such that $|\nabla Y|\geq 1/C$ outside of a compact.
According to Hypothesis \ref{V}, one can for instance take $Y=V/2$ or $Y=W$ and recall that as we discussed following Hypothesis \ref{V}, it implies that $Y(X)\geq |X|/C$ outside of a compact.
We also denote $\mathcal U^{(k),Y}$ the critical points of $Y$ of index $k$.
For shortness, we will write "CC" instead of "connected component".

\begin{lem}\label{1.4}
If $X\in \mathcal U^{(1),Y}$, then there exists $r_0>0$ such that for all $0<r<r_0$, $X$ has a connected neighborhood $U_r$ in $B(X,r)$ such that $U_r\cap \{Y< Y(X)\}$ has exactly 2 CCs.\\
\end{lem}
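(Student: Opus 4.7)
The plan is to invoke the Morse lemma at $X$ to replace $Y$ by a quadratic normal form, and then read off the connected components of the sublevel set from that explicit model.

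Since $X \in \mathcal U^{(1),Y}$ is a non-degenerate critical point of index $1$, the Morse lemma furnishes an open neighborhood $V$ of $X$, an open neighborhood $W$ of $0$ in $\R^{d'}$, and a smooth diffeomorphism $\phi\colon V \to W$ with $\phi(X)=0$ and
$$(Y\circ \phi^{-1})(y) \;=\; Y(X)-y_1^2+y_2^2+\dots+y_{d'}^2 \qquad \text{for } y=(y_1,\dots,y_{d'}) \in W.$$
Because $\phi$ is bi-Lipschitz near $X$, for every sufficiently small $r>0$ there exists $\eta=\eta(r)>0$ with $\phi^{-1}\big(B(0,\eta)\big)\subset B(X,r)$. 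I would then set $U_r:=\phi^{-1}\big(B(0,\eta)\big)$, which is an open connected neighborhood of $X$ contained in $B(X,r)$. Transporting the problem through the homeomorphism $\phi$, the connected components of $U_r\cap\{Y<Y(X)\}$ are in bijection with those of
$$\Omega_\eta \;:=\; B(0,\eta)\,\cap\, \big\{y\in\R^{d'}\,;\, y_1^2>y_2^2+\dots+y_{d'}^2\big\}.$$

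It only remains to verify that $\Omega_\eta$ has exactly two components, namely
$$\Omega_\eta^\pm \;:=\; B(0,\eta)\,\cap\, \big\{\pm y_1 > |(y_2,\dots,y_{d'})|\big\}.$$
Each $\Omega_\eta^\pm$ is nonempty (it contains $(\pm\eta/2,0,\dots,0)$) and convex, being the intersection of $B(0,\eta)$ with the open convex cone $\{\pm y_1 > |y'|\}$, whose convexity follows immediately from the triangle inequality. They are disjoint, as they lie respectively in the half-spaces $\{y_1>0\}$ and $\{y_1<0\}$, and their union is clearly $\Omega_\eta$. Hence $\Omega_\eta$ has exactly two connected components, which concludes the argument.

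The only substantial ingredient is the Morse lemma; the rest is an elementary inspection of the model form $-y_1^2+y_2^2+\dots+y_{d'}^2$. There is no real obstacle here: the proof is entirely local and does not require any of the confining behavior of $Y$ at infinity, which only enters later when one wants to globalize this construction into a labeling of the saddle points (as in the references \cite{HeKlNi,HHS11} cited above).
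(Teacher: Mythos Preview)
Your proof is correct and follows essentially the same approach as the paper: apply the Morse lemma at $X$ and count the connected components of the sublevel set of the quadratic model. The paper states the normal form as $\tfrac12\langle \mathrm{Hess}_X Y\,\cdot,\cdot\rangle$ rather than diagonalizing, but this is only a cosmetic difference.
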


\begin{proof} Let $X\in \mathcal U^{(1),Y}$; according to the Morse Lemma, there exists a connected neighborhood $U_r$ of $X$, $r'>0$ and $\varphi:U_r\to B(0,r')$ a smooth diffeomorphism such that 
$$Y\circ \varphi^{-1} =Y(X)+\frac12 \langle \mathrm{Hess}_{X}Y \, \cdot, \cdot \rangle.$$
Besides, it is easy to see that
$$U_r\cap \{Y< Y(X)\}=\varphi^{-1}\big(\{y \in B(0,r')\, ;\, \langle \mathrm{Hess}_{X}Y \, y, y \rangle<0\}\big)$$
and $\{y \in B(0,r')\, ;\, \langle \mathrm{Hess}_{X}Y \, y, y \rangle<0\}$ has exactly 2 CCs.
\end{proof}

\begin{lem}\label{1.4bis}
Let $X\in \R^{d'}$ and suppose there exists $r_0>0$ such that for every neighborhood $U$ of $X$ in $B(X,r_0)$, the set $U\cap \{Y< Y(X)\}$ is not connected. Then $X\in \mathcal U^{(1),Y}$.
\end{lem}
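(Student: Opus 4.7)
The plan is to prove the contrapositive: if $X \notin \mathcal U^{(1),Y}$, I will exhibit a neighborhood $U$ of $X$ contained in $B(X,r_0)$ such that $U \cap \{Y < Y(X)\}$ is connected (allowing the empty set, which is connected in our convention). Two cases appear depending on whether $\nabla Y(X)$ vanishes.

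If $\nabla Y(X) \neq 0$, I would invoke the implicit function theorem: for $r>0$ small enough, $\{Y = Y(X)\} \cap B(X,r)$ is a smooth hypersurface splitting the ball into exactly two connected open pieces, namely $B(X,r) \cap \{Y < Y(X)\}$ and $B(X,r) \cap \{Y > Y(X)\}$. In particular, the former is connected, giving the desired conclusion.

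If instead $\nabla Y(X) = 0$, then since $Y$ is Morse the Hessian $\mathrm{Hess}_X Y$ is nondegenerate of some index $k \in \{0,\dots,d'\}$, and it suffices to rule out every $k \neq 1$. I would apply the Morse Lemma exactly as in the proof of Lemma \ref{1.4} to obtain, for small $r$, a diffeomorphism $\varphi : B(X,r) \to B(0,r')$ with $\varphi(X)=0$ and
$$Y \circ \varphi^{-1}(y) - Y(X) = \tfrac12\bigl(-y_1^2 - \dots - y_k^2 + y_{k+1}^2 + \dots + y_{d'}^2\bigr)$$
after a linear change of coordinates. Then $B(X,r) \cap \{Y < Y(X)\}$ is diffeomorphic to the explicit model set $\Omega_k := \{y \in B(0,r') : y_1^2 + \dots + y_k^2 > y_{k+1}^2 + \dots + y_{d'}^2\}$, and the whole problem reduces to analyzing connectivity of $\Omega_k$.

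The only substantive step, and the one I expect to require some care, is this connectivity analysis for $k \neq 1$. For $k = 0$, obviously $\Omega_k = \emptyset$. For $k \geq 2$, I would write $y = (y',y'') \in \R^k \times \R^{d'-k}$ and note that the straight-line homotopy $t \mapsto (y', (1-t)y'')$ stays in $\Omega_k$ and deformation-retracts it onto $\{y \in B(0,r') : y'' = 0,\ y' \neq 0\}$, which is a punctured $k$-ball and hence connected as soon as $k \geq 2$ (this handles $k = d'$ uniformly, since for $d'=1$ the only indices are $0$ and the excluded $1$). Since $k = 1$ is excluded by assumption, every remaining case produces a small ball on which $\{Y < Y(X)\}$ is connected, contradicting the hypothesis and forcing $X \in \mathcal U^{(1),Y}$.
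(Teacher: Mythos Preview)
Your proof is correct and follows essentially the same route as the paper: rule out $\nabla Y(X)\neq 0$ via the implicit function theorem, then use the Morse Lemma to reduce to the model quadratic and check connectivity of the sublevel set for indices $k\neq 1$. The only difference is cosmetic: the paper simply asserts that $\{y\in B(0,r'):\langle\mathrm{Hess}_X Y\,y,y\rangle<0\}$ is connected for $k\geq 2$, whereas you supply the explicit deformation retraction onto the punctured $k$-ball.
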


\begin{proof} 
First we clearly have that $\nabla Y(X)= 0$ since otherwise one could use the implicit function theorem to find a neighborhood $U$ of $X$ in $B(X,r_0)$ such that $U\cap \{Y< Y(X)\}$ is connected.
It is also clear that $X\notin \mathcal U^{(0),Y}$ so let us assume by contradiction that $X\in \mathcal U^{(k),Y}$ with $k \geq 2$.
Then using the Morse Lemma as in the proof of Lemma \ref{1.4}, we would once again get that $X$ has a neighborhood $U$ in $B(X,r_0)$ such that $U\cap \{Y< Y(X)\}$ 
has the same number of CCs as $\{y \in B(0,r)\, ;\, \langle \mathrm{Hess}_{X}Y \, y, y \rangle<0\}$ which is connected since $k\geq 2$.
Hence $X$ has to be in $\mathcal U^{(1),Y}$.
\end{proof}
\hip
In view of the result from Lemma \ref{1.4} and following the approach from \cite{HeKlNi,HHS11}, we give the following definition:
\begin{defi}\label{ssv}
\begin{enumerate}
\item We say that $X\in \mathcal U^{(1),Y}$ is a separating saddle point and we denote $X\in \mathcal V^{(1),Y}$ if for every $r>0$ small enough, the two CCs of $U_r\cap \{Y< Y(X)\}$ are contained in different CCs of $\{Y< Y(X)\}$.
\item We say that $\sigma \in \R$ is a separating saddle value if $\sigma \in Y(\mathcal V^{(1),Y})$.
\item Finally, we say that a set $E\subset \R^{d'}$ is critical if there exists $\sigma \in Y(\mathcal V^{(1),Y})$ such that $E$ is a CC of $\{Y< \sigma\}$ satisfying $\partial E \cap \mathcal V^{(1),Y}\neq \emptyset$.
\end{enumerate}
\end{defi}

\begin{lem}\label{2min}
Let $\mathbf m$, $\mathbf m'$ two distinct local minima of $Y$.
The real number
$$\sigma=\sup \big\{a \in \R\, ;\, \mathbf m \text{ and }\mathbf m' \text{ are in two different CCs of }\{Y<a\}\big\}$$
is well defined and $\{Y<\sigma\}$ has at least two CCs $\Omega \ni \mathbf m$ and $\Omega' \ni \mathbf m'$.
Moreover, $\sigma$ is a separating saddle value and $\Omega$, $\Omega'$ are critical.
\end{lem}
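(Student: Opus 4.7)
Let $A=\{a\in\R : \mathbf m \text{ and } \mathbf m' \text{ lie in distinct CCs of }\{Y<a\}\}$. I would first check $A\neq\emptyset$ by the local Morse picture at each non-degenerate minimum: for $\delta>0$ small, $\{Y<Y(\mathbf m)+\delta\}$ has a CC near $\mathbf m$ that is a topological ball (by the Morse Lemma), and similarly near $\mathbf m'$, so for $a$ slightly above $\max(Y(\mathbf m),Y(\mathbf m'))$ these two CCs are disjoint; such an $a$ lies in $A$. For an upper bound, pick any continuous path $\gamma:[0,1]\to\R^{d'}$ from $\mathbf m$ to $\mathbf m'$; then $M=\max(Y\circ\gamma)<+\infty$, and every $a>M$ fails to lie in $A$ since the path realizes a common CC. Hence $\sigma:=\sup A\in\R$.

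\textbf{Two components.} Let $\Omega,\Omega'$ be the CCs of $\{Y<\sigma\}$ containing $\mathbf m$ and $\mathbf m'$. If $\Omega=\Omega'$, any continuous path in $\Omega$ joining the two minima has compact image, so $\max(Y\circ\gamma)=\sigma-\delta<\sigma$; then every $a\in(\sigma-\delta,\sigma)$ would satisfy $a\notin A$, contradicting $\sigma=\sup A$. Because $\Omega$ is open inside $\{Y<\sigma\}$, any boundary point $p\in\partial\Omega$ with $Y(p)<\sigma$ would admit a connected neighborhood in $\{Y<\sigma\}$ meeting $\Omega$, hence contained in $\Omega$, contradicting $p\in\partial\Omega\setminus\Omega$; so $\partial\Omega\subset\{Y=\sigma\}$, and likewise $\partial\Omega'\subset\{Y=\sigma\}$.

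\textbf{Separating saddle on the boundary.} For every $\epsilon>0$, $\sigma+\epsilon\notin A$, so $\mathbf m,\mathbf m'$ lie in a single CC $\widetilde\Omega_\epsilon$ of $\{Y<\sigma+\epsilon\}$, and $\Omega\cup\Omega'\subset\widetilde\Omega_\epsilon$. I would use the coercivity from Hypothesis \ref{V} (recall $Y\geq|X|/C$ at infinity) to run a Morse deformation argument: sublevel sets are compact, so if $(\sigma-\epsilon,\sigma+\epsilon)$ contained no critical value, a flow along a cutoff of $-\nabla Y$ would build a diffeomorphism $\{Y\leq\sigma+\epsilon\}\cong\{Y\leq\sigma-\epsilon\}$ preserving the CC structure; this contradicts the fact that $\mathbf m,\mathbf m'$ lie in a common CC of the first but in distinct CCs of the second for $\epsilon$ small. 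Hence $\sigma$ is a critical value of $Y$. To pin it down as an index-$1$ value, I would argue that passing through an index-$0$ critical value only creates a new CC (it cannot merge two) and that passing through an index-$k$ critical value with $k\geq 2$ attaches a handle of dimension $\geq 2$, which also preserves the CC count—so merging requires an index-$1$ critical point, providing $\mathbf s\in\mathcal U^{(1),Y}$ with $Y(\mathbf s)=\sigma$. Lemma \ref{1.4} then gives the two local CCs of $\{Y<\sigma\}$ around $\mathbf s$; since they must realize the merging of $\Omega$ with $\Omega'$, one lies in $\Omega$ and the other in $\Omega'$. Therefore $\mathbf s\in\partial\Omega\cap\partial\Omega'\cap\mathcal V^{(1),Y}$, so $\sigma\in Y(\mathcal V^{(1),Y})$ is a separating saddle value and $\Omega,\Omega'$ are critical in the sense of Definition \ref{ssv}.

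\textbf{Main obstacle.} The delicate step is the Morse-theoretic passage from "$\sigma$ is critical" to "there is an index-$1$ separating saddle on $\partial\Omega\cap\partial\Omega'$". A cleaner alternative is a mountain-pass formulation: show $\sigma=\inf_\gamma\max(Y\circ\gamma)$ over continuous paths from $\mathbf m$ to $\mathbf m'$ and extract a critical point at the minimax level, then read off its separating nature from the Morse normal form at $\mathbf s$. Either way, the argument hinges on the coercivity of $Y$ (from Hypothesis \ref{V}) making the relevant sublevel sets compact, so that the gradient flow or minimax sequences are globally well behaved.
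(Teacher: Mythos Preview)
Your argument for well-definedness and for $\Omega\neq\Omega'$ is fine and close in spirit to the paper's (the paper uses an annulus around $\mathbf m'$ rather than the Morse lemma, and a compactness argument on paths rather than your direct contradiction, but these are cosmetic).

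The real issue is in your ``Separating saddle on the boundary'' step. Your Morse/handle argument correctly shows that some index-$1$ critical point at level $\sigma$ merges two connected components, hence $\sigma$ is a separating saddle value. But you then assert that the two local branches of this $\mathbf s$ lie in $\Omega$ and $\Omega'$, and this is not justified: if $\{Y<\sigma\}$ has three components $\Omega,\Omega',\Omega''$ with one saddle joining $\Omega$ to $\Omega''$ and another joining $\Omega''$ to $\Omega'$ (and none joining $\Omega$ to $\Omega'$ directly), then no single saddle sits on $\partial\Omega\cap\partial\Omega'$, yet the lemma's conclusion still holds. So your proof, as written, does not establish that $\Omega$ and $\Omega'$ are critical --- you have only located \emph{some} separating saddle at level $\sigma$, not one on $\partial\Omega$ (and one on $\partial\Omega'$).

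The paper avoids Morse deformation theory entirely and argues directly on $\Omega$: assuming no other CC of $\{Y<\sigma\}$ meets $\overline{\Omega}$, one finds $\varepsilon>0$ with $(\Omega+B(0,\varepsilon))\setminus\overline{\Omega}\subset\{Y\geq\sigma\}$; after shrinking $\varepsilon$ to avoid the finitely many local minima at level $\sigma$, the ``sphere'' $\Gamma=\{\mathrm{dist}(\cdot,\Omega)=\varepsilon\}$ lies in $\{Y\geq\sigma+\delta\}$, so any path from $\mathbf m$ to $\mathbf m'$ crosses $\Gamma$ and $\sigma+\delta/2\in A$, contradicting maximality. Thus $\overline{\Omega}\cap\overline{\Omega''}\neq\emptyset$ for some other CC $\Omega''$, and any point there is a separating saddle by Lemma~\ref{1.4bis}. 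This gives $\partial\Omega\cap\mathcal V^{(1),Y}\neq\emptyset$ directly, and the same works for $\Omega'$ by symmetry. If you want to keep your Morse-theoretic route, you would need to run the handle-attachment bookkeeping more carefully to track which components $\Omega$ actually merges with; the paper's point-set argument is both shorter and more elementary.
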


\begin{proof} 
We can assume that $Y(\mathbf m)\leq Y(\mathbf m')$ so taking $a:=\inf_{\mathcal A}Y$ where $\mathcal A$ is a well chosen annulus centered in $\mathbf m'$, we see that 
\begin{align}\label{2ccdiff}
\big\{a \in \R\, ;\, \mathbf m \text{ and }\mathbf m' \text{ are in two different CCs of }\{Y<a\}\big\}\neq \emptyset
\end{align}
and it is then clear that $\sigma$ is well defined.
Besides, if $(\sigma_n)_{n\geq 1}$ is an increasing sequence in the set from \eqref{2ccdiff} that converges towards $\sigma$ and $\gamma:[0,1]\to \R^{d'}$ is a continuous path linking $\mathbf m$ and $\mathbf m'$, then 
$$\gamma([0,1])\cap \big(\R^{d'}\backslash \{Y<\sigma\}\big)=\bigcap_{n \geq 1}\gamma([0,1])\cap \big(\R^{d'}\backslash \{Y<\sigma_n\}\big)$$
is non empty by compactness so we can consider $\Omega \ni \mathbf m$ and $\Omega' \ni \mathbf m'$ two different CCs of $\{Y<\sigma\}$.
To prove that $\sigma$ is a separating saddle value, we will actually show that there exists 
a CC of $\{Y<\sigma\}$ 
that we denote $\Omega''$ which is not $\Omega$ and satisfies $\overline{\Omega}\cap\overline{\Omega''}\neq \emptyset$.
Assume by contradiction that there exists $\varepsilon>0$ such that $(\Omega+B(0,\varepsilon))\backslash \overline\Omega$ is included in $\{Y\geq \sigma\}$. 
In that case, the points of $(\Omega+B(0,\varepsilon))\backslash \overline\Omega$ on which $Y$ takes the value $\sigma$ are local minima of $Y$ which is a Morse function, so there are finitely many such points.
Thus, up to taking $\varepsilon$ smaller, we can assume that 
$$\Gamma:=\mathrm{dist}(\cdot,\Omega)^{-1}(\{\varepsilon\})\subseteq \{Y>\sigma\}.$$
Hence there exists $\delta>0$ such that the minimum of $Y$ on $\Gamma$ is $\sigma+\delta$.
Since any continuous path linking $\mathbf m$ and $\mathbf m'$ has to cross $\Gamma$, $\mathbf m$ and $\mathbf m'$ are in two different CCs of $\{Y<\sigma+\delta/2\}$.
This contradicts the maximality of $\sigma$ and proves the existence of $\Omega''$.
Hence, Lemma \ref{1.4bis} implies that $\overline{\Omega} \cap \overline{\Omega''} \subseteq \mathcal U^{(1),Y}$ and then $\overline{\Omega} \cap \overline{\Omega''} \subseteq \mathcal V^{(1),Y}$ follows obviously from the definition of $\mathcal V^{(1),Y}$.
\end{proof}

Thanks to Lemma \ref{2min}, we know that $\mathcal V^{(1),Y}\neq \emptyset$. Let us then denote $\sigma_2>\dots>\sigma_N$ where $N\geq 2$ the different separating saddle values of $Y$ and for convenience we set $\sigma_1=+\infty$.
We call \emph{labeling} of the minima of $Y$ any injection $l:\mathcal U^{(0),Y}\to \llbracket 1,N \rrbracket \times \N^*$.
If $l(\mathbf m)=(k,j)$, we denote for shortness $\mathbf m=\mathbf m_ {k,j}$.
We are going to introduce the usual labeling of the minima for a potential $Y$ (see for instance \cite{HeKlNi, HHS11, LPMichel}).
We adopt a slightly unusual point of view in order to facilitate the establishment of the correspondence between the constructions for $W$ and the ones for $V/2$ that we will state later on.
For $\sigma \in \R\cup\{+\infty\}$, let us denote $\mathcal C_\sigma^Y$ the set of all the CCs of $\{Y<\sigma\}$.
Given a labeling $l$ of the minima, we denote for $k\in \llbracket 1,N \rrbracket$ 
$$\mathcal U^{(0),Y}_k=l^{-1}(\llbracket 1,k \rrbracket\times \N^*)\cap \{Y<\sigma_k\}$$
and we say that the labeling is \emph{adapted} to the separating saddle values if for all $k\in \llbracket 1,N \rrbracket$, each element of $l^{-1}(\{k\}\times \N^*)$ is a global minimum of $Y$ restricted to some CC of $\{Y<\sigma_k\}$ and the map $T_k^Y: \mathcal U^{(0),Y}_k \to \mathcal C_ {\sigma_k}^Y$ sending $\mathbf m \in \mathcal U^{(0),Y}_k$ on the element of $\mathcal C_ {\sigma_k}^Y$ to which it belongs is bijective.
In particular, $l^{-1}(\{k\}\times \N^*)$ is contained in $\mathcal U^{(0),Y}_k$.
Such labelings exist, one can for instance easily check that the usual labeling procedure presented in \cite{HHS11} is adapted to the separating saddle values.

\begin{lem}\label{critic}
Under an adapted labeling of the minima of $Y$, for any $2\leq k \leq N$, the elements of $T_k^Y\big(l^{-1}(\{k\}\times \N^*)\big)$ are critical.
\end{lem}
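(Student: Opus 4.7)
The plan is to show that for $\mathbf m \in l^{-1}(\{k\}\times \N^*)$ with $2\leq k \leq N$ and $E = T_k^Y(\mathbf m)$, the set $E$—which is already a CC of $\{Y<\sigma_k\}$ associated with the separating saddle value $\sigma_k$—has a separating saddle point on $\partial E$, so that $E$ meets the second condition in Definition \ref{ssv} and is critical.

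The first step is to produce, from the labeling data alone, a \emph{companion minimum} $\mathbf m''\neq \mathbf m$ lying in the same CC of $\{Y<\sigma_{k-1}\}$ as $\mathbf m$ but in a different CC of $\{Y<\sigma_k\}$. Let $F$ denote the CC of $\{Y<\sigma_{k-1}\}$ containing $\mathbf m$ (which exists since $Y(\mathbf m)<\sigma_k<\sigma_{k-1}$). Surjectivity of $T_{k-1}^Y$ yields $\mathbf m''\in \mathcal U_{k-1}^{(0),Y}$ with $T_{k-1}^Y(\mathbf m'')=F$. Since $l(\mathbf m)\in \{k\}\times \N^*$ forces $\mathbf m\notin \mathcal U_{k-1}^{(0),Y}$, we get $\mathbf m''\neq \mathbf m$. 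Because $\mathbf m''$ is a global minimum of $Y$ on $F$ and $\mathbf m\in F$, we have $Y(\mathbf m'')\leq Y(\mathbf m)<\sigma_k$, hence $\mathbf m''\in \mathcal U_k^{(0),Y}$ and $T_k^Y(\mathbf m'')$ is well defined. Injectivity of $T_k^Y$ then forces $T_k^Y(\mathbf m'')\neq T_k^Y(\mathbf m)=E$, so $\mathbf m$ and $\mathbf m''$ do lie in distinct CCs of $\{Y<\sigma_k\}$.

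The second step is to plug the pair $(\mathbf m,\mathbf m'')$ into Lemma \ref{2min}. The associated value $\sigma$ lies in $[\sigma_k,\sigma_{k-1})$: the inequality $\sigma\geq \sigma_k$ holds since $\mathbf m,\mathbf m''$ are separated at level $\sigma_k$, and $\sigma<\sigma_{k-1}$ since they are in the same CC $F$ at level $\sigma_{k-1}$. Lemma \ref{2min} furthermore asserts that $\sigma$ is a separating saddle value, and by definition $\sigma_k$ is the only element of $Y(\mathcal V^{(1),Y})$ in $[\sigma_k,\sigma_{k-1})$, so $\sigma=\sigma_k$. Lemma \ref{2min} then yields that the CC of $\{Y<\sigma_k\}$ containing $\mathbf m$—which is precisely $E$—is critical, as required.

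The genuine step here is the first one: the extraction of the companion minimum $\mathbf m''$ from the adaptedness of the labeling, together with verifying that $\mathbf m''\in \mathcal U_k^{(0),Y}$ so that the injectivity of $T_k^Y$ can be invoked. Once that is in hand, the conclusion is pure bookkeeping on separating saddle values using Lemma \ref{2min}; no new geometric or analytic input is needed.
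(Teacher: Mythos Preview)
Your proof is correct and follows essentially the same approach as the paper's: both locate a companion minimum in the same CC of $\{Y<\sigma_{k-1}\}$ via the bijectivity of $T_{k-1}^Y$, separate it from $\mathbf m$ at level $\sigma_k$ via the bijectivity of $T_k^Y$, and then invoke Lemma~\ref{2min} to pin the resulting separating saddle value to $\sigma_k$. Your write-up is in fact slightly more explicit than the paper's in justifying $\mathbf m''\in \mathcal U_k^{(0),Y}$ (the claim that $\mathbf m''$ is a global minimum on $F$ follows since $\mathbf m''\in l^{-1}(\{k''\}\times\N^*)$ for some $k''\leq k-1$ is a global minimum on its CC of $\{Y<\sigma_{k''}\}\supseteq F$).
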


\begin{proof} 
Let $\mathbf m_{k,j}\in l^{-1}(\{k\}\times \N^*)$. There exists a CC of $\{ Y<\sigma_{k-1}\}$ that we call $E$ which is such that $T_k^Y(\mathbf m_{k,j})\subseteq E$ and $E$ contains some $\mathbf m_{k',j'}\in E$ for $1\leq k'\leq k-1$ and $j' \in \N^*$ by bijectivity of $T_{k-1}^Y$.
Therefore, $\mathbf m_{k',j'}$ and $\mathbf m_{k,j}$ are in the same CC of $\{ Y<\sigma_{k-1}\}$ but are not both in $T_k^Y(\mathbf m_{k,j})$ this time by bijectivity of $T_k^Y$.
Applying Lemma \ref{2min} to $\mathbf m_{k',j'}$ and $\mathbf m_{k,j}$, 
we obtain a separating saddle value $\tilde \sigma$ which is the maximal real number such that $\mathbf m_{k',j'}$ and $\mathbf m_{k,j}$ are in two different CCs of $\{ Y<\tilde\sigma\}$.
Therefore we get $\tilde \sigma=\sigma_k$ so $T_k^Y(\mathbf m_{k,j})$ is one of the CCs of $\{ Y<\tilde \sigma\}$ called $\Omega$ and $\Omega'$ in Lemma \ref{2min} and which are critical.
\end{proof}

\begin{defi}\label{j et s}
Given an adapted labeling, we can now define the following mappings:
\begin{enumerate}[label=\textbullet]
\item $E^Y: \mathcal U^{(0),Y}\xrightarrow{{}\quad{}} \mathcal P(\R^{d'})$\\
	${} \; \mathbf m_{k,j} \xmapsto{{}\quad{}} T_k^Y(\mathbf m_{k,j})$.
\item 
$\mathbf j^Y:\mathcal U^{(0),Y}\to \mathcal P\big(\mathcal V^{(1),Y}\cup \{\mathbf s_1\}\big)$\\
given by $\mathbf j^Y(\underline{\mathbf m})=\mathbf s_1$ where $\mathbf s_1$ is a fictive saddle point such that $Y(\mathbf s_1)=\sigma_1=+\infty$; and for $2\leq k\leq N$, $\mathbf j^Y(\mathbf m_{k,j})=\partial E^Y(\mathbf m_{k,j})\cap \mathcal V^{(1),Y}$ which is not empty according to Lemma \ref{critic} and included in $\{Y=\sigma_k\}$.
\item
$\boldsymbol \sigma^Y:\mathcal U^{(0),Y}\to Y(\mathcal V^{(1),Y})\cup \{\sigma_1\}$\\
${} \quad \mathbf m \mapsto Y(\mathbf j^Y(\mathbf m))$\\
where we allow ourselves to identify the set $Y(\mathbf j^Y(\mathbf m))$ and its unique element in $Y(\mathcal V^{(1),Y})\cup \{\sigma_1\}$.
\item
$S^Y: \mathcal U^{(0),Y}\xrightarrow{{}\quad{}} ]0,+\infty]$\\
	${} \quad \mathbf m \xmapsto{{}\quad{}} \boldsymbol \sigma^Y(\mathbf m)-Y(\mathbf m)$.
\end{enumerate}
\end{defi}

Let us now state a Lemma that will enable us to show that, roughly speaking, the previous constructions for $Y=V/2$ are the projections on $\R^d_x$ of the ones for $Y=W$.
First, we give the following easy observation.

\begin{rema}\label{obs}
By definition of $W$, we have $V/2=W(\cdot,0)$. Moreover, if $(x_0,v_0)\in \{W<\sigma\}$, then $\{x_0\}\times B(0,|v_0|)\subseteq \{W<\sigma\}$.
\end{rema}
\hip
For shortness, we denote $\mathcal C_{\sigma}=\mathcal C_{\sigma}^{V/2}$ and $\widetilde{\mathcal C}_{\sigma}=\mathcal C_ {\sigma}^{W}$ as well as $\mathcal U^{(k)}=\mathcal U^{(k),V/2}$ and $\widetilde{\mathcal U}^{(k)}=\mathcal U^{(k),W}$ (we do similarly with $\mathcal V$ or $\mathcal U_k$ instead of $\mathcal U$).
Notice that $\widetilde{\mathcal U}^{(k)}=\mathcal U^{(k)}\times\{0\}$.
We introduce the natural projection $\pi_x:\R^{2d}\to \R^d_x$ sending $(x,v)$ on $x$ that we also consider as a map from $\mathcal P(\R^{2d})$ to $\mathcal P(\R^d_x)$.

\begin{lem}\label{corres}
For all $\sigma \in \R$, the projection $\pi_x$ sends $\widetilde{\mathcal C}_{\sigma}$ in $\mathcal C_ {\sigma}$.
Moreover, the map $\pi_x:\widetilde{\mathcal C}_{\sigma} \to  \mathcal C_ {\sigma}$ is bijective.
\end{lem}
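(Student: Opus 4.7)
The plan is to build the claimed bijection explicitly, using a pair of connected-component selection maps $\Phi : \widetilde{\mathcal C}_\sigma \to \mathcal C_\sigma$ and $\Psi : \mathcal C_\sigma \to \widetilde{\mathcal C}_\sigma$ which will turn out to be mutual inverses, with $\Phi$ realized by $\pi_x$. The construction hinges on two elementary facts already recorded in Remark \ref{obs}. First, $V(x)/2 = W(x,0) \leq W(x,v)$ for every $(x,v)$, so $\pi_x$ sends $\{W<\sigma\}$ into $\{V/2<\sigma\}$. Second, for any $(x_0,v_0)\in \{W<\sigma\}$, the segment $t \in [0,1] \mapsto (x_0,(1-t)v_0)$ lies entirely in $\{W<\sigma\}$, since $W(x_0,(1-t)v_0) = V(x_0)/2 + (1-t)^2 v_0^2/4 \leq W(x_0,v_0)$.

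For $\widetilde\Omega \in \widetilde{\mathcal C}_\sigma$, the set $\pi_x(\widetilde\Omega)$ is connected, nonempty, and contained in $\{V/2<\sigma\}$, so it sits inside a unique CC of $\{V/2<\sigma\}$ which I call $\Phi(\widetilde\Omega)$; in particular $\pi_x$ really does send $\widetilde{\mathcal C}_\sigma$ into $\mathcal C_\sigma$, which handles the first assertion of the lemma. Conversely, for $\Omega \in \mathcal C_\sigma$ the set $\Omega \times \{0\}$ is connected and included in $\{W<\sigma\}$ because $W(\cdot,0)=V/2<\sigma$ on $\Omega$, so it lies in a unique CC of $\{W<\sigma\}$ which I call $\Psi(\Omega)$. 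The identity $\Phi \circ \Psi = \mathrm{id}$ is then almost immediate: $\pi_x(\Psi(\Omega))$ is connected, is contained in $\{V/2<\sigma\}$, and contains $\Omega$ by construction, so the unique CC of $\{V/2<\sigma\}$ containing it must be $\Omega$ itself.

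The remaining step $\Psi \circ \Phi = \mathrm{id}$ is the crux. Given $\widetilde\Omega \in \widetilde{\mathcal C}_\sigma$, pick any $(x_0,v_0)\in \widetilde\Omega$; the segment furnished by the second observation above is a continuous path in the open set $\{W<\sigma\}$ starting inside the CC $\widetilde\Omega$, hence entirely contained in $\widetilde\Omega$, which yields $(x_0,0)\in \widetilde\Omega$. Since also $x_0 \in \pi_x(\widetilde\Omega) \subseteq \Phi(\widetilde\Omega)$, the point $(x_0,0)$ lies in $\widetilde\Omega \cap \bigl(\Phi(\widetilde\Omega)\times\{0\}\bigr)$, and the uniqueness in the definition of $\Psi$ forces $\Psi(\Phi(\widetilde\Omega)) = \widetilde\Omega$. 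I expect this vertical-retraction step — namely, trapping the path $(x_0,(1-t)v_0)$ inside its starting component in order to reduce every element of $\widetilde\Omega$ to the slice $\{v=0\}$ — to be the only point demanding a little care; once it is in place, $\Phi$ and $\Psi$ are mutually inverse, $\Phi = \pi_x$ is a bijection $\widetilde{\mathcal C}_\sigma \to \mathcal C_\sigma$, and the lemma is proven.
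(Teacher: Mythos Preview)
Your proof is correct and takes essentially the same route as the paper: both arguments hinge on the vertical retraction $(x_0,(1-t)v_0)$ from Remark~\ref{obs} together with the inclusion $\Omega\times\{0\}\subseteq\{W<\sigma\}$, the only difference being that you package surjectivity and injectivity as a pair of mutual inverses $\Phi,\Psi$ while the paper argues them separately. One small point of order: the first assertion of the lemma is that $\pi_x(\widetilde\Omega)$ is \emph{equal} to a connected component of $\{V/2<\sigma\}$ (not merely contained in one), so your claim that it is ``handled'' once $\Phi$ is defined is slightly premature --- the equality $\pi_x(\widetilde\Omega)=\Phi(\widetilde\Omega)$ only follows at the very end, from the sandwich $\Omega\subseteq\pi_x(\Psi(\Omega))\subseteq\Phi(\Psi(\Omega))=\Omega$ combined with the surjectivity of $\Psi$.
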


\begin{proof}
The proof of the first statement is an easy consequence of Remark \ref{obs}.
For the second statement, let $x\in E\in \mathcal C_{\sigma}$ and denote $\tilde E$ the element of $\widetilde{\mathcal C}_ {\sigma}$ containing $(x,0)$. By the first statement, we necessarily have $\pi_x(\tilde E)= E$ so we have shown the surjectivity.
Now let $\tilde E_1$, $\tilde E_2 \in \widetilde{\mathcal C}_ {\sigma}$ such that $\pi_x(\tilde E_1)=\pi_x(\tilde E_2)= E_1$.
Let also $(x_1,v_1)\in \tilde E_1$ and $(x_2,v_2)\in \tilde E_2$.
Since $x_1$, $x_2\in  E_1$, there exists a path $(\gamma(t),0)$ from $(x_1,0)$ to $(x_2,0)$ contained in $\{W<\sigma\}$. Thus, the concatenation of the paths $(x_1,(1-t)v_1)$, $(\gamma(t),0)$ and $(x_2,tv_2)$ yields a path linking $(x_1,v_1)$ and $(x_2,v_2)$ in $\{W<\sigma\}$.
Hence $\tilde E_1=\tilde E_2$ and we get the injectivity.
\end{proof}

\begin{prop}\label{lienVW}
\begin{enumerate}
\item We have $\widetilde{\mathcal V}^{(1)}=\mathcal V^{(1)}\times \{0\}$. In particular, $V/2$ and $W$ have the same separating saddle values.
\item A set $\tilde E\in \widetilde{\mathcal C}_\sigma$ is critical if and only if $\pi_x(\tilde E)$ is critical.
\item A labeling $((\mathbf m,0)_ {k,j})_{k,j}$ is adapted to $W$ if and only if $(\mathbf m_ {k,j})_{k,j}$ is adapted to $V/2$.
\end{enumerate}
\hip
Moreover, given an adapted labeling, the mappings from Definition \ref{j et s} satisfy
$$E^{V/2}(\mathbf m_ {k,j})=\pi_x\big(E^{W}(\mathbf m_ {k,j},0)\big)\quad \text{and}\quad \mathbf j^W(\mathbf m_{k,j},0)=\mathbf j^{V/2}(\mathbf m_{k,j})\times\{0\}.$$
\end{prop}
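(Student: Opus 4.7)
My plan is to leverage Lemma \ref{corres} (the bijection $\pi_x \colon \widetilde{\mathcal C}_\sigma \to \mathcal C_\sigma$) together with the product structure $W(x,v) = V(x)/2 + v^2/4$ in order to transfer each piece of the saddle/critical/labeling data between $W$ and $V/2$ through the projection $\pi_x$. For \textbf{Part (1)}, since $\nabla W = (\nabla V/2,\,v/2)$ and $\mathrm{Hess}\,W$ is block-diagonal with blocks $\mathrm{Hess}(V/2)$ and $\mathrm{Id}/2$, the critical points of $W$ are exactly $\mathcal U^{(k)} \times \{0\}$ with the same Morse index, so $\widetilde{\mathcal U}^{(1)} = \mathcal U^{(1)} \times \{0\}$. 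To upgrade this to the separating version, I would fix $(x_0, 0) \in \widetilde{\mathcal U}^{(1)}$, set $\sigma = V(x_0)/2$, and in an orthonormal basis diagonalizing $\mathrm{Hess}_{x_0} V$ (with the unique negative eigenvalue placed first) expand $W - \sigma$ near $(x_0, 0)$ to see that the two local connected components of $\{W < \sigma\}$ near $(x_0, 0)$ produced by Lemma \ref{1.4} are mapped by $\pi_x$ onto the two local components of $\{V/2 < \sigma\}$ near $x_0$. The global bijection of Lemma \ref{corres} then transfers the "distinct CC" property, giving $(x_0, 0) \in \widetilde{\mathcal V}^{(1)} \Leftrightarrow x_0 \in \mathcal V^{(1)}$, and the equality of separating saddle values follows. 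I denote the common decreasing sequence $+\infty = \sigma_1 > \sigma_2 > \dots > \sigma_N$.

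For \textbf{Part (2)}, take $\tilde E \in \widetilde{\mathcal C}_\sigma$ with $E = \pi_x(\tilde E) \in \mathcal C_\sigma$. If $(x_0, 0) \in \partial \tilde E \cap \widetilde{\mathcal V}^{(1)}$, then Part (1) gives $x_0 \in \mathcal V^{(1)}$, and projecting any sequence in $\tilde E$ converging to $(x_0, 0)$ yields a sequence in $E$ converging to $x_0 \notin E$, so $x_0 \in \partial E \cap \mathcal V^{(1)}$. Conversely, given $x_0 \in \partial E \cap \mathcal V^{(1)}$ and a sequence $x_n \to x_0$ in $E$, I pick preimages $(x_n, v_n) \in \tilde E$; by Remark \ref{obs} the connected fibre ball $\{x_n\} \times B(0, |v_n|)$ sits inside $\{W < \sigma\}$ and, since it meets $\tilde E$, it is forced into $\tilde E$, so $(x_n, 0) \in \tilde E$ and $(x_n, 0) \to (x_0, 0) \in \widetilde{\mathcal V}^{(1)} \cap \partial \tilde E$. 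For \textbf{Part (3)}, the fibrewise minimisation $\min_v W(x,v) = V(x)/2$, attained only at $v=0$, shows that $(\mathbf m, 0)$ is a global minimum of $W|_{\tilde E}$ iff $\mathbf m$ is a global minimum of $(V/2)|_E$. Combined with Lemma \ref{corres}, this makes the bijectivity of $T_k^W$ equivalent to that of $T_k^{V/2}$, and the equivalence of adapted labelings follows.

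For the two remaining formulas, once an adapted labeling is fixed, $E^{V/2}(\mathbf m_{k,j}) = \pi_x(E^W((\mathbf m_{k,j}, 0)))$ is just the identity $T_k^{V/2} = \pi_x \circ T_k^W$ from Lemma \ref{corres}, while $\mathbf j^W((\mathbf m_{k,j}, 0)) = \mathbf j^{V/2}(\mathbf m_{k,j}) \times \{0\}$ follows by applying Parts (1) and (2) to the critical set $\tilde E = E^W((\mathbf m_{k,j}, 0))$. I expect the main technical point to be the Morse-type expansion in Part (1), where one must check carefully that the two local "tongues" of $\{W < \sigma\}$ near an index-one saddle $(x_0, 0)$ are really the $\pi_x$-preimages of the two local tongues of $\{V/2 < \sigma\}$ near $x_0$; everything else amounts to bookkeeping with Lemma \ref{corres} and the fibration structure of $W$.
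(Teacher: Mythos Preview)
Your proposal is correct. The main difference from the paper's proof lies in Part~(1). The paper first isolates a single boundary observation,
\[
(x,0)\in \partial\tilde E \iff x\in \partial\big(\pi_x(\tilde E)\big),
\]
which follows directly from Remark~\ref{obs}, and then characterizes ``separating'' as ``lies in the closure of two distinct global CCs''; combined with the bijection of Lemma~\ref{corres}, this immediately gives $(\mathbf s,0)\in\widetilde{\mathcal V}^{(1)}\Leftrightarrow \mathbf s\in\mathcal V^{(1)}$ with no local Morse analysis at all. Your route instead matches the two local ``tongues'' of $\{W<\sigma\}$ and $\{V/2<\sigma\}$ via an explicit product Morse chart $(\psi,\mathrm{id})$ built from a Morse chart $\psi$ for $V/2$ at $x_0$; this works, but is heavier than necessary. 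In fact, you essentially reprove the paper's boundary observation inline in your Part~(2) argument (the fibre trick with $\{x_n\}\times B(0,|v_n|)$); had you stated it once at the outset, your Part~(1) would collapse to the paper's two-line argument. Parts~(2), (3), and the two final formulas are handled essentially the same way in both proofs.
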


\begin{proof}
Let $\tilde E \in \widetilde{\mathcal C}_\sigma$. Thanks to Remark \ref{obs}, we easily have 
\begin{align}\label{projadh}
(x,0)\in \partial\tilde E \iff x\in \partial\big(\pi_x(\tilde E)\big).
\end{align}
$a)$: We already know that $\widetilde{\mathcal U}^{(1)}=\mathcal U^{(1)}\times\{0\}$. Besides, we easily deduce from \eqref{projadh} and Lemma \ref{corres} that $(\mathbf s,0)\in\widetilde{\mathcal U}^{(1)}$ is in the closure of two distinct CCs of $\{W<W(\mathbf s,0)\}$ if and only if $\mathbf s\in\mathcal U^{(1)}$ is in the closure of two distinct CCs of $\{V<V(\mathbf s)\}$ so the first item is proven.\\
$b)$: This is also a straightforward consequence of \eqref{projadh} and Lemma \ref{corres} combined with item $a)$.\\
$c)$: Let $\tilde E\in\widetilde{\mathcal C}_{\sigma_k}$. By Remark \ref{obs}, we easily have 
\begin{align}\label{minssimin}
 \quad (\mathbf m,0) \text{ is a global minimum of } W|_{\tilde E} \iff \mathbf m \text{ is a global minimum of } V|_{\pi_x(\tilde E)}.
\end{align}
Besides, since $\widetilde{\mathcal U}^{(0)}_k=\mathcal U^{(0)}_k\times\{0\}$, we have that $\pi^k$ defined as $\pi_x:\widetilde{\mathcal U}^{(0)}_k \to \mathcal U^{(0)}_k$ is bijective.
We can then conclude as 
\begin{align}\label{TWTV}
T_k^W=\pi_x^{-1} \circ T_k^{V/2} \circ \pi^k
\end{align}
 where $\pi_x$ denotes the bijective map from Lemma \ref{corres}.\\
The last statement is a direct consequence of \eqref{TWTV}, \eqref{projadh} and item $a)$.
\end{proof}
\hip
From now on, we fix a labeling $(\mathbf m_{k,j})_ {k,j}$ adapted to $V$.
Note that $\boldsymbol \sigma^{V/2}(\mathbf m)=\boldsymbol \sigma^{W}(\mathbf m,0)$ and $S^{V/2}(\mathbf m)=S^W(\mathbf m,0)$.
For shortness, we will denote in the rest of the paper $\mathbf j=\mathbf j^{V/2}$, $\boldsymbol \sigma=\boldsymbol \sigma^{V/2}$ and $S=S^{V/2}$.
However, be careful that we choose to denote $E=\pi_x^{-1}\circ E^{V/2}$ so that the range of $E$ is in $\mathcal P(\R^{2d})$.
Following \cite{BoGaKl, HeKlNi, HHS11, LPMichel}, we can now state our last assumption that allows us to treat the generic case.
As mentionned in the introduction, this assumption could actually be omitted (see \cite{Michel} or \cite{BonyLPMichel}) but this would introduce additionnal difficulties that are not the main concern of this paper.

\begin{hypo}\label{jvide}
For all $\mathbf m \in \mathcal U^{(0)}$, we have 
\begin{enumerate}[label=\alph*)]
\item $\mathbf m$ is the only global minimum of $V|_{E^{V/2}(\mathbf m)}$ \label{mseul}
\item for any $\mathbf m' \in \mathcal U^{(0)}\backslash \{\mathbf m\}$, the sets $\mathbf j(\mathbf m)$ and $\mathbf j(\mathbf m')$ do not intersect.\label{jvide2}
\end{enumerate}
\end{hypo}
\hip
According to Proposition \ref{lienVW} and \eqref{minssimin}, this hypothesis is equivalent to the facts that $(\mathbf m,0)$ is the only global minimum of $W|_{E(\mathbf m)}$ and $\mathbf j^W(\mathbf m,0)\cap\mathbf j^W(\mathbf m',0)=\emptyset$ which is what we use in practice.
\hop

Recall the notation \eqref{exph} and let us extend our notions of asymptotic expansions to smooth functions that are not necessarily symbols. Throughout the paper, for $d'\in \N^*$, $\Omega \subseteq \R^{d'}$ and $a \in \mathcal C^{\infty}(\Omega)$ a function depending on $h$ and such that for all $\beta\in \N^{d'}$ we have $\partial^\beta a =O_{L^\infty}(1)$, we will denote $a \sim_h \sum_{j \geq 0}h^j a_j$, where  $(a_j)_{j \geq 0}\subset \mathcal C^{\infty}(\Omega)$ are allowed to depend on $h$, provided that for all $\beta \in \N^{d'}$ and $N \in \N$, there exists $C_{\beta, N}$ such that 
$$\Big\|\partial^\beta \Big(a-\sum_{j = 0}^{N-1}h^j a_j\Big)\Big\|_{\infty, \Omega}\leq C_{\beta, N} h^N.$$
It implies in particular that $\partial^\beta a_j=O_{L^\infty}(1)$.
We will also say that $a \in \mathcal C^{\infty}(\Omega)$ admits a classical expansion on $\Omega$ and we will denote $a \sim \sum_{j \geq 0}h^j a_j$ if $a \sim_h \sum_{j \geq 0}h^j a_j$ and the $(a_j)$ are independent of $h$.
From now on, the letter $r$ will denote a small universal positive constant whose value may decrease as we progress in this paper (one can think of $r$ as $1/C$).
For $x\in \R^d$, we denote $B_0(x,r)=B(x,r)\times B(0,r)\subseteq \R^{2d}$.
We essentially follow the quasimodal construction from \cite{BonyLPMichel}.
We will also denote 
$$H_W=h^{-1}X_0^h=\begin{pmatrix}
v \\
-\partial_x V 
\end{pmatrix}.$$

Let 
$\mathbf m  \in \mathcal U^{(0)}\backslash\{\underline{\mathbf m}\}$; for each $\mathbf s \in \mathbf j(\mathbf m)$ we introduce a function 
$\ell^{\mathbf s,h}$ that will appear in our quasimodes.
Note that thanks to item \ref{jvide2} from Hypothesis \ref{jvide}, each $\ell^{\mathbf s,h}$ corresponds to a unique $\mathbf m  \in \mathcal U^{(0)}\backslash\{\underline{\mathbf m}\}$.
Our goal will be to find some functions $\ell^{\mathbf s,h}$ such that our quasimodes are the most accurate possible.
In order to begin the computations that will yield the equations that the function $\ell^{\mathbf s,h}$ should satisfy, we will for the moment assume that it satisfies the following:
\hop

\begin{minipage}{0.02\linewidth}
\begin{align}\label{hypol}
\,
\end{align}
\end{minipage}
\begin{minipage}{0.93\linewidth}
\begin{enumerate}[label=\alph*)]
\item $\ell^{\mathbf s,h}$ is a smooth real valued function on $\R^{2d}$ whose support is contained in $B_0(\mathbf s,3r)$ 
\label{hypol1}
\item $\ell^{\mathbf s,h}$ admits a classical expansion $\ell^{\mathbf s,h}(x,v)\sim \sum h^j\ell^{\mathbf s}_j(x,v)$ on $B_0(\mathbf s,2r)$ 
\item $\ell^{\mathbf s}_0$ vanishes at $(\mathbf s,0)$ \label{hypoln-2}
\item $(\mathbf s,0)$ is a local minimum of the function $W+(\ell_0^{\mathbf s})^2/2$ which is non degenerate \label{hypoln-1}
\item the functions $\theta_{\mathbf m,h}$ (which depends on $\ell^{\mathbf s,h}$) and $\chi_{\mathbf m}$ that we will introduce in \eqref{thetainte}-\eqref{chim} are such that $\theta_{\mathbf m,h}$ is smooth on a neighborhood of supp $\chi_{\mathbf m}$. \label{hypoln}
\end{enumerate}
\end{minipage}
\hop
Once we will have found the desired function $\ell^{\mathbf s,h}$, we will see in Proposition \ref{lexist} that these assumptions are actually satisfied.
Denote $\zeta \in \mathcal C^{\infty}_c(\R, [0,1])$ an even cut-off function supported in $[-\gamma,\gamma]$ that is equal to $1$ on $[-\gamma/2,\gamma/2]$ where $\gamma>0$ is a parameter to be fixed later and 
\begin{align}\label{approxa}
A_h=\frac12 \int_\R \zeta(s) \e^{-\frac{s^2}{2h}}\D s=\int_0^{\gamma} \zeta(s) \e^{-\frac{s^2}{2h}}\D s=\frac{\sqrt{\pi h}}{\sqrt 2}(1+O(\e^{-\alpha/h}))\qquad \text{for some }\alpha>0.
\end{align}
We now define for each $\mathbf m  \in \mathcal U^{(0)}\backslash\{\underline{\mathbf m}\}$ a function $\theta_{\mathbf m,h}$ as follows: if $(x,v) \in B_0(\mathbf s,r)\cap \{|\ell^{\mathbf s,h}|\leq 2\gamma\}$ for some $\mathbf s \in \mathbf j(\mathbf m)$, 
\begin{align}\label{thetainte}
\theta_{\mathbf m,h}(x,v)=\frac12 \Big(1+A_h^{-1}\int_0^{\ell^{\mathbf s,h}(x,v)}\zeta(s)\e^{-s^2/2h}\D s\Big)
\end{align} 
whereas we set 
\begin{align}\label{theta1}
\theta_{\mathbf m,h}=1 \quad  \text{on } \Big(E(\mathbf m)+B(0, \varepsilon) \Big)\backslash \Big(\bigsqcup_{\mathbf s\in\mathbf j(\mathbf m)} \big(B_0(\mathbf s,r)\cap \{|\ell^{\mathbf s,h}|\leq 2\gamma\}\big) \Big)
\end{align}
with $\varepsilon(r)>0$ to be fixed later and 
\begin{align}\label{theta0}
\theta_{\mathbf m,h}=0 \quad  \text{everywhere else.}
\end{align}
Note that $\theta_{\mathbf m,h}$ takes values in $[0,1]$.
Denote $\Omega$ the CC of $\{W\leq \boldsymbol \sigma(\mathbf m)\}$ containing $\mathbf m$.
The CCs of $\{W\leq \boldsymbol \sigma(\mathbf m)\}$ are separated so for $\varepsilon>0$ small enough, there exists $\tilde \varepsilon>0$ such that 
$$\min\,\big\{W(x,v)\, ; \, \D \big((x,v),\Omega\big)=\varepsilon\big\}=\boldsymbol \sigma(\mathbf m)+2 \tilde \varepsilon.$$
Thus the distance between $\{W \leq \boldsymbol \sigma(\mathbf m)+\tilde \varepsilon\}\cap \big(\Omega+B(0,\varepsilon)\big)$ and $\partial \big(\Omega+B(0,\varepsilon)\big)$ is positive and we can consider a cut-off function 
\begin{align}\label{chim}
\chi_{\mathbf m}\in \mathcal C^{\infty}_c(\R^{2d},[0,1])
\end{align}
such that 
$$\chi_{\mathbf m} =1 \text{ on } \{W \leq \boldsymbol \sigma(\mathbf m)+\tilde \varepsilon\}\cap \big(\Omega+B(0,\varepsilon)\big)$$
 and 
$$\mathrm{supp} \,\chi_{\mathbf m} \subset  \big(\Omega+B(0,\varepsilon)\big).$$
To sum up, we have the following picture:

\begin{center}
\begin{tikzpicture}[scale=5]

\draw plot[samples=100,domain=1:2] (\x, {(\x-1)*sqrt(1-((\x-1))^2)}) 
		-- plot[samples=100,domain=2:1] (\x, {-(\x-1)*sqrt(1-((\x-1))^2)}) -- cycle ;
\draw plot[samples=100,domain=0:1] (\x, {\x*(1-\x)}) -- plot[samples=100,domain=1:0] (\x, {-\x*(1-\x)}) -- cycle ;
\draw plot[samples=100,domain=-1:0] (\x, {\x*sqrt(1-(\x)^2)}) -- plot[samples=100,domain=0:-1] (\x, {-\x*sqrt(1-(\x)^2)}) -- cycle ;

\begin{scope} [xshift=-6.5,scale=1.15]
\draw[densely dashed] plot[samples=100,domain=1.07:2] (\x, {(\x-1)*sqrt(1-((\x-1))^2)}) 
		-- plot[samples=100,domain=2:1.07] (\x, {-(\x-1)*sqrt(1-((\x-1))^2)}) ;
\end{scope}
\begin{scope} [xshift=-2,scale=1.15]
\draw[densely dashed] plot[samples=100,domain=0.07:0.93] (\x, {\x*(1-\x)});
\draw[densely dashed] plot[samples=100,domain=0.93:0.07] (\x, {-\x*(1-\x)});
\end{scope}
\begin{scope} [xshift=2.4,scale=1.15]
\draw[densely dashed] plot[samples=100,domain=-0.07:-1] (\x, {\x*sqrt(1-(\x)^2)}) -- plot[samples=100,domain=-1:-0.07] (\x, {-\x*sqrt(1-(\x)^2)});
\fill[pattern={Lines[angle=45,distance=5pt]},pattern color=blue,opacity=0.7] (-0.02,0) -- plot[samples=100,domain=-0.07:-1] (\x, {\x*sqrt(1-(\x)^2)}) -- plot[samples=100,domain=-1:-0.07] (\x, {-\x*sqrt(1-(\x)^2)}) -- cycle;
\end{scope}


\coordinate (ne) at (0.5*0.4,{0.5*sqrt(1-(0.4)^2)});
\coordinate (nw) at (-0.5*0.4,{0.5*sqrt(1-(0.4)^2)});
\coordinate (se) at (0.5*0.4,{-0.5*sqrt(1-(0.4)^2)});
\coordinate (sw) at (-0.5*0.4,{-0.5*sqrt(1-(0.4)^2)});
\filldraw[red,fill opacity=0.5] (ne) arc (1.16 r:{(pi-1.16) r}:0.5) -- (sw) arc ({-(pi-1.16) r}:-1.16 r:0.5) -- cycle;

\draw (0,0) node {$\bullet$};
\draw (-0.09,0) node[scale=0.8] {$\mathbf j(\mathbf m)$};
\draw (-0.8,0) node {$\bullet$};
\draw (-0.75,-0.02) node[scale=0.9] {$\mathbf m$};
\draw (-0.55,0.2) node[scale=0.9] {$\theta_{\mathbf m,h}=1$};
\draw (0.55,0.05) node[scale=0.9] {$\theta_{\mathbf m,h}=0$};
\draw (1.4,-0.08) node[scale=1.2] {$\Omega$};
\draw (0.75,0.27) node[scale=0.9,rotate=-19] {supp $\chi_{\mathbf m}$};

\draw[>=stealth,<-] (0,-0.3) arc ({(-0.7*pi) r}:{(-0.55*pi) r}:1.2) node[right] {$\theta_{\mathbf m,h}$ given by \eqref{thetainte}};

\end{tikzpicture}
\end{center}
We also denote 
$$W_{\mathbf m}(x,v)=W(x,v)-V(\mathbf m)/2$$ 
and it is clear that on the support of $\nabla \chi_{\mathbf m}$, we have
$$W_{\mathbf m}\geq S(\mathbf m)+\tilde \varepsilon.$$
Our quasimodes will be the $L^2$-renormalizations of the functions 
\begin{align}\label{quasim}
f_{\mathbf m,h}(x,v)=\chi_{\mathbf m}(x,v)\theta_{\mathbf m,h}(x,v)\e^{-W_{\mathbf m}(x,v)/h}\quad ; \quad  \mathbf m \in \mathcal U^{(0)}\backslash\{\underline{\mathbf m}\}
\end{align}
and for $\mathbf m=\underline{\mathbf m}$, 
$$f_{\underline{\mathbf m},h}(x,v)=\e^{-W_{\underline{\mathbf m}}(x,v)/h}\in \mathrm{Ker}\, P_h.$$
Note that these functions belong to $\mathcal C^\infty_c(\R^{2d})$ thanks to our assumption on the $(\ell^{\mathbf s,h})_{\mathbf s\in \mathbf j(\mathbf m)}$ and that for $\mathbf m\neq \underline{\mathbf m}$, we have 
\begin{align}\label{suppf}
\mathrm{supp}\, f_{\mathbf m,h} \subseteq E(\mathbf m)+B(0,\varepsilon')
\end{align}
where $\varepsilon'=\max(\varepsilon,r)$.

\subsection{Action of the operator $P_h$}

Let us fix $\mathbf m \in \mathcal U^{(0)}\backslash\{\underline{\mathbf m}\}$. 
We will denote
\begin{align}\label{ytilde}
\widetilde W_{\mathbf m,h}=W_{\mathbf m}+\sum_{\mathbf s \in \mathbf j(\mathbf m)}(\ell^{\mathbf s,h})^2/2
\end{align}
and
\begin{align}\label{psi}
\psi^{\mathbf m,h}(x,v,v')=\int_0^1\partial_v \widetilde W_{\mathbf m,h}(x, v'+t(v-v')) \D t.
\end{align}

\begin{rema}\label{defg}
Using Hypothesis \ref{hypom}, it is easy to see that 
$b_h^*\text{Op}_h(M^h)=\text{Op}_h(g^h)$, with 
$$g^h=(-i\, {}^t\eta+\, {}^tv/2)M^h-\frac{h}{2}( {}^t\nabla_v-\frac i2  {}^t\nabla_\eta)M^h\in \mathcal M_{1,d}\big(S^0_\tau(\langle (v,\eta) \rangle^{-1})\big)$$
where 
$${}^t\nabla_vM^h=\Big(\sum_{k=1}^d\partial_{v_k}m_{k,j}\Big)_{1\leq j \leq d}$$
and ${}^t\nabla_\eta$ is defined similarly.
\end{rema}

\begin{prop}\label{phf}
Let $f_{\mathbf m,h}$ be the quasimode defined in \eqref{quasim}.
With the notations introduced in \eqref{approxa} and \eqref{ytilde}, one has
$$P_hf_{\mathbf m,h}=\frac h2 A_h^{-1}\om^{\mathbf m,h}\,\e^{\frac{-\widetilde W_{\mathbf m,h}}{h}}\1_{\mathbf j^W(\mathbf m)+B_0(0,2r)}+O_{L^2}\Big(h^\infty\e^{-\frac{S(\mathbf m)}{h}}\Big)$$
where $\om^{\mathbf m,h}$ is a function bounded uniformly in $h$ and defined on $\mathbf j^W(\mathbf m)+B_0(0,2r)$ by
\begin{align*}
\om^{\mathbf m,h}=&\sum_{ \mathbf s \in \mathbf j(\mathbf m)}\Big( H_W \cdot \nabla \ell^{\mathbf s,h}+I^{\mathbf s,h}\Big)
\end{align*}
with $I^{\mathbf s,h}(x,v)$ given for $(x,v)\in \mathbf j^W(\mathbf m)+B_0(0,2r)$ by the oscillatory integral
$$(2\pi h)^{-d}\int_{\R^d} \int_{|v'|\leq 2r} \e^{\frac ih \eta \cdot (v-v')}g^h\Big(x,\frac{v+v'}{2},\eta+i\psi^{\mathbf m,h}(x,v,v')\Big) \partial_v\ell^{\mathbf s,h}(x,v')\,\D v' \D \eta.$$
\end{prop}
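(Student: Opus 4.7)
The plan is to decompose $P_h f_{\mathbf m,h}=X_0^h f_{\mathbf m,h}+Q_h f_{\mathbf m,h}$ and compute each piece using two identities that localize all derivatives onto the cutoffs $\chi_{\mathbf m}$ and $\theta_{\mathbf m,h}$: since $W$ is conserved along the flow of $H_W$ one has $X_0^h e^{-W_{\mathbf m}/h}=0$, and since $\partial_v W_{\mathbf m}=v/2$ one has $b_h e^{-W_{\mathbf m}/h}=0$, so that $b_h(e^{-W_{\mathbf m}/h}u)=e^{-W_{\mathbf m}/h}\,h\partial_v u$ for any $u$.

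For the transport term, the $\nabla\chi_{\mathbf m}$ contribution is supported where $W_{\mathbf m}\geq S(\mathbf m)+\tilde\varepsilon$ and is absorbed in the remainder. On each $B_0(\mathbf s,r)$, differentiating \eqref{thetainte} gives $\nabla\theta_{\mathbf m,h}=\tfrac12 A_h^{-1}\zeta(\ell^{\mathbf s,h})e^{-(\ell^{\mathbf s,h})^2/(2h)}\nabla\ell^{\mathbf s,h}$, which produces the main contribution $\tfrac h2 A_h^{-1}\zeta(\ell^{\mathbf s,h})H_W\cdot\nabla\ell^{\mathbf s,h}\,e^{-\widetilde W_{\mathbf m,h}/h}$. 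Since $\zeta(\ell^{\mathbf s,h})\neq 1$ forces $|\ell^{\mathbf s,h}|\geq\gamma/2$ and hence $e^{-(\ell^{\mathbf s,h})^2/(2h)}=O(e^{-\gamma^2/(8h)})$ there, one may replace $\zeta(\ell^{\mathbf s,h})$ by $1$ up to an $O(h^\infty e^{-S(\mathbf m)/h})$ error, yielding the $H_W\cdot\nabla\ell^{\mathbf s,h}$ term of $\omega^{\mathbf m,h}$.

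For the collision term, the analogous computation gives, near each saddle, $b_h f_{\mathbf m,h}=\tfrac h2 A_h^{-1}\chi_{\mathbf m}\zeta(\ell^{\mathbf s,h})\partial_v\ell^{\mathbf s,h}\,e^{-\widetilde W_{\mathbf m,h}/h}+O(h^\infty e^{-S(\mathbf m)/h})$. Using Remark \ref{defg}, I write $Q_h f_{\mathbf m,h}=\mathrm{Op}_h(g^h)b_h f_{\mathbf m,h}$ in oscillatory integral form in $(v',\eta)$. The identity $e^{-\widetilde W_{\mathbf m,h}(x,v')/h}=e^{-\widetilde W_{\mathbf m,h}(x,v)/h}\,e^{\psi^{\mathbf m,h}(x,v,v')\cdot(v-v')/h}$ (the integral form of the mean value theorem in $v$) extracts the factor $e^{-\widetilde W_{\mathbf m,h}(x,v)/h}$ and turns the phase into $e^{i(\eta-i\psi^{\mathbf m,h})\cdot(v-v')/h}$. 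For $(x,v)\in\mathbf j^W(\mathbf m)+B_0(0,2r)$ and $|v'|\leq 2r$, $\psi^{\mathbf m,h}$ lies inside the analyticity strip $\Sigma_\tau$ of $g^h$ provided by Hypothesis \ref{hypom}, so a contour shift $\eta\mapsto\eta+i\psi^{\mathbf m,h}(x,v,v')$ is licit and produces exactly $I^{\mathbf s,h}(x,v)\,e^{-\widetilde W_{\mathbf m,h}(x,v)/h}$, the factors $\chi_{\mathbf m}$ and $\zeta(\ell^{\mathbf s,h})$ being removed as above.

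Outside the neighborhood $\mathbf j^W(\mathbf m)+B_0(0,2r)$, either $\chi_{\mathbf m}\theta_{\mathbf m,h}$ is locally constant equal to $1$, in which case pointwise $X_0^h e^{-W_{\mathbf m}/h}=0$ and $Q_h e^{-W_{\mathbf m}/h}=0$ (using $e^{-W_{\mathbf m}/h}\propto \mathcal M_h\in\ker Q_h$), or $\chi_{\mathbf m}\theta_{\mathbf m,h}$ vanishes locally; in both cases the non-local tail of $Q_h f_{\mathbf m,h}$ coming from the transitions is controlled by the pseudo-locality of $\mathrm{Op}_h(g^h)$ in the variable $v$ combined with the Gaussian decay of $e^{-\widetilde W_{\mathbf m,h}/h}$ away from the points $(\mathbf s,0)$, giving an $O_{L^2}(h^\infty e^{-S(\mathbf m)/h})$ remainder and producing the indicator function. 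Summing the resulting saddle-by-saddle contributions yields the stated formula. The main technical obstacle is the uniform justification of the $\eta$-contour shift together with the truncation to $|v'|\leq 2r$, both handled within the analytic symbol calculus of Definition \ref{symbolo}.
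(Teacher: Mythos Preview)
Your proposal is correct and follows essentially the same route as the paper: the same decomposition $P_h=X_0^h+Q_h$, the same use of $X_0^h e^{-W_{\mathbf m}/h}=0$ and $b_h e^{-W_{\mathbf m}/h}=0$ to localize onto $\nabla(\chi_{\mathbf m}\theta_{\mathbf m,h})$, the same mean-value identity to pull out $e^{-\widetilde W_{\mathbf m,h}/h}$, and the same contour shift in $\eta$ (done in the paper component by component via Cauchy's formula). The only cosmetic difference is in the treatment of the region outside $\mathbf j^W(\mathbf m)+B_0(0,2r)$: rather than arguing pointwise that $\chi_{\mathbf m}\theta_{\mathbf m,h}$ is locally constant and then invoking pseudo-locality for the non-local tail of $Q_h$, the paper observes directly that $b_h f_{\mathbf m,h}$ is (up to the $\nabla\chi_{\mathbf m}$ remainder) supported in $\bigsqcup_{\mathbf s}B_0(\mathbf s,r)$, so that for $|v|\geq 2r$ one has $|v-v'|\geq r$ on the support of the integrand and a plain non-stationary phase in $\eta$ gives the $O_{L^2}(h^\infty e^{-S(\mathbf m)/h})$ bound; this is slightly cleaner than the pseudo-locality formulation but amounts to the same mechanism.
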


\begin{proof}
In order to lighten the notations, we will drop some of the exponents and indexes $\mathbf m$, $\mathbf s$ and $h$ in the proof.
By \eqref{hypol}, we have on the support of $\chi$ that $\theta$ is smooth and
$$\nabla \theta=\frac{A_h^{-1}}{2}\sum_{\mathbf s\in \mathbf j(\mathbf m)}\e^{-(\ell^{\mathbf s})^2/2h}\zeta(\ell^{\mathbf s})\nabla \ell^{\mathbf s} \,\1_{B_0(\mathbf s,r)}.$$
Here we have to put the indicator function because $\zeta(\ell)\nabla \ell$ might have some support in 
$B_0(\mathbf s,3r)\backslash B_0(\mathbf s,r)$.
We can then begin by computing
\begin{align}\label{x0f}
X_0^hf&=hH_W \cdot \nabla f \nonumber\\
			&=hH_W \cdot \nabla \theta \,\chi \e^{-W_{\mathbf m}/h}+hH_W \cdot \nabla \chi \,\theta \e^{-W_{\mathbf m}/h}\\
			&=\frac h2 A_h^{-1}\chi \e^{-\widetilde W/h} \sum_{\mathbf s\in \mathbf j(\mathbf m)} \zeta(\ell^{\mathbf s})  H_W \cdot \nabla \ell^{\mathbf s} \,\1_{B_0(\mathbf s,r)} +O\Big(h\e^{-\frac{S(\mathbf m)+\tilde \varepsilon}{h}}\Big). \nonumber
\end{align}
since $ W_{\mathbf m}\geq S(\mathbf m)+\tilde\varepsilon$ on the support of $\nabla \chi$.
Now we can use Remark \ref{defg} to write
\begin{align}\label{qf}
\qquad \quad Q_h(f)&=h\mathrm{Op}_h(g)\big((\partial_v \theta) \chi \e^{-W_{\mathbf m}/h}+(\partial_v\chi)\theta \e^{-W_{\mathbf m}/h}\big)\\
			&=\frac h2 A_h^{-1}\sum_{\mathbf s\in \mathbf j(\mathbf m)}\mathrm{Op}_h(g)\Big(\zeta(\ell^{\mathbf s})\chi \e^{-\widetilde W/h} \partial_v \ell^{\mathbf s} \,\1_{B_0(\mathbf s,r)} \Big)+O\Big(h\e^{-\frac{S(\mathbf m)+\tilde \varepsilon}{h}}\Big) \nonumber
\end{align}
since $g \in S(\langle (v,\eta) \rangle^{-1})$ and thus $\mathrm{Op}_h(g)$ is bounded uniformly in $h$.
But since $g$ does not depend on $\xi$,
we have for $\mathbf s\in \mathbf j(\mathbf m)$
\begin{align}\label{pasxi}
(2 \pi h)^{d}\mathrm{Op}_h(g)\Big(\zeta(\ell)\chi \e^{-\widetilde W/h} &\partial_v \ell \,\1_{B_0(\mathbf s,r)} \Big)(x,v)=\int_{\R^{d}} \int_{|v'|\leq r} \e^{\frac ih \eta \cdot (v-v')}g\Big(x,\frac{v+v'}{2},\eta\Big) \nonumber\\
& \qquad \qquad \qquad \quad \times\chi(x,v') \zeta\big(\ell(x,v')\big) \e^{-\widetilde W(x,v')/h}\partial_v\ell(x,v')\;\D v' \D \eta \,\1_{B(\mathbf s,r)}(x) .
\end{align}
Let us now treat separately the cases $|v|\geq 2r$ and $|v|<2r$ .\\
When $|v|\geq 2r$, we have $|v-v'|\geq r$ so we can apply the non stationnary phase to the integral in $\eta$ to get that for all $x \in B(\mathbf s,r)$ and $N\geq 1$, there exists $C_N>0$ such that
\begin{align*}
\bigg|\int_{\R^{d}} 
\int_{|v'|\leq r} \e^{\frac ih \eta \cdot (v-v')}g\Big(x,\frac{v+v'}{2},\eta\Big) \chi(x,v') \zeta\big(\ell(x,v')\big) \e^{-\widetilde W(x,v')/h}\partial_v\ell(x,v')\;\D v' \D \eta\bigg| \leq C_N h^{N} |v|^{-N} \e^{-\frac{S(\mathbf m)}{h}}
\end{align*}
where we used item \ref{hypoln-1} from \eqref{hypol}, the fact that $W_{\mathbf m}(\mathbf s,0)+\ell_0^2(\mathbf s,0)/2=S(\mathbf m)$ and the estimate $|v-v'|\geq |v|/2$.
Hence we have shown that 
\begin{align}\label{vgrand}
\qquad \quad Q_hf\,\1_{\{|v|\geq 2r\}}=O\Big(h^\infty \e^{-\frac{S(\mathbf m)}{h}}\Big) \quad \text{and }\quad P_hf\,\1_{\{|v|\geq 2r\}}=O\Big(h^\infty \e^{-\frac{S(\mathbf m)}{h}}\Big).
\end{align}
Now for the case $|v|<2r$, let us denote $J_1^{\mathbf s}(x,v)$ the RHS of \eqref{pasxi}. Proceeding as in \cite{Nakamura} in order to take the $\e^{-\widetilde W(x,v')/h}$ in front of the oscillatory integral,
we get that for any 
$x\in B(\mathbf s,r)$,
\begin{align}\label{expdevant}
J_1^{\mathbf s}(x,v)=\e^{-\widetilde W(x,v)/h}J_2^{\mathbf s}(x,v)
\end{align}
where
\begin{align}
J_2^{\mathbf s}(x,v)=\int_{\R^d} \int_{|v'|\leq r} \e^{\frac ih \big(\eta-i\psi(x,v,v')\big) \cdot \big(v-v'\big)}g\Big(x,\frac{v+v'}{2},\eta\Big) \chi(x,v') \zeta\big(\ell(x,v')\big) \partial_v\ell(x,v')\;\D v' \D \eta \,\1_{B(\mathbf s,r)}(x)
\end{align}
and $\psi$ is the function defined in \eqref{psi}.
For $K\subset \{1,\dots, d\}$ and $z\in \C^d$, denote  $z_K=(z_j)_{j\in K}$.
We also denote for $d'\in \N$ and $1\leq j\leq d'$
\begin{align}\label{ej}
e_j=(\delta_{k,j})_{1\leq k \leq d'}\in \N^{d'}
\end{align}
the elements of the canonical basis of $\C^{d'}$.
Now notice that $\psi$ is a smooth function and that using the expansion of $\ell$ and \eqref{ytilde}, we get on $B_0(\mathbf s,2r)\times \{|v'|\leq 2r\}$,
$$\psi(x,v,v')=\frac{v+v'}{4}+\int_0^1\big(\ell_0\partial_v \ell_0\big)(x,v'+t(v-v'))\D t+O(h).$$
In particular, we can choose $r$ small enough so that $|\psi|<\tau$ on $B_0(\mathbf s,2r)\times \{|v'|\leq 2r\}$.
Besides, since $g \in S^0_\tau(\langle(v,\eta) \rangle^{-1})$, we have for all $K\subset \{1,\dots, d\}$ and $k\in \{1,\dots, d\}\backslash K$ that the symbol
$$\eta_k \mapsto g\Big(x,\frac{v+v'}{2},\eta+i\sum_{j\in K}[\psi(x,v,v')]_{j}e_j\Big) $$
has an analytic continuation to $\{|\eta_k|<\tau\}$ for any $x\in B(\mathbf s,r)$, $v$, $v'\in B(0,2r)$ and $\eta \in \R^d$.
Hence, one can use the Cauchy formula which combined with the decay of $g$ yields
\begin{align}
\int_{\R}  \e^{\frac ih\big(\eta_k-i[\psi(x,v,v')]_k\big) (v_k-v_k')}  g\Big(x,\frac{v+v'}{2},\eta+i\sum_{j\in K}[\psi(x,v,v')]_{j}e_j\Big)\, \D \eta_k=&\\
\int_{\R}  \e^{\frac ih \eta_k (v_k-v_k')}  g\Big(x,\frac{v+v'}{2},\eta+&i\sum_{j\in K\cup \{k\}}[\psi(x,v,v')]_{j}e_j\Big)\,   \D \eta_k.  
\end{align}
Applying this successively for each component of $\eta$ on the integrals in $J_2^{\mathbf s}$ finally gives $J_2^{\mathbf s}=J_3^{\mathbf s}$ where
\begin{align}
J_3^{\mathbf s}(x,v)=\int_{\R^d} \int_{|v'|\leq r} \e^{\frac ih \eta \cdot (v-v')}g\Big(x,\frac{v+v'}{2},\eta+i\psi(x,v,v')\Big)\chi(x,v') \zeta\big(\ell(x,v')\big)\partial_v\ell(x,v')\;\D v' \D \eta \,\1_{B(\mathbf s,r)}(x).
\end{align}
Combined with 
\eqref{pasxi} and \eqref{expdevant}, this yields for $|v|<2r$
\begin{align}\label{opgtruc}
(2 \pi h)^{d}&\mathrm{Op}_h(g)\Big(\zeta(\ell)\chi \e^{-\widetilde W/h} \partial_v \ell \,\1_{B_0(\mathbf s,r)} \Big)(x,v)=\e^{-\widetilde W(x,v)/h}J_3^{\mathbf s}(x,v).
\end{align}
Therefore, setting on $\mathbf j^W(\mathbf m)+B_0(0,2r)$
\begin{align*}
\tilde \om=\sum_{\mathbf s\in \mathbf j(\mathbf m)}\Big(\chi \zeta(\ell^{\mathbf s})  H_W \cdot \nabla \ell^{\mathbf s} \,\1_{B_0(\mathbf s,r)}+(2\pi h)^{-d}J_3^{\mathbf s}(x,v)\Big),
\end{align*}
we have according to \eqref{x0f}, \eqref{qf}, \eqref{vgrand} and \eqref{opgtruc}
$$P_hf=\frac h2 A_h^{-1}\tilde \om\,\e^{-\widetilde W/h}\1_{\mathbf j^W(\mathbf m)+B_0(0,2r)}+O\Big(h^\infty\e^{-\frac{S(\mathbf m)}{h}}\Big).$$
Hence it is sufficient to check that on $\mathbf j^W(\mathbf m)+B_0(0,2r)$
\begin{align*}
(\tilde\om-\om)\e^{-\widetilde W/h}=O\Big(h^\infty \e^{-\frac{S(\mathbf m)}{h}}\Big).
\end{align*}
This can be done easily using again the non stationary phase on an $h$-independent neighborhood of $(\mathbf s,0)$ on which $\chi\zeta(\ell)-1$ vanishes since item \ref{hypoln-1} from \eqref{hypol} implies that $\e^{-\widetilde W/h}=O(\e^{-(S(\mathbf m)+\delta)/h})$ outside of this neighborhood for some $\delta>0$.
\end{proof}

\begin{rema}\label{Ph*f}
Since $P_h^*=-X_0^h+Q_h$, it is clear from  the previous proof that 
$$P_h^*f_{\mathbf m,h}=\frac h2 A_h^{-1}\overset{*}{\om}{}^{\mathbf m,h}\,\e^{\frac{-\widetilde W_{\mathbf m,h}}{h}}\1_{\mathbf j^W(\mathbf m)+B_0(0,2r)}+O_{L^2}\Big(h^\infty\e^{-\frac{S(\mathbf m)}{h}}\Big)$$
with 
$$\overset{*}{\om}{}^{\mathbf m,h}=\sum_{ \mathbf s \in \mathbf j(\mathbf m)}\Big(- H_W \cdot \nabla \ell^{\mathbf s,h}+I^{\mathbf s,h}\Big).$$
\end{rema}

\section{Equations on $\ell^{\mathbf s,h}$}\label{sectionequations}
\noindent
From now on, we also fix $\mathbf s\in \mathbf j(\mathbf m)$.

\begin{lem}\label{expom}
The function $\om^{\mathbf m,h}$ admits the classical expansion $\om^{\mathbf m,h} \sim \sum_{j\geq 0}h^j\om^{\mathbf m}_j$ on $B_0(\mathbf s,2r)$
where 
\begin{align}\label{om0}
\om^{\mathbf m}_0=H_W \cdot \nabla \ell^{\mathbf s}_0+M_0\Big(x,v,i\big(\frac v2 +\ell^{\mathbf s}_0 \, \partial_v \ell^{\mathbf s}_0\big)\Big)\big(v+\ell^{\mathbf s}_0 \partial_v \ell^{\mathbf s}_0\big)\cdot \partial_v\ell^{\mathbf s}_0
\end{align}
and for $j\geq 1$, 
\begin{align}\label{omj}
\qquad \om^{\mathbf m}_{j}=H_W & \cdot \nabla \ell^{\mathbf s}_j+M_0\Big(x,v,i\big(\frac v2 +\ell^{\mathbf s}_0 \, \partial_v \ell^{\mathbf s}_0\big)\Big)( v+2\ell^{\mathbf s}_0 \partial_v \ell^{\mathbf s}_0)\cdot \partial_v \ell^{\mathbf s}_j \\
			&+i\, \ell^{\mathbf s}_0 \big({}^tv+\ell^{\mathbf s}_0 \, {}^t( \partial_v \ell^{\mathbf s}_0)\big) D_\eta M_0\big(x,v,i(v/2+\ell^{\mathbf s}_0 \partial_v \ell^{\mathbf s}_0)\big)\big( \partial_v \ell^{\mathbf s}_j\big) \, \partial_v \ell^{\mathbf s}_0 \nonumber\\
			&+M_0\Big(x,v,i\big(\frac v2 +\ell^{\mathbf s}_0 \, \partial_v \ell^{\mathbf s}_0\big)\Big) \partial_v \ell^{\mathbf s}_0 \cdot \partial_v \ell^{\mathbf s}_0 \, \ell^{\mathbf s}_j\nonumber\\
			&+i  \big({}^tv+\ell^{\mathbf s}_0 \, {}^t( \partial_v \ell^{\mathbf s}_0)\big) D_\eta M_0\big(x,v,i(v/2+\ell^{\mathbf s}_0 \partial_v \ell^{\mathbf s}_0)\big)\big( \partial_v \ell^{\mathbf s}_0\big) \, \partial_v \ell^{\mathbf s}_0 \, \ell^{\mathbf s}_j \nonumber\\
			&+R_j(\ell^{\mathbf s}_0, \dots, \ell^{\mathbf s}_{j-1})\nonumber
\end{align}
where $R_j : \big( \mathcal C^\infty(B_0(\mathbf s,2r) )\big)^j \to \mathcal C^\infty(B_0(\mathbf s,2r) )$ and $D_\eta$ denotes the partial differential with respect to the variable $\eta$.
\end{lem}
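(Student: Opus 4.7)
The plan is to use the decomposition from Proposition \ref{phf}, namely $\om^{\mathbf m,h}=\sum_{\mathbf s \in \mathbf j(\mathbf m)}(H_W\cdot\nabla\ell^{\mathbf s,h}+I^{\mathbf s,h})$, and expand each piece in $h$. The first piece is immediate: by \eqref{hypol}\ref{hypol1}--\ref{hypoln-2} the function $\ell^{\mathbf s,h}$ admits a classical expansion on $B_0(\mathbf s,2r)$, hence so does $H_W\cdot\nabla\ell^{\mathbf s,h}\sim\sum_j h^j\,H_W\cdot\nabla\ell^{\mathbf s}_j$, which accounts for the first summand in both \eqref{om0} and \eqref{omj}. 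The substantial work is to obtain a classical expansion of the oscillatory integral $I^{\mathbf s,h}$, together with explicit formulas for the terms linear in $\ell^{\mathbf s}_j$.

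For $I^{\mathbf s,h}$ I would view the integral as an amplitude-type Weyl quantization applied, at the point $v$, to the function $v'\mapsto\partial_v\ell^{\mathbf s,h}(x,v')$, with amplitude $g^h(x,(v+v')/2,\eta+i\psi^{\mathbf m,h}(x,v,v'))$ depending on both $v$ and $v'$. The phase $\eta\cdot(v-v')$ is stationary at $v'=v$, and the standard amplitude-to-Weyl-symbol asymptotic expansion gives, in any $h$-independent neighborhood of $\mathbf s$,
$$I^{\mathbf s,h}(x,v)\sim\sum_{|\alpha|\geq 0}\frac{h^{|\alpha|}}{\alpha!}\bigl[(\tfrac{1}{i}\partial_{v'}\cdot\partial_\eta)^{\alpha}\bigl(g^h(x,\tfrac{v+v'}{2},\eta+i\psi^{\mathbf m,h}(x,v,v'))\,\partial_v\ell^{\mathbf s,h}(x,v')\bigr)\bigr]_{v'=v,\,\eta=0}.$$
This is justified by Taylor expansion of the amplitude at $v'=v$, combined with repeated integrations by parts exploiting the decay of $g^h\in S^0_\tau(\langle(v,\eta)\rangle^{-1})$ and the Cauchy estimates coming from analyticity in $\eta$. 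The reason the expansion is classical (and not merely $\sim_h$) is that $g^h$ itself expands classically by Remark \ref{defg}, and the same holds for $\ell^{\mathbf s,h}$ and $\psi^{\mathbf m,h}$.

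The leading term ($\alpha=0$, $h^0$ coefficient) is simply $g_0(x,v,i\Psi^{(0)})\cdot\partial_v\ell^{\mathbf s}_0$, where $\Psi^{(0)}=\psi^{\mathbf m,h}(x,v,v)|_{h=0}=v/2+\ell^{\mathbf s}_0\partial_v\ell^{\mathbf s}_0$ by \eqref{psi}--\eqref{ytilde} (the other $\mathbf s'\in\mathbf j(\mathbf m)$ play no role here by Hypothesis \ref{jvide}\ref{jvide2}). Since $g_0(x,v,\eta)=(-i\,{}^t\eta+{}^tv/2)M_0(x,v,\eta)$ and $M_0$ is symmetric, a direct substitution and transposition give
$$g_0(x,v,i\Psi^{(0)})\cdot\partial_v\ell^{\mathbf s}_0=M_0\bigl(x,v,i(\tfrac{v}{2}+\ell^{\mathbf s}_0\partial_v\ell^{\mathbf s}_0)\bigr)(v+\ell^{\mathbf s}_0\partial_v\ell^{\mathbf s}_0)\cdot\partial_v\ell^{\mathbf s}_0,$$
which together with $H_W\cdot\nabla\ell^{\mathbf s}_0$ yields \eqref{om0}.

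For the $h^j$ coefficient with $j\geq 1$, I would isolate the contributions that depend \emph{linearly} on $\ell^{\mathbf s}_j$, since all other pieces only involve $\ell^{\mathbf s}_0,\dots,\ell^{\mathbf s}_{j-1}$ and get absorbed into $R_j$. The quantity $\ell^{\mathbf s}_j$ enters through three channels: through $\partial_v\ell^{\mathbf s,h}$ as $h^j\partial_v\ell^{\mathbf s}_j$; through $\psi^{\mathbf m,h}=\partial_v\widetilde W_{\mathbf m,h}(\cdots)+\text{Taylor}$ whose $h^j$-order contribution containing $\ell^{\mathbf s}_j$ linearly is $\ell^{\mathbf s}_0\partial_v\ell^{\mathbf s}_j+\ell^{\mathbf s}_j\partial_v\ell^{\mathbf s}_0$; and from the combinations of these in the leading-order stationary-phase term (all higher-$\alpha$ Weyl corrections produce extra $h$-powers, so their linear-in-$\ell^{\mathbf s}_j$ parts appear only at order $j+|\alpha|$, i.e.\ they land in $R_j$). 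Collecting the two contributions $g_0(x,v,i\Psi^{(0)})\cdot\partial_v\ell^{\mathbf s}_j$ and $i(\ell^{\mathbf s}_0\partial_v\ell^{\mathbf s}_j+\ell^{\mathbf s}_j\partial_v\ell^{\mathbf s}_0)\cdot\partial_\eta g_0(x,v,i\Psi^{(0)})\cdot\partial_v\ell^{\mathbf s}_0$, splitting $\partial_\eta g_0=-iM_0+(\tfrac{v}{2}-i\eta)^T D_\eta M_0$ at $\eta=i\Psi^{(0)}$, and using symmetry of $M_0$ to combine $M_0(v+\ell^{\mathbf s}_0\partial_v\ell^{\mathbf s}_0)\cdot\partial_v\ell^{\mathbf s}_j+\ell^{\mathbf s}_0 M_0\partial_v\ell^{\mathbf s}_0\cdot\partial_v\ell^{\mathbf s}_j=M_0(v+2\ell^{\mathbf s}_0\partial_v\ell^{\mathbf s}_0)\cdot\partial_v\ell^{\mathbf s}_j$ reproduces exactly the four displayed terms of \eqref{omj}, everything else being polynomial in $\ell^{\mathbf s}_0,\dots,\ell^{\mathbf s}_{j-1}$ and their derivatives and so defining $R_j$. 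The only delicate point is justifying the amplitude-to-symbol expansion with the complex shift $\eta\mapsto\eta+i\psi^{\mathbf m,h}$: it is controlled by the analyticity of $g^h$ in the tube $\Sigma_\tau$ (Definition \ref{symbolo}) together with the fact that, by choosing $r$ small, $|\psi^{\mathbf m,h}(x,v,v')|<\tau$ on $B_0(\mathbf s,2r)\times\{|v'|\leq 2r\}$, which has already been checked in the proof of Proposition \ref{phf}.
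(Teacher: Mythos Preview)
Your proposal is correct and follows essentially the same route as the paper: stationary-phase expansion of $I^{\mathbf s,h}$, classical expansions of $g^h$, $\psi^{\mathbf m,h}$ and $\ell^{\mathbf s,h}$, and isolation of the terms linear in $\ell^{\mathbf s}_j$ (which, as you observe, can only come from the $n_1=0$ term via $n_4=j$ or $n_3=j$). The paper carries out the intermediate expansions more explicitly (Propositions \ref{expcompo}, \ref{expinexp} and Corollary \ref{bousin}) before applying the stationary phase, while you fold these into a single ``amplitude-to-symbol'' step; two small slips worth fixing are that the stationary-phase operator is $\frac{1}{i^{n}n!}(\partial_{v'}\!\cdot\!\partial_\eta)^{n}$ with a scalar power $n$ (or equivalently $\sum_{|\alpha|=n}\frac{1}{i^{|\alpha|}\alpha!}\partial_{v'}^\alpha\partial_\eta^\alpha$), and your phrase ``they land in $R_j$'' should read ``they do not contribute to $\omega_j$'' (they appear only in $\omega_{j+|\alpha|}$, where $\ell_j$ is indeed a previous coefficient).
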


\begin{proof} %
Once again, we drop some of the exponents and indexes $\mathbf m$, $\mathbf s$ and $h$ in the proof.
\sloppy Denote $B_\infty(0,2r)=\{v',\eta \in \R^{2d}\,;\,\max (|v'|,|\eta|)< 2r\}$.
The first terms of $\om_0$ and $\om_j$ are both easily obtained thanks to the expansion of $\ell$ on $B_0(\mathbf s,2r)$.
Hence it remains to get an expansion of $g(x,v/2+v'/2,\eta+i\psi(x,v,v'))$ that we will then be able to combine with the stationnary phase to get an expansion of the whole term $I^{\mathbf s,h}$ of $\om$.
Let us start with an expansion of $\psi$ :
the expansion of $\ell$ yields
$$\partial_v \widetilde W-v/2\sim \sum_{j\geq 0}h^j\sum_{k=0}^j\ell_k\partial_v \ell_{j-k}\quad \text{on } B_0(\mathbf s,2r)$$
so using \eqref{psi}, we get 
\begin{align}\label{exppsi}
\psi \sim \sum_{j\geq 0}h^j\psi_j\quad \text{on } B_0(\mathbf s,2r)\times \{|v'|\leq 2r\}
\end{align}
where 
\begin{align}\label{psi0}
\psi_0(x,v,v')=\frac{v+v'}{4}+\int_0^1\big(\ell_0\partial_v \ell_0\big)(x,v'+t(v-v'))\D t
\end{align}
and for $j \geq 1$, 
\begin{align}\label{psij}
\psi_j(x,v,v')=\int_0^1\sum_{k=0}^j\big(\ell_k\partial_v \ell_{j-k}\big)(x,v'+t(v-v'))\D t.
\end{align}
Besides, since $M^h \sim \sum_{n \geq 0}h^nM_n$ in $\mathcal M_{d}\big(S^0_\tau(\langle(v,\eta) \rangle^{-2})\big)$, we deduce thanks to Proposition \ref{expsharp} and Remark \ref{defg} that $g$ also has a classical expansion $g \sim \sum_{n \geq 0}h^ng_n$ in $\mathcal M_{1,d}\big(S^0_\tau(\langle(v,\eta) \rangle^{-1})\big)$, where the $(g_n)$ are given by 
\begin{align}\label{g0}
g_0(x,v,\eta)=\Big(-i\, {}^t\eta+ \, \frac {{}^tv}{2}\Big)M_0(x,v, \eta)
\end{align}
 and 
\begin{align}\label{gn}
g_n(x,v,\eta)=\Big(-i\, {}^t\eta+ \, \frac {{}^tv}{2}\Big)M_n(x,v,\eta)-\frac 12( {}^t\nabla_v-\frac i2  {}^t\nabla_\eta)M_{n-1}(x,v,\eta)
\end{align}
for $n\geq 1.$
According to Corollary \ref{bousin}, we have
$$
g_n\Big(x,\frac{v+v'}{2},\eta +i\psi(x,v,v')\Big)\sim \sum_{j\geq 0} h^j g_{n,j}(x,v,v',\eta) \quad \text{on } B_0(\mathbf s,2r)\times B_\infty (0,2r)
$$
with 
\begin{align}\label{gn0}
g_{n,0}(x,v,v',\eta)=g_n\Big(x,\frac{v+v'}{2},\eta +i\psi_0(x,v,v')\Big)
\end{align}
and for $j \geq 1$
\begin{align}\label{gnj}
g_{n,j}(x,v,v',\eta)=iD_\eta g_n\Big(x,\frac{v+v'}{2},\eta+i\psi_0(x,v,v')\Big) \big(\psi_j(x,v,v')\big)+R^1_j(\ell_0, \dots, \ell_{j-1})
\end{align}
where $R^1_j : \big( \mathcal C^\infty(B_0(\mathbf s,2r) )\big)^j \to \mathcal C^\infty(B_0(\mathbf s,2r) )$.
Using the expansion of $g$ itself and Proposition \ref{expcompo}, we get
$$g\Big(x,\frac{v+v'}{2},\eta +i\psi(x,v,v')\Big)\sim_h\sum_{n\geq 0} h^n g_n\Big(x,\frac{v+v'}{2},\eta +i\psi(x,v,v')\Big)$$
on $B_0(\mathbf s,2r)\times B_\infty (0,2r)$ so we can use Proposition \ref{expinexp} which yields
\begin{align}\label{expg}
g\Big(x,\frac{v+v'}{2},\eta +i\psi(x,v,v')\Big)\sim \sum_{j\geq 0} h^j \sum_{n=0}^j g_{n,j-n}(x,v,v',\eta)
\end{align}
on $B_0(\mathbf s,2r)\times B_\infty (0,2r)$.
Thus, using the expansion \eqref{expg} that we just got, the one of $\partial_v \ell$, and the one for an oscillatory integral given by the stationnary phase (see for instance \cite{Zworski}, Theorem 3.17) as well Proposition \ref{expinexp}, we finally get
\begin{align}\label{expinteosci}
I^{\mathbf s,h}\sim \sum_{j\geq 0} h^j I_j \quad \text{on } B_0(\mathbf s,2r),
\end{align}
where
$$I_j(x,v)=\sum_{n_1+n_2+n_3+n_4=j}\frac{1}{i^{n_1}n_1!}\big(\partial_{v'} \cdot \partial_\eta \big)^{n_1} \Big( g_{n_2,n_3}(x,v,v',\eta) \partial_v \ell_{n_4}(x,v') \Big) \Bigg| \mathop{}_{\substack{v'=v \\  \eta =0}}.$$
We can already use \eqref{gn0} to deduce the expression of $\omega_0$ by noticing that according to \eqref{psi0}, $\psi_0(x,v,v)=v/2+\ell_0 \partial_v \ell_0$.
For $j \geq 1$, the terms of $I_j$ in which the function $\ell_j$ appears are obviously the one given by $n_4=j$, but also the one given by $n_3=j$ according to \eqref{gnj}.
Indeed, in that case, we have using \eqref{psij} that
\begin{align*}
g_{0,j}(x,v,v,0)=i\ell_0  D_\eta g_0\big(x,v,i(v/2+&\ell_0 \partial_v \ell_0)\big)\big(\partial_v \ell_j\big)\\
				&+iD_\eta g_0\big(x,v,i(v/2+\ell_0 \partial_v \ell_0)\big)\big(\partial_v \ell_0\big) \, \ell_j+R^2_j(\ell_0, \dots, \ell_{j-1})
\end{align*}
where $R^2_j : \big( \mathcal C^\infty(B_0(\mathbf s,2r) )\big)^j \to \mathcal C^\infty(B_0(\mathbf s,2r) )$.
We can now conclude as for any $X \in \R^d$,
\begin{align*}D_\eta g_0\big(x,v,i(v/2+\ell_0 \partial_v \ell_0)\big)(X)=-i\,{}^tXM_0&\big(x,v,i(v/2+\ell_0 \partial_v \ell_0)\big) \\
					&+\big({}^tv+\ell_0 \, {}^t( \partial_v \ell_0)\big) D_\eta M_0\big(x,v,i(v/2+\ell_0 \partial_v \ell_0)\big)(X) 
\end{align*}
according to \eqref{g0}.
\end{proof}
\hip
Denote $(m_{p,q}^n)_{p,q}$ the entries of the matrix $M_n$ from Hypothesis \ref{hypom}. 
Since we have for $X\in \R^d$ 
$$ D_\eta M_0\big(x,v,i(v/2+\ell_0 \partial_v \ell_0)\big)\big( X\big)=\Big( \partial_{\eta}m^0_{p,q}\big(x,v,i(v/2+\ell_0 \partial_v \ell_0)\big)\cdot X\Big)_{1\leq p,q\leq d}$$
we get by putting
\begin{align}\label{U}
\quad U(x,v)=M_0\Big(x,v,&i\big(\frac v2 +\ell_0 \, \partial_v \ell_0\big)\Big) \partial_v\ell_0 \\
&+\sum_{1\leq p,q \leq d}\big( v_p +\ell_0\partial_{v_p} \ell_0 \big) i \partial_{\eta} m^0_{p,q}\Big(x,v,i\big(\frac v2 +\ell_0 \, \partial_v \ell_0\big)\Big)\partial_{v_q}\ell_0 \nonumber
\end{align}
that equation \eqref{omj} reads 
\begin{align*}
\omega_j= \Bigg[H_W+\begin{pmatrix}
				0 \\
				M_0\Big(x,v,i\big(\frac v2 +\ell_0 \, \partial_v \ell_0\big)\Big)( v+\ell_0 \partial_v \ell_0)+\ell_0\, U
				\end{pmatrix}\Bigg] \cdot \nabla \ell_j+ U \cdot \partial_v \ell_0  \, \ell_j+R_j(\ell_0, \dots, \ell_{j-1}).
\end{align*}

\begin{lem}\label{ureelle}
Let $(x,v) \in B_0(\mathbf s,2r)$ and $|v'|<2r$. 
For any $n \in \N$, $\beta \in \N^d$ and $1 \leq p,q \leq d$, we have
$$\partial^\beta_\eta m_{p,q}^n\Big(x,\frac{v+v'}{2},i\psi_0^{\mathbf m}(x,v,v')\Big)  \in i^{|\beta|}\R$$
and
$$ \partial^\beta_\eta g_n\Big(x,\frac{v+v'}{2},i\psi_0^{\mathbf m}(x,v,v')\Big)  \in i^{|\beta|}\R^d.$$
In particular, $U$ defined in \eqref{U} sends $B_0(\mathbf s,2r)$ in $\R^d$.
\end{lem}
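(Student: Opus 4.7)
The strategy is to exploit the two structural facts: that $\psi_0^{\mathbf m}(x,v,v')\in\R^d$ and that each $M_n$ is real-valued on $\R^{3d}$, even in $\eta$ and analytic on $\R^{2d}\times \Sigma_\tau$.

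First, I would record that $\ell_0^{\mathbf s}$ is real (it is the first coefficient of the classical expansion of the real function $\ell^{\mathbf s,h}$ from item \ref{hypol1} of \eqref{hypol}), so the formula \eqref{psi0} shows that $\psi_0^{\mathbf m}(x,v,v')\in\R^d$. Next, the matrices $M_n$ take values in $\mathcal M_d(\R)$, are symmetric and even in $\eta$ (this was stated right after Hypothesis \ref{hypom}), so each entry $m_{p,q}^n(x,\cdot,\cdot)$ is an analytic function on $\R^d\times\Sigma_\tau$ which is real on $\R^{2d}$ and satisfies $m_{p,q}^n(x,w,-\eta)=m_{p,q}^n(x,w,\eta)$.

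The key lemma I would then state (and prove in one line via Taylor expansion in $\eta$) is: if $f$ is analytic on some neighborhood of $\R^d$ in $\C^d$, real on $\R^d$, and even in $\eta$, then for every $\beta\in\N^d$ and $y\in\R^d$ one has $\partial_\eta^\beta f(iy)\in i^{|\beta|}\R$. Indeed, expanding
\begin{equation*}
\partial_\eta^\beta f(iy)=\sum_\alpha \partial^{\alpha+\beta}f(0)\,\frac{(iy)^\alpha}{\alpha!},
\end{equation*}
the evenness forces $\partial^{\alpha+\beta}f(0)=0$ unless $|\alpha|+|\beta|$ is even, and when it is even we have $i^{|\alpha|}=i^{|\beta|}\cdot(\pm 1)$, giving the claim. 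The same argument applied to an odd analytic real function yields $\partial_\eta^\beta f(iy)\in i^{|\beta|+1}\R$. Applying the first statement to $f=m_{p,q}^n(x,\tfrac{v+v'}{2},\cdot)$ at $y=\psi_0^{\mathbf m}(x,v,v')$ proves the first assertion of the lemma.

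For the second assertion, I would split $g_n$ into its $\eta$-even and $\eta$-odd parts using \eqref{g0}-\eqref{gn}: since each $M_n$ is even in $\eta$, the functions $\tfrac{{}^tv}{2}M_n$ and $-\tfrac12{}^t\nabla_v M_{n-1}$ are real analytic and even in $\eta$, while $-i\,{}^t\eta M_n$ and $\tfrac{i}{4}{}^t\nabla_\eta M_{n-1}$ are $i$ times real analytic odd functions of $\eta$. Applying the even case of the key lemma to the first pair and the odd case to the second pair (multiplied by $i$), the contributions both lie in $i^{|\beta|}\R^d$ when evaluated at $\eta=i\psi_0^{\mathbf m}(x,v,v')$, so their sum does too.

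Finally, the statement about $U$ follows by applying the first assertion with $|\beta|=0$ to the term $M_0\big(x,v,i(v/2+\ell_0\partial_v\ell_0)\big)$ (yielding a real matrix) and with $|\beta|=1$ to the term $\partial_\eta m_{p,q}^0\big(x,v,i(v/2+\ell_0\partial_v\ell_0)\big)$ (yielding a purely imaginary vector), so that the explicit factor of $i$ in \eqref{U} makes the second contribution real as well. The main (and only) delicate point is correctly bookkeeping the parity/imaginary factors in the decomposition of $g_n$; everything else is purely mechanical.
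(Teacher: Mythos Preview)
Your proof is correct and follows essentially the same approach as the paper: Taylor-expand $\partial_\eta^\beta m_{p,q}^n$ at $\eta=0$, use parity so that only terms with $|\alpha|+|\beta|$ even survive, and conclude $i^{|\alpha|}\in i^{|\beta|}\R$; the paper then says the statement for $g_n$ ``follows easily using \eqref{g0} and \eqref{gn}'', which is exactly your even/odd decomposition. The only detail you leave implicit is that $r$ is chosen small enough so that $i\psi_0^{\mathbf m}(x,v,v')\in D(0,\tau)^d$, ensuring the Taylor series at $0$ actually converges at the evaluation point---the paper makes this remark explicitly.
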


\begin{proof} %
Since $\ell_0$ vanishes at $(\mathbf s,0)$, we can suppose that $r$ is such that $i\psi_0(x,v,v')$ is in 
\begin{align}\label{d0tau}
D(0, \tau)^d=\{z\in \C \, ; \, |z|<\tau\}^d
\end{align}
so by analyticity and using the parity of $m_{p,q}^n$
, we have
\begin{align*}
\partial^\beta_\eta m_{p,q}^n\Big(x,\frac{v+v'}{2},i\psi_0(x,v,v')\Big)=\mathop{\sum_{\gamma \in \N^d;}}_{\substack{|\gamma|+|\beta|\in 2\N}} i^{|\gamma|}\frac{\partial_\eta^{\gamma+\beta} m_{p,q}^n\big(x,\frac{v+v'}{2},0\big)}{\gamma !}\,\psi_0(x,v,v')^\gamma \in i^{|\beta|}\R.
\end{align*}
The result for $g_n$ follows easily using \eqref{g0} and \eqref{gn}.
\end{proof}
\hip
We also have the following result whose proof is postponed to Appendix \ref{rreelleapp} as it involves tedious calculations.

\begin{lem}\label{rreelle}
The term $R_j(\ell_0^{\mathbf s}, \dots, \ell_{j-1}^{\mathbf s})$ from Lemma \ref{expom} is real valued.
Moreover, it satisfies $R_j(\ell_0^{\mathbf s}, \dots, \ell_{j-1}^{\mathbf s})=-R_j(-\ell_0^{\mathbf s}, \dots, -\ell_{j-1}^{\mathbf s})$.
\end{lem}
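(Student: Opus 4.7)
The plan is to read off $R_j$ directly from the explicit formula for $I_j$ established in the proof of Lemma \ref{expom}, and then deduce both reality and the antisymmetry under $\ell_k^{\mathbf s}\mapsto -\ell_k^{\mathbf s}$ by tracking the parity of every factor. Concretely, $R_j$ is the partial sum of
$$I_j = \sum_{n_1+n_2+n_3+n_4=j}\frac{1}{i^{n_1}n_1!}\big(\partial_{v'}\!\cdot\!\partial_\eta\big)^{n_1}\!\Big(g_{n_2,n_3}(x,v,v',\eta)\,\partial_v\ell_{n_4}^{\mathbf s}(x,v')\Big)\Big|_{v'=v,\,\eta=0}$$
restricted to indices with $n_3<j$ and $n_4<j$ (these are exactly the terms that do not involve $\ell_j^{\mathbf s}$ through either $\psi_j$ or $\partial_v\ell_{n_4}^{\mathbf s}$).

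For the reality statement, I would Taylor-expand $g_n(x,(v+v')/2,i\psi)$ around $i\psi_0^{\mathbf m}$: writing $i\psi-i\psi_0^{\mathbf m}=i\sum_{k\geq 1}h^k\psi_k^{\mathbf m}\in i\,\R^d[[h]]$, one has
$$\partial_\eta^\alpha g_n\Big(x,\frac{v+v'}{2},i\psi\Big) =\sum_\gamma\frac{(i\psi-i\psi_0^{\mathbf m})^\gamma}{\gamma!}\,\partial_\eta^{\alpha+\gamma}g_n\Big(x,\frac{v+v'}{2},i\psi_0^{\mathbf m}\Big),$$
and Lemma \ref{ureelle} gives $\partial_\eta^{\alpha+\gamma}g_n(\cdot,i\psi_0^{\mathbf m})\in i^{|\alpha|+|\gamma|}\R^d$, so each term lies in $i^{|\alpha|+2|\gamma|}\R^d= i^{|\alpha|}\R^d$. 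Comparing with the expansion $\sum_k h^k g_{n,k}$ in powers of $h$, this yields $\partial_\eta^\alpha g_{n_2,n_3}|_{\eta=0}\in i^{|\alpha|}\R^d$. Expanding $(\partial_{v'}\!\cdot\!\partial_\eta)^{n_1}$ by Leibniz, every summand produces a factor $\partial_{v'}^\beta\partial_\eta^\alpha g_{n_2,n_3}|_{\eta=0,\,v'=v}$ with $|\alpha|=n_1$ (note $\partial_{v'}^\beta$ preserves the factor $i^{n_1}$ since it is a real operator), multiplied by a real quantity built from $\partial_v\ell_{n_4}^{\mathbf s}$ and its $v'$-derivatives. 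The prefactor $1/i^{n_1}$ then exactly cancels the $i^{n_1}$, so every contribution is real. Since the explicit $\ell_j^{\mathbf s}$-terms displayed in Lemma \ref{expom} are also real by the same application of Lemma \ref{ureelle}, the leftover $R_j$ is real.

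For the antisymmetry under $\ell_k^{\mathbf s}\mapsto -\ell_k^{\mathbf s}$ for $k=0,\dots,j-1$, the key observation is the product structure of each $\psi_k^{\mathbf m}$. From \eqref{psi0}--\eqref{psij}, every $\psi_k^{\mathbf m}$ with $0\leq k\leq j-1$ is an integral of $\ell_m\,\partial_v\ell_{k-m}$ over $m=0,\dots,k$; under the flip, both factors change sign so the product is invariant, and hence so is $\psi_k^{\mathbf m}$. It follows that each $g_{n_2,n_3}$ with $n_3<j$ is invariant (it depends on the $\psi^{\mathbf m}_k$ only for $k\leq n_3<j$), while every factor $\partial_{v'}^{\alpha-\beta}\partial_v\ell_{n_4}^{\mathbf s}$ with $n_4<j$ changes sign. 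Each summand defining $R_j$ therefore picks up exactly one minus sign, giving $R_j(-\ell_0^{\mathbf s},\dots,-\ell_{j-1}^{\mathbf s})=-R_j(\ell_0^{\mathbf s},\dots,\ell_{j-1}^{\mathbf s})$.

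The only mildly delicate point is the reality claim: Lemma \ref{ureelle} is phrased only at $\psi_0^{\mathbf m}$, and one must promote it to arbitrary orders of $\psi^{\mathbf m}$ by the Taylor-expansion step above. Everything else reduces to bookkeeping with the combinatorial formula for $I_j$, so no further obstacle is expected.
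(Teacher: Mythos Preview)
Your approach is essentially the paper's: both read $R_j$ off from the explicit formula for $I_j$ and apply Lemma \ref{ureelle} termwise (the paper invokes the already-expanded formula \eqref{gnjapp} from Corollary \ref{bousin} instead of redoing the Taylor expansion in $\eta$ around $i\psi_0^{\mathbf m}$, but this is the same computation). Your antisymmetry argument via the bilinearity of each $\psi_k^{\mathbf m}$ in the $\ell_m$'s is exactly the paper's one-line observation that $\psi$ is unchanged under $\ell\mapsto-\ell$.

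One imprecision: your identification of $R_j$ with the partial sum over $n_3<j$ and $n_4<j$ is not complete. The index $n_3=j$ (forcing $n_1=n_2=n_4=0$) contributes $g_{0,j}(x,v,v,0)\cdot\partial_v\ell_0^{\mathbf s}$, and $g_{0,j}$ depends on $\psi_j^{\mathbf m}=\int_0^1\sum_{k=0}^{j}\ell_k\partial_v\ell_{j-k}$. Only the summands $k\in\{0,j\}$ contain $\ell_j^{\mathbf s}$; the middle terms $k=1,\dots,j-1$, together with the $|\beta|\geq 2$ part of the Taylor expansion in \eqref{gnjapp} (which involves only $\psi_1^{\mathbf m},\dots,\psi_{j-1}^{\mathbf m}$), also belong to $R_j$: this is precisely the second and third blocks in the paper's explicit formula \eqref{Rj}. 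Your arguments extend to these extra pieces without change, since they consist of factors $\partial_\eta^\beta g_0(\cdot,i\psi_0^{\mathbf m})$ (handled by Lemma \ref{ureelle}), products of $\psi_k^{\mathbf m}$ with $k<j$ (invariant under the sign flip), and a single factor $\partial_v\ell_0^{\mathbf s}$ (which flips). So the conclusion stands, but the bookkeeping claim about which indices contribute to $R_j$ should be corrected.
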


\hip
In view of the results from Proposition $\ref{phf}$ and Lemma \ref{expom}, we want to find $\ell$ such that on $B_0(\mathbf s, 2r)$,
\begin{align}\label{eikon}
H_W \cdot \nabla \ell_0+M_0\Big(x,v,i\big(\frac v2 +\ell_0 \, \partial_v \ell_0\big)\Big)\big(v+\ell_0 \partial_v \ell_0\big)\cdot \partial_v\ell_0=0
\end{align}
and for $j \geq 1$
\begin{align}\label{transport}
 \Bigg[H_W+\begin{pmatrix}
				0 \\
				M_0\Big(x,v,i\big(\frac v2 +\ell_0 \, \partial_v \ell_0\big)\Big)( v+\ell_0 \partial_v \ell_0)+\ell_0\, U
				\end{pmatrix}\Bigg] \cdot \nabla \ell_j& \\
+ \partial_v \ell_0 \cdot U \, \ell_j+R_j(\ell_0&, \dots, \ell_{j-1})=0\nonumber
\end{align}
where $U$ was introduced in \eqref{U}.
Note that Lemmas \ref{ureelle} and \ref{rreelle} ensure that the fact that the $(\ell_j)_{j\geq 0}$ are real valued is compatible with equations \eqref{transport}.

\subsection{Solving for $\ell_0^{\mathbf s}$}
Denote $$p(x,v,\xi,\eta)= i\xi \cdot v - i\eta \cdot \partial_x V  +(-i \, {}^t\eta+\, {}^tv/2)M_0(x,v,\eta) (i\eta+v/2)$$ the principal symbol of the whole operator $P_h$ and $\tilde p(x,v,\xi,\eta)=-p(x,v,i\xi,i\eta)$ its complexification.
After computing the Hamiltonian of $\tilde p$
which vanishes at $(\mathbf s,0,0,0)$, we find that its linearization at this point is the matrix
\begin{align*}
F&=\begin{pmatrix} 
		0 & \mathrm{Id} & 0&0 \\
		-\mathrm{Hess}_{\mathbf s}V & 0&0&2M_0(\mathbf s,0,0)\\
		0&0&0&\mathrm{Hess}_{\mathbf s}V\\
		0&\frac12 M_0(\mathbf s,0,0)&-\mathrm{Id}&0
		\end{pmatrix}.
\end{align*}
One can easily check that for any eigenvector $(x,v,\xi,\eta)$ of $F$ associated to an eigenvalue $\lambda$, the vector $(-x,v,\xi,-\eta)$ is an eigenvector associated to $-\lambda$ so the spectrum of $F$ is centrally symmetric with respect to the origin.
Moreover, writing 
$$F=\begin{pmatrix}
0&0&\mathrm{Id}&0\\
0&0&0&\mathrm{Id}\\
\mathrm{Id}&0&0&0\\
0&\mathrm{Id}&0&0
\end{pmatrix}
\begin{pmatrix} 
		0 &  0&0& \mathrm{Hess}_{\mathbf s}V\\
		0 &\frac12 M_0(\mathbf s,0,0)&-\mathrm{Id}&0\\
		0&\mathrm{Id}&0&0\\
		-\mathrm{Hess}_{\mathbf s}V&0&0&2 M_0(\mathbf s,0,0)
		\end{pmatrix}$$
and noticing that 
$$F\Big(\{v=\eta=0\}\Big) \cap \{v=\eta=0\}=\mathrm{Ker}\,F\cap \{v=\eta=0\}=\{0\},$$
we see that $F$ satisfies the assumptions of Lemma \ref{pasir}.
Therefore, $F$ has no eigenvalues in $i\R$ so it has $2d$ eigenvalues (counted with algebraic multiplicity) in $\{\mathrm{Re}\,z >0\}$ while the $2d$ others are in $\{\mathrm{Re}\,z <0\}$.
Therefore we can apply the stable manifold theorem to get 
that the stable manifolds associated to $H_{\tilde p}$ given in a neighborhood of $(\mathbf s,0,0,0)$ by 
$$\Lambda_\pm=\Big\{(x,v,\xi,\eta)\, ; \lim_{t\to \mp \infty}\e^{tH_{\tilde p}}(x,v,\xi,\eta)=(\mathbf s,0,0,0)\Big\}$$
%
are both of dimension $2d$ and for all $\rho_\pm \in \Lambda_\pm$, we have 
\begin{align}\label{hptangent}
H_{\tilde p}(\rho_\pm)\in T_{\rho_\pm}\Lambda_\pm
\end{align}
and for $t>0$,
$$\big\|\e^{\mp tH_{\tilde p}}\rho_\pm-(\mathbf s,0,0,0)\big\|\leq C\e^{-t/C}\|\rho_\pm-(\mathbf s,0,0,0)\|.$$
Moreover, we have (see for instance \cite{DimassiSjostrand} Lemmas 3.2 and 3.3) that
\begin{align}\label{plam0}
\tilde p (\Lambda_\pm)=\{0\}
\end{align}
and $\Lambda_\pm$ are Lagrangian manifolds.
In order to get some parametrization for those manifolds, we follow the steps of \cite{HHS}, Lemma 8.1. 

\begin{lem}\label{transverse}
The tangent spaces $T_{(\mathbf s,0,0,0)}\Lambda_\pm$ that we denote for shortness $T_{\mathbf s} \Lambda_\pm$ are transverse to both $\{(\mathbf s,0)\}\times \R^{2d}$ and $\R^{2d}\times\{(0,0)\}$.
\end{lem}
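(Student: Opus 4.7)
The plan is to exploit that $\tilde p$ vanishes on $\Lambda_\pm$ by \eqref{plam0} and that $(\mathbf s,0,0,0)$ is a critical point of $\tilde p$ (since $H_{\tilde p}$ vanishes there), so that the quadratic part $q$ of $\tilde p$ at this point vanishes on $T_{\mathbf s}\Lambda_\pm$. A direct Taylor expansion, using the symmetry of $M_0$ to cancel the $v$–$\eta$ cross terms coming from $-({}^t\eta+\tfrac{{}^tv}{2})M_0(-\eta+\tfrac{v}{2})$, gives in the tangent coordinates at $(\mathbf s,0,0,0)$
\begin{equation*}
q(x,v,\xi,\eta)=\xi\cdot v-\eta\cdot \mathrm{Hess}_{\mathbf s}V\cdot x+{}^t\eta\,M_0(\mathbf s,0,0)\,\eta-\tfrac{1}{4}\,{}^tv\,M_0(\mathbf s,0,0)\,v.
\end{equation*}
Since item \ref{minom} of Hypothesis \ref{hypom} ensures that $M_0(\mathbf s,0,0)$ is positive definite, the restriction of $q$ to $\{x=v=0\}$ is $\eta^T M_0(\mathbf s,0,0)\eta$, which vanishes only at $\eta=0$, and the restriction to $\{\xi=\eta=0\}$ is $-\tfrac{1}{4}v^T M_0(\mathbf s,0,0)v$, which vanishes only at $v=0$.

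For the transversality to $\{(\mathbf s,0)\}\times\R^{2d}$, I would take $(0,0,\xi,\eta)\in T_{\mathbf s}\Lambda_\pm$ and use the first observation to conclude $\eta=0$. The tangent space $T_{\mathbf s}\Lambda_\pm$ is invariant under $F$ (being a sum of generalized eigenspaces of $F$), and the explicit form of $F$ yields $F(0,0,\xi,0)^T=(0,0,0,-\xi)^T\in T_{\mathbf s}\Lambda_\pm$. Applying the first observation once more gives $\xi=0$, hence $T_{\mathbf s}\Lambda_\pm\cap(\{0\}\times\{0\}\times\R^{2d})=\{0\}$; since both subspaces have dimension $2d$ in an ambient space of dimension $4d$, the transversality follows.

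The transversality to $\R^{2d}\times\{(0,0)\}$ is obtained in parallel: for $(x,v,0,0)\in T_{\mathbf s}\Lambda_\pm$, the second observation forces $v=0$, and then $F$-invariance gives $F(x,0,0,0)^T=(0,-\mathrm{Hess}_{\mathbf s}V\cdot x,0,0)^T\in T_{\mathbf s}\Lambda_\pm$. The second observation applied to this new vector yields $(\mathrm{Hess}_{\mathbf s}V\cdot x)^T M_0(\mathbf s,0,0)(\mathrm{Hess}_{\mathbf s}V\cdot x)=0$, and positive definiteness of $M_0(\mathbf s,0,0)$ combined with non-degeneracy of $\mathrm{Hess}_{\mathbf s}V$ (the Morse assumption in Hypothesis \ref{V}) forces $x=0$.

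The main technical obstacle is getting the closed-form expression for $q$ correctly, namely expanding $\tilde p$ at $(\mathbf s,0,0,0)$ to second order while keeping track of the pseudodifferential symbol $M_0(x,v,i\eta)$ and noticing, via the symmetry of $M_0$, that the $v$–$\eta$ cross terms cancel exactly. Once $q$ is in hand, the transversality reduces to a short linear-algebra argument combining the $F$-invariance of $T_{\mathbf s}\Lambda_\pm$ with the sign-definiteness of $M_0(\mathbf s,0,0)$.
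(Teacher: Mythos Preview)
Your proof is correct and follows essentially the same approach as the paper: both use that the quadratic part $q$ of $\tilde p$ vanishes on $T_{\mathbf s}\Lambda_\pm$ to kill the $\eta$ (resp.\ $v$) component, and then exploit invariance of $T_{\mathbf s}\Lambda_\pm$ under the linearized Hamiltonian flow to kill the remaining $\xi$ (resp.\ $x$) component. The paper packages these two steps into a single positive-definite form $\tilde q=(p_2+p_0)+(p_2+p_0)\circ F_{p_1}$ in the style of \cite{HHS}, whereas you carry them out explicitly with the full matrix $F$; on the relevant subspaces $F$ and $F_{p_1}$ coincide, so the arguments are equivalent.
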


\begin{proof}
We provide an adaptation of the proof from \cite{HHS} as some simplifications appear in our case.
Since we are working in the linearized case, we can assume that $\tilde p$ coincides with its quadratic approximation at $(\mathbf s,0,0,0)$ and for commodity we will work with the variable $x_{\mathbf s}=x-\mathbf s$ instead of $x$.
Note that if $a$ is a quadratic form, its Hamiltonian $H_a$ is then linear and we denote $F_a$ the associated matrix.
We then decompose $\tilde p=p_2+p_1-p_0$ where
$$p_2=M_0(\mathbf s,0,0)\eta \cdot \eta, \quad p_1=v\cdot \xi-\mathrm{Hess}_sVx_{\mathbf s}\cdot \eta \quad \text{and} \quad p_0=\frac14 M_0(\mathbf s,0,0)v \cdot v.$$
It is clear that $p_2+p_0$ is positive semi-definite, moreover, the subspace $\{v=\eta=0\}$ on which $p_2+p_0$ vanishes satisfies $\{v=\eta=0\}\cap F_{p_1}^{-1}\big(\{v=\eta=0\}\big)=\{0\}.$
Thus the quadratic form
$$\tilde q=(p_2+p_0)+(p_2+p_0)\circ F_{p_1}$$
is positive definite.
Let us denote $L_\pm=\Lambda_\pm\cap\{x_{\mathbf s}=v=0\}$.
To prove that $L_\pm=\{0\}$, it is sufficient to establish that $\tilde q=0$ on $L_\pm$.
In order to do so, we will show that $L_\pm$ is an $F_{p_1}$-invariant subspace on which $p_2+p_0=0$.
Indeed, it is clear that $p_0=p_1=0$ on $L_\pm$ and thanks to \eqref{plam0} we deduce that $p_2$ also vanishes on $L_\pm$ so in particular $p_2+p_0=0$ on $L_\pm$.
It also implies that $L_\pm$ is included in $\{\eta=0\}$ so $F_{p_2}|_{L_\pm}=0$.
Besides, we clearly have $F_{p_0}|_{L_\pm}=0$ so $F_{p_1}$ coincides on $L_\pm$ with $F_{\tilde p}$ which leaves $\Lambda_\pm$ invariant according to \eqref{hptangent}.
Since it is easy to see that $\{x_{\mathbf s}=v=0\}$ is also invariant under $F_{p_1}$, we can conclude as announced that $L_\pm=\{0\}$.
The proof that $\Lambda_\pm\cap\{\xi=\eta=0\}=\{0\}$ is similar.
\end{proof}
\hip
Since $\Lambda_\pm$ are Lagrangian manifolds such that $T_{\mathbf s} \Lambda_\pm$ are transverse to $\{(\mathbf s,0)\}\times \R^{2d}$, there exist $\phi_\pm \in \mathcal C^\infty(B_0(\mathbf s,2r),\R)$ vanishing together with their gradients at $(\mathbf s,0)$ and such that
\begin{align}\label{lamphi}
\Lambda_\pm=\Big\{\Big((x,v, \nabla\phi_\pm (x,v) \Big) \, ; (x,v)\in B_0(\mathbf s,2r)\Big\}.
\end{align}
Therefore, $T_{\mathbf s} \Lambda_\pm$ coincide with the graphs of the matrices Hess$_{(\mathbf s,0)}\phi_\pm$ which are then invertible according to Lemma \ref{transverse}. 
Now we need a result similar to the one of Proposition 8.2 in \cite{HHS}.

\begin{lem}\label{defpos}
The Hessian matrix of $\pm \phi_\pm$ at $(\mathbf s,0)$ is definite positive.
\end{lem}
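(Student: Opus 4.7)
The plan is to adapt the proof of \cite{HHS}, Proposition 8.2 to our setting by representing $\phi_\pm$ as action integrals along the Hamiltonian trajectories foliating $\Lambda_\pm$, and then reading off positive definiteness from the principal part of the integrand. I will write everything out for $\Lambda_+$; the argument for $\Lambda_-$ is the same after time reversal.

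The preliminary observation is that $\tilde p$ is actually real-valued. Indeed, expanding $(-i\,{}^t\eta+\,{}^tv/2)M_0(x,v,\eta)(i\eta+v/2)$ using the symmetry of $M_0$ gives
$$p=i(\xi\cdot v-\eta\cdot\partial_xV)+{}^t\eta M_0(x,v,\eta)\eta+\tfrac14{}^tvM_0(x,v,\eta)v,$$
so that
$$\tilde p(x,v,\xi,\eta)=\xi\cdot v-\eta\cdot\partial_xV+{}^t\eta M_0(x,v,i\eta)\eta-\tfrac14{}^tvM_0(x,v,i\eta)v,$$
and the evenness of $M_0$ in $\eta$ (Hypothesis \ref{hypom}\,\ref{paire}) ensures that $M_0(x,v,i\eta)$ is real for real $\eta$. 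Since $\Lambda_+$ is the graph of $d\phi_+$ and each $\rho_+\in\Lambda_+$ is joined to $(\mathbf s,0,0,0)$ by its (exponentially decaying) backward Hamilton orbit, integration of $d\phi_+=\xi\,dx+\eta\,dv$ along this trajectory yields, with the normalization $\phi_+(\mathbf s,0)=0$, the action formula
$$\phi_+(x,v)=\int_{-\infty}^{0}\big(\xi(t)\cdot\partial_\xi\tilde p+\eta(t)\cdot\partial_\eta\tilde p\big)\D t.$$
Using $\partial_\eta[M_0(x,v,i\eta)]=i[\partial_\eta M_0](x,v,i\eta)$ together with $[\partial_\eta M_0](x,v,0)=0$ (again by evenness), a direct computation gives
$$\xi\partial_\xi\tilde p+\eta\partial_\eta\tilde p=\tilde p+{}^t\eta M_0(x,v,i\eta)\eta+\tfrac14{}^tvM_0(x,v,i\eta)v+\mathcal R,$$
where $\mathcal R$ is real and $O(|(v,\eta)|^4)$. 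Since $\tilde p=0$ on $\Lambda_+$, the integrand reduces to the explicit semi-definite quadratic form plus $\mathcal R$, both integrable thanks to the exponential decay of $(v(t),\eta(t))$ as $t\to-\infty$.

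To extract the Hessian at $(\mathbf s,0)$, I would restrict to the quadratic part in $(y,v):=(x-\mathbf s,v)$: the orbit becomes $(y(t),v(t),\xi(t),\eta(t))=e^{tF}\bigl(y,v,\mathrm{Hess}_{(\mathbf s,0)}\phi_+(y,v)\bigr)$ on the stable subspace $E_+$, the matrix $M_0(x,v,i\eta)$ is replaced by $M:=M_0(\mathbf s,0,0)$, and $\mathcal R$ drops to order $4$ in $(y,v)$ and is discarded. The outcome is
$$\bigl\langle\mathrm{Hess}_{(\mathbf s,0)}\phi_+(y,v),(y,v)\bigr\rangle=2\int_{-\infty}^{0}\Big({}^t\eta(t)M\eta(t)+\tfrac14{}^tv(t)Mv(t)\Big)\D t.$$
By Hypothesis \ref{hypom}\,\ref{minom}, $M\geq C^{-1}\mathrm{Id}$, so the integrand is $\geq 0$ and vanishes identically on $(-\infty,0]$ only if $v(t)\equiv\eta(t)\equiv 0$; but the linearised Hamilton equations $\dot y=v$ and $\dot v=-\mathrm{Hess}_{\mathbf s}V\,y+2M\eta$ then force $y$ to be constant with $\mathrm{Hess}_{\mathbf s}V\,y=0$, and the invertibility of $\mathrm{Hess}_{\mathbf s}V$ at the Morse critical point $\mathbf s$ gives $(y,v)=0$. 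Hence $\mathrm{Hess}_{(\mathbf s,0)}\phi_+>0$, and the symmetric argument on $\Lambda_-$ with $t\in[0,+\infty)$ yields $-\mathrm{Hess}_{(\mathbf s,0)}\phi_->0$.

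The key technical point, and the only real obstacle, is controlling the $\eta$-dependence encoded in $M_0(x,v,i\eta)$: the combination of the Cauchy--Riemann identity above with the evenness of $M_0$ in $\eta$ ensures that the corrections thereby introduced are real and of sufficiently high order in $(y,v)$ to be absorbed into the remainder, so the argument reduces cleanly to the classical analysis of an action integral associated with the stable manifold of $H_{\tilde p}$.
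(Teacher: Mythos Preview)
Your argument is correct, and it takes a genuinely different route from the paper's. The paper proceeds by a homotopy argument: it introduces the one-parameter family
\[
\tilde p^\delta=(1-\delta)\tilde p+\delta\big(\xi^2+\eta^2-(x_{\mathbf s}^2+v^2)\big),\qquad \delta\in[0,1],
\]
verifies via Lemma~\ref{pasir} that $F_{\tilde p^\delta}$ has no eigenvalues on $i\R$ for every $\delta$, so that the stable/unstable subspaces (and hence $\mathrm{Hess}_{(\mathbf s,0)}\phi_\pm^\delta$) vary continuously and remain invertible, and then transports the sign from the explicitly known Schr\"odinger endpoint $\delta=1$ back to $\delta=0$.

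Your approach instead exploits the specific structure of $\tilde p$ directly: the evenness of $M_0$ in $\eta$ makes $\tilde p$ real, the identity $\xi\partial_\xi\tilde p+\eta\partial_\eta\tilde p=\tilde p+{}^t\eta M_0\eta+\tfrac14{}^tvM_0v+O(|(v,\eta)|^4)$ combined with $\tilde p|_{\Lambda_\pm}=0$ produces an action integral with semi-definite integrand, and the linearised Hamilton equations then force $(y,v)=0$ whenever the integrand vanishes identically. This gives more: an explicit integral representation of the Hessian quadratic form, from which positivity is read off without any deformation. The paper's argument is more robust---it needs only the abstract inputs of Lemma~\ref{pasir} and the Schr\"odinger case---whereas yours leans on the precise algebraic shape of $\tilde p$ (symmetry of $M_0$, evenness in $\eta$, and $M_0(\mathbf s,0,0)>0$). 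One minor point: the passage ``I would restrict to the quadratic part'' is the only step left slightly implicit; spelling out that the trajectory on $\Lambda_+$ is $e^{tF}$ times the initial data plus higher order, and that $M_0(x,v,i\eta)-M_0(\mathbf s,0,0)$ contributes only cubic terms to the integrand, would make the reduction to the linearised integral entirely explicit.
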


\begin{proof}
The proof is simply an adaptation of the one found in \cite{HHS}.
Here again we will assume that $\tilde p$ coincides with its quadratic approximation at $(\mathbf s,0,0,0)$ and work with the variable $x_{\mathbf s}=x-\mathbf s$ instead of $x$.
For $\delta \in [0,1]$, let us denote 
\begin{align*}
\tilde p^\delta&=(1-\delta)\tilde p+\delta\big(\xi^2+\eta^2-(x_{\mathbf s}^2+v^2)\big)\\
	&=p_2^\delta+(1-\delta)p_1-p_0^\delta
\end{align*}
where 
$$p_2^\delta=(1-\delta) p_2+\delta(\xi^2+\eta^2) \qquad \text{and} \qquad p_0^\delta=(1-\delta) p_0+\delta(x_{\mathbf s}^2+v^2).$$
Note in particular that $\tilde p^0=\tilde p$ and that $\tilde p^1=\big(\xi^2+\eta^2-(x_{\mathbf s}^2+v^2)\big)$ corresponds to the well know Schr\"odinger case (see for instance \cite{DimassiSjostrand}, chapter 3).
Besides, we have that
\begin{align*}
F_{\tilde p^\delta}=\begin{pmatrix}
0&0&\mathrm{Id}&0\\
0&0&0&\mathrm{Id}\\
\mathrm{Id}&0&0&0\\
0&\mathrm{Id}&0&0
\end{pmatrix}\left[(1-\delta)\begin{pmatrix} 
		0 &  0&0& \mathrm{Hess}_{\mathbf s}V\\
		0 &\frac12 M_0(\mathbf s,0,0)&-\mathrm{Id}&0\\
		0&\mathrm{Id}&0&0\\
		-\mathrm{Hess}_{\mathbf s}V&0&0&2 M_0(\mathbf s,0,0)
		\end{pmatrix}
+2\delta \, \mathrm{Id} \right]
\end{align*}
so Lemma \ref{pasir} easily yields that the eigenvalues of $F_{\tilde p^\delta}$ cannot cross $i\R$ for some $\delta \in (0,1]$.
Moreover, it is clear that for $\delta \in (0,1]$, the quadratic form $p_2^\delta+p_0^\delta$ is positive definite, so the results of Lemma \ref{transverse} are true for the $2d$-dimensional Lagrangian planes
$$\Lambda_\pm^\delta=\Big\{(x_{\mathbf s},v,\xi,\eta)\, ; \lim_{t\to \mp \infty}\e^{tF_{\tilde p^\delta}}(x,v,\xi,\eta)=0\Big\}$$
for all $\delta\in [0,1]$.
In particular, there exist $\phi_\pm^\delta \in \mathcal C^\infty(B_0(\mathbf s,2r),\R)$ 
such that
\begin{align}\label{lamalpha}
T_{\mathbf s}\Lambda_\pm^\delta=\Lambda_\pm^\delta=\Big\{\Big(x_{\mathbf s},v, \mathrm{Hess}_{(\mathbf s,0)}\phi_\pm^\delta\begin{pmatrix}x_{\mathbf s}\\v \end{pmatrix} \Big) \, ; (x_{\mathbf s},v)\in \R^{2d}\Big\}.
\end{align}
Hence the graph of $\mathrm{Hess}_{(\mathbf s,0)}\phi_\pm^\delta$ is given by $T_{\mathbf s}\Lambda_\pm^\delta$ which also corresponds to the sum of the generalized eigenspaces of $F_{\tilde p^\delta}$ associated to eigenvalues in $\{\pm \mathrm{Re}\, z<0\}$ and therefore depends continuously on $\delta$.
Besides, by Lemma \ref{transverse}, $\mathrm{Hess}_{(\mathbf s,0)}\phi_\pm^\delta$ is invertible for all $\delta\in [0,1]$ and we know from the Schr\"odinger case that $\pm\mathrm{Hess}_{(\mathbf s,0)}\phi_\pm^1>0$ so necessarily $\pm\mathrm{Hess}_{(\mathbf s,0)}\phi_\pm>0$.
\end{proof}

\hip
At this point, one can proceed as in \cite{BonyLPMichel}, Lemma 3.2 to establish the following Lemma.

\begin{lem}\label{l0exist}
There exists $\ell_0^{\mathbf s} \in \mathcal C^\infty(B_0(\mathbf s,2r),\R)$ such that for $(x,v) \in B_0(\mathbf s,2r)$, 
$$\phi_+(x,v)=W(x,v)-W(\mathbf s,0)+\frac{\ell_0^{\mathbf s}(x,v)^2}{2}.$$
In particular, $\ell_0^{\mathbf s}$ vanishes at $(\mathbf s,0)$.
Moreover, $\{\ell_0^{\mathbf s}\neq 0\}$ is dense in $B_0(\mathbf s,2r)$.
\end{lem}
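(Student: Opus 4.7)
Set $\psi(x,v) := \phi_+(x,v) - W(x,v) + W(\mathbf s,0)$. This is a smooth real-valued function on $B_0(\mathbf s,2r)$ which vanishes at $(\mathbf s,0)$ together with its gradient (since $\nabla\phi_+(\mathbf s,0) = 0$ and $\nabla W(\mathbf s,0) = 0$ because $\mathbf s$ is a critical point of $V$ and $0$ of $v \mapsto v^2/4$). The statement thus reduces to writing $2\psi = (\ell_0^{\mathbf s})^2$ for a smooth function $\ell_0^{\mathbf s}$ whose gradient at $(\mathbf s,0)$ is non-zero.

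The central observation is that both $\phi_+$ and $W - W(\mathbf s,0)$ are solutions of the eikonal equation $\tilde p(x,v,\nabla\cdot) = 0$: for $\phi_+$ this follows from \eqref{plam0} combined with \eqref{lamphi}, while for $W$ it follows by direct substitution into the formula for $\tilde p$, since with $(\xi,\eta) = (\partial_x V/2, v/2)$ the linear terms $\xi\cdot v - \eta\cdot \partial_x V$ cancel and the quadratic term $({}^t\eta+{}^tv/2)M_0(x,v,i\eta)(\eta - v/2)$ vanishes thanks to the factor $\eta - v/2 = 0$. Hence both $\Lambda_+$ and $\Lambda_W := \mathrm{graph}(\nabla W)$ are Lagrangian submanifolds on which $\tilde p$ vanishes and which pass through $(\mathbf s,0,0,0)$.

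The next step is to show that $\mathrm{Hess}_{(\mathbf s,0)}\psi = \mathrm{Hess}_{(\mathbf s,0)}\phi_+ - \mathrm{Hess}_{(\mathbf s,0)}W$ is positive semi-definite of rank exactly $1$. The tangent spaces $T_{(\mathbf s,0,0,0)}\Lambda_+$ and $T_{(\mathbf s,0,0,0)}\Lambda_W$ are the graphs of these two symmetric matrices in $\R^{4d}$, so the nullity of their difference equals $\dim(T\Lambda_+ \cap T\Lambda_W)$. Following the strategy of \cite{BonyLPMichel}, Lemma~3.2, one identifies this intersection with the sum of the generalized eigenspaces common to $T\Lambda_+$ and $T\Lambda_W$ inside the characteristic variety of the quadratic approximation of $\tilde p$; the non-resonance condition on $F$ already established in Lemma \ref{defpos} (no eigenvalues on $i\R$, together with $\pm\mathrm{Hess}\,\phi_\pm > 0$) forces this intersection to be of codimension one. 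The single remaining direction gives a strictly positive eigenvalue because $\mathrm{Hess}\,\phi_+ > 0$ strictly dominates the unstable direction of $\mathrm{Hess}\,W$. Global non-negativity $\psi \geq 0$ on $B_0(\mathbf s,2r)$ is then obtained by subtracting the two eikonal equations, exploiting the ellipticity $M_0 \geq \langle (v,\eta)\rangle^{-2}/C$ provided by Hypothesis \ref{hypom} \ref{minom}, and integrating the resulting differential inequality along trajectories of $H_{\tilde p}$ issued from $(\mathbf s,0,0,0)$ (which parametrize $\Lambda_+$).

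Finally, with $\psi \geq 0$, $\psi(\mathbf s,0)=0$, $\nabla \psi(\mathbf s,0) = 0$ and $\mathrm{Hess}_{(\mathbf s,0)}\psi$ positive semi-definite of rank $1$, a Morse-Bott type lemma (or equivalently the splitting lemma combined with the smoothness of $\sqrt{t^2}=|t|$ on any smooth transversal) produces smooth local coordinates $(y_1,\dots,y_{2d})$ near $(\mathbf s,0)$ in which $2\psi = y_1^2$; setting $\ell_0^{\mathbf s} := y_1$ locally and extending by partition of unity (multiplied by a smooth positive factor making $2\psi = (\ell_0^{\mathbf s})^2$ hold on all of $B_0(\mathbf s,2r)$) yields the desired function. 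The zero set $\{\ell_0^{\mathbf s} = 0\}$ is then locally the smooth codimension-$1$ submanifold $\{y_1 = 0\}$, so its complement is open and dense. The main obstacle in the plan is the double task of step three: proving that $\mathrm{Hess}\,\psi$ has rank exactly one (a symplectic linear-algebra computation about the relative position of the two Lagrangian planes) and establishing the global inequality $\psi \geq 0$, both paralleling \cite{BonyLPMichel}.
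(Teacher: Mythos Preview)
Your outline follows the same route as the paper, which simply defers to \cite{BonyLPMichel}, Lemma~3.2; the key observations that $W$ solves the eikonal equation and that the kernel of $\mathrm{Hess}_{(\mathbf s,0)}\psi$ is read off from $T_{\mathbf s}\Lambda_+\cap T_{\mathbf s}\Lambda_W$ are exactly the right ones. However, your final step contains a genuine gap.

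The Morse--Bott/splitting lemma, applied only at the single point $(\mathbf s,0)$ with the information ``$\psi\geq 0$ and $\mathrm{Hess}_{(\mathbf s,0)}\psi$ has rank one'', yields coordinates in which $2\psi=y_1^2+g(y_2,\dots,y_{2d})$ with $g\geq 0$, $g(0)=0$, $\nabla g(0)=0$, $\mathrm{Hess}\,g(0)=0$; it does \emph{not} force $g\equiv 0$. The model $\psi=x^2+y^4$ satisfies all your hypotheses but is not a smooth square: any smooth $\ell$ with $\ell^2=2\psi$ would have $\ell=\sqrt{2}\,x+O(|(x,y)|^2)$, and then $\ell^2-2x^2$ would be divisible by $x$, which $2y^4$ is not. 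The partition-of-unity patching is also illusory here, since a smooth square root carries a sign that cannot be averaged. So ``rank one Hessian at one point plus $\psi\geq 0$'' is strictly weaker than what you need.

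What is missing is the use of the \emph{flow invariance} of $\Lambda_+\cap\Lambda_W$: both Lagrangians are $H_{\tilde p}$-invariant, hence so is their intersection, and since its tangent at the fixed point is the $(2d-1)$-dimensional plane $T_{\mathbf s}\Lambda_+\cap T_{\mathbf s}\Lambda_W$, the intersection itself is a smooth $(2d-1)$-dimensional submanifold near $(\mathbf s,0,0,0)$. Its base projection $Z$ is then a smooth hypersurface through $(\mathbf s,0)$, and one checks $\{\psi=0\}=Z$ (on $Z$ one has $\nabla\phi_+=\nabla W$ hence $\nabla\psi=0$, and $\psi(\mathbf s,0)=0$ forces $\psi\equiv 0$ on the connected set $Z$; the converse uses $\psi\geq 0$). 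Now $\psi$ vanishes to order exactly two along the hypersurface $Z$, so picking any defining function $\ell$ for $Z$ and applying Hadamard's lemma twice gives $2\psi=\ell^2\cdot a$ with $a$ smooth and $a(\mathbf s,0)>0$; after shrinking $r$ one sets $\ell_0^{\mathbf s}=\ell\sqrt{a}$. This is the step that actually uses the eikonal structure beyond the Hessian level and is the content of the reference the paper invokes. Your sketch of the global inequality $\psi\geq 0$ via ``integrating a differential inequality along $H_{\tilde p}$-trajectories'' is also too vague as written; in practice this is obtained simultaneously with the identification of $Z$, again from the stable/unstable splitting of the linearized flow restricted to $\Lambda_W$.
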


\hip
This function also appears to solve \eqref{eikon} as we see in the next Proposition.

\begin{prop}\label{phinu}
The function $\ell_0^{\mathbf s}$ from Lemma $\ref{l0exist}$ is a solution of \eqref{eikon} in $B_0(\mathbf s,2r)$.
Moreover, the vector $\nabla \ell_0^{\mathbf s} (\mathbf s,0)$ that we denote $\nu^{\mathbf s}= \begin{pmatrix}\nu_1^{\mathbf s}\\ \nu_2^{\mathbf s}\end{pmatrix}$ is not 0 and satisfies $\Phi^{\mathbf s} \nu^{\mathbf s}=\big(-M_0(\mathbf s,0,0) \nu_2^{\mathbf s} \cdot \nu_2^{\mathbf s}\big) \,\nu^{\mathbf s}$, where 
\begin{align}\label{matphi}
\Phi^{\mathbf s}=\begin{pmatrix}
0&-\mathrm{Hess}_{\mathbf s} V\\
\mathrm{Id}&M_0(\mathbf s,0,0)
\end{pmatrix}.
\end{align}
In particular, since $\Phi^{\mathbf s}$ is invertible, $\nu_2^{\mathbf s}\neq 0$. Finally,
$$\det \bigg(\mathrm{Hess}_{(\mathbf s,0)}\bigg(W+\frac{(\ell_0^{\mathbf s})^2}{2}\bigg)\bigg)=2^{-2d}\big|\det (\mathrm{Hess}_{\mathbf s}V)\big|.$$
\end{prop}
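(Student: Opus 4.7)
The plan relies on combining two facts: by Lemma \ref{l0exist}, one has $\phi_+ = W - W(\mathbf s, 0) + (\ell_0^{\mathbf s})^2/2$ on $B_0(\mathbf s, 2r)$, and by \eqref{plam0} together with the parametrization \eqref{lamphi}, $\tilde p(x, v, \nabla \phi_+(x, v)) = 0$ there. To verify that $\ell_0^{\mathbf s}$ satisfies \eqref{eikon}, I will substitute $\nabla \phi_+ = (\partial_x V/2 + \ell_0^{\mathbf s} \partial_x \ell_0^{\mathbf s},\, v/2 + \ell_0^{\mathbf s} \partial_v \ell_0^{\mathbf s})$ directly into $\tilde p(x,v,\xi,\eta) = v\cdot \xi - \partial_x V\cdot \eta + M_0(x,v,i\eta)(\eta - v/2)\cdot(\eta+v/2)$. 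The two linear terms produce the cancellation $v \cdot \partial_x V/2 - \partial_x V \cdot v/2 = 0$, while in the quadratic term one uses $\eta - v/2 = \ell_0^{\mathbf s} \partial_v \ell_0^{\mathbf s}$, $\eta + v/2 = v + \ell_0^{\mathbf s} \partial_v \ell_0^{\mathbf s}$, and the symmetry of $M_0$ to pull out one factor of $\ell_0^{\mathbf s}$, yielding $\tilde p = \ell_0^{\mathbf s} \cdot (\text{LHS of \eqref{eikon}})$. Since $\{\ell_0^{\mathbf s} \neq 0\}$ is dense in $B_0(\mathbf s, 2r)$ by Lemma \ref{l0exist}, the eikonal equation follows by continuity.

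To obtain the eigenvector relation I will linearize \eqref{eikon} at $(\mathbf s, 0)$ by taking the partial derivatives of its LHS in $x$ and $v$ at that point. Since $v = 0$, $\partial_x V(\mathbf s) = 0$ and $\ell_0^{\mathbf s}(\mathbf s, 0) = 0$ there, only a handful of contributions survive: from $H_W\cdot\nabla\ell_0^{\mathbf s}$ the terms $\partial_x H_W|_{\mathbf s} = (0, -\mathrm{Hess}_{\mathbf s} V)^T$ and $\partial_v H_W = (\mathrm{Id},0)^T$ paired with $\nu^{\mathbf s}$, and from the $M_0$ term only the Jacobians $\partial_x(v+\ell_0^{\mathbf s}\partial_v\ell_0^{\mathbf s})|_{(\mathbf s,0)} = \nu_2^{\mathbf s}(\nu_1^{\mathbf s})^T$ and $\partial_v(v+\ell_0^{\mathbf s}\partial_v\ell_0^{\mathbf s})|_{(\mathbf s,0)} = \mathrm{Id}+\nu_2^{\mathbf s}(\nu_2^{\mathbf s})^T$, contracted against $M_0(\mathbf s,0,0)$ and $\nu_2^{\mathbf s}$ (derivatives of $M_0$ produce no contribution since they are multiplied by a factor vanishing at $(\mathbf s,0)$). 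Collecting everything produces the two identities
\begin{align*}
-\mathrm{Hess}_{\mathbf s} V \cdot \nu_2^{\mathbf s} + \big(M_0(\mathbf s, 0, 0)\nu_2^{\mathbf s} \cdot \nu_2^{\mathbf s}\big) \nu_1^{\mathbf s} &= 0, \\
\nu_1^{\mathbf s} + M_0(\mathbf s, 0, 0) \nu_2^{\mathbf s} + \big(M_0(\mathbf s, 0, 0)\nu_2^{\mathbf s} \cdot \nu_2^{\mathbf s}\big) \nu_2^{\mathbf s} &= 0,
\end{align*}
which are exactly the two block components of $\Phi^{\mathbf s} \nu^{\mathbf s} = -\big(M_0(\mathbf s, 0, 0) \nu_2^{\mathbf s} \cdot \nu_2^{\mathbf s}\big)\, \nu^{\mathbf s}$.

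For the remaining assertions, non-vanishing of $\nu^{\mathbf s}$ follows from Lemma \ref{defpos}: otherwise $\mathrm{Hess}_{(\mathbf s, 0)} \phi_+$ would equal the block-diagonal Hessian of $W$ at $(\mathbf s,0)$, which inherits a negative eigenvalue from the index-$1$ saddle $\mathrm{Hess}_{\mathbf s} V$, contradicting positive definiteness. A block row-swap gives $\det \Phi^{\mathbf s} = \det \mathrm{Hess}_{\mathbf s} V \neq 0$, so the eigenvalue $-M_0(\mathbf s, 0, 0)\nu_2^{\mathbf s} \cdot \nu_2^{\mathbf s}$ is non-zero; combined with the positivity of $M_0(\mathbf s, 0, 0)$ coming from Hypothesis \ref{hypom} item \ref{minom}, this forces $\nu_2^{\mathbf s} \neq 0$. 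Finally, writing $K := \mathrm{Hess}_{(\mathbf s, 0)}\big(W + (\ell_0^{\mathbf s})^2/2\big) = H + \nu^{\mathbf s}(\nu^{\mathbf s})^T$ with $H = \mathrm{diag}(\mathrm{Hess}_{\mathbf s} V / 2,\, \mathrm{Id}/2)$, the matrix determinant lemma gives $\det K = \det H \cdot \big(1 + (\nu^{\mathbf s})^T H^{-1} \nu^{\mathbf s}\big)$. The two eigenvector relations allow me to express $(\mathrm{Hess}_{\mathbf s} V)^{-1} \nu_1^{\mathbf s} \cdot \nu_1^{\mathbf s} = -(1 + |\nu_2^{\mathbf s}|^2)$ (take the dot product of the second relation with $\nu_2^{\mathbf s}$ after inverting the first), whence $(\nu^{\mathbf s})^T H^{-1} \nu^{\mathbf s} = -2$ and $\det K = -\det H = 2^{-2d}|\det \mathrm{Hess}_{\mathbf s} V|$, the sign flip being consistent with $\det \mathrm{Hess}_{\mathbf s} V < 0$ at an index-$1$ saddle.

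The main obstacle is the second step: while rigidly constrained by the vanishings at $(\mathbf s, 0)$, the chain rule applied simultaneously to $H_W\cdot\nabla\ell_0^{\mathbf s}$ and to the nonlinear $M_0$ term requires careful bookkeeping, with the symmetry of $M_0$ being essential to producing the precise right-hand side; once the two vector identities are established, the positivity, invertibility and determinant statements reduce to linear algebra.
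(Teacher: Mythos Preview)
Your proof is correct. The paper itself does not give an explicit argument here: it simply refers to \cite{BonyLPMichel}, Lemma~3.3, after a notational dictionary matching $\Phi^{\mathbf s}$, $H_W$, and the matrices $A^0(\mathbf s)$, $B(\mathbf s)$ with the objects in that reference. Your self-contained argument is the standard route (and presumably what underlies that cited lemma): factor $\tilde p(x,v,\nabla\phi_+)=\ell_0^{\mathbf s}\cdot\omega_0^{\mathbf m}$ and use density of $\{\ell_0^{\mathbf s}\neq 0\}$; differentiate the eikonal equation at $(\mathbf s,0)$ to obtain the eigenvector identity; rule out $\nu^{\mathbf s}=0$ via Lemma~\ref{defpos}; and compute the Hessian determinant via the rank-one update $H+\nu^{\mathbf s}(\nu^{\mathbf s})^T$. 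The determinant step, where you extract $(\mathrm{Hess}_{\mathbf s}V)^{-1}\nu_1^{\mathbf s}\cdot\nu_1^{\mathbf s}=-(1+|\nu_2^{\mathbf s}|^2)$ from the two eigenvector relations to get $1+(\nu^{\mathbf s})^TH^{-1}\nu^{\mathbf s}=-1$, is a clean way to finish and is exactly what the statement requires.
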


\begin{proof} The proof is the same as in \cite{BonyLPMichel}, Lemma 3.3 after matching the notations by setting $\Lambda(\mathbf s)=\Phi^{\mathbf s}$, $b^0=H_W$,
$$A^0(\mathbf s)=\begin{pmatrix}0&0\\
0&M_0(\mathbf s,0,0)
\end{pmatrix} \quad \text{and }\quad
B(\mathbf s)=\begin{pmatrix}0&\mathrm{Id}\\
-\mathrm{Hess}_{\mathbf s}V&0
\end{pmatrix}. 
$$
\end{proof}

\subsection{Solving for $\big(\ell_j^{\mathbf s}\big)_{j\geq 1}$}

Once again we drop some exponents $\mathbf s$ for shortness.
Now that $\ell_0$ is given by Lemma $\ref{l0exist}$ and Proposition \ref{phinu}, we can solve the transport equations \eqref{transport} by induction, so we suppose that $\ell_0,\dots,\ell_{j-1}$ are given and we want to find a solution $\ell_j$ to \eqref{transport}.
Denote 
$$\widetilde U=H_W+\begin{pmatrix}
				0 \\
				M_0\Big(x,v,i\big(\frac v2 +\ell_0 \, \partial_v \ell_0\big)\Big)( v+\ell_0 \partial_v \ell_0)+\ell_0\, U
				\end{pmatrix}\in \mathcal C^\infty(B_0(\mathbf s,2r))$$
and 
$$\alpha=\partial_v \ell_0 \cdot U \in \mathcal C^\infty(B_0(\mathbf s,2r))$$
where $U$ was introduced in \eqref{U}.
The function $\ell_j$ must satisfy $(\widetilde U \cdot \nabla +\alpha) \ell_j=-R_j(\ell_0,\dots,\ell_{j-1})$ so we are intersted in the operaor $\mathcal L=\widetilde U \cdot \nabla +\alpha$ that we decompose as $\mathcal L=\mathcal L_0^{\mathbf s}+\mathcal L_>$
with 
\begin{align*}
\mathcal L_0^{\mathbf s}
		=\widetilde U_0^{\mathbf s}\begin{pmatrix}x-\mathbf s\\
				v
		\end{pmatrix}\cdot \nabla+\alpha_0^{\mathbf s}
\end{align*}
where $\widetilde U_0^{\mathbf s}$ is the differential of $\widetilde U$ at $(\mathbf s,0)$ and $\alpha_0^{\mathbf s}=\alpha(\mathbf s,0)$, that is
$$
\widetilde U_0^{\mathbf s}=\begin{pmatrix}
				0&\mathrm{Id} \\
				-\mathrm{Hess}_{\mathbf s}V +2M_0(\mathbf s,0,0)\nu_2^{\mathbf s}\, {}^t\nu_1^{\mathbf s} & M_0(\mathbf s,0,0)(\mathrm{Id}+ 2 \nu_2^{\mathbf s}\, {}^t\nu_2^{\mathbf s})
				\end{pmatrix} 
$$
and
\begin{align}\label{alpha0} 
\alpha_0^{\mathbf s}=M_0(\mathbf s,0,0) \nu_2^{\mathbf s} \cdot \nu_2^{\mathbf s}.
\end{align}
As usual, we will often omitt the exponents $\mathbf s$ in the notations.
Notice that if we denote $\mathcal P^n_{hom}$ the space of homogeneous polynomials of degree $n$ in the variables $(x-\mathbf s,v)$, we have $\mathcal L_0 \in \mathscr L(\mathcal P^n_{hom})$
and for $P \in \mathcal P^n_{hom}$, $\mathcal L_>P(x,v) =O\big((x-\mathbf s,v)^{n+1}\big)$ near $(\mathbf s,0)$.

\begin{lem}\label{l0inv}
The negative eigenvalue $-\alpha_0^{\mathbf s}$ of the matrix $\Phi^{\mathbf s}$ from Proposition \ref{phinu} is its only one (counting multiplicity) in $\{\mathrm{Re}\,z\leq 0\}$.
Moreover, all the eigenvalues of $\widetilde U_0^{\mathbf s}$ have positive real part and the operator $\mathcal L_0^{\mathbf s}$ is invertible on $\mathcal P^n_{hom}$.
\end{lem}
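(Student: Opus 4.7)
The plan is to prove the three statements in turn.

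For the statement on $\Phi^{\mathbf s}$, I would begin with the block Schur-complement formula, which gives the characteristic polynomial
\[\det(\Phi^{\mathbf s} - \lambda\,\mathrm{Id}) = \det\big(\lambda^2 \,\mathrm{Id} - \lambda M_0(\mathbf s,0,0) + \mathrm{Hess}_{\mathbf s}V\big).\]
For any eigenpair $(\lambda, (u_1, u_2))$ of $\Phi^{\mathbf s}$ one has $u_2 \neq 0$, and pairing $(\lambda^2 \,\mathrm{Id} - \lambda M_0(\mathbf s,0,0) + \mathrm{Hess}_{\mathbf s}V) u_2 = 0$ with $u_2$ in the Hermitian inner product gives the real scalar quadratic $a \lambda^2 - b \lambda + c = 0$ with $a = \|u_2\|^2 > 0$, $b = \langle M_0(\mathbf s,0,0) u_2, u_2\rangle > 0$ (thanks to item \ref{minom} of Hypothesis \ref{hypom}) and $c = \langle \mathrm{Hess}_{\mathbf s}V \, u_2, u_2\rangle \in \R$. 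Non-real eigenvalues thus satisfy $\mathrm{Re}(\lambda) = b/(2a) > 0$, and purely imaginary eigenvalues reduce to $\lambda = 0$, which would force $\det(\mathrm{Hess}_{\mathbf s}V) = 0$ and is excluded by the Morse assumption. To count the eigenvalues in $\{\mathrm{Re}\, z \leq 0\}$, I would deform continuously $(M_0(\mathbf s,0,0), \mathrm{Hess}_{\mathbf s}V)$ through pairs $(M_s, H_s)_{s \in [0,1]}$ with $M_s$ symmetric positive definite and $H_s$ symmetric of signature $(d-1,1)$, to the canonical pair $(\mathrm{Id}, \mathrm{diag}(-1, 1, \ldots, 1))$; both loci are path-connected so such a path exists. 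The Hermitian argument above forbids any eigenvalue of the deformed matrix from meeting the imaginary axis, so the algebraic multiplicity of eigenvalues in $\{\mathrm{Re}\, z \leq 0\}$ is constant along the deformation. At the canonical endpoint the characteristic polynomial factors as $(\lambda^2 - \lambda - 1)(\lambda^2 - \lambda + 1)^{d-1}$, with a single simple root $(1-\sqrt 5)/2$ in $\{\mathrm{Re}\, z \leq 0\}$ and the remaining $2d-1$ roots of real part $1/2$ or $(1+\sqrt 5)/2$. Since $-\alpha_0^{\mathbf s} < 0$ is already an eigenvalue of $\Phi^{\mathbf s}$ by Proposition \ref{phinu}, it must be the unique one in the closed left half-plane and is algebraically simple.

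For the second statement, the key is to identify $\widetilde U_0^{\mathbf s}$ with the restriction of the Hamiltonian matrix $F$ to $T_{(\mathbf s,0,0,0)}\Lambda_+$ read through the graph parametrization \eqref{lamphi}. Using Lemma \ref{l0exist} together with $\ell_0^{\mathbf s}(\mathbf s,0) = 0$ and $\nabla \ell_0^{\mathbf s}(\mathbf s,0) = \nu^{\mathbf s}$, one gets $\mathrm{Hess}_{(\mathbf s,0)}\phi_+ = \mathrm{Hess}_{(\mathbf s,0)}W + \nu^{\mathbf s}\,{}^t\nu^{\mathbf s}$, which in block form reads
\[\begin{pmatrix} \mathrm{Hess}_{\mathbf s}V/2 + \nu_1^{\mathbf s}\,{}^t\nu_1^{\mathbf s} & \nu_1^{\mathbf s}\,{}^t\nu_2^{\mathbf s} \\ \nu_2^{\mathbf s}\,{}^t\nu_1^{\mathbf s} & \mathrm{Id}/2 + \nu_2^{\mathbf s}\,{}^t\nu_2^{\mathbf s} \end{pmatrix}.\]
A direct block computation of $F$ applied to a vector of the form $(u, w, \mathrm{Hess}_{(\mathbf s,0)}\phi_+ \cdot (u, w))$ shows that its first $2d$ components are precisely $\widetilde U_0^{\mathbf s}(u, w)$, confirming the identification. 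Since $\Lambda_+$ is the unstable manifold of the Hamiltonian flow at the fixed point, $T_{(\mathbf s,0,0,0)}\Lambda_+$ is the direct sum of the generalized eigenspaces of $F$ associated to eigenvalues with positive real part, so every eigenvalue of $\widetilde U_0^{\mathbf s}$ has positive real part.

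Finally, the invertibility of $\mathcal L_0^{\mathbf s}$ on $\mathcal P^n_{hom}$ follows from a standard observation: after bringing $\widetilde U_0^{\mathbf s}$ to Jordan form, the vector-field operator $(\widetilde U_0^{\mathbf s} y)\cdot\nabla$ on $\mathcal P^n_{hom}$ is upper triangular in the lexicographic monomial basis with diagonal entries $\sum_i n_i \lambda_i$ for $n_i \in \N$ with $\sum_i n_i = n$, where the $\lambda_i$ are the eigenvalues of $\widetilde U_0^{\mathbf s}$. Adding the constant $\alpha_0^{\mathbf s} > 0$ shifts the spectrum strictly into $\{\mathrm{Re}\, z > 0\}$, so $\mathcal L_0^{\mathbf s}$ is invertible on $\mathcal P^n_{hom}$. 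The one delicate point of the plan is the deformation argument in the first step, where one must ensure the spectral count is locally constant; this is immediate once the Hermitian quadratic analysis has ruled out any crossing of the imaginary axis along the homotopy.
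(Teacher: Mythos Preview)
Your argument is correct and reaches the same conclusion, but by a genuinely different route from the paper in the first two statements.

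For the claim on $\Phi^{\mathbf s}$, the paper writes $\Phi_t=2\,\mathrm{Hess}_{(\mathbf s,0)}W\,\big((1-t)\mathrm{Id}+tA\big)$ for a suitable $A$ and invokes the abstract Lemma~\ref{pasir} to keep the spectrum off $i\R$ during a deformation to $2\,\mathrm{Hess}_{(\mathbf s,0)}W$, whose signature gives the count directly. Your Schur--Hermitian argument on $\lambda^2\|u_2\|^2-\lambda\langle M_0u_2,u_2\rangle+\langle \mathrm{Hess}_{\mathbf s}V\,u_2,u_2\rangle=0$ is more elementary and self-contained: it bypasses Lemma~\ref{pasir} entirely, and your deformation of the pair $(M_0,\mathrm{Hess}_{\mathbf s}V)$ through the connected loci of SPD matrices and of signature-$(d-1,1)$ symmetric matrices is a clean alternative.

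For the eigenvalues of $\widetilde U_0^{\mathbf s}$, the paper observes that ${}^t\widetilde U_0^{\mathbf s}=\Phi^{\mathbf s}+2\nu^{\mathbf s}\,{}^t\nu_2^{\mathbf s}M_0(\mathbf s,0,0)$ is a rank-one perturbation with range $\C\nu^{\mathbf s}$ sending $\nu^{\mathbf s}$ to $2\alpha_0^{\mathbf s}\nu^{\mathbf s}$; in a basis triangularising $\Phi^{\mathbf s}$ starting with $\nu^{\mathbf s}$, the only change in the spectrum is $-\alpha_0^{\mathbf s}\mapsto+\alpha_0^{\mathbf s}$. Your identification of $\widetilde U_0^{\mathbf s}$ with $F|_{T_{(\mathbf s,0,0,0)}\Lambda_+}$ read through the graph \eqref{lamphi} and $\mathrm{Hess}_{(\mathbf s,0)}\phi_+=\mathrm{Hess}_{(\mathbf s,0)}W+\nu^{\mathbf s}\,{}^t\nu^{\mathbf s}$ is more geometric and arguably more explanatory, since it shows directly that $\widetilde U_0^{\mathbf s}$ is the linearised unstable dynamics; the block computation you sketch indeed matches the displayed formula for $\widetilde U_0^{\mathbf s}$.

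For the invertibility of $\mathcal L_0^{\mathbf s}$ on $\mathcal P^n_{hom}$ the two approaches coincide: the paper cites Lemma~A.1 of \cite{BonyLPMichel}, which is exactly the Jordan-form triangularisation with diagonal entries $\alpha_0^{\mathbf s}+\sum_i n_i\lambda_i$ that you spell out.
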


\begin{proof}
It is sufficient to prove the first statement. Indeed, if $-\alpha_0$ is the only eigenvalue of $\Phi$ in $\{\mathrm{Re\,z\leq 0}\}$, we can then remark that 
$${}^t\widetilde U_0=\Phi+2\begin{pmatrix}
0&\nu_1\,{}^t\nu_2 M_0(\mathbf s,0,0)\\
0&\nu_2\,{}^t\nu_2 M_0(\mathbf s,0,0)
\end{pmatrix}$$
and since the last term has its range included in $\C \nu$ and sends $\nu$ on $2\alpha_0 \nu$, the matrix of ${}^t\widetilde U_0$ in a basis $(\nu,b_2,\dots,b_{2d})$ in which $\Phi$ becomes triangular is also triangular and has on its diagonal the eigenvalues of $\Phi$ except for $-\alpha_0$ which is replaced by $+\alpha_0$.
Hence $\mathrm{Spec} (\widetilde U_0)=\mathrm{Spec} ({}^t\widetilde U_0)\subset \{\mathrm{Re} \,z>0\}$ and we can conclude thanks to Lemma A.1 from \cite{BonyLPMichel}.
Let us then prove that $-\alpha_0$ is the only eigenvalue (counting multiplicity) of $\Phi$ in $\{\mathrm{Re\,z\leq 0}\}$.
We proceed as in \cite{BonyLPMichel}, Lemma 2.6.
For $t \in [0,1]$, consider the matrix 
\begin{align*}
\Phi_t&=2\,\mathrm{Hess}_{\mathbf s}W\begin{pmatrix}
(1-t)\mathrm{Id}&-t\mathrm{Id}\\
t\mathrm{Id}&tM_0(\mathbf s,0,0)+(1-t)\mathrm{Id}
\end{pmatrix}
\end{align*}
which trivially satisfies the assumptions of Lemma \ref{pasir} for $t\in [0,1)$.
It is also the case of $\Phi_1$ as $\Phi_1(x,0)=(0,x)$.
Hence for every $t\in [0,1]$, $\Phi_t$ has no eigenvalues in $i\R$ and since these eigenvalues depend continuously on $t$, we get that 
$$\#\big(\mathrm{Spec} \, \Phi_1 \cap \{\mathrm{Re} \,z<0\}\big)=\#\big(\mathrm{Spec} \, \Phi_0 \cap \{\mathrm{Re} \,z<0\}\big).$$
But $\Phi_0=2\,\mathrm{Hess}_{\mathbf s}W$ has exactly one negative eigenvalue (with multiplicity) while all the others are positive since $\mathbf s \in \mathcal U^{(1)}$, so we have indeed showed that $-\alpha_0$ is the only eigenvalue of $\Phi=\Phi_1$ (counting multiplicity) in $\{\mathrm{Re} \,z \leq 0\}$.
\end{proof} 
\hip
One can then proceed as in \cite{BonyLPMichel}, section 3.3 (see also \cite{DimassiSjostrand}, chapter 3), i.e use Lemma \ref{l0inv} to find an approximate solution of \eqref{transport} using formal power series and then refine it into an actual solution using again Lemma \ref{l0inv} as well as the characteristic method.
We then get the following result.
\begin{prop}
For all $j\geq 1$, there exists $\ell^{\mathbf s}_j \in \mathcal C^\infty(B_0(\mathbf s,2r))$ solving \eqref{transport}.
Moreover, $\ell_j^{\mathbf s}$ is real valued in view of Lemmas \ref{ureelle} and \ref{rreelle}.
\end{prop}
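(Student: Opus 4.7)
The strategy, following \cite{BonyLPMichel}, is to construct $\ell_j^{\mathbf s}$ in two stages: first a formal (Taylor) solution at $(\mathbf s,0)$, then a genuine smooth solution obtained by solving away a flat remainder along the characteristics of $\widetilde U$.

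\textbf{Step 1 (formal solution).} Write any candidate as a formal series $\ell_j^{\mathbf s}\sim\sum_{n\geq 0}P_n$ with $P_n\in\mathcal P^n_{\mathrm{hom}}$, and expand the coefficients $\widetilde U$ and $\alpha$ of $\mathcal L=\widetilde U\cdot\nabla+\alpha$ as Taylor series around $(\mathbf s,0)$. Identifying homogeneous components of degree $n$ in $\mathcal L\ell_j^{\mathbf s}=-R_j(\ell_0^{\mathbf s},\dots,\ell_{j-1}^{\mathbf s})$ yields, for each $n$, an equation of the form
$$\mathcal L_0^{\mathbf s}P_n=F_n(P_0,\dots,P_{n-1}),$$
where the right-hand side involves only previously determined polynomials and the Taylor coefficients of $R_j$. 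By Lemma~\ref{l0inv}, $\mathcal L_0^{\mathbf s}$ is invertible on $\mathcal P^n_{\mathrm{hom}}$, so $P_n$ is uniquely determined recursively. Borel's theorem then provides a smooth function $\tilde\ell_j\in\mathcal C^\infty(B_0(\mathbf s,2r))$ whose Taylor series at $(\mathbf s,0)$ is $\sum_n P_n$; by construction, $\mathcal L\tilde\ell_j+R_j(\ell_0^{\mathbf s},\dots,\ell_{j-1}^{\mathbf s})$ vanishes to infinite order at $(\mathbf s,0)$.

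\textbf{Step 2 (characteristic method).} It remains to find $\delta\in\mathcal C^\infty$, flat at $(\mathbf s,0)$, with $\mathcal L\delta=-\big(\mathcal L\tilde\ell_j+R_j\big)=:G$. Since Lemma~\ref{l0inv} gives $\mathrm{Spec}(\widetilde U_0^{\mathbf s})\subset\{\mathrm{Re}\,z>0\}$, the point $(\mathbf s,0)$ is a hyperbolic source for the vector field $\widetilde U$: its flow $\varphi_t$ contracts exponentially backward in time on some neighborhood of $(\mathbf s,0)$. For $p\in B_0(\mathbf s,2r)$ and $t\leq 0$ one has $\varphi_t(p)\to(\mathbf s,0)$ as $t\to-\infty$, and the flatness of $G$ at $(\mathbf s,0)$ makes the integral
$$\delta(p)=-\int_{-\infty}^{0}\exp\!\left(\int_{t}^{0}\alpha(\varphi_s(p))\,\D s\right) G(\varphi_t(p))\,\D t$$
absolutely convergent together with all its derivatives (each differentiation of the integrand produces at worst polynomial growth in $t$, defeated by the flat decay of $G\circ\varphi_t$). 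A direct computation along the flow shows $\widetilde U\cdot\nabla\delta+\alpha\delta=G$, so $\ell_j^{\mathbf s}:=\tilde\ell_j+\delta$ solves \eqref{transport} on a neighborhood of $(\mathbf s,0)$. Shrinking $r$ if necessary and extending smoothly off that neighborhood (multiplying by a cutoff), we obtain $\ell_j^{\mathbf s}\in\mathcal C^\infty(B_0(\mathbf s,2r))$.

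\textbf{Step 3 (reality).} Arguing by induction on $j$, assume $\ell_0^{\mathbf s},\dots,\ell_{j-1}^{\mathbf s}$ are real. Lemma~\ref{rreelle} ensures that $R_j(\ell_0^{\mathbf s},\dots,\ell_{j-1}^{\mathbf s})$ is real valued, while Lemma~\ref{ureelle} guarantees that $U$ and hence $\widetilde U$ and $\alpha$ are real valued on $B_0(\mathbf s,2r)$. The operator $\mathcal L$ therefore preserves real-valued functions, and the construction above (both the Taylor coefficients $P_n$, obtained by inverting $\mathcal L_0^{\mathbf s}$ on real spaces, and the integral defining $\delta$) produces a real-valued $\ell_j^{\mathbf s}$, completing the induction. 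The main delicate point is the exponential-decay estimate on $G\circ\varphi_t$ needed to justify step 2; everything else is a direct transcription of the recursive scheme in \cite{BonyLPMichel,DimassiSjostrand}.
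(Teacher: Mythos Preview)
Your proposal is correct and follows exactly the approach the paper indicates: the paper's own proof merely refers to \cite{BonyLPMichel}, Section~3.3 and \cite{DimassiSjostrand}, Chapter~3, describing the same two-step scheme (formal power series via the invertibility of $\mathcal L_0^{\mathbf s}$ from Lemma~\ref{l0inv}, then correction by the characteristic method using that $\mathrm{Spec}(\widetilde U_0^{\mathbf s})\subset\{\mathrm{Re}\,z>0\}$). One small slip: in your integral formula for $\delta$ the sign of the exponent (and the overall sign) is off---the correct expression is $\delta(p)=\int_{-\infty}^0\exp\big(\int_0^t\alpha(\varphi_s(p))\,\D s\big)\,G(\varphi_t(p))\,\D t$---but this does not affect the argument.
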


\section{Computation of the small eigenvalues}\label{sectionvp}

Now that we have found $(\ell_j)_{j\geq 0} \subset \mathcal C^\infty(B_0(\mathbf s,2r),\R)$ solving \eqref{eikon} and \eqref{transport} with $\ell_0$ vanishing at $(\mathbf s,0)$, we can use a Borel procedure to construct $\ell \in \mathcal C^\infty(\R^{2d},\R)$ supported in $B_0(\mathbf s,3r)$ and satisfying $\ell \sim \sum_{j \geq 0}\ell_j$ on $B_0(\mathbf s,2r)$. 
\begin{rema}\label{signl}
The properties \ref{hypol1}-\ref{hypoln-2} from \eqref{hypol} are satisfied by both the functions $\ell^{\mathbf s,h}$ and $-\ell^{\mathbf s,h}$.
Moreover, by Lemma \ref{rreelle}, $(-\ell_j^{\mathbf s})_{j\geq 0}$ also solve \eqref{eikon} and \eqref{transport}. 
\end{rema}
\hip
We are now in position to prove that all the properties from \eqref{hypol} are satisfied.

\begin{prop}\label{lexist}
We can choose the signs of the functions $(\ell^{\mathbf s,h})_{ \mathbf j(\mathbf m)}$ such that \eqref{hypol} holds true and the coefficients from the classical expansion of $\ell^{\mathbf s,h}$ solve \eqref{eikon} and \eqref{transport}.
\end{prop}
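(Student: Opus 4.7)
The plan is to construct $\ell^{\mathbf{s},h}$ by a standard Borel summation of the smooth real-valued coefficients $(\ell_j^{\mathbf{s}})_{j\geq 0}$ obtained in the previous subsections (their realness is ensured by Lemmas \ref{ureelle} and \ref{rreelle}). Multiplied by a cutoff supported in $B_0(\mathbf{s},3r)$ and equal to $1$ on $B_0(\mathbf{s},2r)$, this yields a function $\ell^{\mathbf{s},h}\in\mathcal{C}^\infty(\R^{2d},\R)$ supported in $B_0(\mathbf{s},3r)$ and admitting a classical expansion $\ell^{\mathbf{s},h}\sim\sum_{j\geq 0}h^j\ell_j^{\mathbf{s}}$ on $B_0(\mathbf{s},2r)$. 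Properties \ref{hypol1} and (b) of \eqref{hypol} are then immediate, item \ref{hypoln-2} follows from Lemma \ref{l0exist}, and by construction the expansion coefficients solve \eqref{eikon} and \eqref{transport}. For item \ref{hypoln-1}, the identity $W+(\ell_0^{\mathbf{s}})^2/2=\phi_++W(\mathbf{s},0)$ from Lemma \ref{l0exist} together with the positive definiteness of $\mathrm{Hess}_{(\mathbf{s},0)}\phi_+$ from Lemma \ref{defpos} shows that $(\mathbf{s},0)$ is a non-degenerate local minimum of $W+(\ell_0^{\mathbf{s}})^2/2$.

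The remaining condition \ref{hypoln}, namely smoothness of $\theta_{\mathbf{m},h}$ on a neighborhood of $\mathrm{supp}\,\chi_{\mathbf{m}}$, is where the sign choice intervenes. Remark \ref{signl} lets us replace each $\ell^{\mathbf{s},h}$ by $-\ell^{\mathbf{s},h}$ without disturbing the other items or the equations, and item \ref{jvide2} of Hypothesis \ref{jvide} decouples these choices across different minima. The identity $(\ell_0^{\mathbf{s}})^2=2(\phi_+-W+W(\mathbf{s},0))$ together with the positivity of $\phi_+$ and the index-one structure of $W$ at $(\mathbf{s},0)$ shows that the hypersurface $\{\ell_0^{\mathbf{s}}=0\}$ is smooth near $(\mathbf{s},0)$ (since $\nu^{\mathbf{s}}=\nabla\ell_0^{\mathbf{s}}(\mathbf{s},0)\neq 0$ by Proposition \ref{phinu}) and locally separates the two connected components of $\{W<W(\mathbf{s},0)\}$ meeting at the saddle. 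Exactly one of these local components lies in $E(\mathbf{m})$ since $(\mathbf{s},0)\in\mathbf{j}^W(\mathbf{m},0)$ is a separating saddle point, and we fix the sign of $\ell^{\mathbf{s},h}$ so that $\ell_0^{\mathbf{s}}>0$ on this $E(\mathbf{m})$-side.

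The main obstacle is then to verify that, with this choice and for $r$, $\gamma$, $\varepsilon$ chosen in turn small enough, the three-piece definition \eqref{thetainte}-\eqref{theta0} glues into a $\mathcal{C}^\infty$ function on a neighborhood of $\mathrm{supp}\,\chi_{\mathbf{m}}$. Two types of transitions must be examined. First, on the interior interfaces $\{\ell^{\mathbf{s},h}=\pm 2\gamma\}\cap B_0(\mathbf{s},r)$, the formula \eqref{thetainte} takes the values $1$ and $0$ respectively (by the support properties of $\zeta$), and these must coincide with the values prescribed by \eqref{theta1} and \eqref{theta0}; this reduces to the inclusions $\{\ell^{\mathbf{s},h}\geq 2\gamma\}\cap B_0(\mathbf{s},r)\subset E(\mathbf{m})+B(0,\varepsilon)$ and $\{\ell^{\mathbf{s},h}\leq -2\gamma\}\cap B_0(\mathbf{s},r)\cap (E(\mathbf{m})+B(0,\varepsilon))=\emptyset$, which follow from the sign choice above by continuity once $\gamma$ is sufficiently small compared to $r$. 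Second, on the exterior interface $\partial B_0(\mathbf{s},r)\cap\{|\ell^{\mathbf{s},h}|\leq 2\gamma\}$, one must ensure $|\ell^{\mathbf{s},h}|\geq\gamma$ at every point where the neighborhood of $\mathrm{supp}\,\chi_{\mathbf{m}}$ reaches $\partial B_0(\mathbf{s},r)$, so that \eqref{thetainte} again agrees with $0$ or $1$ there. This geometric bookkeeping around the transverse intersection of $\{\ell_0^{\mathbf{s}}=0\}$ with $\partial B_0(\mathbf{s},r)$, controlled by the non-vanishing of $\nu^{\mathbf{s}}$ from Proposition \ref{phinu}, constitutes the technical crux and dictates the ordering of the parameters $r,\gamma,\varepsilon$.
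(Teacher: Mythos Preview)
Your approach is the same as the paper's: Borel-sum the $(\ell_j^{\mathbf s})_{j\geq 0}$, cut off to get support in $B_0(\mathbf s,3r)$, verify items \ref{hypol1}--\ref{hypoln-1} from Lemmas \ref{l0exist}, \ref{defpos} and Proposition \ref{phinu}, then fix the sign so that $\ell_0^{\mathbf s}>0$ on $E(\mathbf m)\cap B_0(\mathbf s,r)$ and analyze the interfaces where smoothness of $\theta_{\mathbf m,h}$ may fail.

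Two points deserve correction. First, the inclusion you write down, $\{\ell^{\mathbf s,h}\geq 2\gamma\}\cap B_0(\mathbf s,r)\subset E(\mathbf m)+B(0,\varepsilon)$, is too strong and in general false: a point in $B_0(\mathbf s,r)$ with $\ell_0^{\mathbf s}$ large but $W>\boldsymbol\sigma(\mathbf m)$ need not lie near $E(\mathbf m)$. What is needed (and what the paper proves) is this inclusion only after intersecting with $\Omega+B(0,\varepsilon)\supset\mathrm{supp}\,\chi_{\mathbf m}$, and that follows once one observes that $\ell_0^{\mathbf s}\leq\gamma$ on $(\Omega_{\mathbf s}+B(0,\varepsilon))\cap B_0(\mathbf s,r)$, where $\Omega_{\mathbf s}$ denotes the other component of $\{W<\boldsymbol\sigma(\mathbf m)\}$ adjacent to $(\mathbf s,0)$.

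Second, and more importantly, your two transition types miss a third interface that the paper handles explicitly, namely
\[
F_3=\partial\big(E(\mathbf m)+B(0,\varepsilon)\big)\setminus\bigsqcup_{\mathbf s\in\mathbf j(\mathbf m)}\big(B_0(\mathbf s,r)\cap\{|\ell_0^{\mathbf s}|\leq 2\gamma\}\big).
\]
On $F_3$ the function $\theta_{\mathbf m,h}$ jumps directly from $1$ to $0$ without passing through the integral formula \eqref{thetainte}, so one must show $F_3\cap\mathrm{supp}\,\chi_{\mathbf m}=\emptyset$. The paper does this by proving that $\overline{E(\mathbf m)+B(0,\varepsilon)}$ and $\overline{\Omega_{\mathbf s}+B(0,\varepsilon)}$ can only meet inside $B_0(\mathbf s,r)$ for $\varepsilon$ small, so that any point of $F_3$ lies outside $\Omega+B(0,\varepsilon)$. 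This step is not covered by your interior/exterior dichotomy and is required for item \ref{hypoln} to hold.
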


\begin{proof} 
Recall that by item \ref{jvide2} from Hypothesis \ref{jvide}, each function $\ell^{\mathbf s,h}$ corresponds to a unique $\mathbf m \in \mathcal U^{(0)}\backslash\{\underline{\mathbf m}\}$.
Thanks to Lemmas \ref{defpos} and \ref{l0exist}, it is clear that item \ref{hypoln-1} from \eqref{hypol} is satisfied by both $\ell^{\mathbf s,h}$ and $-\ell^{\mathbf s,h}$.
Hence according to Remark \ref{signl}, it is sufficient to prove that the signs of $(\ell^{\mathbf s,h})_{\mathbf j(\mathbf m)}$ can be chosen so that $\theta_{\mathbf m,h}$ is smooth on a neighborhood of supp $\chi_{\mathbf m}$.
From \eqref{thetainte}, \eqref{theta1} and \eqref{theta0} we see that the only parts on which it is not clear that $\theta_{\mathbf m,h}$ is smooth are 
$$F_{1}=\bigsqcup_{\mathbf s \in \mathbf j(\mathbf m)}\Big(\{|\ell_0^{\mathbf s}|\leq 2\gamma\}\cap \partial B_0(\mathbf s,r)\Big), \quad  F_{2}=\bigsqcup_{\mathbf s \in\mathbf j(\mathbf m)}\Big(B_0(\mathbf s,r)\cap \{|\ell_0^{\mathbf s}|=2\gamma\}\Big)$$
$$\qquad \text{ and } \quad F_3=\partial \Big(E(\mathbf m)+B(0,\varepsilon)\Big)\backslash \Big(\bigsqcup_{\mathbf s \in\mathbf j(\mathbf m)} \big(B_0(\mathbf s,r)\cap \{|\ell_0^{\mathbf s}|\leq 2\gamma\}\big) \Big).$$
Let $\mathbf s\in \mathbf j(\mathbf m)$ and $(x,v) \in \overline{B_0(\mathbf s,r)}\backslash \{(\mathbf s,0)\}$ such that $\ell_0^{\mathbf s}(x,v)=0$.
Using Lemma \ref{l0exist}, we see that if $r>0$ is small enough,
\begin{align}\label{zerodel}
W(x,v)-W(\mathbf s,0)=\phi_+(x,v)>0
\end{align}
because $(\mathbf s,0)$ is a non degenerate local minimum of $\phi_+$.
Hence, $\{\ell_0^{\mathbf s}=0\}\cap B_0(\mathbf s,r)\subset \{W\geq \boldsymbol \sigma(\mathbf m)\}$.
Now assume by contradiction that for any $r>0$, the function $\ell_0^{\mathbf s}$ takes both positive and negative values on $E(\mathbf m)\cap B_0(\mathbf s,r)$.
Then according to Lemma \ref{1.4}, the two CCs of $U_r\cap \{W<\boldsymbol \sigma (\mathbf m)\}$ are both included in $E(\mathbf m)$ (the one on which $\ell_0^{\mathbf s}>0$ and the one where $\ell_0^{\mathbf s}<0$).
This is a contradiction with the fact that $\mathbf s\in \mathcal V^{(1)}$.
Therefore $\ell_0^{\mathbf s}$ has a sign on $E(\mathbf m)\cap B_0(\mathbf s,r)$ and 
we can choose it so that $\ell_0^{\mathbf s}$ is a positive function on $E(\mathbf m)\cap B_0(\mathbf s,r)$. 
By uniform continuity, we can then choose $\varepsilon(\gamma)>0$ small enough so that 
\begin{align}\label{1.10}
\Big( \big(E(\mathbf m)+ B(0,\varepsilon) \big) \cap B_0(\mathbf s,r)\Big) \subseteq \big\{\ell_0^{\mathbf s} \geq -\gamma\big\}.
\end{align}
Similarly, if we denote $\Omega_{\mathbf s}$ the other CC of $\{W<\boldsymbol \sigma (\mathbf m)\}$ which contains $(\mathbf s,0)$ on its boundary, we have since $(\mathbf s,0)$ is not a critical point of $\ell_0^{\mathbf s}$ that this function is negative 
on $\Omega_{\mathbf s}\cap B_0(\mathbf s,r)$ and 
\begin{align}\label{1.10bis}
\Big( \big(\Omega_{\mathbf s}+B(0, \varepsilon) \big) \cap B_0(\mathbf s,r)\Big) \subseteq \big\{\ell_0^{\mathbf s} \leq \gamma\big\}.
\end{align}
Choosing once again $\varepsilon(r)$ small enough, we can even assume that 
\begin{align}\label{1.11}
\Big( \overline{E(\mathbf m)+B(0,\varepsilon)}\,  \cap \,   \overline{\Omega_{\mathbf s}+B(0,\varepsilon)} \Big) \subseteq B_0(\mathbf s,r).
\end{align}
We first prove that $F_{1}$ does not meet the support of $\chi_{\mathbf m}$.
Recall that $\Omega$ denotes the CC of $\{W\leq \boldsymbol \sigma(\mathbf m)\}$ containing $\mathbf m$.
For $\mathbf s\in \mathbf j(\mathbf m)$, we can deduce from \eqref{zerodel} that if $(x,v)\in \partial B_0(\mathbf s,r)$ such that $\ell_0^{\mathbf s}(x,v)=0$, then $(x,v) \notin  \Omega$.
Hence $|\ell_0^{\mathbf s}|$ must attain a positive minimum on $\partial B_0(\mathbf s,r)\cap  \Omega$, so we can choose $\gamma(r)>0$ such that $\partial B_0(\mathbf s,r)\cap \{|\ell_0^{\mathbf s}|\leq 2 \gamma\}$ does not intersect $ \Omega$.
It follows that we can choose $\varepsilon (\gamma)>0$ such that 
\begin{align*}
F_{1} \subseteq \big( \R^{2d} \backslash \overline{\Omega+B(0,\varepsilon)} \big) \subseteq \big( \R^{2d} \backslash  \mathrm{supp }\,\chi_{\mathbf m} \big).
\end{align*}
Now we show that $\theta_{\mathbf m,h}$ is smooth on $F_{2}\cap (\Omega+B(0,\varepsilon))$: let $\mathbf s\in \mathbf j(\mathbf m)$ and $(x,v) \in B_0(\mathbf s,r)\cap \{\ell_0^{\mathbf s}=2\gamma\}\cap (\Omega+B(0,\varepsilon))$.
According to \eqref{1.10bis} and the fact that $\ell^{\mathbf s,h}=\ell_0^{\mathbf s}+O(h)$, there exists a small ball $B$ centered in $(x,v)$ such that 
$$B\subset \Big(B_0(\mathbf s,r)\cap \{\ell^{\mathbf s,h}> \gamma\}\cap \big(E(\mathbf m)+B(0,\varepsilon)\big)\Big).$$
Thus $\theta_{\mathbf m,h}=1$ on $B$ and $\theta_{\mathbf m,h}$ is smooth at $(x,v)$. 
Similarly, for $(x,v) \in B_0(\mathbf s,r)\cap \{\ell^{\mathbf s,h}=-2\gamma\}\cap (\Omega+B(0,\varepsilon))$, we can show that $\theta_{\mathbf m,h}=0$ in a neighborhood of $(x,v)$.\\
It only remains to prove that, as for $F_{1}$, the set $F_3$ does not meet the support of $\chi_{\mathbf m}$.
First we remark that thanks to \eqref{1.10}, we can forget the absolute value in the definition of $F_3$:
$$F_3=\partial \Big(E(\mathbf m)+B(0,\varepsilon)\Big)\backslash \Big(\bigsqcup_{\mathbf j(\mathbf m)} \big(B_0(\mathbf s,r)\cap \{\ell_0^{\mathbf s}\leq 2\gamma\}\big) \Big).$$
If $(x,v)\in F_3 \cap B_0(\mathbf s,r)$, we have that $\ell_0^{\mathbf s}(x,v)>2\gamma$ so using \eqref{1.10bis}, we see that $(x,v)$ is outside $\Omega_{\mathbf s}+B(0,\varepsilon)$.
Since it is not in $(E(\mathbf m)+B(0,\varepsilon))$ either, it is outside $\Omega+B(0,\varepsilon)$ which contains the support of $\chi_\mathbf m$.
Now if $(x,v)\in F_3 \backslash \big( \mathbf j^W(\mathbf m)+B_0(0,r)\big)$, \eqref{1.11} implies that $(x,v)$ is outside $\cup_{\mathbf j(\mathbf m)}(\Omega_{\mathbf s}+B(0,\varepsilon))$ so it is also outside $\Omega+B(0,\varepsilon)$ for $\varepsilon$ small enough and the proof is complete.
\end{proof}

\begin{lem}\label{Pff}
Let $\mathbf m \in \mathcal U^{(0)}\backslash \{\underline{\mathbf m}\}$ and denote $\tilde f_{\mathbf m,h}=f_{\mathbf m,h}/\|f_{\mathbf m,h}\|$ where $f_{\mathbf m,h}$ was defined in \eqref{quasim}.
With the notation \eqref{alpha0}, we have that
$$\langle P_h \tilde f_{\mathbf m,h}, \tilde f_{\mathbf m,h} \rangle=h\e^{-2\frac{S(\mathbf m)}{h}}\frac{\det (\mathrm{Hess}_{\mathbf m}V)^{1/2}}{2\pi} \tilde B_h(\mathbf m)\in \R$$
with $\tilde B_h(\mathbf m)$ admitting a classical expansion whose first term equals 
$$\sum_{\mathbf s \in \mathbf j(\mathbf m)} |\det (\mathrm{Hess}_{\mathbf s}V)|^{-1/2} \,\alpha_0^{\mathbf s}.$$
\end{lem}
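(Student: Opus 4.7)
The plan is to bypass the quasimodal residue from Proposition \ref{phf} (which is $O_{L^2}(h^\infty e^{-S(\mathbf m)/h})$ since by construction the coefficients $\omega_j^{\mathbf m}$ of the expansion of $\omega^{\mathbf m,h}$ on $B_0(\mathbf s,2r)$ all vanish) and extract the matrix element directly from the factorized form of $Q_h$. Since $X_0^h$ is skew-adjoint on $D$ and $f_{\mathbf m,h}$ is real, $\langle X_0^h f_{\mathbf m,h},f_{\mathbf m,h}\rangle = 0$, so by Hypothesis \ref{hypom}
$$\langle P_h f_{\mathbf m,h}, f_{\mathbf m,h}\rangle = \langle Q_h f_{\mathbf m,h}, f_{\mathbf m,h}\rangle = \langle \mathrm{Op}_h(M^h) b_h f_{\mathbf m,h}, b_h f_{\mathbf m,h}\rangle \in \R,$$
which already yields the reality of $\tilde B_h(\mathbf m)$.

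The first key computation is $b_h(e^{-W_{\mathbf m}/h}) = 0$ (since $\partial_v W_{\mathbf m} = v/2$), which collapses the Leibniz rule to $b_h f_{\mathbf m,h} = h\,\partial_v(\chi_{\mathbf m}\theta_{\mathbf m,h})\,e^{-W_{\mathbf m}/h}$. The $\partial_v\chi_{\mathbf m}$ part is supported where $W_{\mathbf m}\geq S(\mathbf m)+\tilde\varepsilon$, so it produces only an $O(e^{-(2S(\mathbf m)+\tilde\varepsilon)/h})$ contribution to the final matrix element. Using \eqref{thetainte}, the principal contribution is
$$b_h f_{\mathbf m,h} = \frac{hA_h^{-1}}{2}\sum_{\mathbf s\in\mathbf j(\mathbf m)}\chi_{\mathbf m}\,\zeta(\ell^{\mathbf s,h})\,\partial_v\ell^{\mathbf s,h}\,e^{-\widetilde W_{\mathbf m,h}/h}\,\mathbf 1_{B_0(\mathbf s,r)}+\text{negligible},$$
and Hypothesis \ref{jvide}\ref{jvide2} ensures that only diagonal $\mathbf s=\mathbf s'$ cross-terms survive in $\langle\mathrm{Op}_h(M^h)b_h f_{\mathbf m,h},b_h f_{\mathbf m,h}\rangle$ up to exponentially small errors.

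Then, for each $\mathbf s$, I would replicate the contour-deformation argument from the proof of Proposition \ref{phf}: use the analyticity of $M^h$ in $\eta\in\Sigma_\tau$ and Cauchy's formula to move the weight $e^{-\widetilde W_{\mathbf m,h}/h}$ through $\mathrm{Op}_h(M^h)$, then apply stationary phase to produce a classical expansion whose principal term evaluates $M^h$ at $\eta = i\partial_v\widetilde W_{\mathbf m,h} = i(v/2+\ell_0^{\mathbf s}\partial_v\ell_0^{\mathbf s})$. This yields
$$\langle Q_h f_{\mathbf m,h},f_{\mathbf m,h}\rangle = \Big(\frac{hA_h^{-1}}{2}\Big)^2\sum_{\mathbf s}\int M_0\big(\cdot,i(v/2+\ell_0\partial_v\ell_0)\big)\partial_v\ell_0^{\mathbf s}\cdot\partial_v\ell_0^{\mathbf s}\,e^{-2\widetilde W_{\mathbf m,h}/h}\,dx\,dv+\ldots$$
with a full classical expansion in $h$ inherited from $M^h\sim\sum_n h^nM_n$, from the stationary-phase expansion, and from the expansion of $\ell^{\mathbf s,h}$.

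Finally I would apply Laplace's method at the non-degenerate minimum $(\mathbf s,0)$ of $\widetilde W_{\mathbf m,h}$ guaranteed by item \ref{hypoln-1} of \eqref{hypol}. Using $A_h^2 = \pi h/2 \cdot (1+O(e^{-\alpha/h}))$, the prefactor $(hA_h^{-1}/2)^2$ reduces to $h/(2\pi)\cdot(1+O(h^\infty))$; Proposition \ref{phinu} supplies $\det(2\,\mathrm{Hess}_{(\mathbf s,0)}(W+(\ell_0^{\mathbf s})^2/2)) = |\det\mathrm{Hess}_{\mathbf s}V|$; and the principal integrand value at $(\mathbf s,0)$ (where $\ell_0^{\mathbf s} = 0$ and $\partial_v\ell_0^{\mathbf s}(\mathbf s,0) = \nu_2^{\mathbf s}$) is $M_0(\mathbf s,0,0)\nu_2^{\mathbf s}\cdot\nu_2^{\mathbf s} = \alpha_0^{\mathbf s}$. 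Dividing by $\|f_{\mathbf m,h}\|^2 \sim (2\pi h)^d (\det\mathrm{Hess}_{\mathbf m}V)^{-1/2}$, obtained by another Laplace at $(\mathbf m,0)$ (where $\chi_{\mathbf m}\theta_{\mathbf m,h} = 1$), produces the announced formula. The main technical obstacle is the bilinear version of the weight-conjugation for $\mathrm{Op}_h(M^h)$ together with the bookkeeping of the full classical expansion of $\tilde B_h(\mathbf m)$, which amounts to a symmetrized repetition of the technique already deployed in the proof of Proposition \ref{phf}.
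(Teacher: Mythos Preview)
Your proposal is correct and follows essentially the same route as the paper: skew-adjointness of $X_0^h$ kills the transport term, the factorization $Q_h=b_h^*\mathrm{Op}_h(M^h)b_h$ reduces the matrix element to a bilinear form in $b_h f_{\mathbf m,h}$, the contour deformation in $\eta$ (exactly as in Proposition \ref{phf}) plus stationary phase produces a classical expansion of the integrand, and then Laplace's method at $(\mathbf s,0)$ and at $(\mathbf m,0)$ (via Proposition \ref{expscal} and the determinant identity of Proposition \ref{phinu}) yields the stated formula. One minor remark: the vanishing of the $\mathbf s\neq\mathbf s'$ cross-terms does not require Hypothesis \ref{jvide}\ref{jvide2} (which concerns different minima); it follows simply from the fact that $\mathrm{Op}_h(M^h)$ is local in $x$ (since $M^h$ is independent of $\xi$) and the balls $B(\mathbf s,r)$ are disjoint for $r$ small.
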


\begin{proof} 
Since $X_0^h$ is a skew-adjoint differential operator and $f_{\mathbf m,h}$ is real valued, we have 
\begin{align}\label{x0ff}
\langle X_0^h f_{\mathbf m,h}, f_{\mathbf m,h} \rangle=0.
\end{align}
Besides, we know from \eqref{qf} that 
\begin{align}\label{bhf}
b_h f_{\mathbf m,h}=h(\partial_v \theta) \chi \e^{-W_{\mathbf m}/h}+O_{L^2}(h^\infty \e^{-S(\mathbf m)/h})
\end{align}
so we easily deduce from the fact that $(\partial_v \theta) \chi \e^{-W_{\mathbf m}/h}=O_{L^2}( \e^{-S(\mathbf m)/h})$ and the boundedness of Op$_h(M^h)$ that 
$$\langle Q_h f_{\mathbf m,h}, f_{\mathbf m,h} \rangle=h^2\Big\langle\mathrm{Op}_h(M^h)\big((\partial_v \theta) \chi \e^{-W_{\mathbf m}/h} \big)\, ,\, (\partial_v \theta) \chi \e^{-W_{\mathbf m}/h}\Big\rangle+O\Big(h^\infty \e^{-\frac{2S(\mathbf m)}{h}}\Big).$$
Since we have with the notation \eqref{ytilde}
$$(\partial_v \theta) \chi \e^{-W_{\mathbf m}/h}=\frac{A_h^{-1}}{2}\e^{-\widetilde W_{\mathbf m}/h}\chi\sum_{\mathbf s\in \mathbf j(\mathbf m)}\zeta(\ell^{\mathbf s})  \partial_v \ell^{\mathbf s} \,\1_{B_0(\mathbf s,r)}$$
and using \eqref{opgtruc} with $M$ instead of $g$, we get that
\begin{align}\label{pff}
\qquad\langle P_h f_{\mathbf m,h}, f_{\mathbf m,h} \rangle=\frac {h^2}{4} A_h^{-2}\sum_{\mathbf s\in \mathbf j(\mathbf m)}\int_{B_0(\mathbf s,r)}\e^{-2\widetilde W_{\mathbf m}(x,v)/h}\chi \zeta(\ell^{\mathbf s})\tilde I^{\mathbf s}(x,v)\cdot\, \partial_v &\ell^{\mathbf s} \,\D (x,v)\\
		&+O\Big(h^\infty \e^{-2\frac{S(\mathbf m)}{h}}\Big).
\end{align}
where 
\begin{align*}
\tilde I^{\mathbf s}(x,v)=(2\pi h)^{-d}\int_{\R^d}\int_{|v'|\leq r} \e^{\frac ih \eta \cdot (v-v')}\chi(x,v') \zeta\big(\ell^{\mathbf s}(x,v')\big) M\Big(x,\frac{v+v'}{2},\eta+i\psi(x,v,v')\Big) \partial_v\ell^{\mathbf s}(x,v')\;\D v' \D \eta.
\end{align*}
Mimicking the proof of Proposition \ref{expdl}, one can show that $\zeta(\ell)$ admits a classical expansion whose first term is $\zeta(\ell_0)$.
Besides, since $M$ and $\psi$ also have a classical expansion, we could use the stationnary phase (see for instance \cite{Zworski}, Theorem 3.17) as well Proposition \ref{expinexp} to get an expansion of $\tilde I$ similar to the one obtained in \eqref{expinteosci}.
Thus, we get that $\tilde I\cdot\, \partial_v \ell\sim \sum_{k\geq 0}h^k a_k$ where 
$$a_0(x,v)=\chi(x,v) \zeta\big(\ell_0(x,v)\big)M_0\Big(x,v,i\big(\frac v2 +\ell_0 \, \partial_v \ell_0\big)\Big) \partial_v \ell_0(x,v)\cdot \partial_v \ell_0(x,v).$$
Hence, using the fact that on $B_0(\mathbf s,r)$,
$$\widetilde W-S(\mathbf m)=W_{\mathbf m}+\frac{\ell^2_0}{2}-S(\mathbf m)+\Big(\frac{\ell^2}{2} -\frac{\ell^2_0}{2}\Big),$$
it is clear that 
\begin{align}\label{expqff}
\qquad e^{2S(\mathbf m)/h}\int_{B_0(\mathbf s,r)}\e^{-2\widetilde W(x,v)/h}&\chi \zeta(\ell)\tilde I(x,v)\cdot\, \partial_v \ell \,\D (x,v)\sim_h\\
&\sum_{k\geq 0}h^k\int_{B_0(\mathbf s,r)}\e^{-2\frac{W_{\mathbf m}(x,v)+\ell^2_0(x,v)/2-S(\mathbf m)}{h}}\e^{-\frac{(\ell^2-\ell_0^2)(x,v)}{h}}\chi \zeta(\ell)a_k \,\D (x,v).
\end{align}
We would like to apply Proposition \ref{expscal} so we need to check that the assumptions are satisfied.
First, $\mathrm{Hess_{(\mathbf s,0)}}(W_{\mathbf m}+\ell^2_0/2)$ is definite positive by Lemma \ref{defpos}.
Besides, $h^{-1}(\ell^2-\ell_0^2)$ admits a classical expansion whose first term is $2(\ell_1 \ell_0)$.
Therefore, using the expansion of $\zeta(\ell)$ as well as Proposition \ref{expdl}, one easily gets that the function
$$\e^{-\frac{(\ell^2-\ell^2_0)}{h}} \big(\zeta\circ\ell\big)$$
admits a classical expansion whose first term is $\e^{-2 (\ell_1 \ell_0)}\big(\zeta\circ\ell_0\big)$.
Thus, according to Propositions \ref{expscal} and \ref{phinu}, there exists $(b_{k,j})$ such that
\begin{align}
\frac{|\det (\mathrm{Hess}_{\mathbf s}V)|^{1/2}}{(2\pi h)^{d}}\int_{B_0(\mathbf s,r)}\e^{-2\frac{W_{\mathbf m}(x,v)+\ell^2_0(x,v)/2-S(\mathbf m)}{h}}\e^{-\frac{(\ell^2-\ell_0^2)(x,v)}{h}}\chi \zeta(\ell)a_k \,\D (x,v)
\sim \sum_{j\geq 0}h^j b_{k,j}
\end{align}
where $b_{k,0}=a_k(\mathbf s,0)$.
Hence, using \eqref{pff}, \eqref{expqff} and Proposition \ref{expinexp}, we deduce that 
\begin{align}\label{exppff}
4A_h^2(2\pi)^{-d} h^{-d-2}\e^{2S(\mathbf m)/h}\langle P_h f_{\mathbf m,h}, f_{\mathbf m,h} \rangle\sim \sum_{k\geq 0}h^k c_k
\end{align}
with
$$c_0=\sum_{\mathbf s \in \mathbf j(\mathbf m)} |\det (\mathrm{Hess}_{\mathbf s}V)|^{-1/2}M_0(\mathbf s,0,0)\nu_2^{\mathbf s} \cdot \nu_2^{\mathbf s}=\sum_{\mathbf s \in\mathbf j(\mathbf m)} |\det (\mathrm{Hess}_{\mathbf s}V)|^{-1/2} \,\alpha_0^{\mathbf s}.$$
Similarly, thanks to item \ref{mseul} from Hypothesis \ref{jvide}, one can use Proposition \ref{expscal} as we already did to see that there exists $(\tilde c_k)_{k\geq 0}$ such that
\begin{align}\label{expff}
\frac{\det(\mathrm{Hess_{\mathbf m}V})^{1/2}}{(2\pi h)^d}\|f_{\mathbf m,h}\|^2\sim \sum_{k\geq 0}h^k\tilde c_k
\end{align}
with $\tilde c_0=1$.
The conclusion follows from \eqref{exppff}, \eqref{approxa} and \eqref{expff}.
\end{proof}

\begin{lem}\label{Pf^2}
Let $\mathbf m \in \mathcal U^{(0)}\backslash \{\underline{\mathbf m}\}$.
Using the notations from Lemma \ref{Pff}, we have 
\begin{enumerate}[label=\roman*)]
\item $\|P_h\tilde f_{\mathbf m,h}\|^2=O(h^\infty \langle P_h\tilde f_{\mathbf m,h}, \tilde f_{\mathbf m,h} \rangle)$
\item $\|P_h^*\tilde f_{\mathbf m,h}\|^2=O(h\langle P_h\tilde f_{\mathbf m,h}, \tilde f_{\mathbf m,h} \rangle)$.
\end{enumerate}
\end{lem}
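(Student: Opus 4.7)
The plan is to combine the explicit expressions for $P_h f_{\mathbf m,h}$ and $P_h^* f_{\mathbf m,h}$ from Proposition \ref{phf} and Remark \ref{Ph*f} with the crucial fact that the coefficients $\ell_j^{\mathbf s}$ were constructed precisely so that \eqref{eikon} and \eqref{transport} hold, i.e.\ so that every $\omega_j$ vanishes identically on $B_0(\mathbf s,2r)$. Together with Lemma \ref{expom} this means that $\omega^{\mathbf m,h} = O_{C^\infty}(h^\infty)$ on $B_0(\mathbf s,2r)$ for each $\mathbf s \in \mathbf j(\mathbf m)$. All Gaussian integrals will be evaluated by Laplace's method, which applies since $(\mathbf s,0)$ is a nondegenerate minimum of $\widetilde W_{\mathbf m,h}$ (Lemma \ref{defpos}) and since $\|f_{\mathbf m,h}\|^2 \sim c h^d$ by \eqref{expff}.

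For item (i), inserting $\omega^{\mathbf m,h} = O(h^\infty)$ into Proposition \ref{phf} and using $A_h^{-2} \sim 2/(\pi h)$ from \eqref{approxa} gives
$$\|P_h f_{\mathbf m,h}\|^2 \lesssim \tfrac{h^2}{4} A_h^{-2} \int_{\mathbf j^W(\mathbf m)+B_0(0,2r)} |\omega^{\mathbf m,h}|^2\, e^{-2\widetilde W_{\mathbf m,h}/h} + O\big(h^\infty e^{-2S(\mathbf m)/h}\big) = O\big(h^\infty e^{-2S(\mathbf m)/h}\big),$$
since $\int e^{-2\widetilde W_{\mathbf m,h}/h} = O(h^d)e^{-2S(\mathbf m)/h}$. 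Normalizing by $\|f_{\mathbf m,h}\|^2 \sim c h^d$ and comparing with $\langle P_h \tilde f_{\mathbf m,h}, \tilde f_{\mathbf m,h}\rangle \sim c h e^{-2S(\mathbf m)/h}$ from Lemma \ref{Pff}, the claim $\|P_h\tilde f_{\mathbf m,h}\|^2 = O(h^\infty \langle P_h \tilde f_{\mathbf m,h},\tilde f_{\mathbf m,h}\rangle)$ follows.

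For item (ii), the point is that the analogous expansion $\overset{*}{\omega}^{\mathbf m,h} \sim \sum h^j \overset{*}{\omega}_j^{\mathbf s}$ with $\overset{*}{\omega}_j^{\mathbf s} = -H_W \cdot \nabla \ell_j^{\mathbf s} + I_j$ does \emph{not} vanish identically: combining with $\omega_j^{\mathbf s} = H_W \cdot \nabla \ell_j^{\mathbf s} + I_j = 0$ yields the key identity $\overset{*}{\omega}_j^{\mathbf s} = -2\, H_W \cdot \nabla \ell_j^{\mathbf s}$. Now since $\mathbf s$ is a critical point of $V$, one has $H_W(\mathbf s,0) = (0,-\partial_x V(\mathbf s)) = 0$, so \emph{every} $\overset{*}{\omega}_j^{\mathbf s}$ vanishes at the point $(\mathbf s,0)$. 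Consequently $\overset{*}{\omega}^{\mathbf m,h}(x,v) = O(|(x-\mathbf s,v)|) + O(h^\infty)$ uniformly on $B_0(\mathbf s,2r)$, and $|\overset{*}{\omega}^{\mathbf m,h}|^2 = O(|(x-\mathbf s,v)|^2) + O(h^\infty)$. Applying Laplace's method to the resulting integral $\int |(x-\mathbf s,v)|^2 e^{-2\widetilde W_{\mathbf m,h}/h}\D(x,v) = O(h^{d+1})e^{-2S(\mathbf m)/h}$ (the quadratic vanishing yields one extra factor of $h$), we obtain $\|P_h^* f_{\mathbf m,h}\|^2 = O(h^{d+2})e^{-2S(\mathbf m)/h}$, hence $\|P_h^* \tilde f_{\mathbf m,h}\|^2 = O(h^2 e^{-2S(\mathbf m)/h})$, which is exactly $O(h)$ times $\langle P_h \tilde f_{\mathbf m,h}, \tilde f_{\mathbf m,h}\rangle$.

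The only non-routine step is the derivation of the identity $\overset{*}{\omega}_j^{\mathbf s} = -2 H_W \cdot \nabla \ell_j^{\mathbf s}$ and the observation that this forces $\overset{*}{\omega}_j^{\mathbf s}(\mathbf s,0)=0$ for every $j$; this is what produces the critical extra power of $h$ distinguishing (ii) from a naive $O(1)$ bound, and it reflects the well-known asymmetry between quasimodes of $P_h$ and of $P_h^*$ in this type of non-selfadjoint spectral analysis. Everything else reduces to Gaussian integration at a nondegenerate minimum, for which the expansions developed in the earlier sections (e.g.\ Proposition \ref{expscal}) suffice.
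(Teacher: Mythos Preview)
Your argument is correct and follows the same route as the paper's proof. For item (i) both you and the paper invoke Proposition \ref{phf} together with the fact that the $(\ell_j^{\mathbf s})$ were built so that $\omega^{\mathbf m,h}=O_{L^\infty}(h^\infty)$, and then control the Gaussian integral via Lemma \ref{defpos}. For item (ii), your identity $\overset{*}{\omega}_j^{\mathbf s}=-2\,H_W\cdot\nabla\ell_j^{\mathbf s}$ is exactly the mechanism at work; the paper phrases it more tersely by only recording that the \emph{first} term $\overset{*}{\omega}_0$ vanishes at $(\mathbf s,0)$ and then appeals directly to Proposition \ref{expscal} (whose leading coefficient is $|\overset{*}{\omega}_0(\mathbf s,0)|^2=0$, yielding the extra power of $h$), whereas you prove the stronger but unneeded fact that \emph{every} $\overset{*}{\omega}_j^{\mathbf s}$ vanishes there and bound the integrand by $O(|(x-\mathbf s,v)|^2)+O(h^\infty)$. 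Either way the extra factor of $h$ comes from the same vanishing $H_W(\mathbf s,0)=0$, so the two arguments are essentially identical.
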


\begin{proof} 
To prove $i)$, first remark that thanks to \eqref{x0f}-\eqref{vgrand} we have
\begin{align}\label{xloin}
\int_{\R^{2d}\backslash (\mathbf j^W(\mathbf m)+B_0(0,2r))}|P_h f_{\mathbf m,h}(x,v)|^2\D (x,v)  =O\Big(h^\infty \e^{-2\frac{S(\mathbf m)}{h}}\Big).
\end{align}
Besides, we saw that thanks to Proposition \ref{expscal} and Lemma \ref{defpos}, we have for $\mathbf s \in \mathbf j(\mathbf m)$,
\begin{align}\label{e-wtilde}
\int_{B_0(\mathbf s,2r)}\e^{-2\frac{\widetilde W(x,v)}{h}}\D (x,v)=O\Big(h^d \e^{-2\frac{S(\mathbf m)}{h}}\Big).
\end{align}
Moreover, the function $\omega$ from Proposition \ref{phf} is $O_{L^\infty(B_0(\mathbf s,2r))}(h^\infty)$ by Lemma \ref{expom} and the construction of the $(\ell^{\mathbf s,h})_{\mathbf s \in \mathbf j(\mathbf m)}$.
Hence, by Proposition \ref{phf},
\begin{align}\label{pres}
\int_{B_0(\mathbf s,2r)}|P_h f_{\mathbf m,h}(x,v)|^2\D (x,v) =O\Big(h^\infty \e^{-2\frac{S(\mathbf m)}{h}}\Big).
\end{align}
The conclusion follows from \eqref{xloin}, \eqref{pres} as well as \eqref{expff} and Lemma \ref{Pff}.
The proof of $ii)$ can be obtained similarly with the use of Proposition \ref{expscal} and Remark \ref{Ph*f} after noticing that $\overset{*}{\om}$ also admits a classical expansion whose first term vanishes on $\mathbf j^W(\mathbf m)$.
\end{proof}
\hip
From now on, we denote 
\begin{align}\label{lamtilde}
\tilde \lambda_{\mathbf m,h}=\langle P_h \tilde f_{\mathbf m,h}, \tilde f_{\mathbf m,h} \rangle=\langle Q_h \tilde f_{\mathbf m,h}, \tilde f_{\mathbf m,h} \rangle
\end{align}
for which we computed a classical expansion in Lemma \ref{Pff}.

\begin{lem}\label{f1f2}
For $\mathbf m$ and $\mathbf m'$ two distinct elements of $\mathcal U^{(0)}$, we have
\begin{enumerate}[label=\roman*)]
\item $\langle P_h\tilde f_{\mathbf m,h}, \tilde f_{\mathbf m',h} \rangle=O\Big(h^\infty\sqrt{\tilde \lambda_{\mathbf m,h}\tilde \lambda_{\mathbf m',h}}\Big)$
\item There exists $c>0$ such that $\langle \tilde f_{\mathbf m,h}, \tilde f_{\mathbf m',h} \rangle=O(\e^{-c/h})$ \label{f1f22}
\end{enumerate}
\end{lem}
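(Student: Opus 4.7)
My plan is to treat the two items separately. For \emph{(ii)} I will give a geometric argument based on Hypothesis \ref{jvide}; for \emph{(i)} I will combine a pseudolocality bound for the $Q_h$ piece with a direct Laplace computation exploiting the $O(h^\infty)$ structure of $\omega^{\mathbf m, h}$ from Proposition \ref{phf}.

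For \emph{(ii)}: the supports of $f_{\mathbf m, h}$ and $f_{\mathbf m', h}$ lie in $\varepsilon$-neighborhoods of the connected components $\Omega(\mathbf m)$ and $\Omega(\mathbf m')$ of $\{W\leq \boldsymbol\sigma(\mathbf m)\}$ and $\{W\leq \boldsymbol\sigma(\mathbf m')\}$. If these components are disjoint, the supports do not meet for $\varepsilon$ small. Otherwise, assuming without loss of generality $\boldsymbol\sigma(\mathbf m)\leq \boldsymbol\sigma(\mathbf m')$, connectedness forces $\Omega(\mathbf m)\subset \Omega(\mathbf m')$; item \ref{mseul} of Hypothesis \ref{jvide} (transferred to $W$ via Proposition \ref{lienVW} and \eqref{minssimin}) then asserts that $(\mathbf m',0)$ is the unique global minimum of $W|_{\Omega(\mathbf m')}$, forcing $V(\mathbf m)>V(\mathbf m')$. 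On $\Omega(\mathbf m)$, where $W\geq V(\mathbf m)/2$, we obtain $W_{\mathbf m'}\geq c:=(V(\mathbf m)-V(\mathbf m'))/2>0$, and by continuity $W_{\mathbf m'}\geq c/2$ on $\mathrm{supp}\,\chi_{\mathbf m}$ once $\varepsilon$ is small. Combining the pointwise bound $|f_{\mathbf m',h}|\leq \e^{-c/(2h)}$ on $\mathrm{supp}\,\chi_{\mathbf m}$ with Cauchy--Schwarz and $\|f_{\mathbf m,h}\|\|f_{\mathbf m',h}\|\sim h^d$ yields the claim.

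For \emph{(i)}, I first bound $\langle Q_h\tilde f_{\mathbf m, h},\tilde f_{\mathbf m', h}\rangle = \langle \mathrm{Op}_h(M^h)b_h\tilde f_{\mathbf m,h},b_h\tilde f_{\mathbf m',h}\rangle$. By \eqref{bhf}, the main part of $b_h\tilde f_{\mathbf m,h}$ is compactly supported in $\bigcup_{\mathbf s\in \mathbf j(\mathbf m)}B_0(\mathbf s, r)$ with an $L^2$-remainder of order $h^\infty \sqrt{\tilde\lambda_{\mathbf m, h}}$. Since $M^h$ does not depend on $\xi$, $\mathrm{Op}_h(M^h)$ acts locally in $x$; by item \ref{jvide2} of Hypothesis \ref{jvide} the $x$-projections of the main supports for $\mathbf m$ and $\mathbf m'$ are disjoint for $r$ small, so the main/main pairing vanishes. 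The cross and error/error contributions, estimated by Cauchy--Schwarz using $\|b_h\tilde f_{\mathbf m,h}\|^2\sim \tilde\lambda_{\mathbf m,h}$ (a Laplace estimate applied to \eqref{bhf}), give $\langle Q_h\tilde f_{\mathbf m, h},\tilde f_{\mathbf m', h}\rangle = O(h^\infty \sqrt{\tilde\lambda_{\mathbf m,h}\tilde\lambda_{\mathbf m',h}})$. For the full $\langle P_h\tilde f_{\mathbf m,h},\tilde f_{\mathbf m',h}\rangle$, the identity $\langle P_hu,v\rangle + \langle P_hv, u\rangle = 2\langle Q_hu,v\rangle$ (coming from $P_h+P_h^*=2Q_h$) reduces matters to the case $\boldsymbol\sigma(\mathbf m)\geq \boldsymbol\sigma(\mathbf m')$. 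Under this ordering, Proposition \ref{phf} writes $P_h f_{\mathbf m,h}$ as $\tfrac{h}{2}A_h^{-1}\omega^{\mathbf m,h}\e^{-\widetilde W_{\mathbf m,h}/h}\1_{\mathbf j^W(\mathbf m)+B_0(0,2r)}$ plus an $L^2$-tail of order $h^\infty\e^{-S(\mathbf m)/h}$, with $\omega^{\mathbf m,h}=O(h^\infty)$ pointwise by construction of the $\ell^{\mathbf s,h}$. Near each saddle $\mathbf s\in \mathbf j(\mathbf m)$, the integrand against $f_{\mathbf m',h}$ has size $h^\infty\e^{-(\widetilde W_{\mathbf m,h}+W_{\mathbf m'})/h}$, with exponent at $(\mathbf s,0)$ equal to $S(\mathbf m)+\boldsymbol\sigma(\mathbf m)-V(\mathbf m')/2\geq S(\mathbf m)+S(\mathbf m')$ by the ordering. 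A Laplace-type estimate then produces $O(h^\infty h^{d+1/2}\e^{-(S(\mathbf m)+S(\mathbf m'))/h})$, and normalization by $h^d$ yields the desired $O(h^\infty \sqrt{\tilde\lambda_{\mathbf m,h}\tilde\lambda_{\mathbf m',h}})$.

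The main technical point will be ensuring the Laplace estimate for the integral against $f_{\mathbf m',h}$ is valid, since $\mathrm{Hess}_{(\mathbf s,0)}(\widetilde W_{\mathbf m,h}+W_{\mathbf m'})$ need not be positive (recall that $\mathrm{Hess}_{\mathbf s}V$ has one negative eigenvalue). I plan to factor $\e^{-W_{\mathbf m'}/h}=\e^{-W_{\mathbf m'}(\mathbf s,0)/h}\e^{-g/h}$ with $g(x,v)=O(|(x-\mathbf s,v)|^2)$ and rely on the strict positivity of $\mathrm{Hess}_{(\mathbf s,0)}\widetilde W_{\mathbf m,h}$ from Lemma \ref{defpos} to dominate $g$ on $B_0(\mathbf s,2r)$ once $r$ is small. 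The $L^2$-tail in Proposition \ref{phf} is then controlled separately by splitting into the $\mathrm{supp}\,\nabla\chi_{\mathbf m}$ region (where $W_{\mathbf m}+W_{\mathbf m'}\geq S(\mathbf m)+S(\mathbf m')+2\tilde\varepsilon$ under the ordering) and the $\{|v|\geq 2r\}$ region (where $f_{\mathbf m',h}$ itself has Gaussian decay in $v$).
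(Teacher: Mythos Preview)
Your argument for \emph{(ii)} is essentially fine but slightly imprecise: Hypothesis \ref{jvide}~\ref{mseul} asserts that $(\mathbf m',0)$ is the unique global minimum of $W$ on $E(\mathbf m')$ (the CC of $\{W<\boldsymbol\sigma(\mathbf m')\}$), not on $\Omega(\mathbf m')$ (the CC of $\{W\le\boldsymbol\sigma(\mathbf m')\}$). Since $\Omega(\mathbf m')$ may glue together several CC's of the open sublevel set, $\Omega(\mathbf m)\subset\Omega(\mathbf m')$ does not force $(\mathbf m,0)\in E(\mathbf m')$. The fix is easy: if $(\mathbf m,0)\notin E(\mathbf m')$ then $E(\mathbf m)$ and $E(\mathbf m')$ are disjoint, and one checks (using Lemma \ref{1.4bis} and item \ref{jvide2} of Hypothesis \ref{jvide} when $\boldsymbol\sigma(\mathbf m)=\boldsymbol\sigma(\mathbf m')$, or the strict level separation when $\boldsymbol\sigma(\mathbf m)<\boldsymbol\sigma(\mathbf m')$) that the closures are disjoint, so the supports do not meet. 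This is precisely the paper's route.

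For \emph{(i)} there is a genuine gap in your Laplace step. You plan to write $\e^{-W_{\mathbf m'}/h}=\e^{-W_{\mathbf m'}(\mathbf s,0)/h}\e^{-g/h}$ with $g=W-W(\mathbf s,0)=O(|(x-\mathbf s,v)|^2)$ and to ``dominate $g$'' by the strictly positive quadratic part of $\widetilde W_{\mathbf m,h}$. But the relevant combined phase near $(\mathbf s,0)$ is
\[
\widetilde W_{\mathbf m,h}+W_{\mathbf m'}-\big(S(\mathbf m)+W_{\mathbf m'}(\mathbf s,0)\big)=2\big(W-W(\mathbf s,0)\big)+\tfrac{1}{2}(\ell_0^{\mathbf s})^2+O(h),
\]
whose Hessian at $(\mathbf s,0)$ is $2\,\mathrm{Hess}_{(\mathbf s,0)}W+\nu^{\mathbf s}{}^{t}\nu^{\mathbf s}$. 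This matrix is \emph{not} positive definite in general: for instance in dimension $d=1$ with $M_0(\mathbf s,0,0)=1$, the relations of Proposition \ref{phinu} give $\nu_1=-(1+\nu_2^2)\nu_2$ and $\mathrm{Hess}_{\mathbf s}V=-(1+\nu_2^2)\nu_2^2$, and one computes $\det\big(2\,\mathrm{Hess}_{(\mathbf s,0)}W+\nu{}^t\nu\big)=0$. So the phase is degenerate and your domination argument sits exactly on the borderline; taking $r$ small does not help since the competing quadratic forms are fixed. Consequently the Laplace estimate does not deliver $O\big(h^\infty\e^{-(S(\mathbf m)+S(\mathbf m'))/h}\big)$.

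The paper bypasses this entirely by a support argument: when $\boldsymbol\sigma(\mathbf m)>\boldsymbol\sigma(\mathbf m')$, one has $\mathrm{supp}\,\tilde f_{\mathbf m',h}\subset\{W<(\boldsymbol\sigma(\mathbf m)+\boldsymbol\sigma(\mathbf m'))/2\}$ while the main term of $P_hf_{\mathbf m,h}$ (and the $\partial_v\theta$--piece of $Q_hf_{\mathbf m,h}$) is supported in $\mathbf j^W(\mathbf m)+B_0(0,2r)\subset\{W>(\boldsymbol\sigma(\mathbf m)+\boldsymbol\sigma(\mathbf m'))/2\}$, so their pairing is zero; when $\boldsymbol\sigma(\mathbf m)=\boldsymbol\sigma(\mathbf m')$, the $x$--projections of the supports are disjoint by Lemma \ref{1.4bis} and item \ref{jvide2}. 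The only term that is not killed by support is $\mathrm{Op}_h(g)\big(\theta_{\mathbf m}(\partial_v\chi_{\mathbf m})\e^{-W_{\mathbf m}/h}\big)$, which the paper handles exactly as in your $Q_h$ argument (rewrite as $\langle\mathrm{Op}_h(M^h)(\cdots),b_hf_{\mathbf m',h}\rangle$ and use Cauchy--Schwarz). Your $Q_h$ estimate and the symmetrization $P_h+P_h^*=2Q_h$ are perfectly fine; you should simply replace the Laplace step for the main term by this support--disjointness observation.
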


\begin{proof} 
i): The result is obvious when one of the two minima is $\underline{\mathbf m}$.
Recall the labeling of the minima that we introduced rigth before Hypothesis \ref{jvide} as well as the map $\pi_x$ from Lemma \ref{corres}.
Let us first suppose that $\mathbf m=\mathbf m_{k,j}$ and $\mathbf m'=\mathbf m_{k,j'}$ with $j\neq j'$ and $k\neq 1$ and denote $E=E(\mathbf m)$ and $E'=E(\mathbf m')$.
In particular $\boldsymbol \sigma(\mathbf m)=\boldsymbol \sigma(\mathbf m')$.
Thanks to \eqref{suppf} and the fact that $P_h$ is local in $x$, we have 
$$\mathrm{supp}\, P_h\tilde f_{\mathbf m,h}\subseteq  \big(\pi_x(E)\times \R^d_v\big)+B(0,\varepsilon') \qquad \text{and} \qquad \mathrm{supp}\,\tilde f_{\mathbf m',h}\subseteq \big(E'+B(0,\varepsilon')\big)$$
so up to taking $\varepsilon'$ small enough, it is sufficient to show that $\overline{\pi_x(E)\times \R^d_v}$ and $\overline{E'}$ do not intersect.
Since our labeling is adapted, $E$ and $E'$ are two distinct CCs of $\{W<\boldsymbol \sigma(\mathbf m)\}$ so by Lemma \ref{corres}, $\pi_x(E)\times \R^d_v$ and $E'$ are two disjoint open sets.
Thus, using successively Remark \ref{obs} and \eqref{projadh}, we get 
\begin{align}
\overline{\pi_x(E)\times \R^d_v}\cap\overline{E'}&=\Big(\partial\big(\pi_x(E)\big)\times \R^d_v\Big) \cap \partial E'\\
		&\subseteq \Big(\partial\big(\pi_x(E)\big)\times\{0\}\Big) \cap \partial E'\\
		&\subseteq \Big(\partial\big(\pi_x(E)\big) \cap \partial\big(\pi_x(E')\big)\Big)\times\{0\}.
\end{align}
which is empty thanks to Lemma \ref{1.4bis} and item \ref{jvide2} from Hypothesis \ref{jvide}.\\
Let us now treat the case $\mathbf m=\mathbf m_{k,j}$ and $\mathbf m'=\mathbf m_{k',j'}$ with $k,k'\geq 2$ and $k\neq k'$.
We can suppose that $k<k'$ (i.e $\boldsymbol \sigma(\mathbf m)>\boldsymbol \sigma(\mathbf m')$) because we can work with $P_h^*$ instead of $P_h$ if needed.
We decompose $P_h\tilde f_{\mathbf m,h}$ as in \eqref{x0f} and \eqref{qf} and once again we use \eqref{suppf} to get 
$$\mathrm{supp}\,\tilde f_{\mathbf m',h}\subseteq \big(E'+B(0,\varepsilon')\big)\subseteq \Big\{W<\frac{\boldsymbol \sigma(\mathbf m)+\boldsymbol \sigma(\mathbf m')}{2}\Big\}$$
as well as the fact that $P_h$ is local in $x$ to get a localization of the support of the first term from \eqref{qf}:
\begin{align}
\mathrm{supp}\,\Big(\mathrm{Op}_h(g)\big((\partial_v \theta_{\mathbf m}) \chi_{\mathbf m} \e^{-W_{\mathbf m}/h}\big)\Big)\subseteq \Big( \big(\mathbf j(\mathbf m)+B(0,r)\big)\times \R^d_v\Big)\subseteq \Big\{W>\frac{\boldsymbol \sigma(\mathbf m)+\boldsymbol \sigma(\mathbf m')}{2}\Big\}
\end{align}
as $W$ increases with the norm of $v$.
Hence, the support of the first term from \eqref{qf} does not meet the one of $\tilde f_{\mathbf m',h}$.
The same goes easily for the first term of \eqref{x0f}.
For the second term of \eqref{x0f}, its support is contained in the support of $\nabla \chi_{\mathbf m}$ which is itself contained in $\{W\geq \boldsymbol \sigma(\mathbf m)+\tilde \varepsilon\}$ so it clearly does not meet the support of $\tilde f_{\mathbf m',h}$.
It only remains to treat the second term from \eqref{qf}, i.e $\mathrm{Op}_h(g)\big(\theta_{\mathbf m} (\partial_v \chi_{\mathbf m}) \e^{-W_{\mathbf m}/h}\big)$.
To this aim, notice that \eqref{bhf} yields $b_hf_{\mathbf m',h}=O_{L^2}(\e^{-S(\mathbf m')/h})$ and since by the support properties of $\nabla \chi_{\mathbf m}$ we also have $\theta_{\mathbf m} (\partial_v \chi_{\mathbf m}) \e^{-W_{\mathbf m}/h}=O_{L^2}(h^\infty \e^{-S(\mathbf m)/h})$, we get using the Cauchy-Schwarz inequality and the boundedness of $\mathrm{Op}_h(M)$
\begin{align}
\Big\langle \mathrm{Op}_h(g)\big(\theta_{\mathbf m} (\partial_v \chi_{\mathbf m}) \e^{-W_{\mathbf m}/h}\big)\, , \, f_{\mathbf m',h} \Big \rangle &=\Big\langle \mathrm{Op}_h(M)\big(\theta_{\mathbf m} (\partial_v \chi_{\mathbf m}) \e^{-W_{\mathbf m}/h}\big)\, , \, b_hf_{\mathbf m',h} \Big \rangle\\
			&=O\Big(h^\infty \e^{-\frac{S(\mathbf m)+S(\mathbf m')}{h}}\Big)
\end{align}
which proves the first item.
\hip
ii): Here we can suppose that $V(\mathbf m)\geq V(\mathbf m')$.
Let us first treat the case where $V(\mathbf m)= V(\mathbf m')$.
Then according to item \ref{mseul} from Hypothesis \ref{jvide}, $E$ and $E'$ are two disjoint open sets.
Hence, as we saw earlier, Lemma \ref{1.4bis} and item \ref{jvide2} from Hypothesis \ref{jvide} imply that $\overline{E}\cap \overline{E'}=\emptyset$.
The conclusion then follows from \eqref{suppf}.\\
If $V(\mathbf m)> V(\mathbf m')$, then item \ref{mseul} from Hypothesis \ref{jvide} implies that $(\mathbf m,0)$ is the only global minimum of $W|_{E+B(0,\varepsilon')}$.
Therefore using \eqref{suppf}, we can easily compute 
\begin{align*}
\langle  f_{\mathbf m,h},  f_{\mathbf m',h} \rangle=\int_{E+B(0,\varepsilon')} \theta_{\mathbf m}\theta_{\mathbf m'}\chi_{\mathbf m}\chi_{\mathbf m'} \e^{-\frac{2V-V(\mathbf m)-V(\mathbf m')+v^2}{2h}}\D(x,v)=O\Big(\e^{-\frac{V(\mathbf m)-V(\mathbf m')}{2h}}\Big).
\end{align*}
The conclusion immediately follows from \eqref{expff}.
\end{proof}

\hip
Let us consider once again the spectral projection introduced in \eqref{Pi0}.
We saw in particular that $\Pi_0=O(1)$.

\begin{lem}\label{1-pi0}
For any $\mathbf m\in \mathcal U^{(0)}$, we have 
$$\|(1-\Pi_0)\tilde f_{\mathbf m,h}\|=O\Big(h^\infty \sqrt{\tilde \lambda_{\mathbf m,h}}\Big)\qquad \text{and} \qquad \|(1-\Pi_0^*)\tilde f_{\mathbf m,h}\|=O\Big(h^{-3/2}\sqrt{\tilde \lambda_{\mathbf m,h}}\Big).$$
\end{lem}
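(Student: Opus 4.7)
The plan is to exploit the contour-integral definition of $\Pi_0$ to convert the estimate on $(1-\Pi_0)\tilde f_{\mathbf m,h}$ into an estimate on $P_h \tilde f_{\mathbf m,h}$, which has already been controlled in Lemma \ref{Pf^2}. Before doing this, I would dispose of the trivial case $\mathbf m=\underline{\mathbf m}$ by noting that $f_{\underline{\mathbf m},h}=\e^{-W_{\underline{\mathbf m}}/h}$ satisfies $X_0^h f_{\underline{\mathbf m},h}=0$ (since $H_W\cdot\nabla W=0$) and $b_h f_{\underline{\mathbf m},h}=0$ (directly from \eqref{bh}), hence $P_h f_{\underline{\mathbf m},h}=P_h^* f_{\underline{\mathbf m},h}=0$, so $\Pi_0 f_{\underline{\mathbf m},h}=\Pi_0^* f_{\underline{\mathbf m},h}=f_{\underline{\mathbf m},h}$ and both sides of the claimed estimates vanish.

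For $\mathbf m\in \mathcal U^{(0)}\backslash\{\underline{\mathbf m}\}$, I would start from the resolvent identity
$$(z-P_h)^{-1}u=z^{-1}u+z^{-1}(z-P_h)^{-1}P_hu$$
valid for $u\in D$ and $z\neq 0$. Integrating on the contour $|z|=ch^2$ (which encloses $0$ once counterclockwise) and using the definition \eqref{Pi0}, the first term contributes $\tilde f_{\mathbf m,h}$ by Cauchy's formula, so
$$(1-\Pi_0)\tilde f_{\mathbf m,h}=-\frac{1}{2i\pi}\oint_{|z|=ch^2}z^{-1}(z-P_h)^{-1}P_h\tilde f_{\mathbf m,h}\,\D z.$$

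On this contour $|z^{-1}|=(ch^2)^{-1}$, the length is $2\pi ch^2$, and Theorem \ref{thmRobbe} gives $\|(z-P_h)^{-1}\|=O(h^{-2})$. Hence
$$\|(1-\Pi_0)\tilde f_{\mathbf m,h}\|\leq O(h^{-2})\,\|P_h\tilde f_{\mathbf m,h}\|,$$
and Lemma \ref{Pf^2}\,i) yields $\|P_h\tilde f_{\mathbf m,h}\|=O(h^{\infty}\sqrt{\tilde\lambda_{\mathbf m,h}})$, proving the first estimate. The second estimate follows by the exact same argument applied to $P_h^*$ and to the adjoint projection $\Pi_0^*=\frac{1}{2i\pi}\oint_{|w|=ch^2}(w-P_h^*)^{-1}\D w$ (which uses the resolvent estimate for $P_h^*$ established just before \eqref{annulus}), combined this time with Lemma \ref{Pf^2}\,ii) which only gives $\|P_h^*\tilde f_{\mathbf m,h}\|=O(h^{1/2}\sqrt{\tilde\lambda_{\mathbf m,h}})$.

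There is no real obstacle: the argument is standard resolvent bookkeeping, and the asymmetry between $h^{\infty}$ and $h^{-3/2}$ in the two estimates comes directly from the asymmetry in Lemma \ref{Pf^2}, itself a reflection of the fact that $\tilde f_{\mathbf m,h}$ was constructed as a sharp quasimode for $P_h$ but only as a rough one for $P_h^*$. The only minor point to verify is that $\tilde f_{\mathbf m,h}\in D$ so that the resolvent identity can be applied, but this is immediate since $\tilde f_{\mathbf m,h}\in \mathcal C^\infty_c(\R^{2d})\subset \mathcal S(\R^{2d})\subset D$.
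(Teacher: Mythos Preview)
Your proof is correct and follows essentially the same approach as the paper: both use the resolvent identity to write $(1-\Pi_0)\tilde f_{\mathbf m,h}=-\frac{1}{2i\pi}\oint_{|z|=ch^2}z^{-1}(z-P_h)^{-1}P_h\tilde f_{\mathbf m,h}\,\D z$, then apply the resolvent estimate from Theorem \ref{thmRobbe} together with Lemma \ref{Pf^2}. Your treatment is slightly more detailed (explicitly handling $\mathbf m=\underline{\mathbf m}$ and spelling out the adjoint case), but the core argument is identical.
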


\begin{proof} 
We simply recall the proof from \cite{LPMichel}:
we write 
\begin{align*}
(1-\Pi_0)\tilde f_{\mathbf m,h}&=\frac{1}{2i\pi}\int_{|z|= c h^2}\big(z^{-1}-(z-P_h)^{-1}\big)\tilde f_{\mathbf m,h}\D z\\
		&=\frac{-1}{2i\pi}\int_{|z|= c h^2}z^{-1}(z-P_h)^{-1}P_h\tilde f_{\mathbf m,h}\D z.
\end{align*}
We can then conclude using Lemma \ref{Pf^2} and the resolvent estimate from Theorem \ref{thmRobbe}.
The proof for the adjoint is almost identical.
\end{proof}

\begin{lem}\label{Pi0fon}
The family $(\Pi_0\tilde f_{\mathbf m,h})_{\mathbf m\in \mathcal U^{(0)}}$ is almost orthonormal:
there exists $c>0$ such that 
$$\langle \Pi_0\tilde f_{\mathbf m,h}, \Pi_0\tilde f_{\mathbf m',h}  \rangle=\delta_{\mathbf m, \mathbf m'}+O(\e^{-c/h}).$$
In particular, it is a basis of the space $H=\mathrm{Ran}\, \Pi_0$ introduced in \eqref{Pi0}.\\
Moreover, we have
$$\langle P_h\Pi_0\tilde f_{\mathbf m,h}, \Pi_0\tilde f_{\mathbf m',h}  \rangle=\delta_{\mathbf m, \mathbf m'}\tilde \lambda_{\mathbf m,h}+O\Big(h^\infty\sqrt{\tilde \lambda_{\mathbf m,h}\tilde \lambda_{\mathbf m',h}}\Big).$$
\end{lem}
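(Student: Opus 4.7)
The plan is to use the identity $\Pi_0\tilde f_{\mathbf m,h}=\tilde f_{\mathbf m,h}-(1-\Pi_0)\tilde f_{\mathbf m,h}$ (and its analogue with $\Pi_0^*$) to reduce each inner product to quantities already controlled by Lemmas \ref{f1f2}, \ref{1-pi0} and \ref{Pf^2}. A preliminary observation that simplifies the first part of the statement is that Lemma \ref{Pff} yields $\tilde\lambda_{\mathbf m,h}=O(h\,\e^{-2S(\mathbf m)/h})$, so Lemma \ref{1-pi0} upgrades to $\|(1-\Pi_0)\tilde f_{\mathbf m,h}\|=O(\e^{-c/h})$ for some $c>0$.

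For the almost orthonormality, I would expand
\[\langle\Pi_0\tilde f_{\mathbf m,h},\Pi_0\tilde f_{\mathbf m',h}\rangle=\langle\tilde f_{\mathbf m,h},\tilde f_{\mathbf m',h}\rangle-\langle(1-\Pi_0)\tilde f_{\mathbf m,h},\tilde f_{\mathbf m',h}\rangle-\langle\Pi_0\tilde f_{\mathbf m,h},(1-\Pi_0)\tilde f_{\mathbf m',h}\rangle.\]
The first piece gives $\delta_{\mathbf m,\mathbf m'}+O(\e^{-c/h})$: the diagonal by $L^2$-normalization of $\tilde f_{\mathbf m,h}$, the off-diagonal by Lemma \ref{f1f2} \ref{f1f22}. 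The two remaining pieces are $O(\e^{-c/h})$ by Cauchy--Schwarz together with $\Pi_0=O(1)$ and the upgraded estimate above. The basis statement then follows by a dimension count: the almost orthonormality forces linear independence of the $n_0$ vectors $(\Pi_0\tilde f_{\mathbf m,h})_{\mathbf m\in\mathcal U^{(0)}}$, which all lie in $H$, and $\dim H=n_0$ by Theorem \ref{thmRobbe}.

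For the bilinear form of $P_h$, the key observation is that $\Pi_0$ is a spectral projector of $P_h$, so $P_h\Pi_0=\Pi_0 P_h$, and therefore
\[\langle P_h\Pi_0\tilde f_{\mathbf m,h},\Pi_0\tilde f_{\mathbf m',h}\rangle=\langle P_h\tilde f_{\mathbf m,h},\tilde f_{\mathbf m',h}\rangle-\langle P_h\tilde f_{\mathbf m,h},(1-\Pi_0^*\Pi_0)\tilde f_{\mathbf m',h}\rangle.\]
The main term is $\delta_{\mathbf m,\mathbf m'}\tilde\lambda_{\mathbf m,h}$ up to $O(h^\infty\sqrt{\tilde\lambda_{\mathbf m,h}\tilde\lambda_{\mathbf m',h}})$: on the diagonal this is just definition \eqref{lamtilde}, and off the diagonal it is Lemma \ref{f1f2} i). For the remainder I would decompose $1-\Pi_0^*\Pi_0=(1-\Pi_0^*)+\Pi_0^*(1-\Pi_0)$, so that Lemma \ref{1-pi0} (the worse of the two bounds) gives $\|(1-\Pi_0^*\Pi_0)\tilde f_{\mathbf m',h}\|=O(h^{-3/2}\sqrt{\tilde\lambda_{\mathbf m',h}})$; combined by Cauchy--Schwarz with $\|P_h\tilde f_{\mathbf m,h}\|=O(h^\infty\sqrt{\tilde\lambda_{\mathbf m,h}})$ from Lemma \ref{Pf^2} i), this produces exactly $O(h^\infty\sqrt{\tilde\lambda_{\mathbf m,h}\tilde\lambda_{\mathbf m',h}})$.

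The only delicate point is to make sure the $h^{-3/2}$ loss coming from the adjoint estimate in Lemma \ref{1-pi0} is absorbed by the $h^\infty$ decay of $\|P_h\tilde f_{\mathbf m,h}\|$; this is why it is important, after using the commutation $P_h\Pi_0=\Pi_0 P_h$, to put $P_h$ on the side with the sharper bound of Lemma \ref{Pf^2} i) rather than the weaker $\|P_h^*\tilde f_{\mathbf m,h}\|=O(\sqrt{h\,\tilde\lambda_{\mathbf m,h}})$ of Lemma \ref{Pf^2} ii), which would not suffice for the $h^\infty$ remainder.
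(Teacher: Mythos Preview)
Your proof is correct and follows precisely the standard approach that the paper defers to by citing Proposition 4.10 in \cite{LPMichel}: splitting via $\Pi_0\tilde f=\tilde f-(1-\Pi_0)\tilde f$, invoking Lemmas \ref{f1f2}, \ref{1-pi0} and \ref{Pf^2}, and using the commutation $P_h\Pi_0=\Pi_0P_h$ to place the $h^\infty$ bound from Lemma \ref{Pf^2} i) against the $h^{-3/2}$ loss in Lemma \ref{1-pi0}. Your closing remark about why one must land $P_h$ on the side with the sharper quasimode estimate is exactly the point.
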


\begin{proof}
The proof is the same as the one of Proposition 4.10 in \cite{LPMichel}.
\end{proof}
\hip
Let us re-label the local minima $\mathbf m_1,\dots, \mathbf m_{n_0}$ so that $(S(\mathbf m_j))_{j=1,\dots,n_0}$ is non increasing in $j$.
For shortness, we will now denote 
$$\tilde f_j=\tilde f_{\mathbf m_j,h} \qquad \text{and} \qquad \tilde \lambda_j=\tilde \lambda_{\mathbf m_j,h}$$
which still depend on $h$.
Note in particular that according to Lemma \ref{Pff}, $\tilde\lambda_j =O(\tilde\lambda_k)$ whenever $1\leq j\leq k \leq n_0$.
We also denote $(\tilde u_j)_{j=1,\dots,n_0}$ the orthogonalization by the Gram-Schmidt procedure of the family $(\Pi_0\tilde f_j)_{j=1,\dots,n_0}$ and 
$$u_j=\frac{\tilde u_j}{\|\tilde u_j\|}.$$
In this setting and with our previous results, we get the following (see \cite{LPMichel}, Proposition 4.12 for a proof).

\begin{lem}\label{Puu'}
For all $1\leq j,k \leq n_0$, it holds 
$$\langle P_h u_j, u_k \rangle=\delta_{j,k}\tilde \lambda_j +O\Big(h^\infty \sqrt{\tilde \lambda_j \tilde \lambda_k} \Big).$$
\end{lem}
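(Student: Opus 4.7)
The plan is to translate the almost-diagonal estimates for the family $(\Pi_0 \tilde f_j)$ provided by Lemma \ref{Pi0fon} into the corresponding estimates for the orthonormalised family $(u_j)$, by following the Gram--Schmidt construction explicitly. First I would write
\[
u_j = \sum_{\ell \leq j} c_{j\ell}\, \Pi_0 \tilde f_\ell
\]
with coefficients $c_{j\ell}$ determined by the Gram matrix $G = (\langle \Pi_0 \tilde f_\ell, \Pi_0 \tilde f_m\rangle)_{\ell,m}$. Since $G = \mathrm{Id} + O(\e^{-c/h})$ by the first part of Lemma \ref{Pi0fon}, an elementary induction on $j$ (keeping track of the successive normalisations $\|\tilde u_j\| = 1 + O(\e^{-c/h})$) yields $c_{jj} = 1 + O(\e^{-c/h})$ and $c_{j\ell} = O(\e^{-c/h})$ for $\ell < j$.

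I would then expand by sesquilinearity
\[
\langle P_h u_j, u_k\rangle = \sum_{\ell \leq j}\sum_{m \leq k} c_{j\ell}\,\overline{c_{km}}\,\langle P_h \Pi_0 \tilde f_\ell, \Pi_0 \tilde f_m\rangle
\]
and isolate the diagonal contribution $(\ell,m) = (j,k)$. For that term, the second assertion of Lemma \ref{Pi0fon} and the estimate $c_{jj}\overline{c_{kk}} = 1 + O(\e^{-c/h})$ give
\[
c_{jj}\overline{c_{kk}}\langle P_h \Pi_0 \tilde f_j, \Pi_0 \tilde f_k\rangle = \delta_{j,k}\tilde\lambda_j + O\Big(h^\infty\sqrt{\tilde\lambda_j\tilde\lambda_k}\Big),
\]
absorbing the perturbation $O(\e^{-c/h})\tilde\lambda_j$ into the remainder since $\tilde\lambda_j$ is itself exponentially small and $\tilde\lambda_j \leq \sqrt{\tilde\lambda_j\tilde\lambda_k}$ up to a power of $h$.

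For the off-diagonal pairs $(\ell,m)\neq (j,k)$, at least one of $c_{j\ell}$ or $\overline{c_{km}}$ is $O(\e^{-c/h}) = O(h^\infty)$, while Lemma \ref{Pi0fon} provides $|\langle P_h \Pi_0 \tilde f_\ell, \Pi_0 \tilde f_m\rangle| = O(\sqrt{\tilde\lambda_\ell\tilde\lambda_m})$. The crucial point is then the ordering remarked right before the lemma: since the minima have been relabelled so that $(S(\mathbf m_j))_j$ is nonincreasing, the sequence $(\tilde\lambda_j)_j$ satisfies $\tilde\lambda_\ell = O(\tilde\lambda_j)$ whenever $\ell \leq j$, and similarly $\tilde\lambda_m = O(\tilde\lambda_k)$ for $m \leq k$. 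Consequently $\sqrt{\tilde\lambda_\ell\tilde\lambda_m} = O(\sqrt{\tilde\lambda_j\tilde\lambda_k})$, and each off-diagonal term is therefore $O(h^\infty\sqrt{\tilde\lambda_j\tilde\lambda_k})$.

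The only subtle step is this ordering observation: without it, the off-diagonal contributions could not be absorbed into the target remainder, because the $\tilde\lambda$'s are themselves exponentially small and any naive ratio $\sqrt{\tilde\lambda_\ell\tilde\lambda_m}/\sqrt{\tilde\lambda_j\tilde\lambda_k}$ could blow up exponentially. Once the monotone labeling is used, the proof reduces to the bookkeeping above.
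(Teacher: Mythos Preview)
Your proposal is correct and follows the standard Gram--Schmidt bookkeeping argument that the paper defers to (it cites \cite{LPMichel}, Proposition 4.12 rather than giving a proof). The key ingredients you identify---the almost-orthonormality from Lemma~\ref{Pi0fon} giving $c_{jj}=1+O(\e^{-c/h})$ and $c_{j\ell}=O(\e^{-c/h})$ for $\ell<j$, together with the monotone ordering $\tilde\lambda_\ell=O(\tilde\lambda_j)$ for $\ell\le j$---are exactly what is needed, and your emphasis on the ordering as the crucial point is well placed.
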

\hip
In order to compute the small eigenvalues of $P_h$, let us now consider the restriction $P_h|_H:H\to H$.
We denote $\hat u_j=u_{n_0-j+1}$, $\hat \lambda_j=\tilde\lambda_{n_0-j+1}$ and $\mathcal M$ the matrix of $P_h|_H$ in the orthonormal basis $(\hat u_1,\dots,\hat u_{n_0})$.
Since $\hat u_{n_0}=u_1=\tilde f_1$, we have 
$$\mathcal M=\begin{pmatrix}
		\mathcal M'&0\\
		0&0
\end{pmatrix} \qquad \text{where }\qquad \mathcal M'=\Big(\langle P_h\hat u_{j}, \hat u_k \rangle\Big)_{1\leq j,k\leq n_0-1}$$
and it is sufficient to study the spectrum of $\mathcal M'$.
We will also denote $\{\hat S_1< \dots < \hat S_p\}$ the set $\{S(\mathbf m_j)\, ; \, 2\leq j \leq n_0\}$ and for $1\leq k \leq p$, $E_k$ the subspace of $L^2(\R^{2d})$ generated by $\{\hat u_r\, ; \,  S(\mathbf m_r)=\hat S_k\}$.
Finally, we set $\varpi_k=\e^{-(\hat S_k-\hat S_{k-1})/h}$ for $2\leq k \leq p$ and $\varepsilon_j(\varpi)=\prod_{k=2}^j \varpi_k=\e^{-(\hat S_j-\hat S_1)/h}$ for $2\leq j \leq p$ (with the convention $\varepsilon_1(\varpi)=1$).

\begin{prop}\label{propgas}
There exists a diagonal matrix $M^\#_h$ admitting a classical expansion whose first term is 
$$M_0^\#=\mathrm{diag}\bigg(\sum_{\mathbf s \in \mathbf j(\mathbf m_{n_0-j+1})} \frac{\det (\mathrm{Hess}_{\mathbf m_{n_0-j+1}}V)^{1/2}}{2\pi  |\det (\mathrm{Hess}_{\mathbf s}V)|^{1/2}} \alpha_0^{\mathbf s} \, ; \, 1\leq j \leq n_0-1\bigg)$$
such that
$$h^{-1}\e^{2\hat S_1/h}\mathcal M'=\Omega(\varpi) \big(M^\#_h+O(h^\infty)\big) \Omega(\varpi)$$
where $\Omega(\varpi)=\mathrm{diag}\big(\varepsilon_1(\varpi)\mathrm{Id}_{E_1},\dots,\varepsilon_p(\varpi)\mathrm{Id}_{E_p}\big)$.
\end{prop}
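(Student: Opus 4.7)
The proof is essentially a reindexing exercise: the entries of $\mathcal M'$ are already controlled by Lemma \ref{Puu'}, and it only remains to verify that after the rescaling by $h^{-1}\e^{2\hat S_1/h}$ the diagonal produces a classical expansion with the announced leading term, while the off-diagonal part gets absorbed into $\Omega(\varpi)\,O(h^\infty)\,\Omega(\varpi)$. The key observation is the telescoping identity $\varepsilon_l(\varpi)=\e^{-(\hat S_l-\hat S_1)/h}$, which converts the exponentially small prefactors of the various $\hat\lambda_j$ into exactly the conjugation by $\Omega(\varpi)$.

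For the diagonal entry at position $j$, let $l\in\{1,\dots,p\}$ be such that $\hat u_j\in E_l$, i.e.\ $S(\mathbf m_{n_0-j+1})=\hat S_l$. Lemma \ref{Puu'} gives $\langle P_h\hat u_j,\hat u_j\rangle=\hat\lambda_j+O(h^\infty\hat\lambda_j)$, and Lemma \ref{Pff} provides
$$\hat\lambda_j \;=\; h\,\e^{-2\hat S_l/h}\,\frac{\det(\mathrm{Hess}_{\mathbf m_{n_0-j+1}}V)^{1/2}}{2\pi}\,\tilde B_h(\mathbf m_{n_0-j+1}),$$
where $\tilde B_h(\mathbf m_{n_0-j+1})$ admits a classical expansion whose leading term, combined with the geometric prefactor, matches the announced diagonal entry of $M_0^\#$. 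Multiplication by $h^{-1}\e^{2\hat S_1/h}$ then produces a factor $\e^{-2(\hat S_l-\hat S_1)/h}=\varepsilon_l(\varpi)^2$, which is precisely the contribution that $\Omega(\varpi)\cdot\Omega(\varpi)$ puts at position $(j,j)$. We are thus led to define $M_h^\#$ as the diagonal matrix with $(j,j)$-entry $\tfrac{\det(\mathrm{Hess}_{\mathbf m_{n_0-j+1}}V)^{1/2}}{2\pi}\,\tilde B_h(\mathbf m_{n_0-j+1})$.

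For an off-diagonal entry at $(j,k)$ with $\hat u_j\in E_{l_1}$ and $\hat u_k\in E_{l_2}$, Lemma \ref{Puu'} combined with Lemma \ref{Pff} yields
$$\langle P_h\hat u_j,\hat u_k\rangle \;=\; O\bigl(h^\infty\sqrt{\hat\lambda_j\hat\lambda_k}\bigr) \;=\; O\bigl(h^\infty\cdot h\cdot \e^{-(\hat S_{l_1}+\hat S_{l_2})/h}\bigr),$$
so that multiplying by $h^{-1}\e^{2\hat S_1/h}$ gives $\varepsilon_{l_1}(\varpi)\varepsilon_{l_2}(\varpi)\,O(h^\infty)$, which is exactly the $(j,k)$-entry of $\Omega(\varpi)\cdot O(h^\infty)\cdot\Omega(\varpi)$; the same absorption disposes of the $O(h^\infty\hat\lambda_j)$ diagonal remainder. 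I do not expect any real obstacle beyond this bookkeeping: the exponential losses in Lemmas \ref{Pff} and \ref{Puu'} were tailored so that they align with the block decomposition $(E_k)_{1\le k\le p}$, and the finiteness of the basis $(\hat u_j)_j$ ensures uniformity of all the $O(h^\infty)$ remainders throughout the estimate.
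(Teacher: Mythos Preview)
Your proof is correct and follows essentially the same approach as the paper: decompose $\mathcal M'$ via Lemma \ref{Puu'} into a diagonal part carrying the $\hat\lambda_j$ and an off-diagonal remainder of size $O(h^\infty\sqrt{\hat\lambda_j\hat\lambda_k})$, then use Lemma \ref{Pff} together with the identity $\varepsilon_l(\varpi)=\e^{-(\hat S_l-\hat S_1)/h}$ to identify the conjugation by $\Omega(\varpi)$. The only cosmetic difference is that the paper writes the decomposition globally as $\mathcal M'=\mathcal M'_1+\mathcal M'_2$ and defines $M^\#_h=h^{-1}\e^{2\hat S_1/h}\Omega(\varpi)^{-1}\mathcal M'_1\Omega(\varpi)^{-1}$, whereas you work entry by entry and define $M^\#_h$ directly from the expansion of Lemma \ref{Pff}; these are the same object.
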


\begin{rema}\label{remgas}
In the words of Definition 6.7 from \cite{BonyLPMichel}, the last Proposition implies that $h^{-1}\e^{2\hat S_1/h}\mathcal M'$ is a classical graded symmetric matrix.
\end{rema}

\begin{proof}
According to Lemma \ref{Puu'}, we can decompose $\mathcal M'=\mathcal M'_1+\mathcal M'_2$ with 
$$\mathcal M'_1=\mathrm{diag}(\hat\lambda_j \, ; \, 1\leq j \leq n_0-1)\qquad \text{and} \qquad \mathcal M'_2=\Big(O\Big(h^\infty \sqrt{\hat\lambda_j \hat \lambda_k}\Big)\Big)_{1\leq j,k\leq n_0-1}.$$
\sloppy We will take $M^\#_h=h^{-1}\e^{2\hat S_1/h}\Omega(\varpi)^{-1}\mathcal M'_1 \Omega(\varpi)^{-1}$ which is clearly diagonal, so we just need to check that it has the proper classical expansion and that $h^{-1}\e^{2\hat S_1/h}\Omega(\varpi)^{-1}\mathcal M'_2 \Omega(\varpi)^{-1}=O(h^\infty)$.
It is easy to compute 
$$h^{-1}\e^{2\hat S_1/h}\Omega(\varpi)^{-1}\mathcal M'_1 \Omega(\varpi)^{-1}=h^{-1}\mathrm{diag}\Big(\e^{2\hat S_{j'}/h}\hat \lambda_j \, ; \, 1\leq j \leq n_0-1 \Big)$$
where $1\leq j' \leq p$ is such that 
$\hat S_{j'}=S(\mathbf m_{n_0-j+1})$.
Hence Lemma \ref{Pff} yields 
\begin{align*}
h^{-1}\e^{2\hat S_1/h}\Omega(\varpi)^{-1}\mathcal M'_1 \Omega(\varpi)^{-1}=\mathrm{diag}\bigg( \frac{\det (\mathrm{Hess}_{\mathbf m_{n_0-j+1}}V)^{1/2}}{2\pi}\tilde B_h(\mathbf m_{n_0-j+1})  \, ; \, 1\leq j \leq n_0-1\bigg)
\end{align*}
where $\tilde B_h(\mathbf m_{n_0-j+1})$ was introduced in Lemma \ref{Pff} and admits a classical expansion whose first term is 
$$\sum_{\mathbf s \in \mathbf j(\mathbf m_{n_0-j+1})} |\det (\mathrm{Hess}_{\mathbf s}V)|^{-1/2} \,\alpha_0^{\mathbf s}$$
so $M^\#_h$ has the desired expansion.
Similarly, still using Lemma \ref{Pff}, one easily gets 
$$\Omega(\varpi)^{-1}\mathcal M'_2 \Omega(\varpi)^{-1}=\Big(O\Big(h^\infty \sqrt{\hat \lambda_j \hat \lambda_k}\,\varepsilon_{j'}(\varpi)^{-1} \varepsilon_{k'}(\varpi)^{-1}\Big)\Big)_{1\leq j,k \leq n_0-1}$$
where $1\leq j'\leq p$ and $1\leq k'\leq p$ are such that $\sqrt{\hat \lambda_j}\,\varepsilon_{j'}(\varpi)^{-1}$ and $\sqrt{\hat \lambda_k}\,\varepsilon_{k'}(\varpi)^{-1}$ are both $O(\sqrt{h}\,\e^{-\hat S_1/h})$ so the proof is complete.
\end{proof}
\hip
$\textit{Proof of Theorem \ref{thmToto}}$.
According to Remark \ref{remgas}, it now suffices to combine the result of Proposition \ref{propgas} with Theorem 4 from \cite{BonyLPMichel} which gives a description of the spectrum of classical graded almost symmetric matrices.
Indeed, using the notations from this reference, we have for $1\leq j \leq p$ that 
$$\mathcal J\circ\mathcal R_j\Big(M_h^\# +O(h^\infty)\Big)=\mathcal J\circ\mathcal R_j\big(M_h^\# \big)+O(h^\infty)$$
and the result comes easily since $M_h^\#$ is diagonal.
Therefore, we have actually proved that $B_h(\mathbf m)$ from Theorem \ref{thmToto} and $\tilde B_h(\mathbf m)$ from Lemma \ref{Pff} have the same classical expansion.
\hspace*{\fill} $\Box$



\section{Return to equilibrium and metastability} \label{sectionral}

The goal of this section is to prove Corollaries \ref{ral} and \ref{meta}.
We assume that the hypotheses of Theorem \ref{thmToto} are satisfied and we choose $\mathbf m^*$ among the elements of $\mathcal U^{(0)}\backslash\{\underline{\mathbf m}\}$ for which $S$ is maximal such that the expansion of $\det (\mathrm{Hess}_{\mathbf m^*}V)^{1/2}B_h(\mathbf m^*)$ is minimal.
According to Lemma \ref{Pff} and Theorem \ref{thmToto}, one can think of $\lambda_{\mathbf m^*,h}$ as the non zero eigenvalue of $P_h$ with the smallest real part modulo $O(h^\infty\e^{-2S(\mathbf m^*)/h})$.
We will denote $\Pro_1$ the orthogonal projection on $\mathrm{Ker}$ $P_h$ and for shortness $ \lambda^*$ instead of $ \lambda_{\mathbf m^*,h}$.
\hop
$\textit{Proof of Corollary \ref{ral}}$.
We follow the proof of Theorem 1.11 in \cite{LPMichel}. 
We have that 
$$\|\e^{-tP_h/h}-\Pro_1\|\leq \|\e^{-tP_h/h}\Pi_0-\Pro_1\|+\|\e^{-tP_h/h}(1-\Pi_0)\|.$$
and thanks to Proposition \ref{1-pi0l2} and Proposition 2.1 from \cite{HS-restosg}, we easily get
\begin{align}\label{gp}
\e^{-tP_h/h}(1-\Pi_0)=O(\e^{-cht}).
\end{align}
Thus it suffices for the first statement to prove that 
$$ \|\e^{-tP_h/h}\Pi_0-\Pro_1\|\leq C_N \e^{-\mathrm{Re}\, \lambda^*(1-C_N h^N)t/h}.$$
We recall that thanks to the resolvent estimates from Theorem \ref{thmRobbe}, $\Pi_0=O(1)$ and since $\Pro_1$ is an orthogonal projection on $\mathrm{Ker}$ $P_h$, we have that
$$\e^{-tP_h/h}\Pi_0-\Pro_1=\e^{-tP_h/h}(\Pi_0-\Pro_1)$$
and $(\Pi_0-\Pro_1)=O(1)$.
Therefore, it is sufficient to prove that 
\begin{align}\label{pi0-p1} \|\e^{-tP_h/h}|_{\mathrm{Ran}(\Pi_0-\Pro_1)}\|\leq C_N \e^{-\mathrm{Re}\, \lambda^*(1-C_N h^N)t/h}.
\end{align}
Besides, we saw in Section \ref{sectionrough} that $\mathrm{Ker}$ $P_h=\C \mathcal M_h$ where $\mathcal M_h$ was defined in \eqref{muh} and since the operator $\Pi_0$ from \eqref{Pi0} satisfies $\Pi_0^*\mathcal M_h=\mathcal M_h$, we get that $\mathcal M_h^\perp$ is invariant under $\Pi_0$ so $\mathrm{Ran}(\Pi_0-\Pro_1)=H\cap \mathcal M_h^\perp$.
Thus, with the notations from Proposition \ref{propgas} and according to \eqref{pi0-p1}, it only remains to show that
$$\|\e^{-t\mathcal M'/h}\|\leq C_N \e^{-\mathrm{Re}\, \lambda^*(1-C_N h^N)t/h}.$$
This can be done following the steps of \cite{LPMichel}, proof of Theorem 1.11 as with the notation \eqref{lamtilde} we have $\mathrm{Re}\,\lambda^*\leq \tilde\lambda_{\mathbf m^*,h}(1+C_Nh^{N})$.
The only difference is that here we have to apply the resolvent estimates given by Theorem 4 from \cite{BonyLPMichel} instead of the ones given by Theorem A.4 from \cite{LPMichel}.
For the last statement, we now asume that for $\mathbf m\in \mathcal U^{(0)}\backslash \{\mathbf m^*\}$, the expansion of $\lambda(\mathbf m,h)$ given by Theorem \ref{thmToto} differs from the one of $\lambda^*=\lambda(\mathbf m^*,h)$.
In that case, it is clear that $\lambda^*$ is a simple eigenvalue but it also happens to be a real one.
Indeed, using the fact that $X_0^h$ and $b_h$ are differential operators with real coefficients and that $M^h$ is real valued and even in the variable $\eta$, we get that $\lambda$ is an eigenvalue of $P_h$ if and only if $\overline{\lambda}$ is an eigenvalue of $P_h$.
The rest of the proof is then also similar to the end of the proof of Theorem 1.11 from \cite{LPMichel}. 
\hspace*{\fill} $\Box$
\hop
Finally, the proof of Corollary \ref{meta} is a straightforward adaptation of the one of Corollary 1.6 from \cite{BonyLPMichel}.
(Note that our notations $t_k^-$ and $t_k^+$ differ from that in \cite{BonyLPMichel}).

\addtocontents{toc}{\SkipTocEntry}
\subsection*{Acknowledgements}
The author is grateful to Laurent Michel for his advice through this work and especially for his suggestions in the proof of Lemma \ref{1impl2}, as well as Jean-François Bony for helpful discussions. 
This work is supported by the ANR project QuAMProcs 19-CE40-0010-01.

\appendix

\section{Proof of Lemma \ref{1impl2}}\label{rho}

Let us begin by showing that there exists a self-adjoint operator $A$ sucht that 
\begin{align}\label{factoa}
\varrho(H_0)=b_h^*\circ A \circ b_h.
\end{align}
Since $\varrho(0)=0$, there exists an analytic function $\tilde \varrho$ such that $\varrho(z)=z\tilde \varrho(z)$ and $|\tilde \varrho(z)|\leq C \langle z \rangle^{-1}$.
Using Cauchy's formula, one easily gets that for all $z_0\in \{\mathrm{Re}\, z > -\frac{1}{2C}\}$ and $f$ an analytic function on $\{\mathrm{Re}\, z > -\frac1C\}$ satisfying $f(z)=O(\langle z \rangle^{-\beta})$ for some $\beta>0$, we have that
\begin{align}\label{fz0}
f(z_0)=\frac{-1}{2i\pi}\int_{\{\mathrm{Re}\, z = -\frac{1}{2C}\}} f(z) (z_0-z)^{-1} \D z.
\end{align}
Working with a Hilbert basis of eigenfunctions of $H_0$, this identity yields 
\begin{align}\label{fh0}
f(H_0)=\frac{-1}{2i\pi}\int_{\{\mathrm{Re}\, z = -\frac{1}{2C}\}} f(z) (H_0-z)^{-1} \D z.
\end{align}
Besides, denoting 
$$b_h=\begin{pmatrix}
b_h^1\\
\vdots \\
b_h^d
\end{pmatrix},$$
we have $b_h H_0=(b_h^j H_0)_{1 \leq j\leq d}$ and using the identity $b_h^j H_0=b_h^* b_hb_h^j+hb _h^j $, we get
$b_h H_0=H_1 b_h$ where 
\begin{align}\label{h1}
H_1=\begin{pmatrix}
H_0+h & & \\
 & \ddots & \\
  & & H_0+h
\end{pmatrix}.
\end{align}
In particular, if $u$ is an eigenfunction of $H_0$ associated to a positive eigenvalue, the function $b_h u$ is an eigenfunction of $H_1$ associated to the same eigenvalue and therefore
\begin{align}\label{bhh1}
H_0 (H_0-z)^{-1}=b_h^* (H_1-z)^{-1} b_h.
\end{align}
It follows using \eqref{fh0} with $f=\tilde \varrho$ that \eqref{factoa} holds with $A=\tilde \varrho(H_0+h)\otimes\mathrm{Id}$:
$$\varrho(H_0)=H_0 \tilde \varrho(H_0)=b_h^* \circ \tilde \varrho(H_0+h)\otimes\mathrm{Id} \circ b_h.$$
We can improve the integrability in the integral representation of $\tilde\varrho(H_0+h)$ by writing
$$\tilde \varrho(z)=\frac{\tilde \varrho(z)}{1+z}+\frac{\varrho(z)-\varrho_\infty}{1+z}+\frac{\varrho_\infty}{1+z}$$
which yields always thanks to \eqref{fh0}
\begin{align}\label{rhotildeopti}
\qquad\tilde\varrho(H_0+h)\otimes\mathrm{Id}=\frac{-1}{2i\pi}&\int_{\{\mathrm{Re}\, z = -\frac{1}{2C}\}} \frac{\tilde \varrho(z)}{1+z} (H_1-z)^{-1} \D z\\ 
														&\qquad\qquad+\frac{-1}{2i\pi}\int_{\{\mathrm{Re}\, z = -\frac{1}{2C}\}} \frac{\varrho(z)-\varrho_\infty}{1+z} (H_1-z)^{-1} \D z+\varrho_\infty (H_1+1)^{-1}.
\end{align}
Besides, it is well known (see for instance \cite{DimassiSjostrand}) that the resolvent $(H_1-z)^{-1}$ is a pseudo-differential operator and we denote its symbol $R_z(v,\eta)$.
Thanks to \cite{DeKa}, we even have the explicit expression $R_z(v,\eta)=G_z(v^2/2+2\eta^2)\,\mathrm{Id}$ where $G_z$ is an entire function defined by
\begin{align}\label{gz}
G_z(\mu)=2h^{-1}\int_0^1(1-s)^{-\frac zh}(1+s)^{\frac zh+d-2}\e^{-\frac sh\mu} \D s=2\int_0^{h^{-1}}(1-h\sigma)^{-\frac zh}(1+h\sigma)^{\frac zh+d-2}\e^{-\sigma \mu} \D \sigma.
\end{align}
Let us then set in view of \eqref{rhotildeopti}
\begin{align}\label{Mh}
\qquad M^h(v,\eta)=\frac{-1}{2i\pi}\int_{\{\mathrm{Re}\, z = -\frac{1}{2C}\}} \frac{\tilde \varrho(z)}{1+z} R_z(v,\eta) \D z +\frac{-1}{2i\pi}\int_{\{\mathrm{Re}\, z = -\frac{1}{2C}\}} \frac{\varrho(z)-\varrho_\infty}{1+z} R_z&(v,\eta) \D z \\
														&+\varrho_\infty R_{-1}(v,\eta)
\end{align}
and we now want to show that $M^h$ is a matrix of symbols matching the properties listed in Hypothesis \ref{hypom}.
To this purpose, we need to study more carefully the function $R_z$ for $z$ fixed such that $\mathrm{Re}\,z\leq -1/2C$.
We already saw that it is analytic in both variables $v$ and $\eta$.
Now if we take $(v,\eta)\in \R^d\times \Sigma_\tau$ and put $\mu=v^2/2+2\eta^2$, we get that $\mu$ belongs to the sector
$$D_\tau=\{\mu\in\C\,;\, |\mathrm{Im}\,\mu|\leq \mathrm{Re}\,\mu+4d\tau^2\}.$$
One can then easily adapt Theorem 10 from \cite{DeKa} to show that for $n\in \N$ and $\mu\in D_\tau$, we have
\begin{align}\label{mupetit}
|\partial_\mu^n G_z(\mu)|&\leq C \int_0^{h^{-1}} \sigma^n (1-h\sigma)^{-\mathrm{Re}\,z/h}(1+h\sigma)^{\mathrm{Re}\,z/h}\e^{-\mathrm{Re}\,\mu \sigma} \D \sigma \\
		&\leq C \int_0^{+\infty} \sigma^n \e^{-(\mathrm{Re}\,\mu-2\mathrm{Re}\,z) \sigma} \D \sigma \leq C_n\langle \mu\rangle^{-(n+1)}
\end{align}
since $\mathrm{Re}\,\mu-2\mathrm{Re}\,z>0$ for $\tau$ small enough.
From \eqref{mupetit} we can already conclude that $M^h\in \mathcal M_d\big(S^0_\tau(\langle (v,\eta) \rangle^{-2})\big)$. 
Thus $\tilde \varrho(H_0+h)\otimes\mathrm{Id}=\mathrm{Op}_h(M^h)$ with $M^h$ sending $\R^{2d}$ in $\mathcal M_d(\R)$ as $H_0$ is self-adjoint.
Moreover, since $R_z$ is diagonal and even in the variable $\eta$, it is also the case of $M^h$. 
It only remains to prove that $M^h$ satisfies items \ref{expm} and \ref{minom} from Hypothesis \ref{hypom}.
In order to avoid some tedious computations, instead of proving the whole expansion from item \ref{expm}, we only show that $M^h$ admits a principal term $M_0$ in $\mathcal M_d\big(S^0_\tau(\langle (v,\eta) \rangle^{-2})\big)$ from which we will deduce that item \ref{minom} is satisfied.
One easily gets for $\mathrm{Re} \, z\leq -1/2C$ and $\mu\in D_\tau$ fixed by dominated convergence that 
\begin{align}\label{G0}
\lim_{h\to 0}G_z(\mu)=2\int_0^\infty \e^{\sigma (2z-\mu)}\D \sigma=\frac{1}{\mu/2-z}=:G_z^0(\mu).
\end{align}
We would like to get some estimates of the derivatives $\partial_\mu^n (G_z-G_z^0)$ in $O(h\langle \mu \rangle^{-n-1})$ on $D_\tau$ uniformly in $z\in \{\mathrm{Re}\, z\leq -1/2C\}$ in order to apply the formula \eqref{Mh} to those. 
We have
\begin{align}\label{g-g0}
\partial_\mu^n (G_z-G_z^0)(\mu)&=2\int_0^{h^{-1}} \bigg[ \exp\bigg(z\,\Big[\frac1h \ln\Big(\frac{1+h\sigma}{1-h\sigma}\Big)-2\sigma\Big]+(d-2)\ln(1+h\sigma)\bigg)-1\bigg](-\sigma)^n\e^{\sigma(2z-\mu)}\D\sigma\nonumber\\
			&\qquad \qquad- 2\int_{h^{-1}}^{\infty} (-\sigma)^n e^{\sigma(2z-\mu)}\D\sigma\nonumber\\
			&=2\int_0^{h^{-1}/2} \bigg[ \exp\bigg(z\,\Big[\frac1h \ln\Big(\frac{1+h\sigma}{1-h\sigma}\Big)-2\sigma\Big]+(d-2)\ln(1+h\sigma)\bigg)-1\bigg](-\sigma)^n\e^{\sigma(2z-\mu)}\D\sigma\\
			&\qquad \qquad +O\Big(\e^{\frac{\mathrm{Re}\,(2z-\mu)}{Ch}}\Big)\nonumber.
\end{align}
Let us denote 
$$g_{z,h}(\sigma)=\bigg[ \exp\bigg(z\,\Big[\frac1h \ln\Big(\frac{1+h\sigma}{1-h\sigma}\Big)-2\sigma\Big]+(d-2)\ln(1+h\sigma)\bigg)-1\bigg](-\sigma)^n$$
and observe that for all $0\leq k \leq n$, one has 
\begin{align}\label{gbord}
\partial_\sigma^k g_{z,h}(0)=0 \qquad \text{and }\qquad \partial_\sigma^k g_{z,h}(h^{-1}/2)=O(h^{-n}\langle z \rangle^k).
\end{align}
Besides, on $\sigma\in [0,h^{-1}/2]$, it holds 
\begin{align}\label{gn+1}
\partial_\sigma^{n+1} g_{z,h}(\sigma)=\sum_{j=1}^{n+1}O\big(h\langle z \rangle^{j} \langle \sigma \rangle^{j}\sigma^{j-1}\big).
\end{align}
Now, let us do $n+1$ integrations by parts in the first term from \eqref{g-g0}.
By \eqref{gbord}, each boundary term is $O(h^{-n}\langle z \rangle^{k} \langle 2z-\mu \rangle^{-(k+1)}\e^{\mathrm{Re}\,(2z-\mu)/Ch})$ while the remaining integral term satisfies
\begin{align}
\bigg|\frac{2}{(\mu-2z)^{n+1}}\int_0^{h^{-1}/2}\partial_\sigma^{n+1} g_{z,h}(\sigma) \e^{\sigma(2z-\mu)}\D\sigma\bigg|&\leq C_nh\sum_{j=1}^{n+1}\frac{\langle z\rangle^j}{|2z-\mu|^{n+1}}\int_0^\infty \sigma^{j-1}\langle \sigma \rangle^{j} \e^{\sigma\, \mathrm{Re}(2z-\mu)}\D\sigma\\
			&\leq C_n h \langle \mu \rangle^{-(n+1)}
\end{align}
thanks to \eqref{gn+1}.
Thus, we have shown that for $n\in \N$, $\mu\in D_\tau$ and $\mathrm{Re}\,z\leq -1/2C$,
$$|\partial_\mu^n (G_z-G_z^0)(\mu)|\leq C_n h \langle \mu \rangle^{-(n+1)}.$$
Putting $R_z^0(v,\eta)=G_z^0(v^2/2+2\eta^2)\,\mathrm{Id}$ and defining $M_0(v,\eta)$ as in \eqref{Mh} with $R_z$ replaced by $R_z^0$, we deduce that
$$|\partial^\alpha (M^h-M_0)(v,\eta)|\leq C_\alpha h\langle (v,\eta) \rangle^{-2}\qquad \text{on }\R^d\times \Sigma_\tau$$
so item \ref{expm} from Hypothesis \ref{hypom} holds true.
Finally, by definition of $M_0$ and thanks to \eqref{G0} and \eqref{fz0}, we have
\begin{align}\label{m0}
M_0(v,\eta)=\tilde \varrho\big(v^2/4+\eta^2\big)\,\mathrm{Id} \geq \frac1C \langle (v,\eta) \rangle^{-2}\,\mathrm{Id}
\end{align}
by assumption on $\varrho$.
Therefore item \ref{minom} from Hypothesis \ref{hypom} holds true and the proof is complete.

\section{Linear algebra Lemma}

We use the following lemma which is inspired by \cite{BonyLPMichel}, Lemma 2.6.
\begin{lem}\label{pasir}
Let $M\in \mathcal M_{d'}(\C)$ such that $M=S(A+T)$ with $S$ hermitian and invertible, $A$ skew-hermitian and $T$ hermitian positive semidefinite.
Suppose moreover that
$$M(\mathrm{Ker}\,T) \cap \mathrm{Ker}\,T=\mathrm{Ker}\,M\cap \mathrm{Ker}\,T=\{0\}.$$
Then $M$ has no spectrum in $i\R$.
\end{lem}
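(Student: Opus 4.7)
The plan is to argue by contradiction, so I would assume that $i\lambda \in \mathrm{Spec}(M)$ for some $\lambda \in \R$, pick an eigenvector $u \neq 0$ with $Mu = i\lambda u$, and derive $u = 0$ from the two assumed intersection conditions.

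The key observation is that since $S$ is invertible, the eigenvalue equation $S(A+T)u = i\lambda u$ can be rewritten as
\begin{equation*}
(A+T)u = i\lambda S^{-1}u,
\end{equation*}
and then paired against $u$ to obtain
\begin{equation*}
\langle Au, u \rangle + \langle Tu, u \rangle = i\lambda \langle S^{-1}u, u \rangle.
\end{equation*}
Since $A$ is skew-hermitian, $\langle Au, u \rangle \in i\R$; since $T$ is hermitian positive semidefinite, $\langle Tu, u \rangle \in \R_+$; and since $S$ (hence $S^{-1}$) is hermitian, $\langle S^{-1}u, u \rangle \in \R$, so the right-hand side is purely imaginary. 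Taking real parts therefore forces $\langle Tu, u \rangle = 0$, which together with $T \geq 0$ yields $Tu = 0$, i.e.\ $u \in \mathrm{Ker}\,T$.

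From here I would split into two cases according to the sign of $\lambda$. If $\lambda = 0$, then $Mu = 0$, so $u \in \mathrm{Ker}\,M \cap \mathrm{Ker}\,T = \{0\}$, contradicting $u \neq 0$. If $\lambda \neq 0$, then $Mu = i\lambda u$ is a nonzero scalar multiple of $u$, hence still lies in $\mathrm{Ker}\,T$; but $Mu$ is also in $M(\mathrm{Ker}\,T)$, so $Mu \in M(\mathrm{Ker}\,T) \cap \mathrm{Ker}\,T = \{0\}$, which forces $u = 0$, again a contradiction.

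I do not anticipate any real obstacle: the whole argument rests on the standard trick of separating real and imaginary parts of $\langle Mu, u \rangle$ after factoring out $S$, and the two transversality conditions in the hypothesis are tailored exactly to eliminate the two remaining cases. The only mild subtlety is noticing that one must use $S^{-1}$ (not $S$) when pairing, which is why the invertibility of $S$ is essential.
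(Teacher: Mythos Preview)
Your proposal is correct and follows essentially the same approach as the paper: rewrite the eigenvalue equation as $(A+T)u = i\lambda S^{-1}u$, take real parts of the pairing with $u$ to force $u \in \mathrm{Ker}\,T$, then use the two transversality hypotheses to conclude $u=0$. The only cosmetic difference is that the paper handles both cases at once by first deducing $Mu = i\lambda u \in M(\mathrm{Ker}\,T)\cap \mathrm{Ker}\,T = \{0\}$ (so $u \in \mathrm{Ker}\,M$) and then invoking $\mathrm{Ker}\,M \cap \mathrm{Ker}\,T = \{0\}$, whereas you split into $\lambda = 0$ and $\lambda \neq 0$.
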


\begin{proof}
Let $\lambda\in \R$ and $X\in \mathrm{Ker}\, [M-i\lambda]$, we first show that $X \in$ Ker $T$.
Since $T$ is hermitian positive semidefinite, it is sufficient to show that $\langle TX,X\rangle=0$.
Using the properties of $S$, $A$ and $T$ we have 
\begin{align}
\langle TX,X\rangle&=\mathrm{Re}\,\big\langle (A+T)X,X\big\rangle\\
	&=\mathrm{Re}\,\big\langle S^{-1}S(A+T)X,X\big\rangle \\
	&=\mathrm{Re}\,\big(i\lambda \big\langle S^{-1}X,X\big\rangle\big)\\
	&=0
\end{align}
so $X \in$ Ker $T$.
Thanks to the assumption, it only remains to prove that $X \in $ Ker $M$.
This can be done easily by noticing that
$$MX=i\lambda X \in M(\mathrm{Ker}\,T) \cap \mathrm{Ker}\,T$$
so $MX=0$ by assumption.
\end{proof}

\section{Asymptotic expansions}

Let $d'\in \N^*$.
Here we use the convention $\sum_{j=0}^{-1}a_j=0$ for any sequence $(a_j)_{j\geq 0}$ in  a vector space.
For $K\subseteq \R^{d'}$, the notation $a=O_{\mathcal C^{\infty}(K)}(h^N)$ (respectively $a=O_{L^{\infty}(K)}(h^N)$) means that for all $\beta \in \N^{d'}$, there exists $C_{\beta, N}$ such that $\|\partial^\beta a\|_{\infty,K}\leq C_{\beta, N} h^N$ (resp. there exists $C_{ N}$ such that $\| a\|_{\infty,K}\leq C_{ N} h^N$).
We will also use the notations from Definition $\ref{symbolo}$ and \eqref{exph}.

\begin{prop}\label{expcompo}
Let $m\in \N^*$ ; $d_1,\dots, d_m \in \N^*$ and for $1 \leq j \leq m$,  $K_j\subset \R^{d_j}$ some compact sets.
Let a smooth function
$$\phi_h:\prod_{j=1}^m K_j\to K\subset \Sigma_\tau$$
such that $\phi_h=O_{\mathcal C^{\infty}(\prod_{j=1}^m K_j)}(1)$.
Consider $g^h \sim_h \sum_{n \geq 0}h^ng_n$ in $S^0_\tau(1)$ or in $\mathcal C^{\infty}(K)$ if $\phi_h$ actually takes values in $\R^d$.
Then 
$$g^h\circ \phi_h \sim_h \sum_{n \geq 0}h^n(g_n\circ \phi_h)$$
in $\mathcal C^{\infty}(\prod_{j=1}^m K_j)$.
\end{prop}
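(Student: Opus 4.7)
The plan is to rewrite an asymptotic expansion as a remainder estimate, then control the composition via Faà di Bruno. For each $N \in \N$, set
\[
r_N^h = h^{-N}\Big(g^h - \sum_{n=0}^{N-1} h^n g_n\Big).
\]
By definition of $\sim_h$ in $S^0_\tau(1)$ (resp.\ $\mathcal{C}^\infty(K)$), the family $r_N^h$ satisfies $r_N^h = O_{S^0_\tau(1)}(1)$ (resp.\ $r_N^h = O_{\mathcal{C}^\infty(K)}(1)$), i.e.\ every derivative $\partial^\alpha r_N^h$ is bounded uniformly in $h$ on $\R^{2d}\times \Sigma_\tau$ (resp.\ on $K$). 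The same is automatic for each fixed $g_n$. Composing with $\phi_h$ we obtain the exact identity
\[
g^h \circ \phi_h - \sum_{n=0}^{N-1} h^n (g_n \circ \phi_h) = h^N \, (r_N^h \circ \phi_h),
\]
so the proposition reduces to showing that $r_N^h \circ \phi_h = O_{\mathcal{C}^\infty(\prod K_j)}(1)$.

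To control the derivatives of this composition I would invoke the multivariate Faà di Bruno formula: for any multi-index $\beta$, the derivative $\partial^\beta (r_N^h \circ \phi_h)$ is a finite universal polynomial combination of terms of the form
\[
(\partial^\gamma r_N^h)(\phi_h) \cdot \prod_{i} \partial^{\beta^{(i)}} \phi_h^{(k_i)},
\]
where the multi-indices $\beta^{(i)}$ partition $\beta$ and $|\gamma| \leq |\beta|$. In the $S^0_\tau(1)$ case, note that $g^h$ and the $g_n$ are holomorphic in $\eta \in \Sigma_\tau$ and smooth in $(x,v)$, so each $r_N^h$ is smooth on $\R^{2d}\times \Sigma_\tau$ in the real-variable sense and the chain rule applies directly (the holomorphy in $\eta$ justifies differentiating under $\phi_h$ even when $\phi_h$ takes values in $\Sigma_\tau$; the Cauchy estimates already built into Definition \ref{symbolo} give uniform bounds on a slightly smaller strip if needed, which is harmless). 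By assumption $\partial^{\beta^{(i)}} \phi_h$ and $(\partial^\gamma r_N^h)\circ \phi_h$ are all bounded uniformly in $h$ on $\prod K_j$, the latter because $\phi_h$ takes values in a fixed compact $K \subset \Sigma_\tau$ and the bounds on $r_N^h$ are uniform on $\R^{2d}\times \Sigma_\tau$. Each term in Faà di Bruno is therefore $O_{L^\infty(\prod K_j)}(1)$ uniformly in $h$, and summing the finitely many terms yields $\partial^\beta (r_N^h \circ \phi_h) = O_{L^\infty(\prod K_j)}(1)$, which is exactly what we needed.

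The only delicate point, and what I would view as the main obstacle to a fully rigorous write-up, is the bookkeeping in the $S^0_\tau(1)$ case when $\phi_h$ has image genuinely in $\Sigma_\tau\setminus \R^d$: one must be careful that applying a real derivative $\partial_{x_j}$ of the real-variable argument of $\phi_h$ produces, after composition, a derivative of $r_N^h$ with respect to its complex $\eta$-argument, which is legitimate only because of the analyticity in $\eta$ from Definition \ref{symbolo}. Once this is unpacked, the argument is purely combinatorial via Faà di Bruno, and the conclusion $r_N^h\circ \phi_h = O_{\mathcal{C}^\infty(\prod K_j)}(1)$ holds for every $N$, which is exactly the definition of $g^h \circ \phi_h \sim_h \sum_{n\geq 0} h^n(g_n \circ \phi_h)$ in $\mathcal{C}^\infty(\prod K_j)$.
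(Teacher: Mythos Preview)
Your proposal is correct and follows essentially the same approach as the paper: write $g^h=\sum_{n=0}^{N-1}h^n g_n + r_N$ with $r_N=O_{S^0_\tau(1)}(h^N)$, compose with $\phi_h$, and use the chain rule together with the uniform bounds on the derivatives of $\phi_h$ and of $r_N$ to conclude that $r_N\circ\phi_h=O_{\mathcal C^\infty(\prod K_j)}(h^N)$. The paper's proof is terser (it simply asserts the last step without spelling out Fa\`a di Bruno or the analyticity issue for complex-valued $\phi_h$), so your version is a more detailed write-up of the same argument.
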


\begin{proof} 
Let $N \in \N$ and denote $r_N=g^h-\sum_{n=0}^{N-1}h^n g_n=O_{S^0_\tau(1)}(h^N)$.
\begin{align*}
g^h\circ \phi_h &=\Big(\sum_{n=0}^{N-1}h^n g_n+r_N\Big)\circ \phi_h\\
		&=\sum_{n=0}^{N-1}h^n (g_n\circ \phi_h) +r_N\circ \phi_h.
\end{align*}
But since all the derivatives of $\phi_h$ are bounded uniformly in $h$, and the ones of $r_N$ are $O_{L^{\infty}(\Sigma_\tau)}(h^N)$, we see that $r_N\circ \phi_h$ is $O_{\mathcal C^{\infty}(\prod_{j=1}^m K_j)}(h^N)$ so we have the announced result.
\end{proof}

%

\begin{prop}\label{expsharp}
Since the matrix $M^h$ from Hypothesis \ref{hypom} satisfies $M^h \sim \sum_{n \geq 0}h^nM_n$ in $\mathcal M_{d}\big(S^0_\tau(\langle(v,\eta) \rangle^{-2})\big)$, 
the vector of symbols $g^h$ defined in Remark \ref{defg} also admits a classical expansion $g^h \sim \sum_{n \geq 0}h^ng_n$ in $\mathcal M_{1,d}\big(S^0_\tau(\langle(v,\eta) \rangle^{-1})\big)$, where the $(g_n)$ are given by 
\begin{align*}
g_0(x,v,\eta)=\Big(-i\, {}^t\eta+ \, \frac {{}^tv}{2}\Big)M_0(x,v, \eta)
\end{align*}
 and 
$$g_n(x,v,\eta)=\Big(-i\, {}^t\eta+ \, \frac {{}^tv}{2}\Big)M_n(x,v,\eta)-\frac 12( {}^t\nabla_v-\frac i2  {}^t\nabla_\eta)M_{n-1}(x,v,\eta)$$
for $n\geq 1.$
\end{prop}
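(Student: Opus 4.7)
The plan is to substitute the hypothesized expansion $M^h\sim\sum_{n\geq 0} h^n M_n$ directly into the formula
$$g^h=\Big(-i\,{}^t\eta+\frac{{}^tv}{2}\Big)M^h-\frac{h}{2}\Big({}^t\nabla_v-\frac{i}{2}{}^t\nabla_\eta\Big)M^h$$
from Remark \ref{defg}, and then collect by powers of $h$ in the target class $\mathcal M_{1,d}(S^0_\tau(\langle(v,\eta)\rangle^{-1}))$. For the first piece, left-multiplication of a matrix in $\mathcal M_d(S^0_\tau(\langle(v,\eta)\rangle^{-2}))$ by the row $(-i\,{}^t\eta+{}^tv/2)$ clearly lands in $\mathcal M_{1,d}(S^0_\tau(\langle(v,\eta)\rangle^{-1}))$, since that row and all its derivatives are $O(\langle(v,\eta)\rangle)$ on $\R^{2d}\times\Sigma_\tau$. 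In particular the truncation remainders transfer directly: if $M^h-\sum_{n=0}^{N-1}h^nM_n=O_{\mathcal M_d(S^0_\tau(\langle(v,\eta)\rangle^{-2}))}(h^N)$, then
$$\Big(-i\,{}^t\eta+\frac{{}^tv}{2}\Big)\Big(M^h-\sum_{n=0}^{N-1}h^nM_n\Big)=O_{\mathcal M_{1,d}(S^0_\tau(\langle(v,\eta)\rangle^{-1}))}(h^N).$$

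Next, I would check that the differential operator $({}^t\nabla_v-\tfrac{i}{2}{}^t\nabla_\eta)$ sends $\mathcal M_d(S^0_\tau(\langle(v,\eta)\rangle^{-2}))$ continuously into $\mathcal M_{1,d}(S^0_\tau(\langle(v,\eta)\rangle^{-2}))$. Differentiation in $v$ preserves the estimates of item \ref{majobande} of Definition \ref{symbolo} without change; differentiation in the analytic variable $\eta$ also preserves the class up to slightly shrinking $\tau$, exactly by the Cauchy-formula observation recalled right after Definition \ref{symbolo}. Applying this at each order of the expansion of $M^h$ gives $({}^t\nabla_v-\tfrac{i}{2}{}^t\nabla_\eta)M^h\sim\sum_{n\geq 0} h^n({}^t\nabla_v-\tfrac{i}{2}{}^t\nabla_\eta)M_n$ in $\mathcal M_{1,d}(S^0_\tau(\langle(v,\eta)\rangle^{-2}))$, and premultiplying by $h/2$ yields an expansion shifted by one power of $h$, which lives a fortiori in the larger class $\mathcal M_{1,d}(S^0_\tau(\langle(v,\eta)\rangle^{-1}))$.

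Summing the two expansions in this common class and re-indexing then gives the announced formula: the $h^0$-contribution, coming only from the first piece, equals $g_0=(-i\,{}^t\eta+{}^tv/2)M_0$, and for $n\geq 1$ the $h^n$-contribution collects $(-i\,{}^t\eta+{}^tv/2)M_n$ from the first piece and $-\tfrac{1}{2}({}^t\nabla_v-\tfrac{i}{2}{}^t\nabla_\eta)M_{n-1}$ from the second, which is exactly $g_n$. Since the $(M_n)$ are $h$-independent, so are the resulting $(g_n)$, and the expansion is classical. The only mildly technical point — not a genuine obstacle — is verifying that the sum of two classical expansions in ordered weights yields a classical expansion in the larger weight with compatible remainders; this reduces to the linearity of the $O_{S^0_\tau}(h^N)$ notation and the trivial inclusion $S^0_\tau(\langle(v,\eta)\rangle^{-2})\subset S^0_\tau(\langle(v,\eta)\rangle^{-1})$.
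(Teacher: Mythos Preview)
Your proposal is correct and follows essentially the same approach as the paper: substitute the expansion of $M^h$ into the formula for $g^h$ from Remark \ref{defg}, observe that multiplication by the row $(-i\,{}^t\eta+{}^tv/2)$ raises the weight from $\langle(v,\eta)\rangle^{-2}$ to $\langle(v,\eta)\rangle^{-1}$ while preserving remainder estimates, and note that the derivative term contributes a shifted expansion in the smaller class. The paper's proof is terser but identical in content.
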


\begin{proof} 
We have $$g^h=(-i\, {}^t\eta+\, {}^tv/2)M^h-\frac{h}{2}( {}^t\nabla_v-\frac i2  {}^t\nabla_\eta)M^h$$ and the last term clearly admits the expansion $$-\sum_{n\geq 1}h^n\frac 12( {}^t\nabla_v-\frac i2  {}^t\nabla_\eta)M_{n-1}$$ in $S^0_\tau(\langle(v,\eta) \rangle^{-2})$.
For the first term of $g^h$, it suffices to notice that for any $N\in \N$, $$\Big(-i\, {}^t\eta+ \, \frac {{}^tv}{2}\Big)\;O_{\mathcal M_{d}\big(S^0_\tau(\langle(v,\eta) \rangle^{-2})\big)}(h^N)=O_{\mathcal M_{1,d}\big(S^0_\tau(\langle(v,\eta) \rangle^{-1})\big)}(h^N).$$
\end{proof}

\begin{prop}\label{expinexp}
Let $K$ a compact set in $\R^{d'}$ and $a\sim_h \sum_{n\geq 0}h^n a_n$ in $\mathcal C^{\infty}(K)$ such that for all $n \geq 0$, $a_n\sim_h \sum_{j\geq 0}h^j a_{n,j}$ in $\mathcal C^{\infty}(K)$.
Then  
$$a \sim_h \sum_{n \geq 0}h^n \sum_{j=0}^n a_{j,n-j} \quad \text{in } \mathcal C^{\infty}(K).$$
\end{prop}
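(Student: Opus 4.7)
The statement is a diagonal-summation principle for two nested asymptotic expansions, and the proof is essentially a reindexing of a triangular sum, together with additivity of the $O(h^N)$ estimates in $\mathcal{C}^\infty(K)$.

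The plan is as follows. Fix $N \in \N$ and $\beta \in \N^{d'}$; the goal is to produce a constant $C_{\beta,N}$ such that
$$\Big\| \partial^\beta \Big( a - \sum_{n=0}^{N-1} h^n \sum_{j=0}^n a_{j,n-j} \Big) \Big\|_{\infty, K} \leq C_{\beta,N}\, h^N.$$
First, I would use the outer expansion $a \sim_h \sum_n h^n a_n$ at order $N$ to write
$$a = \sum_{n=0}^{N-1} h^n a_n + r_N, \qquad r_N = O_{\mathcal C^\infty(K)}(h^N).$$
Then, for each $0 \leq n \leq N-1$, I would invoke the inner expansion $a_n \sim_h \sum_j h^j a_{n,j}$ \emph{at order $N - n$} (not at order $N$), giving
$$a_n = \sum_{j=0}^{N-1-n} h^j a_{n,j} + s_{n, N-n}, \qquad s_{n, N-n} = O_{\mathcal C^\infty(K)}(h^{N-n}).$$
Substituting into the first identity produces the decomposition
$$a = \sum_{n=0}^{N-1} \sum_{j=0}^{N-1-n} h^{n+j} a_{n,j} \;+\; \sum_{n=0}^{N-1} h^n s_{n, N-n} \;+\; r_N.$$

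The next step is the combinatorial heart of the argument: reindex the double sum by $k = n+j$. The constraints $n, j \geq 0$ and $n+j \leq N-1$ translate into $0 \leq k \leq N-1$ and $0 \leq n \leq k$ (with $j = k-n$), so
$$\sum_{n=0}^{N-1} \sum_{j=0}^{N-1-n} h^{n+j} a_{n,j} = \sum_{k=0}^{N-1} h^k \sum_{n=0}^k a_{n, k-n},$$
which is exactly the prescribed partial sum in the claim. It then suffices to control the remainders: each term $h^n s_{n, N-n}$ is $O_{\mathcal C^\infty(K)}(h^N)$ by construction, and since there are only $N$ such terms their sum remains $O_{\mathcal C^\infty(K)}(h^N)$; combined with $r_N$ this yields the required bound.

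I do not anticipate any serious obstacle: the argument is linear in $a$, purely formal at the level of indices, and uses only the additivity of $O_{\mathcal C^\infty(K)}(h^N)$ under finite sums. The only subtle point worth mentioning is that both the $a_n$ and the $a_{n,j}$ are allowed to depend on $h$; however, the notation $\sim_h$ is designed precisely to accommodate this, and nowhere does the argument need $h$-independence of the coefficients. One should simply be careful to truncate each inner expansion at the matching order $N - n$ rather than at $N$, so that the prefactor $h^n$ and the remainder $O(h^{N-n})$ combine to the uniform $O(h^N)$ needed.
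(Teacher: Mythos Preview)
Your proof is correct and follows exactly the same approach as the paper: truncate the outer expansion at order $N$, each inner expansion at order $N-n$, and reindex the resulting triangular double sum by $k=n+j$. The paper's proof is just a two-line compressed version of what you wrote.
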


\begin{proof} 
It suffices to write for $N \in \N$
\begin{align*}
a&=\sum_{n=0}^{N-1} h^n \Big( \sum_{j=0}^{N-1-n} h^j a_{n,j}+O_{\mathcal C^{\infty}(K)}(h^{N-n}) \Big) +O_{\mathcal C^{\infty}(K)}(h^N)\\
		&=\sum_{n = 0}^{N-1}h^n \sum_{j=0}^n a_{j,n-j} +O_{\mathcal C^{\infty}(K)}(h^N).
\end{align*}
\end{proof}

\begin{prop}\label{linfcinf}
Let $K$ a compact set in $\R^{d'}$ and $a \in \mathcal C^\infty(K)$ such that for all $\beta \in \N^{d'}$, there exists $a_{\beta,j}\in \mathcal C^\infty(K)$ such that $\partial^\beta a \sim \sum_{j\geq 0}h^ja_{\beta,j}$ in $L^\infty(K)$.
Then $a_{\beta,j}=\partial^\beta a_{0,j}$, i.e
$$a \sim \sum_{j\geq 0}h^ja_{0,j} \quad \text{in } \mathcal C^\infty(K).$$
\end{prop}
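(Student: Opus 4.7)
The plan is to prove by induction on $j \geq 0$ that $a_{\beta, j} = \partial^\beta a_{0,j}$ for every $\beta \in \N^{d'}$. Once this is established, subtracting from the hypothesis $\partial^\beta a - \sum_{j=0}^{N-1} h^j a_{\beta, j} = O_{L^\infty(K)}(h^N)$ yields
$$\partial^\beta\Bigl(a - \sum_{j=0}^{N-1} h^j a_{0,j}\Bigr) = O_{L^\infty(K)}(h^N)$$
for all $\beta$ and $N$, which is exactly the conclusion $a \sim \sum_{j \geq 0} h^j a_{0,j}$ in $\mathcal C^\infty(K)$.

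For the base case $j = 0$, I would show by a secondary induction on $|\beta|$ that $\partial^\beta a_{0,0} = a_{\beta, 0}$. Letting $h \to 0$ in the $L^\infty$ expansion of $\partial^\alpha a$ for $|\alpha| \leq |\beta| + 1$ yields uniform convergence $\partial^\alpha a(h, \cdot) \to a_{\alpha, 0}$ on $K$. For any $x, x' \in K$ connected by a segment contained in $K$ parallel to the $x_k$-axis, the fundamental theorem of calculus gives
$$\partial^\beta a(h, x') - \partial^\beta a(h, x) = (x'_k - x_k)\int_0^1 \partial^{\beta + e_k} a\bigl(h, x + t(x'-x)\bigr)\, dt ;$$
passing to the limit $h \to 0$ by uniform convergence, one deduces $\partial_{x_k} a_{\beta, 0} = a_{\beta + e_k, 0}$. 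Starting from $\beta = 0$ (where $a_{0,0}$ is smooth by assumption), this propagates inductively to $\partial^\beta a_{0,0} = a_{\beta, 0}$ for every $\beta$.

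For the induction step on $j$, suppose the equality holds for all indices $k < j$, and set
$$\tilde a(h, x) = h^{-j}\Bigl(a(h, x) - \sum_{k=0}^{j-1} h^k a_{0,k}(x)\Bigr),$$
which is smooth on $K$. The expansion $a \sim \sum h^k a_{0,k}$ in $L^\infty$ immediately gives $\tilde a \sim \sum_{k \geq 0} h^k a_{0, k+j}$ in $L^\infty$; and using $\partial^\beta a_{0, k} = a_{\beta, k}$ for $k < j$ (induction hypothesis), one computes
$$\partial^\beta \tilde a = h^{-j}\Bigl(\partial^\beta a - \sum_{k=0}^{j-1} h^k a_{\beta, k}\Bigr) \sim \sum_{k \geq 0} h^k a_{\beta, k+j} \quad \text{in } L^\infty(K).$$
Thus $\tilde a$ satisfies the hypothesis of the proposition with shifted coefficients $(a_{\beta, k+j})_{\beta, k}$. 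Applying the already-proven base case to $\tilde a$ yields $a_{\beta, j} = \partial^\beta a_{0, j}$ for every $\beta$, closing the induction.

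The only mild obstacle is the passage from uniform convergence of $\partial^\alpha a(h, \cdot)$ to equality of limits with the corresponding derivatives of $a_{0,0}$; this uses that $K$ contains enough segments along coordinate directions to apply the fundamental theorem of calculus. In all applications of this proposition $K$ is a compact subset of an open set on which $a$ is actually smooth, so one may freely apply the FTC on $K$ (or on a slightly enlarged neighborhood and then restrict).
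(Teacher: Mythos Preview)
Your proof is correct and follows essentially the same strategy as the paper: both arguments reduce, by induction on $j$ (via the substitution $\tilde a = h^{-j}(a-\sum_{k<j}h^k a_{0,k})$), to the base case $j=0$, and then identify $a_{\beta,0}$ with $\partial^\beta a_{0,0}$ by a differential-calculus argument after letting $h\to 0$. The only cosmetic difference is in that last step: the paper (working with $d'=1$) writes the difference quotient $(a_0(x+t)-a_0(x))/t$ through a second-order Taylor expansion of $a$, takes $h\to 0$ and then $t\to 0$; you use the fundamental theorem of calculus once and pass to the limit under the integral. Your version is marginally more direct and avoids invoking the expansion of $\partial^{\beta+2e_k}a$, but the two arguments are the same in spirit.
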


\begin{proof}
For simplicity, we take $d'=1$.
Let us denote $a_j=a_{0,j}$.
By induction, it is sufficient to prove the result for $\beta=1$, i.e prove that $a_{1,j}=a_j'$.
Here again, it suffices to prove the case $j=0$ which we can then apply to the function $h^{-1}(a-a_0)$ and so on.
Let $x$ in the interior of $K$ and $t\in \R^*$ in a neighborhood of 0.
We look at the differential fraction
\begin{align*}
\frac{a_0(x+t)-a_0(x)}{t}&=\frac{a(x+t)-a(x)}{t}+\frac{O(h)}{t}\\
				&=a'(x)+t\int_0^1 (1-s)a''(x+st) \D s+\frac{O(h)}{t}\\
				&=a_{1,0}(x)+O(h)+t\int_0^1 (1-s)a''(x+st) \D s+\frac{O(h)}{t}\\
				&\xrightarrow[h \to 0]{} a_{1,0}(x)+t\int_0^1 (1-s)a_{2,0}(x+st) \D s.
\end{align*}
Taking now the limit $t \to 0$, we get $a_0'(x)=a_{1,0}(x)$ which was the desired result.
\end{proof}

\begin{prop}\label{expdl}
Recall the notation \eqref{d0tau} and let $K\subset \R^{d'}$ a compact set, $\Psi:K\to D(0, \tau)^d$ a smooth function such that $\Psi \sim \sum_{j\geq 0}h^j \Psi_j$ in $\mathcal C^\infty(K)$ and $b$ an analytic function on $\Sigma_\tau$.
Then 
\begin{align}\label{expaopsi}
b \circ \Psi \sim \sum_{j\geq 0}h^j b_j 
\end{align}
in $\mathcal C^\infty(K)$, with
$$b_0=b \circ \Psi_0 \quad\; \text{and for }j \geq 1, \;\quad b_j=\sum_{|\beta|=1}^j \frac{\partial^\beta b \circ \Psi_0}{\beta !}\sum_{s\in S_{\beta,j}} \prod_{k \in K_\beta} \bigg( \sum_{a \in A_{\beta, s, k}} \prod_{l=1}^{\beta_k} \big(\Psi_{a_l}\big)_k \bigg),$$
where $K_\beta=$ supp $\beta=\{k \in \llbracket 1,d \rrbracket \, ;\,\beta_k\neq 0\}$, 
$S_{\beta,j}=\{s \in \N^d \, ;\, \mathrm{supp}\, s=K_\beta, \, |s|=j \text{ and }s\geq \beta\}$ and 
$A_{\beta, s, k}=\{a \in (\N^*)^{\beta_k} \, ;\, |a|=s_k\}$.
\end{prop}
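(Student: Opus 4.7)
The plan is to Taylor-expand $b$ at the point $\Psi_0(x)$ and substitute the asymptotic expansion of $\widetilde\Psi := \Psi - \Psi_0$, then identify the coefficient of each power of $h$ by a multinomial bookkeeping. Since $\Psi_0$ is smooth on the compact $K$ with $\Psi_0(K)\subset D(0,\tau)^d$, for $h$ small enough the segment $[\Psi_0(x),\Psi(x)]$ remains in a fixed compact subset $K'\Subset D(0,\tau)^d$; the analyticity of $b$ then gives, via Cauchy estimates on a polydisc around $K'$ contained in $\Sigma_\tau$, uniform bounds $\|\partial^\beta b\|_{L^\infty(K')}\leq C_\beta$ for every multi-index $\beta$.

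Fix $N\in\N$. Taylor's formula with integral remainder applied to $b$ at $\Psi_0(x)$ yields
\begin{equation*}
b\circ\Psi=\sum_{|\beta|<N}\frac{\partial^\beta b\circ\Psi_0}{\beta!}\,\widetilde\Psi^\beta+R_N,
\end{equation*}
where $R_N$ is a finite sum of terms involving $\widetilde\Psi^\gamma$ with $|\gamma|=N$ times smooth functions of $x$ (integrals of $\partial^\gamma b$ along the segment). The assumption $\Psi\sim\sum_{j\geq 0}h^j\Psi_j$ in $\mathcal C^\infty(K)$ implies $\widetilde\Psi=O_{\mathcal C^\infty(K)}(h)$, hence $\widetilde\Psi^\gamma=O_{\mathcal C^\infty(K)}(h^N)$ for $|\gamma|=N$; combined with the uniform bounds on the derivatives of $b$ and the Faà di Bruno rule for differentiating $\partial^\gamma b\circ(\Psi_0+t\widetilde\Psi)$ in $x$, this gives $R_N=O_{\mathcal C^\infty(K)}(h^N)$.

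For each $\beta$ with $1\leq|\beta|<N$, the multinomial theorem applied componentwise to $\widetilde\Psi_k=\sum_{l=1}^{N-1}h^l(\Psi_l)_k+O_{\mathcal C^\infty(K)}(h^N)$ yields
\begin{equation*}
\widetilde\Psi_k^{\beta_k}=\sum_{\substack{a\in(\N^*)^{\beta_k}\\ |a|<N}}h^{|a|}\prod_{l=1}^{\beta_k}(\Psi_{a_l})_k+O_{\mathcal C^\infty(K)}(h^N).
\end{equation*}
Multiplying these identities over $k\in K_\beta$ and regrouping via the index change $s_k:=|a^{(k)}|$ (which automatically forces $s_k\geq\beta_k$, hence $\mathrm{supp}\,s=K_\beta$) then collecting by $j=|s|$ gives
\begin{equation*}
\widetilde\Psi^\beta=\sum_{j=|\beta|}^{N-1}h^j\sum_{s\in S_{\beta,j}}\prod_{k\in K_\beta}\sum_{a\in A_{\beta,s,k}}\prod_{l=1}^{\beta_k}(\Psi_{a_l})_k+O_{\mathcal C^\infty(K)}(h^N).
\end{equation*}
Plugging this into the Taylor expansion and regrouping by powers of $h$ up to $N-1$ yields $b\circ\Psi=\sum_{j=0}^{N-1}h^j b_j+O_{\mathcal C^\infty(K)}(h^N)$ with exactly the stated $b_j$; since $N$ is arbitrary this establishes \eqref{expaopsi}. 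The expansion is classical because each $\Psi_l$ and $b$ is $h$-independent.

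The only technical point is to ensure control in the $\mathcal C^\infty(K)$ topology rather than merely pointwise: this is where analyticity of $b$ enters decisively, giving uniform Cauchy estimates for the derivatives of $b$ needed to bound the Taylor remainder and its $x$-derivatives. The combinatorial identification of the coefficient of $h^j$ in $\widetilde\Psi^\beta$ with the expression involving $S_{\beta,j}$ and $A_{\beta,s,k}$ is a straightforward bookkeeping exercise, so no serious obstacle is expected.
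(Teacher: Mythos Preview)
Your argument is correct and in fact a bit more direct than the paper's. Both proofs begin with the same Taylor expansion of $b$ at $\Psi_0$ and the same multinomial bookkeeping for $(\Psi-\Psi_0)^\beta$, and arrive at the same coefficients $b_j$. The difference lies in how the $\mathcal C^\infty(K)$ topology is handled. You control the Taylor remainder $R_N$ and the coefficient functions $\partial^\beta b\circ\Psi_0$ directly in $\mathcal C^\infty(K)$, using Cauchy estimates from the analyticity of $b$ together with Fa\`a di Bruno to bound all $x$-derivatives uniformly in $h$. The paper instead first establishes the expansion only in $L^\infty(K)$ (where the remainder estimate $O((\Psi-\Psi_0)^N)=O_{L^\infty}(h^N)$ is immediate), then observes that each $\partial^\alpha(b\circ\Psi)$ is a finite combination of terms $(\partial^\gamma b\circ\Psi)\cdot(\text{derivatives of }\Psi)$, applies the $L^\infty$ result to each $\partial^\gamma b\circ\Psi$, and finally invokes Proposition~\ref{linfcinf} to upgrade the collection of $L^\infty$ expansions for all derivatives to a single $\mathcal C^\infty$ expansion. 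Your route is self-contained and avoids the auxiliary Proposition~\ref{linfcinf}; the paper's route isolates the combinatorics in the $L^\infty$ step and defers the analytic upgrade to a general lemma, which keeps the remainder estimate trivial at the cost of an extra indirection.
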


\begin{proof} 
We first prove that \eqref{expaopsi} holds in $L^\infty(K)$.
Doing a Taylor expansion of $b$, we have for $N \in \N^*$ that 
\begin{align}\label{taylor}
b\circ \Psi &= b \circ \Psi_0 + \sum_{|\beta|=1}^{N-1} \frac{\partial^\beta b \circ \Psi_0  }{\beta !}(\Psi- \Psi_0)^\beta+ O\Big((\Psi- \Psi_0)^N\Big) \nonumber\\
		&= b \circ \Psi_0 + \sum_{|\beta|=1}^{N-1} \frac{\partial^\beta b \circ \Psi_0  }{\beta !}(\Psi- \Psi_0)^\beta+O_{L^\infty(K)}(h^N)
\end{align}
since $\Psi-\Psi_0=O_{\mathcal C^\infty(K)}(h)$. 
Now one can see that 
$$(\Psi-\Psi_0)^\beta \sim \sum_{j\geq |\beta|}h^j\sum_{s\in S_{\beta,j}} \prod_{k \in K_\beta} \bigg( \sum_{a \in A_{\beta, s, k}} \prod_{l=1}^{\beta_k} \big(\Psi_{a_l}\big)_k \bigg)$$
so \eqref{taylor} gives
\begin{align*}
b\circ \Psi &= b \circ \Psi_0+\sum_{|\beta|=1}^{N-1} \frac{\partial^\beta b \circ \Psi_0  }{\beta !} \Bigg[\sum_{j= |\beta|}^{N-1} h^j\sum_{s\in S_{\beta,j}} \prod_{k \in K_\beta} \Big( \sum_{a \in A_{\beta, s, k}} \prod_{l=1}^{\beta_k} \big(\Psi_{a_l}\big)_k \Big)+O_{\mathcal C^\infty(K)}(h^N) \Bigg]+O_{L^\infty(K)}(h^N) \\
		&=b \circ \Psi_0+\sum_{j=1}^{N-1} h^j \sum_{|\beta|=1}^{j} \frac{\partial^\beta b \circ \Psi_0  }{\beta !} \sum_{s\in S_{\beta,j}} \prod_{k \in K_\beta} \Big( \sum_{a \in A_{\beta, s, k}} \prod_{l=1}^{\beta_k} \big(\Psi_{a_l}\big)_k \Big) +O_{L^\infty(K)}(h^N)
\end{align*}
which proves that \eqref{expaopsi} holds in $L^\infty(K)$.\\
Besides, the derivatives of $b \circ \Psi$ are linear combinations of products of some derivatives of $\Psi$ with some $\partial^\gamma b \,\circ \Psi$ where $\gamma$ is a integer multi-index.
Hence the expansion of $\Psi$ in $\mathcal C^\infty(K)$ and the result that we just proved applied to $\partial^\gamma b \circ \Psi$ instead of $b\circ \Psi$ yield that for all $\beta \in \N^{d'}$, $\partial^\beta (b\circ \Psi)$ admits a classical expansion in $L^\infty(K)$ whose coefficients are smooth.
Therefore, Proposition $\ref{linfcinf}$ enables us to conclude that \eqref{expaopsi} holds in $\mathcal C^\infty(K)$.
\end{proof}

\begin{cor}\label{bousin}
Using the notations from the proof of Lemma \ref{expom}, we have
$$
g_n\Big(x,\frac{v+v'}{2},\eta +i\psi(x,v,v')\Big)\sim \sum_{j\geq 0} h^j g_{n,j}(x,v,v',\eta) \quad \text{on } B_0(\mathbf s,2r)\times B_\infty (0,2r)
$$
with 
$$
g_{n,0}(x,v,v',\eta)=g_n\Big(x,\frac{v+v'}{2},\eta +i\psi_0(x,v,v')\Big)
$$
and for $j \geq 1$
\begin{align*}
g_{n,j}(x,v,v',\eta)=iD_\eta g_n\Big(x,\frac{v+v'}{2},\eta+i\psi_0(x,v,v')\Big) \big(\psi_j(x,v,v')\big)+R^1_j(\ell_0, \dots, \ell_{j-1})
\end{align*}
where $R^1_j : \big( \mathcal C^\infty(B_0(\mathbf s,2r) )\big)^j \to \mathcal C^\infty(B_0(\mathbf s,2r) ).$
\end{cor}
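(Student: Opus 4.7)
The plan is to apply a Taylor expansion of $g_n$ in its third variable, in the spirit of the proof of Proposition \ref{expdl}, and then reorganize powers of $h$ via Proposition \ref{expinexp}.

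First, I would check that the composition is well-defined and smooth with uniformly bounded $h$-dependence. By \eqref{psi0} and the fact that $\ell_0$ vanishes at $(\mathbf{s},0)$, up to shrinking $r$ we can guarantee that $|\psi_0(x,v,v')|<\tau/2$ on $B_0(\mathbf{s},2r)\times\{|v'|\leq 2r\}$. Combined with $\psi-\psi_0=O_{\mathcal{C}^\infty}(h)$ from \eqref{exppsi}, this ensures that, for $h$ small enough, the argument $\eta+i\psi(x,v,v')$ stays inside $\R^d+i(-\tau,\tau)^d\subset\R^d\times\Sigma_\tau$ on the compact set $B_0(\mathbf{s},2r)\times B_\infty(0,2r)$, where $g_n(x,(v+v')/2,\cdot)$ is analytic by Hypothesis \ref{hypom} and Proposition \ref{expsharp}.

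Second, I would Taylor-expand $g_n(x,(v+v')/2,\cdot)$ in its third argument around $\eta+i\psi_0$: for any $N\geq 1$,
\begin{equation*}
g_n\big(x,\tfrac{v+v'}{2},\eta+i\psi\big)=\sum_{|\beta|<N}\frac{\bigl(i(\psi-\psi_0)\bigr)^\beta}{\beta!}\,\partial_\eta^\beta g_n\big(x,\tfrac{v+v'}{2},\eta+i\psi_0\big)+r_N.
\end{equation*}
The integral form of Taylor's theorem combined with Cauchy estimates on a slightly smaller complex neighborhood controls the $\eta$-derivatives of $g_n$ uniformly, and together with $(\psi-\psi_0)^\beta=O_{\mathcal{C}^\infty}(h^{|\beta|})$ this yields $r_N=O_{\mathcal{C}^\infty}(h^N)$.

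Third, for each $|\beta|\geq 1$, $(\psi-\psi_0)^\beta$ admits a classical expansion starting at order $h^{|\beta|}$, and $\partial_\eta^\beta g_n(x,(v+v')/2,\eta+i\psi_0)$ admits a classical expansion by applying Proposition \ref{expdl} (with the analytic function $\partial_\eta^\beta g_n(x,(v+v')/2,\cdot)$ and the smooth map $\psi_0$, treating $x$, $v$, $v'$ as parameters); multiplying these and summing through Proposition \ref{expinexp} yields the claimed classical expansion. Reading off $g_{n,0}=g_n(x,(v+v')/2,\eta+i\psi_0)$ is immediate from $\beta=0$. For $j\geq 1$, the $h^j$-coefficient is a sum of terms $i^{|\beta|}\beta!^{-1}\partial_\eta^\beta g_n(x,(v+v')/2,\eta+i\psi_0)\prod_l\psi_{k_l}$ over $|\beta|\geq 1$ with $\sum k_l=j$ and $k_l\geq 1$; the unique term containing $\psi_j$ corresponds to $|\beta|=1$, $k_1=j$, producing $iD_\eta g_n(x,(v+v')/2,\eta+i\psi_0)(\psi_j)$, while any product with $|\beta|\geq 2$ forces every $k_l\leq j-1$, so by \eqref{psij} depends only on $\ell_0,\dots,\ell_{j-1}$; collecting these gives $R^1_j(\ell_0,\dots,\ell_{j-1})$.

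There is no serious obstacle here; the only care needed is in upgrading the pointwise Taylor remainder estimate to a $\mathcal{C}^\infty$ estimate on the compact parameter set, which is done through Cauchy estimates on the analytic extension together with Proposition \ref{linfcinf} to promote $L^\infty$-expansions of derivatives to $\mathcal{C}^\infty$-expansions.
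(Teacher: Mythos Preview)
Your proposal is correct and takes essentially the same approach as the paper: both Taylor-expand $g_n$ in its third variable around $\eta+i\psi_0$ and then identify the $|\beta|=1$ contribution as the $\psi_j$ term while the $|\beta|\geq 2$ terms depend only on $\ell_0,\dots,\ell_{j-1}$. The only difference is cosmetic: the paper packages the whole Taylor step as a direct application of Proposition \ref{expdl} with $\Psi=\eta+i\psi$, whereas you partially inline that argument (and your separate appeal to Proposition \ref{expdl} for $\partial_\eta^\beta g_n(\cdot,\eta+i\psi_0)$ is superfluous since $\psi_0$ is $h$-independent).
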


\begin{proof}
Since $\psi(\mathbf s,0, 0)=O(h)$, we can suppose that $r$ was chosen small enough so that $(x,v,v',\eta)\mapsto\eta +i\psi(x,v,v')$ sends $B_0(\mathbf s,2r) \times B_\infty(0,2r)$ in $D(0, \tau)^d$
Hence we can use Proposition \ref{expdl} 
to get that 
$$
g_n\Big(x,\frac{v+v'}{2},\eta +i\psi(x,v,v')\Big)\sim \sum_{j\geq 0} h^j g_{n,j}(x,v,v',\eta) \quad \text{on } B_0(\mathbf s,2r)\times B_\infty (0,2r)
$$
with 
$$
g_{n,0}(x,v,v',\eta)=g_n\Big(x,\frac{v+v'}{2},\eta +i\psi_0(x,v,v')\Big)
$$
and for $j \geq 1$
\begin{align}\label{gnjapp}
\qquad g_{n,j}(x,v,v',\eta)=\sum_{|\beta|=1}^j \frac{i^{|\beta|}}{\beta !}\partial^\beta_\eta g_n \Big(&x,\frac{v+v'}{2},\eta+i\psi_0(x,v,v')\Big)\sum_{s\in S_{\beta,j}} \prod_{k \in K_\beta} \bigg( \sum_{a \in A_{\beta, s, k}} \prod_{l=1}^{\beta_k} \big(\psi_{a_l}\big)_k \bigg) 
\end{align}
where $K_\beta=$ supp $\beta=\{k \in \llbracket 1,d \rrbracket \, ;\,\beta_k\neq 0\}$, 
$S_{\beta,j}=\{s \in \N^d \, ;\, \mathrm{supp}\, s=K_\beta, \, |s|=j \text{ and }s\geq \beta\}$ and 
$A_{\beta, s, k}=\{a \in (\N^*)^{\beta_k} \, ;\, |a|=s_k\}$.
Now, we see thanks to \eqref{gnjapp} that the terms of $g_{n,j}(x,v,v',\eta)$ for which $|\beta|=1$ yield 
\begin{align}\label{ljdsij}
iD_\eta g_n\Big(x,\frac{v+v'}{2},\eta+i\psi_0(x,v,v')\Big) \big(\psi_j(x,v,v')\big)
\end{align}
while the terms for which $|\beta|>1$ only feature the functions $\ell_0, \dots , \ell_{j-1}$.
\end{proof}
\hip  
Finally, we state the version of Laplace's method for integral approximation that we use in this paper.

\begin{prop}\label{expscal}
Let $x_0 \in  \R^{d'}$, $K$ a compact neighborhood of $x_0$ and $\varphi\in \mathcal C^\infty(K)$ such that $x_0$ is a non degenerate minimum of $\varphi$ and its only global minimum on $K$.
Let also $a_h \sim \sum_{j\geq 0}h^j a_j $ in $\mathcal C^\infty (K)$ and denote $H\in \mathcal M_{d'}(\R)$ the Hessian of $\varphi$ at $x_0$.
The integral
$$\frac{\det (H)^{1/2}}{(2\pi h)^{d'/2}} \int_{K}a_h(x)\e^{-\frac{\varphi(x)-\varphi(x_0)}{h}}\D x$$
admits a classical expansion whose first term is given by $a_0(x_0)$.
\end{prop}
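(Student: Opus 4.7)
The plan is to combine a standard Laplace expansion applied to each coefficient $a_j$ with a careful handling of the $h$-dependence of $a_h$ via the double-expansion machinery of Proposition \ref{expinexp}.

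First, I would localize the integral near $x_0$. Since $x_0$ is the unique global minimum of $\varphi$ on $K$, there exists a compact neighborhood $K_0 \subset K$ of $x_0$ and $\delta>0$ such that $\varphi - \varphi(x_0) \geq \delta$ on $K \setminus K_0$. Choosing a cut-off $\chi \in \mathcal C^\infty_c(K_0)$ equal to $1$ near $x_0$, the contribution of $(1-\chi)$ to the integral is $O(e^{-\delta/h})$, and thus $O(h^\infty)$ after the prefactor is applied. I may therefore replace $a_h$ by $\chi a_h$.

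Second, I would truncate the expansion of $a_h$: for each $N\in \N$,
\begin{equation*}
\chi a_h = \sum_{j=0}^{N-1} h^j \chi a_j + h^N r_{N,h}, \qquad r_{N,h} = O_{\mathcal C^\infty(K)}(1).
\end{equation*}
Since $\chi a_j$ and $r_{N,h}$ are compactly supported in $K_0$ with all derivatives bounded and $\varphi - \varphi(x_0) \geq c|x-x_0|^2$ on $K_0$ (by the non-degeneracy of the minimum), the usual Gaussian bound gives
\begin{equation*}
\int_K r_{N,h}(x) e^{-(\varphi(x)-\varphi(x_0))/h} \, \D x = O(h^{d'/2}),
\end{equation*}
so the remainder contributes $O(h^N)$ after multiplication by $\det(H)^{1/2}/(2\pi h)^{d'/2}$.

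Third, for each fixed $j$, I would apply the standard Laplace method to the $h$-independent integral $\int_K \chi(x) a_j(x) e^{-(\varphi(x)-\varphi(x_0))/h}\D x$. This is textbook: using the Morse lemma to change variables so that $\varphi - \varphi(x_0) = \tfrac12 {}^t y H y$ near $x_0$ (with Jacobian $1$ at $y=0$), a Taylor expansion of the transformed integrand, and explicit Gaussian moments yield a classical expansion in powers of $h$, times the factor $(2\pi h)^{d'/2}/\det(H)^{1/2}$, with leading coefficient $a_j(x_0)$. Write this expansion as $\sum_{k\geq 0} h^k b_{j,k}$ with $b_{j,0} = a_j(x_0)$.

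Finally, I would assemble everything. The prefactor cancels the $(2\pi h)^{d'/2}/\det(H)^{1/2}$ from each Laplace expansion, so
\begin{equation*}
\frac{\det(H)^{1/2}}{(2\pi h)^{d'/2}}\int_K a_h(x) e^{-(\varphi(x)-\varphi(x_0))/h}\D x \sim_h \sum_{j\geq 0} h^j\, c_j(h), \quad c_j(h) \sim \sum_{k\geq 0} h^k b_{j,k},
\end{equation*}
and Proposition \ref{expinexp} converts this nested expansion into a single classical expansion in $h$ with coefficients $\sum_{j+k=n} b_{j,k}$; in particular the term of order $h^0$ is $b_{0,0} = a_0(x_0)$, as claimed.

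The only mildly delicate step is verifying that the remainder $r_{N,h}$ really produces an $O(h^N)$ contribution after the prefactor: this needs the Gaussian bound for $\int r_{N,h} e^{-(\varphi-\varphi(x_0))/h}\D x$ to be uniform in $h$, which follows from the uniform control on all derivatives of $r_{N,h}$ granted by the definition of the asymptotic expansion in $\mathcal C^\infty(K)$.
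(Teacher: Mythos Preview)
Your proof is correct and follows the standard route for Laplace's method with an $h$-dependent amplitude. The paper does not actually give a proof of this proposition: it is stated in the appendix as ``the version of Laplace's method for integral approximation that we use in this paper'' and left without argument, so there is nothing to compare against.
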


\section{Proof of Lemma \ref{rreelle}}\label{rreelleapp}
According to the proof of Corollary \ref{bousin} and the end of the proof of Lemma \ref{expom} from which we keep the notations, we have the following expression for $R_j$:
\begin{align}\label{Rj}
\qquad \;\, R_j(\ell_0, \dots, \ell_{j-1})(x,v)=&\mathop{\sum_{n_1+n_2+n_3+n_4=j}}_{\substack{n_3, n_4  \neq j}}\frac{1}{i^{n_1}n_1!}\big(\partial_{v'} \cdot \partial_\eta \big)^{n_1} \Big( g_{n_2,n_3}(x,v,v',\eta) \partial_v \ell_{n_4}(x,v') \Big) \Bigg| \mathop{}_{\substack{v'=v \\  \eta =0}} \\
					&+\sum_{|\beta|=2}^j \frac{i^{|\beta|}}{\beta !}\partial^\beta_\eta  g_0 \Big(x,\frac{v+v'}{2},i\big(\frac v2 +\ell_0 (x,v)\, \partial_v \ell_0(x,v)\big)\Big)\nonumber \\
					&\qquad \qquad\qquad \qquad \times \sum_{s\in S_{\beta,j}} \prod_{k \in K_\beta} \bigg( \sum_{a \in A_{\beta, s, k}} \prod_{l=1}^{\beta_k} \big(\psi_{a_l}(x,v,v)\big)_k \bigg) \partial_v \ell_0(x,v) \nonumber\\
					&+iD_\eta g_0\big(x,v,i(v/2+\ell_0 (x,v)\, \partial_v \ell_0(x,v))\big)\sum_{k=1}^{j-1}\big(\ell_k\partial_v \ell_{j-k}\big)(x,v)\; \partial_v \ell_0(x,v).\nonumber
\end{align}
Using Lemma $\ref{ureelle}$ and \eqref{gnjapp}, it is clear that the last two terms of $R_j(\ell_0, \cdots, \ell_{j-1})$ given by \eqref{Rj} and the terms of the first sum for which $n_1=0$ are real valued.
For the rest of the first term, we start by noticing that one can establish by induction that for $n_1 \geq 1$,
\begin{align}\label{n1}
\big(\partial_{v'}\cdot \partial_\eta\big)^{n_1}=\sum_{p\in \llbracket 1,d\rrbracket^{n_1}}\partial_{v'}^{\gamma(p)}\partial_{\eta}^{\gamma(p)}
\end{align}
where using the notation \eqref{ej}, we define $\gamma(p)=\sum_{k=1}^{n_1}e_{p_k}$ (note that $|\gamma(p)|=n_1$).
Besides, we have for $0 \leq n_2 \leq j$ and $p\in \llbracket 1,d\rrbracket^{n_1}$
\begin{align}\label{n30}
\partial_{\eta}^{\gamma(p)}g_{n_2,0}(x,v,v',0)=\partial_{\eta}^{\gamma(p)}g_{n_2}\Big(x,\frac{v+v'}{2},i\psi_0(x,v,v')\Big)\in i^{n_1}\R^d
\end{align}
according to Lemma $\ref{ureelle}$ and in the case $j\geq 2$, for $1 \leq n_3 \leq j-1$
\begin{align}\label{n3}
\qquad \;\,\partial_{\eta}^{\gamma(p)}&g_{n_2,n_3}(x,v,v',0) \\
		&=\sum_{|\beta|=1}^{n_3} \frac{i^{|\beta|}}{\beta !}\partial^{\beta+\gamma(p)}_\eta g_{n_2} \Big(x,\frac{v+v'}{2},i\psi_0(x,v,v')\Big) \sum_{s\in S_{\beta,n_3}} \prod_{k \in K_\beta} \bigg( \sum_{a \in A_{\beta, s, k}} \prod_{l=1}^{\beta_k} \big(\psi_{a_l}\big)_k \bigg)\in i^{n_1}\R^d \nonumber
\end{align}
where we used \eqref{gnjapp} and Lemma $\ref{ureelle}$ once again.
The combination of \eqref{n1}, \eqref{n30} and \eqref{n3} enables us to conclude that the term
$$\mathop{\sum_{n_1+n_2+n_3+n_4=j}}_{\substack{n_1\neq 0;\, n_3, n_4  \neq j}}\frac{1}{i^{n_1}n_1!}\big(\partial_{v'} \cdot \partial_\eta \big)^{n_1} \Big( g_{n_2,n_3}(x,v,v',\eta) \partial_v \ell_{n_4}(x,v') \Big) \Bigg| \mathop{}_{\substack{v'=v \\  \eta =0}}$$
from \eqref{Rj} 
is also real so $R_j(\ell_0^{\mathbf s}, \dots, \ell_{j-1}^{\mathbf s})$ is real valued.
For the last statement, it suffices to use the formula \eqref{Rj} after noticing that $\psi$ (and hence the $(g_{n_2,n_3})$) remain unchanged when $\ell$ is replaced by $-\ell$.

\nocite{}
\bibliography{bibBoltz} 
\bigskip
\scshape \small Thomas Normand, Institut de Math\'ematiques de Bordeaux, Universit\'e de Bordeaux
\normalfont

\end{document}